
\documentclass{amsart}
\usepackage{latexsym}
\usepackage{amsmath}
\usepackage{amssymb}
\usepackage[all]{xy}

\usepackage{pstricks}
\newpsobject{showgrid}{psgrid}{subgriddiv=1,griddots=10,gridlabels=6pt}

\newtheorem{thm}{Theorem}[section]
\newtheorem{prop}[thm]{Proposition}
\newtheorem{conjecture}[thm]{Conjecture}
\newtheorem{cor}[thm]{Corollary}
\newtheorem{lem}[thm]{Lemma}
\theoremstyle{definition}

\theoremstyle{remark}
\newtheorem{rem}[thm]{Remark}
\newtheorem{ex}[thm]{Example}

%
%
%
%

%

%

%

%
\newcommand{\K}{{\mathbb K}}
\newcommand{\e}{\varepsilon}
%

%

%
%

%

\newcommand{\mapright}[1]{%
 \smash{\mathop{%
  \hbox to 1cm{\rightarrowfill}}\limits_{#1}}}
\newcommand{\maprightd}[2]{%
 \smash{\mathop{%
  \hbox to 1.2cm{\rightarrowfill}}\limits^{#1}\limits_{#2}}}
\newcommand{\mapleft}[1]{%
 \smash{\mathop{%
  \hbox to 1cm{\leftarrowfill}}\limits_{#1}}}
\newcommand{\mapleftu}[1]{%
 \smash{\mathop{%
  \hbox to 0.8cm{\leftarrowfill}}\limits^{#1}}}
\newcommand{\maprightu}[1]{%
 \smash{\mathop{%
  \hbox to 1cm{\rightarrowfill}}\limits^{#1}}}
\newcommand{\maprightud}[2]{%
 \smash{\mathop{%
  \hbox to 1cm{\rightarrowfill}}\limits^{#1}_{#2}}}
\newcommand{\mapleftud}[2]{%
 \smash{\mathop{%
  \hbox to 1cm{\leftarrowfill}}\limits^{#1}_{#2}}}


\newcounter{eqn}[section]

\def\theeqn{\textnormal{(\thesection.\arabic{eqn})}}

\def\eqnlabel#1{%
  \refstepcounter{eqn}%
  \label{#1}%
  \leqno{\theeqn}}

\begin{document}

\title[The BV algebra in String Topology of classifying spaces]{
The BV algebra in String Topology of classifying spaces
}


\footnote[0]{{\it 2010 Mathematics Subject Classification}: 55P35, 55T20 \\
{\it Key words and phrases.} String topology, Batalin-Vilkovisky algebra, Classifying space.


Department of Mathematical Sciences, 
Faculty of Science,  
Shinshu University,   
Matsumoto, Nagano 390-8621, Japan   
e-mail:{\tt kuri@math.shinshu-u.ac.jp} 

D\'epartement de Math\'ematiques
Facult\'e des Sciences
Universit\'e d'Angers
49045 Angers
France 
e-mail:{\tt luc.menichi@univ-angers.fr}
}

\author{Katsuhiko KURIBAYASHI and Luc MENICHI}
\date{}

\vspace{-1cm}

\begin{abstract}
For almost any compact connected Lie group $G$ and any field $\mathbb{F}_p$, we compute the Batalin-Vilkovisky
algebra $H^{*+\text{dim }G}(LBG;\mathbb{F}_p)$ on the loop cohomology of the classifying space introduced by
Chataur and the second author.
In particular, if $p$ is odd or $p=0$, this Batalin-Vilkovisky algebra is isomorphic
to the Hochschild cohomology $HH^*(H_*(G),H_*(G))$. Over $\mathbb{F}_2$, such isomorphism of Batalin-Vilkovisky algebras
does not hold when $G=SO(3)$ or $G=G_2$.

\end{abstract}
\maketitle

\section{Introduction}

Let $M$ be a closed oriented smooth manifold and let $LM$ denote
the space of free loops on $M$. Chas and Sullivan~\cite{Chas-Sullivan:stringtop} have defined a product on the homology of $LM$,
called the {\em loop product},
$
H_*(LM)\otimes H_*(LM)\rightarrow H_{*-\text{dim }M}(LM)
$.
They showed that this loop product, together with the homological BV-operator
$\Delta:H_*(LM)\rightarrow H_{*+1}(LM)$, make the shifted free loop space homology
$\mathbb{H}_*(LM):=H_{*+\text{dim }M}(LM)$ into a Batalin-Vilkovisky algebra,
or BV algebra. Over $\mathbb{Q}$, when $M$ is simply-connected, this BV algebra
can be computed using Hochschild cohomology~\cite{Felix-Thomas:ratBVstringtop}.
In particular, if $M$ is formal over $\mathbb{Q}$, there is an isomorphism of
BV algebras between $\mathbb{H}_*(LM)$ and $HH^*(H^*(M;\mathbb{Q}),H^*(M;\mathbb{Q}))$,
the Hochschild cohomology of the symmetric Frobenius algebra $H^*(M;\mathbb{Q})$.
Over a field $\mathbb{F}_p$, if $p\neq 0$, this BV algebra $\mathbb{H}_*(LM)$
is hard to compute. It has been computed only for complex Stiefel manifolds~\cite{Tamanoi:BVLalg},
spheres~\cite{menichi:stringtopspheres}, compact Lie groups~\cite{Hepworth:stringLie,Menichi:BVmorphismdoublefree} and
complex projective spaces~\cite{Chataur-Leborgne:loophomologycomplexprojective,Hepworth:stringcomplexprojective}.

Let $G$ be a connected compact Lie group of dimension $d$ and let $BG$ its classifying space.
Motivated by Freed-Hopkins-Teleman twisted K-theory~\cite{FreedHopkinsTeleman:twistedKtheory3}
and by a structure of symmetric Frobenius algebra on $H_*(G)$, Chataur and the second
author~\cite{Chataur-Menichi:stringclass} have proved that the homology 
of the free loop space $LBG$ with coefficients in a field $\K$ admits the structure 
of a $d$-dimensional homological conformal field theory
(More generally, if $G$ acts smoothly on $M$, Behrend, Ginot, Noohi and Xu~\cite[Theorem 14.2]{BGNX:Stringstackspublie}
have proved that $H_*(L(EG\times_G M))$ is a $(d-\text{dim }M)$-homological conformal
field theory.).
In particular, the operation associated with 
a cobordism connecting one dimensional manifolds called the pair of pants, defined a product on the cohomology of $LBG$,
called the {\em dual of the loop coproduct},
$
H^*(LBG)\otimes H^*(LBG)\rightarrow H^{*-d}(LBG)
$.
Chataur and the second author showed that the dual of the loop coproduct,
together with the cohomological BV-operator
$\Delta:H^*(LBG)\rightarrow H^{*-1}(LBG)$, make the shifted free loop space cohomology
$\mathbb{H}^*(LBG):=H^{*+d}(LBG)$ into a BV algebra {\it up to} signs.
Over $\mathbb{F}_2$, Hepworth and Lahtinen~\cite{Hepworth-Lahtinen:stringclassifying}
have extended this result to non connected compact Lie group and
more difficult, they showed that this $d$-dimensional homological conformal field theory, in particular this algebra $\mathbb{H}^*(LBG)$, has an unit.
Our first result is to solve the sign issues and to show that indeed,
$\mathbb{H}^*(LBG)$ is a BV algebra (Corollary~\ref{thm:B-V_algebra}).
In fact, we show more generally that the dual of a $d$-homological field theory has a structure
of BV algebra (Theorem~\ref{dualHCFTisBV}).

In~\cite{Lahtinen:higherop}, Lahtinen computes some non-trivial higher operations
in the structure of this $d$-dimensional homological conformal field theory on the cohomology of $BG$ for some compact Lie groups $G$.
In this paper, we compute the most important part of this $d$-dimensional homological conformal field theory, namely
the BV-algebra $\mathbb{H}^*(LBG;\mathbb{F}_p)$ for almost any connected compact Lie group $G$ and any field $\mathbb{F}_p$. According to our knowledge, this BV-algebra $H^*(LBG;\mathbb{F}_p)$
has never been computed on any example.

Our method is completely different from the methods used to compute the BV algebra
$\mathbb{H}_*(LM)$ in the known cases recalled above.
Suppose that the cohomology algebra of $BG$ over $\mathbb{F}_p$,
$H^*(BG;\mathbb{F}_p)$, is a polynomial algebra $\mathbb{F}_p[y_1, ..., y_N]$
(few connected compact Lie groups do not satisfy this hypothesis).
Then the cup product on $H^*(LBG;\mathbb{F}_p)$ was first computed by the first author
in~\cite{Kuri:moduleadjoint}(see~\cite{Kishi-Kono:freetwisted} for a quick calculation).
In his paper~\cite{tamanoi:capproducts} entitled "cap products in String topology",
Tamanoi explains the relations between the cap product and the loop product
on $H_*(LM)$. Dually, in Theorem~\ref{Cup in string classifying}
entitled "cup products in String topology of classifying spaces",
 we give the relations between the cup product on $H^*(LBG)$
and the BV algebra $\mathbb{H}^*(LBG)$.
Knowing the cup product on $H^*(LBG)$, these relations give the dual of the loop coproduct, $m$ on
$\mathbb{H}^*(LBG)$ (Theorem~\ref{dlcop given by formulas}).
But now, since the cohomological BV-operator $\Delta$ (see section~\ref{Operateur Delta})
is a derivation with respect to the cup product, $\Delta$ is easy to compute.
So finally, on $H^*(LBG)$, we have computed at the same time, the cup product and the BV-algebra structure.
This has never be done for the BV algebra $\mathbb{H}_*(LM)$.

If there is no top degree Steenrod operation $\text{Sq}_1$ on $H^*(BG;\mathbb{F}_2)$, if $p$ is odd or $p=0$,
applying Theorem~\ref{dlcop given by formulas},
we give an explicit formula for the dual of the loop coproduct $m$ in Theorem~\ref{thm:BG_explicit_cal}
and we show in Theorem~\ref{iso BV with Hochschild when no steenrod operation} that there is an isomorphism of BV algebras between $\mathbb{H}^*(LBG;\mathbb{F}_p)$
and $HH^*(H_*(G;\mathbb{F}_p),H_*(G;\mathbb{F}_p))$, the Hochschild cohomology of the symmetric
Frobenius algebra $H_*(G;\mathbb{F}_p)$.

The case $p=2$ is more intriguing.
When $p=2$, we don't give in general an explicit formula for the dual of the loop coproduct $m$
(however, see Theorem~\ref{un terme en plus} for a general equation satisfied by $m$).
But for a given compact Lie group $G$, applying Theorem~\ref{dlcop given by formulas},
we are able to give an explicit formula.
As examples, in this paper, we compute the dual of the loop coproduct when $G=SO(3)$
(Theorem~\ref{thm:BSO_3}) or $G=G_2$ (Theorem~\ref{thm:BG_2}).
We show (Theorem~\ref{thm:non BV-iso modulo 2}) that the BV algebras $\mathbb{H}^*(LBSO(3);\mathbb{F}_2)$
and $HH^*(H_*(SO(3);\mathbb{F}_2),H_*(SO(3);\mathbb{F}_2))$, the Hochschild cohomology of the symmetric
Frobenius algebra $H_*(SO(3);\mathbb{F}_2)$, are not isomorphic although the underlying Gerstenhaber
algebras are isomorphic.
Such curious result was observed in~\cite{menichi:stringtopspheres} for the Chas-Sullivan
BV algebras $\mathbb{H}_*(LS^2;\mathbb{F}_2)$.

However, for any connected compact Lie group such that $H^*(BG;\mathbb{F}_p)$, is a polynomial algebra,
we show (Corollary~\ref{structure BV quand pas de sq1} and Theorem~\ref{iso of algebras between free loop space cohomology and Hochschild}) that as graded algebras
$$
\mathbb{H}^*(LBG;\mathbb{F}_p)\cong 
H_*(G;\mathbb{F}_p)\otimes H^*(BG;\mathbb{F}_p)\cong
HH^*(H_*(G;\mathbb{F}_p),H_*(G;\mathbb{F}_p)).
$$
Such isomorphisms of Gerstenhaber algebras should exist (Conjecture~\ref{conjecture iso Hochschild}).

 We give now the plan of the paper:

Section 2: We carefully recall the definition of the loop product and of the loop coproduct insisting on orientation (Theorem~\ref{product in degree d}).
 Theorem~\ref{Cup in string classifying} mentioned above is proved.

Section 3: When $H^*(X)$ is a polynomial algebra, following~\cite{Kuri:moduleadjoint} or~\cite{Kishi-Kono:freetwisted},
we give the cup product on $H^*(LX)$.
Therefore (Theorem~\ref{dlcop given by formulas}) the dual of the loop coproduct is completely given by
Theorems~\ref{product in degree d} and~\ref{Cup in string classifying}.

Section 4 is devoted to the simple case when the characteristic of the field is different from two or when there is no top degree Steenrod operation.

Section 5: The field is $\mathbb{F}_2$. We give some general properties of the dual
of the loop coproduct (Lemma~\ref{lem:trivial loop coproduct}, Theorem~\ref{un terme en plus}). In particular, we show that it has an unit (Theorem~\ref{unit for Dlcop}).
As examples, we compute the dual of the loop coproduct
on $\mathbb{H}^*(LBSO(3);\mathbb{F}_2)$ and on $\mathbb{H}^*(LBG_2;\mathbb{F}_2)$
(Theorems~\ref{thm:BSO_3} and~\ref{thm:BG_2}).
Up to an isomorphism of graded algebras,
$
\mathbb{H}^*(LX;\mathbb{F}_2)
$
is just the tensor product of algebras
$H^*(X;\mathbb{F}_2)\otimes H_{-*}(\Omega X;\mathbb{F}_2)
=\mathbb{F}_2[V]\otimes \Lambda(sV)^\vee$ (Theorem~\ref{iso of algebras between free loop space cohomology and Hochschild}).
As examples, we compute the BV-algebra
$
H^{*+3}(LBSO(3);\mathbb{F}_2)\cong \Lambda(u_{-1},u_{-2})\otimes \mathbb{F}_2[v_2,v_3]
$
(Theorem~\ref{thm: BV-algebra sur algebre commutative libre pour SO3})
and the BV-algebra
$
H^{*+14}(LBG_2;\mathbb{F}_2)\cong \Lambda(u_{-3},u_{-5},u_{-6})\otimes \mathbb{F}_2[v_4,v_6,v_7]
$
(Theorem~\ref{thm: BV-algebra sur algebre commutative libre pour G2}).

Section 6: After studying the formality and the coformality of $BG$,
we compare the associative algebras, the Gerstenhaber algebras,
the BV-algebras
$
\mathbb{H}^*(LBG)
$
and
$
HH^*(H_*(G),H_*(G))
$
under various hypothesis.

Section 7: We solve some sign problems in the results of Chataur and the second
author.
In particular, we correct the definition of integration along the fibre
and the main cotheorem of~\cite{Chataur-Menichi:stringclass}
concerning the prop structure on $H^*(LX)$.

Section 8: Therefore $\mathbb{H}^*(LX)$ is equipped with a graded associative
and graded commutative product $m$.

Section 9: In fact, $\mathbb{H}^*(LX)$ equipped with $m$ and the BV-operator
$\Delta$ is a BV-algebra since the BV identity arises from the lantern relation.

Section 10: This BV identity comes from seven equalities involving Dehn twists
and the prop structure on the mapping class group.

Section 11: We compare different definitions of the BV-operator
$\Delta:H^*(LX)\rightarrow H^{*-1}(LX)$.

Section 12: We compute the Gerstenhaber algebra structure on the Hochschild
cohomology $HH^*(S(V),S(V))$ of a free commutative graded algebra $S(V)$
(Theorem~\ref{thm: Hochschild cohomology exterior algebra}).
In particular, we give the BV-algebra structure on the Hochschild
cohomology $HH^*(\Lambda(V),\Lambda(V))$ of a graded exterior algebra $\Lambda(V)$.

Section 13: In this last section independent of the rest of the paper,
we show that the loop product on $H_*(LBG;\mathbb{F}_p)$ is trivial
if and only if the inclusion of the fibre
$i:\Omega BG\hookrightarrow LBG$ induces a surjective map in cohomology
if and only if $H^*(BG;\mathbb{F}_p)$ is a polynomial algebra
if and only if $BG$ is $\mathbb{F}_p$-formal (when $p$ is odd).

\section{The dual of the Loop coproduct}
In this paper, all the results are stated for simplicity for a connected compact Lie group $G$. But they are also valid for an exotic $p$-compact group.
Indeed, following~\cite{Chataur-Menichi:stringclass}, we only require
that $G$ is a connected topological group (or a pointed loop space) with finite dimensional cohomology
$H^*(G;\mathbb{F}_p)$.
This is the main difference with~\cite{Hepworth-Lahtinen:stringclassifying},
where Hepworth and Lahtinen require the smoothness of $G$.

Let $X$ be a simply-connected space satisfying the condition that 
$H^*(\Omega X; \K)$ is of finite dimension. 
Then there exists an unique integer $d$ such that $H^i(\Omega X; \K) = 0$ for $i > d$
and $H^d(\Omega X; \K) \cong \K$. 
In order to describe our results, we first recall the definitions of the product 
$Dlcop$ on $H^{*+d}(LX; \K)$ and of the loop product on $H_{*-d}(LX; \K)$
defined by Chataur and the second author in~\cite{Chataur-Menichi:stringclass}.

Let $F$ be the pair of pants regarded as a cobordism between one ingoing
circle and two outgoing circles.
The ingoing map $in:S^1\hookrightarrow F$
and outgoing map $out:S^1\coprod S^1\hookrightarrow F$
give the correspondence
$$
\xymatrix@C40pt@R20pt{
LX &  \ar@{->>}[l]_{map(in,X)} map(F,X) \ar@{->>}[r]^{map(out,X)} &  LX\times LX
}
$$
where $map(in,X)$ and $map(out,X)$ are orientable fibrations.
After orienting them, the integration along the fibre induces a map 
in cohomology
$$
map(in,X)^!:H^{*+d}(map(F,X))\rightarrow H^*(LX)
$$
and a map in homology
$$
map(out,X)_!:H_*(LX)^{\otimes 2}\rightarrow H_{*+d}(map(F,X)).
$$
See Section~\ref{review of chataurmenichi} for the definition of the integration along the fibre. 
By definition, the loop product is the composite
\begin{eqnarray*}
H_*(map(in,X))\circ map(out,X)_!:H_{p-d}(LX)\otimes H_{q-d}(LX) \!\!\!\! &\rightarrow& \!\!\!\! H_{p+q-d}(map(F,X)) \\
&\rightarrow& H_{p+q-d}(LX).
\end{eqnarray*}
By definition, the dual of the loop coproduct, $Dlcop$ is the composite
\begin{eqnarray*}
map(in,X)^!\circ H^*(map(out,X)):H^{p+d}(LX)\otimes H^{q+d}(LX) \!\!\!\! &\rightarrow& \!\!\!\! H^{p+q+2d}(map(F,X)) \\
&\rightarrow& H^{p+q+d}(LX).
\end{eqnarray*}
The pair of pants $F$ is the mapping cylinder of
$c\coprod \pi:S^1\coprod (S^1\coprod S^1)\rightarrow S^1\vee S^1$
where $c:S^1\rightarrow S^1\vee S^1$ is the pinch map and
$\pi:S^1\coprod S^1\rightarrow S^1\vee S^1$ is the quotient map.
Therefore the wedge of circles $S^1\vee S^1$ is a strong deformation retract of the pair of pants $F$.
The retract $r:F\buildrel{\thickapprox}\over\twoheadrightarrow S^1\vee S^1$
 corresponds to lower his pants and tuck up his trouser legs at the
same time:

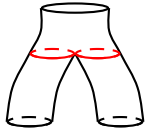
\begin{figure}[ht]
  \centering
  \psset{unit=1ex} 
  \begin{pspicture}(-6,-6)(6,4) 

    \pscurve
    (6,-6)(5.5,-3)(4,0)(3,2)(3,4)
    \pscurve
    (-6,-6)(-5.5,-3)(-4,0)(-3,2)(-3,4)    
    
     \pscurve
    (2,-6)(1.5,-3)(0,0)
      \pscurve
    (-2,-6)(-1.5,-3)(0,0)



    \rput(0,4){%
      \psellipse
      (0,0)(3,.5)
    }

    \rput(2,0){%
      \psellipse[linestyle=dashed,linecolor=red](0,0)(2,.5)
      \psellipticarc[linecolor=red]
      (0,0)(2,.5){180}{360}
    }
    \rput(-2,0){%
      \psellipse[linestyle=dashed,linecolor=red](0,0)(2,.5)
      \psellipticarc[linecolor=red]
      (0,0)(2,.5){180}{360}
    }
     \rput(4,-6){%
      \psellipse[linestyle=dashed]
      (0,0)(2,.5)
      \psellipticarc
      (0,0)(2,.5){180}{360}
    }
    
    \rput(-4,-6){%
      \psellipse[linestyle=dashed]
      (0,0)(2,.5)
      \psellipticarc
      (0,0)(2,.5){180}{360}
    }

  \end{pspicture}  
    \caption{the homotopy between the pairs of pants and the figure eight.}
  \label{fig:1}
\end{figure}
\noindent Thus we have the commutative diagram
$$
\xymatrix@C40pt@R20pt{
LX
&map(F,X)\ar[r]^{map(out,X)}\ar[l]_{map(in,X)}
& LX^{\times 2}\\
& LX\times_X LX\ar[ul]^{Comp}\ar[ur]_{q}\ar[u]_{map(r,X)}^{\thickapprox}
}$$
where $Comp$ is the composition of loops and $q$ is the inclusion.
If $X$ was a closed manifold $M$ of dimension $d$,
$Comp$ and $q$ would be embeddings.
And the Chas-Sullivan loop product is the composite
$$
H_*(Comp)\circ q_!:H_{p+d}(LM)\otimes H_{q+d}(LM)\rightarrow H_{p+q+d}(LM\times_M LM))\rightarrow H_{p+q+d}(LM).
$$
while the dual of the loop coproduct is the composite
$$
Comp^!\circ H^*(q):H^{p-d}(LM)\otimes H^{q-d}(LM)\rightarrow H^{p+q-2d}(LM\times_M LM)\rightarrow H^{p+q-d}(LM).
$$
Therefore although $Comp$ and $q$ are not fibrations, by an abuse of notation,
sometimes, we will say that in the case of string topology of
classifying spaces~\cite{Chataur-Menichi:stringclass}, the loop product on $H_{*-d}(LX)$ is still
$H_*(Comp)\circ q_!$ while $Dlcop$ is $Comp^!\circ H^*(q)$.


The shifted  cohomology $\mathbb{H}^*(LX):=H^{*+d}(LX)$ together with the dual of the loop coproduct $Dlcop$ defined by Chataur and the second author in~\cite{Chataur-Menichi:stringclass}
is a Batalin-Vilkovisky algebra, in particular a graded commutative associative algebra,
only up to signs for two reasons:

-First, the integration along the fibre defined in~\cite{Chataur-Menichi:stringclass} as usually does not
satisfy the usual property with respect to the product.
We have corrected this sign mistake of~\cite{Chataur-Menichi:stringclass} in section~\ref{review of chataurmenichi}.

-Second, as explained in section~\ref{review of chataurmenichi}, this is also due
to the non-triviality of the prop $\text{det}H_1(F,\partial_{out};\mathbb{Z})^{\otimes d}$
(if $d$ is odd).

Nevertheless, we show Theorem~\ref{dualHCFTisBV}.
In particular,
we have that ${\mathbb H}^*(LX)$ equipped with the operator  $\Delta$
induced by the action of the circle on $LX$ (See our definition in section~\ref{Operateur Delta}) 
is a 
Batalin-Vilkovisky algebra with respect to the product $m$ defined by 
$$
m(a\otimes b)=(-1)^{d(d-\vert a\vert)}Dlcop(a\otimes b)
$$
for $a\otimes b \in H^*(LX)\otimes H^*(LX)$; see Corollary~\ref{thm:B-V_algebra} below.

In order to investigate $Dlcop$ more precisely, we need to know how the fibration
$map(in,X)$ is oriented. As explained in~\cite[section 11.5]{Chataur-Menichi:stringclass}, we have to choose
a pointed homotopy equivalence $f:F/\partial_{in}\buildrel{\thickapprox}\over\rightarrow S^1$.
Then the fibre $map_*(F/\partial_{in},X)$ of $map(in,X)$ is oriented by
the composite
$$
\tau\circ H^d(map_*(f,X)):
H^d(map_*(F/\partial_{in},X))\rightarrow H^d(\Omega X)\rightarrow\K.
$$
where $\tau$ is the orientation on $\Omega X$ that we choose.
In this paper, we choose $f$ such that we have the following
homotopy commutative diagram
$$
\xymatrix{
map_*(F/\partial_{in},X)\ar[r]^{incl}
&map(F,X)\\
\Omega X\ar[u]^{map_*(f,X)}_\thickapprox\ar[r]_j
& LX\times_X LX\ar[u]_{map(r,X)}^\thickapprox
}
$$
where $incl$ is the inclusion of the fibre of $map(in,X)$
and $j$ is the map defined by $j(\omega)=(\omega,\omega^{-1})$.
\begin{thm}\label{product in degree d}
Let $i:\Omega X\hookrightarrow LX$ be the inclusion
of pointed loops into free loops.
Let $S$ be the antipode of the Hopf algebra $H^*(\Omega X)$.
Let $\tau:H^d(\Omega X)\rightarrow\K$ be the chosen orientation
on $\Omega X$.
Let $a\in H^p(LX)$ and $b\in H^q(LX)$ such that $p+q=d$.
Then with the above choice of pointed homotopy equivalence
$f:F/\partial_{in}\buildrel{\thickapprox}\over\rightarrow S^1$,
$$
m(a\otimes b)=(-1)^{d(d-p)}\tau\left(H^p(i)(a)\cup S\circ H^q(i)(b)\right)1_{H^*(LX)}.
$$
\end{thm}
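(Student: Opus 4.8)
The plan is to pass from $Dlcop$ to the model $LX\times_X LX$ via the homotopy equivalence $map(r,X)$, to note that integration along the fibre of $map(in,X)$ in the top fibre-degree is merely restriction to the fibre followed by the chosen orientation, to use the homotopy commutative square defining $f$ in order to replace the fibre inclusion by $j$, and finally to recognise loop inversion as inducing the antipode $S$ on $H^*(\Omega X)$. For the first step, recall that $Dlcop=Comp^!\circ H^*(q)$ and that the commutative diagram preceding the statement gives $Comp=map(in,X)\circ map(r,X)$ with $map(r,X)$ a homotopy equivalence, so that $Comp^!=map(in,X)^!\circ(map(r,X)^*)^{-1}$. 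Writing $a\times b:=\text{pr}_1^*(a)\cup\text{pr}_2^*(b)$ for the external product, we obtain
$$
Dlcop(a\otimes b)=map(in,X)^!\bigl((map(r,X)^*)^{-1}(q^*(a\times b))\bigr).
$$
Since $p+q=d$, the class $q^*(a\times b)$ sits in degree $d$, the top fibre-degree of the fibration $map(in,X)$ whose fibre $map_*(F/\partial_{in},X)$ is homotopy equivalent to $\Omega X$; and the left-hand side lies in $H^{p+q-d}(LX)=H^0(LX)\cong\K\cdot 1_{H^*(LX)}$, as $X$, hence $LX$, is connected.

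Now I use the following elementary fact, immediate from the definition of integration along the fibre recalled in Section~\ref{review of chataurmenichi}: for an orientable fibration $\pi:E\to B$ with connected base and fibre $\mathcal{F}$ whose top cohomology lies in degree $d$, oriented by $\theta:H^d(\mathcal{F})\to\K$, the induced map $\pi^!:H^d(E)\to H^0(B)=\K$ is $z\mapsto\theta(\text{incl}^*z)\,1$, where $\text{incl}:\mathcal{F}\hookrightarrow E$ is the fibre inclusion (both $\pi^!$ and $\theta\circ\text{incl}^*$ factor through the edge homomorphism $H^d(E)\twoheadrightarrow E_\infty^{0,d}\hookrightarrow E_2^{0,d}=H^0(B;H^d\mathcal{F})=H^d\mathcal{F}$ of the Serre spectral sequence of $\pi$). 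Applying this to $\pi=map(in,X)$, whose fibre $map_*(F/\partial_{in},X)$ is oriented, by the choice made in the paper, through $\tau\circ H^d(map_*(f,X))$, and combining it with the identity $\text{incl}\circ map_*(f,X)\simeq map(r,X)\circ j$ by which $f$ was selected --- which gives $H^d(map_*(f,X))\circ\text{incl}^*\circ(map(r,X)^*)^{-1}=j^*$ in degree $d$ --- we get
$$
Dlcop(a\otimes b)=\tau\bigl(j^*(q^*(a\times b))\bigr)1_{H^*(LX)}=\tau\bigl((q\circ j)^*(a\times b)\bigr)1_{H^*(LX)}.
$$
Finally $q\circ j:\Omega X\to LX\times LX$ sends $\omega$ to $(i(\omega),i(\omega^{-1}))$, so $\text{pr}_1\circ q\circ j=i$ and $\text{pr}_2\circ q\circ j=i\circ\text{inv}$ with $\text{inv}:\Omega X\to\Omega X$ loop inversion, whence
$$
(q\circ j)^*(a\times b)=H^p(i)(a)\cup\text{inv}^*\bigl(H^q(i)(b)\bigr)\in H^d(\Omega X).
$$

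It remains to identify $\text{inv}^*$ with the antipode $S$ of the Hopf algebra $H^*(\Omega X)$, whose product is the cup product and whose coproduct is induced by the loop multiplication $\Omega X\times\Omega X\to\Omega X$ (this coproduct makes sense because $H^*(\Omega X)$ is finite-dimensional, so $H^*(\Omega X\times\Omega X)\cong H^*(\Omega X)\otimes H^*(\Omega X)$): replacing $\Omega X$ by the Moore loop space turns it into a group-like topological monoid, $\text{inv}$ then induces the antipode on $H_*(\Omega X)$, and dually $\text{inv}^*=S$ on $H^*(\Omega X)$. Substituting into the last displayed formula and multiplying by $(-1)^{d(d-p)}$ yields $m(a\otimes b)=(-1)^{d(d-p)}\tau\bigl(H^p(i)(a)\cup S\circ H^q(i)(b)\bigr)1_{H^*(LX)}$, which is the assertion. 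The one delicate point is the ``elementary fact'' above: one must check that the corrected sign conventions of Section~\ref{review of chataurmenichi} for integration along the fibre, together with those for the external product and for the chosen orientation $\tau$, leave no residual sign in the top fibre-degree; granting that, the remaining steps are purely formal.
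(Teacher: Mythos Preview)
Your proof is correct and follows essentially the same route as the paper's. The paper organises the computation in a single commutative diagram that encodes all the identifications you spell out linearly: the edge-homomorphism description of $map(in,X)^!$ in top fibre-degree, the square relating $j$ to the fibre inclusion via $map_*(f,X)$ and $map(r,X)$, and the factorisation $q\circ j=(i\times (i\circ\text{inv}))\circ\Delta$; your hedge about residual signs is unnecessary, since the Koszul sign $(-1)^{n\vert b\vert}$ in the corrected integration along the fibre vanishes for $\vert b\vert=0$.
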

\begin{proof}
Let $F\buildrel{incl}\over\hookrightarrow E
\buildrel{p}\over\twoheadrightarrow B$ be an oriented fibration
with orientation $\tau:H^d(F)\rightarrow\K$.
By definition or by naturality with respect to pull-backs,
the integration along the fibre $p^!$ is in degree $d$ the composite
$$
H^d(E)\buildrel{H^d(incl)}\over\rightarrow H^d(F)\buildrel{\tau}\over\rightarrow
\K\buildrel{\eta}\over\rightarrow H^0(B)
$$
where $\eta$ is the unit of $H^*(B)$.
Therefore Dlcop is given by the commutative diagram
$$
\xymatrix{
& H^d(LX\times LX)\ar[ld]_{H^dmap(out,X)}\ar[d]^{H^d(q)}\ar[rd]^{H^d(i\times i)}\\
H^d(map(F,X))\ar[r]_{H^dmap(r,X)}\ar[d]^{H^d(incl)}\ar@/_4pc/[dd]_{map(in,X)^!}
& H^d(LX\times_X LX)\ar[d]^{H^d(j)}\ar[r]^{H^d(incl)}
& H^d(\Omega X\times\Omega X)\ar[d]^{H^d(Id\times Inv)}\\
H^d(map_*(F/\partial_{in}))\ar[r]_{H^dmap_*(f,X)}
& H^d(\Omega X)\ar[d]_\tau
& H^d(\Omega X\times\Omega X)\ar[l]^{H^d(\Delta)}\\
H^0(LX)
&\K\ar[l]_\eta
}
$$
where $incl:\Omega X\times \Omega X\hookrightarrow LX\times_X LX$ is
the inclusion and $Inv:\Omega X\rightarrow \Omega X$ maps a loop $\omega$ to its inverse
$\omega^{-1}$.
Therefore
$$
Dlcop(a\otimes b)=\tau\left(H^p(i)(a)\cup S\circ H^q(i)(b)\right)1_{H^*(LX)}.
$$
\end{proof}

We define a bracket $\{ \ , \}$ on $H^*(LX)$ with the product $m$ and the Batalin-Vilkovisky operator $\Delta : H^*(LX) \to H^{*-1}(LX)$ by 
$$
\{a, b\} = (-1)^{\vert a \vert}\Delta (m(a\otimes b)) - (-1)^{\vert a \vert}m(\Delta (a)\otimes b) - m(a \otimes \Delta (b)) 
$$
for $a , b$ in  $H^*(LX)$. By Theorem~\ref{thm:B-V_algebra}, this bracket is exactly
a Lie bracket. 
The following theorem is the analogue for string topology of
classifying spaces~\cite{Chataur-Menichi:stringclass} of the theorems of Tamanoi in~\cite{tamanoi:capproducts} for Chas-Sullivan string topology~\cite{Chas-Sullivan:stringtop}.
This analogy is quite surprising and complete. For our calculations, in the rest of the
paper, we use only parts (1) and (2) of this theorem.
Let $ev: LX\twoheadrightarrow X$ be the evaluation map defined by
$ev(\gamma)=\gamma(0)$ for $\gamma\in LX$.
\begin{thm}\label{Cup in string classifying}(Cup products in string topology of classifying spaces)
Let $X$ be a simply-connected space such that
$H_*(\Omega X;\K)$ is finite dimensional.
Let $P$, $Q\in H^*(X)$ and $a$ and $b\in H^*(LX)$.

(1) (Compare with~\cite[Theorem A (1.2)]{tamanoi:capproducts})
The dual of the loop coproduct $m:\mathbb{H}^*(LX)\otimes\mathbb{H}^*(LX)\rightarrow\mathbb{H}^*(LX)$ is
a morphism of left $H^*(X)\otimes H^*(X)$-modules:
$$m(ev^*(P)\cup a\otimes ev^*(Q)\cup b)
=(-1)^{(\vert a\vert-d)\vert Q\vert}ev^*(P)\cup ev^*(Q)\cup m(a\otimes b).$$

(2) (Compare with~\cite[Theorem A (1.3)]{tamanoi:capproducts})
The cup product with $\Delta(ev^*(P))$ is a derivation with
respect to the algebra ($\mathbb{H}^*(LX),m$):
\begin{eqnarray*}
&&\Delta(ev^*(P))\cup m(a\otimes b) \\
&=& m(\Delta(ev^*(P))\cup  a\otimes b)
+(-1)^{(\vert P\vert-1)(\vert a\vert-d)}m(a\otimes \Delta(ev^*(P))\cup  b). 
\end{eqnarray*}

(3)  (Compare with~\cite[Theorem A(1.4) ]{tamanoi:capproducts})
The cup product with $\Delta(ev^*(P))$ is a derivation with
respect to the bracket
$$
\Delta(ev^*(P))\cup \{a,b\}=
 \{\Delta(ev^*(P))\cup  a, b\}+ (-1)^{(\vert P \vert -1)(\vert a \vert-d -1)}\{a,\Delta(ev^*(P))\cup  b\}. 
$$

(4) (Compare with~\cite[formula p. 16, line -3]{tamanoi:capproducts}) The following formula gives a relation for the cup product of $ev^*(P)$ with the bracket
$$
\{ev^*(P)\cup a, b\}=
ev^*(P)\cup \{a,b\}+
(-1)^{\vert P\vert(\vert a\vert -d-1)}
m( a\otimes \Delta(ev^*(P))\cup b)
$$

(5) (Compare with~\cite[Theorem B]{tamanoi:capproducts})
The direct sum
$H^*(X)\oplus\mathbb{H}^*(LX)$ is a Batalin-Vilkovisky algebra
where the dual of the loop coproduct $m$, the bracket and the $\Delta$ operator are
extended by
$m(P\otimes a):=ev^*(P)\cup a$, $m(P\otimes Q)=P\cup Q$,
$\{P,a\}=(-1)^{\vert P\vert}\Delta(ev^*(P))\cup a$,
$\{P,Q\}=0$ and $\Delta(P)=0$.

(6) (Compare with~\cite[Theorem C]{tamanoi:capproducts})
Suppose that the algebra ($\mathbb{H}^*(LX),m$) has an unit $\mathbb{I}$.
Let $s^!:H^*(X)\rightarrow H^{*+d}(LX)$, be the map 
mapping $P$ to $ev^*(P)\cup \mathbb{I}$. Then $s^!$ is a morphism of Batalin-Vilkovisky algebras
with respect to the trivial BV-operator on $H^*(X)$ and
$$
ev^*(P)\cup a=m(s^!(P)\otimes a)\quad\text{and}\quad
(-1)^{\vert P\vert}\Delta(ev^*(P))\cup a=\{s^!(P), a\}.
$$

(7) Let $r\geq 0$. Let $P_1$, \dots, $P_r$ be $r$ elements of $H^*(LX)$.
Denote by $X_i:=\Delta(ev^*(P_i))$. Then
\begin{align*}
m(ev^*(P)\cup a\otimes ev^*(Q)\cup X_1\cup\cdots\cup X_r\cup b)=(-1)^{(\vert a\vert-d)(\vert Q\vert+\vert X_1\vert+\dots+\vert X_r\vert)}\times\\
\sum_{0\leq j_1,\dots, j_r\leq 1}\pm
ev^*(P)\cup ev^*(Q)
\cup X_1^{1-j_1}\cup\cdots\cup X_r^{1-j_r}
\cup m(X_1^{j_1}\cup\cdots\cup X_r^{j_r}\cup a\otimes b)
\end{align*}
where $\pm$ is the sign $(-1)^{j_1+\cdots+j_r+\sum_{k=1}^r (1-j_k)\vert X_k\vert (j_1\vert X_1\vert +\cdots+j_{k-1}\vert X_{k-1}\vert)}$.
\end{thm}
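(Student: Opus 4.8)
The plan is to deduce the formula from parts (1) and (2) of the theorem by induction on $r$, the case $r=0$ being exactly part (1). First I would dispose of the classes $ev^*(P)$ and $ev^*(Q)$ in one stroke: applying part (1) with the same first argument and with $X_1\cup\cdots\cup X_r\cup b$ in the role of its second argument gives
$$m\bigl(ev^*(P)\cup a\otimes ev^*(Q)\cup X_1\cup\cdots\cup X_r\cup b\bigr)=(-1)^{(\vert a\vert-d)\vert Q\vert}\,ev^*(P)\cup ev^*(Q)\cup m\bigl(a\otimes X_1\cup\cdots\cup X_r\cup b\bigr).$$
So it suffices to prove, for all $a,b\in H^*(LX)$, the reduced identity
$$m\bigl(a\otimes X_1\cup\cdots\cup X_r\cup b\bigr)=(-1)^{(\vert a\vert-d)(\vert X_1\vert+\cdots+\vert X_r\vert)}\sum_{0\le j_1,\dots,j_r\le 1}\pm\;X_1^{1-j_1}\cup\cdots\cup X_r^{1-j_r}\cup m\bigl(X_1^{j_1}\cup\cdots\cup X_r^{j_r}\cup a\otimes b\bigr),$$
with $\pm$ the sign in the statement; multiplying back by $(-1)^{(\vert a\vert-d)\vert Q\vert}$ and $ev^*(P)\cup ev^*(Q)$ then reproduces the overall factor $(-1)^{(\vert a\vert-d)(\vert Q\vert+\vert X_1\vert+\cdots+\vert X_r\vert)}$.

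I would prove the reduced identity by induction on $r$, the case $r=0$ being trivial. For the step, part (2) says that cup product with $X_1=\Delta(ev^*(P_1))$ is a derivation of $m$, so, since $\vert X_1\vert=\vert P_1\vert-1$,
$$m\bigl(a\otimes X_1\cup X_2\cup\cdots\cup X_r\cup b\bigr)=(-1)^{\vert X_1\vert(\vert a\vert-d)}\Bigl(X_1\cup m\bigl(a\otimes X_2\cup\cdots\cup X_r\cup b\bigr)-m\bigl(X_1\cup a\otimes X_2\cup\cdots\cup X_r\cup b\bigr)\Bigr).$$
Applying the inductive hypothesis to the two inner terms — in the second one with $a$ replaced by $X_1\cup a$ of degree $\vert a\vert+\vert X_1\vert$ — expands the right-hand side into $2^r$ summands indexed by $(j_1,\dots,j_r)$, where $j_k=0$ records that $X_k$ is pulled outside and $j_k=1$ that it is moved into the left slot. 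The sign of a summand then assembles from three sources: the minus signs, one for each $k$ with $j_k=1$, contribute $(-1)^{j_1+\cdots+j_r}$; the derivation sign produced when the displayed identity is applied to $X_k$ involves the running degree $\vert a\vert+\sum_{l<k}j_l\vert X_l\vert$ of the left argument and hence equals $(-1)^{\vert X_k\vert(\vert a\vert-d)}\,(-1)^{\vert X_k\vert\sum_{l<k}j_l\vert X_l\vert}$, the first factors contributing $(-1)^{(\vert a\vert-d)(\vert X_1\vert+\cdots+\vert X_r\vert)}$ in total and the second $(-1)^{\sum_{l<k}j_l\vert X_l\vert\vert X_k\vert}$; and in the summands with $j_k=1$ the class $X_k$ is produced inside the inner $m$ to the left of the $X_l$ ($l<k$, $j_l=1$) already inserted there, so graded-commuting the inner factors into the order $X_1^{j_1}\cup\cdots\cup X_r^{j_r}$ required by the statement contributes $(-1)^{\sum_{l<k,\,j_l=j_k=1}\vert X_l\vert\vert X_k\vert}$. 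The part of $(-1)^{\sum_{l<k}j_l\vert X_l\vert\vert X_k\vert}$ with $j_k=1$ cancels this last sign, leaving exactly $(-1)^{\sum_k(1-j_k)\vert X_k\vert(j_1\vert X_1\vert+\cdots+j_{k-1}\vert X_{k-1}\vert)}$; together with $(-1)^{j_1+\cdots+j_r}$ this is the $\pm$ of the statement.

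The routine part is this bookkeeping; the main obstacle is making the last cancellation precise, i.e. checking on the one hand that the classes pulled outside automatically emerge in the increasing order $X_1^{1-j_1}\cup\cdots\cup X_r^{1-j_r}$, so that no reordering sign is incurred there, and on the other that the accumulated derivation signs inside the inner $m$ match exactly the cost of reordering the inner factors $X_k$ into increasing order. As a check on the conventions, the case $r=1$ of the formula is nothing but the derivation identity of part (2) rewritten, and it is worth verifying this before running the induction.
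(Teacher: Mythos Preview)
Your proof is correct and follows the same inductive strategy as the paper's: the base case $r=0$ is part (1), and the inductive step uses the derivation formula (2). The only cosmetic difference is that the paper runs the induction from the right---it first applies the inductive hypothesis with $X_r\cup b$ playing the role of $b$, and then applies (2) once to the innermost $m(X_1^{j_1}\cdots X_{r-1}^{j_{r-1}}\cup a\otimes X_r\cup b)$ to split off $X_r$---whereas you peel off $X_1$ first and then invoke the inductive hypothesis on $X_2,\ldots,X_r$; the resulting sign bookkeeping is equivalent.
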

To prove parts (1) and (2), it is shorter to use the following Lemma.
This Lemma is just the cohomological version
of~\cite[Theorem 8.2]{Chas-Sullivan:stringtop} when we replace the correspondence
$$
LM\times LM\buildrel{q}\over\hookleftarrow LM\times_M LM
\buildrel{Comp}\over\rightarrow LM
$$
by its opposite
$$
LX\buildrel{Comp}\over\leftarrow LX\times_X LX
\buildrel{q}\over\hookrightarrow  LX\times LX.
$$
Similarly, it would have been shorter for Tamanoi to prove parts (1.2) and (1.3) of
~\cite[Theorem A]{tamanoi:capproducts} using ~\cite[Theorem 8.2]{Chas-Sullivan:stringtop}.

\begin{lem}\label{relations cup and loop product}
Let $a=\sum a_1\otimes a_2\in H^{*}(LX\times LX)$ and $A\in H^{*}(LX)$
such that $H^*(Comp)(A)=H^*(q)(a)$.
Then for any $z\in H^*(LX\times LX)$,
$$
A\cup m(z)=\sum (-1)^{d\vert a_2\vert}m(a_1\otimes a_2\cup z).
$$
\end{lem}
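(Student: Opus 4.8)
The plan is to exploit the diagram
$$
LX \xleftarrow{Comp} LX\times_X LX \xhookrightarrow{q} LX\times LX
$$
exactly as in the manifold case of~\cite[Theorem 8.2]{Chas-Sullivan:stringtop}, translated to cohomology. Recall that $m$ is (up to the sign $(-1)^{d(d-|a|)}$ built into its definition) the composite $Comp^!\circ H^*(q)$, where $Comp^!$ is integration along the fibre for the map $Comp$ — or, more precisely in the classifying space setting, the integration along the fibre of $map(in,X)$ transported through the homotopy equivalence $map(r,X)$, as in the proof of Theorem~\ref{product in degree d}. The one algebraic input needed is the projection formula for $Comp^!$: for $u\in H^*(LX)$ and $w\in H^*(LX\times_X LX)$ one has $Comp^!\bigl(H^*(Comp)(u)\cup w\bigr)=u\cup Comp^!(w)$, with the appropriate Koszul sign coming from moving $u$ (of degree equal to that of $A$) past the shift $d$ in the fibre integration. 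This projection formula holds for integration along the fibre of an oriented fibration, and in section~\ref{review of chataurmenichi} the corrected definition of integration along the fibre is set up precisely so that this property is valid; for the non-fibration maps $Comp$ and $q$ it still makes sense by the abuse of notation explained after Figure~\ref{fig:1}.

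First I would write, for $z=\sum z_1\otimes z_2\in H^*(LX\times LX)$,
$$
A\cup m(z)=A\cup Comp^!\bigl(H^*(q)(z)\bigr),
$$
absorbing the definitional sign of $m$ into the statement's sign at the end. Next, using $H^*(Comp)(A)=H^*(q)(a)$ and the projection formula for $Comp^!$,
$$
A\cup Comp^!\bigl(H^*(q)(z)\bigr)=Comp^!\bigl(H^*(Comp)(A)\cup H^*(q)(z)\bigr)=Comp^!\bigl(H^*(q)(a)\cup H^*(q)(z)\bigr),
$$
up to a Koszul sign. Then, because $H^*(q)$ is a ring map,
$$
H^*(q)(a)\cup H^*(q)(z)=H^*(q)(a\cup z)=H^*(q)\Bigl(\sum (-1)^{|a_2||z_1|}\,a_1 z_1\otimes a_2 z_2\Bigr),
$$
and applying $Comp^!$ and re-grouping gives $\sum \pm\, m(a_1\otimes a_2\cup z)$. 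The last step is simply to chase all the Koszul signs — from the external product, from the projection formula, and from the two occurrences of the definitional sign $(-1)^{d(d-|{-}|)}$ in $m$ on the left and on the right — and check that everything collapses to the single sign $(-1)^{d|a_2|}$ claimed.

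The main obstacle is precisely this sign bookkeeping: one must be careful that the projection formula sign for $Comp^!$ (which involves $d$ because fibre integration lowers degree by $d$) and the definitional twist $m=(-1)^{d(d-|{-}|)}Dlcop$ interact correctly, and that the external-product sign $(-1)^{|a_2||z_1|}$ rearranges into $(-1)^{d|a_2|}$ after the degree of $z_1$ is fixed by the requirement that the output lands in the stated degree. A secondary point worth a sentence is justifying the projection formula in the classifying-space setting where $Comp$ is not literally a fibration; this follows by transporting along the homotopy equivalence $map(r,X)$ to the genuine oriented fibration $map(in,X)$, exactly as was done in the proof of Theorem~\ref{product in degree d}, so no new ingredient is required. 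Once the signs are verified, the identity is immediate, and parts~(1) and~(2) of Theorem~\ref{Cup in string classifying} follow by specializing $a$ to $ev^*(P)\otimes ev^*(Q)$ and to $\Delta(ev^*(P))\otimes 1$ respectively, together with the fact that $ev\circ Comp=ev\circ q\circ(\text{either projection})$ so the hypothesis $H^*(Comp)(A)=H^*(q)(a)$ is automatically satisfied for such $a$.
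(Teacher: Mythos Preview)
Your approach matches the paper's: projection formula for $Comp^!$, multiplicativity of $H^*(q)$, then translate between $Dlcop$ and $m$. One correction to your sign bookkeeping: the external-product sign $(-1)^{|a_2||z_1|}$ remains absorbed inside $m\bigl((a_1\otimes a_2)\cup z\bigr)$ and plays no role in producing the final $(-1)^{d|a_2|}$; that sign arises purely as $(-1)^{d|A|}\cdot(-1)^{d|a_1|}=(-1)^{d|a_2|}$, coming from the projection-formula sign $(-1)^{d|A|}=(-1)^{d(|a_1|+|a_2|)}$ together with the difference between the definitional twists $(-1)^{d(|z_1|-d)}$ on the left and $(-1)^{d(|a_1|+|z_1|-d)}$ on the right.
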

\begin{proof}
Since the integration along the fibre $Comp^!$ is exactly with signs,
a morphism of left $H^*(LX)$-modules (See our definition of integration along the fibre
in cohomology in section~\ref{review of chataurmenichi})
$$
Comp^!(H^*(Comp)(A)\cup y)=(-1)^{d\vert A\vert}A\cup Comp^!(y).
$$
Since $H^*(q)$ is a morphism of algebras,
\begin{multline*}
(-1)^{d\vert A\vert}Dlcop(a\cup z)=
(-1)^{d\vert A\vert}Comp^!\circ H^*(q)(a\cup z)\\
=(-1)^{d\vert A\vert}Comp^!(H^*(Comp)(A)\cup H^*(q)(z))
=A\cup Comp^!\circ H^*(q)(z)
=A\cup Dlcop(z).
\end{multline*}
By linearity, we can suppose that $z=z_1\otimes z_2$.
Then the previous equation is
$$
A\cup (-1)^{d(\vert z_1\vert-d)}m(z_1\otimes z_2)
=\sum (-1)^{d(\vert a_1\vert+\vert a_2\vert)}
(-1)^{d(\vert a_1\vert+\vert z_1\vert-d)}
m(a_1\otimes a_2\cup z_1\otimes z_2).
$$
\end{proof}
\begin{proof}[Proof of Theorem~\ref{Cup in string classifying}]
(1) We have the commutative diagram
$$
\xymatrix@C25pt@R20pt{
LX \ar[dr]_{ev}
&LX\times_X LX\ar[l]_(0.6){Comp}\ar[r]^q\ar[d]
&LX\times LX\ar[d]^{ev\times ev}\\
&X\ar[r]_{\Delta}
&X\times X\\
}
$$
Therefore by applying Lemma~\ref{relations cup and loop product}
to $a:=H^*(ev\times ev)(P\otimes Q)$ and $A:=H^*(\Delta\circ
ev)(P\otimes Q)$,
we obtain
$$
H^*(ev)(P)\cup H^*(ev)(Q)\cup m( a\otimes b)=
(-1)^{d\vert Q\vert} m(H^*(ev)(P)\otimes H^*(ev)(Q)\cup a\otimes b).
$$

(2) By~\cite[Proof of Theorem 4.2 (4.5)]{tamanoi:capproducts}
$$
Comp^*(\Delta(ev^*(P)))=q^*(\Delta(ev^*(P))\times 1+1\times \Delta(ev^*(P))).
$$
So we can apply Lemma~\ref{relations cup and loop product}
to $a:=\Delta(ev^*(P))\times 1+1\times \Delta(ev^*(P))$ and $A:=\Delta(ev^*(P))$,
we obtain
\begin{multline*}
\Delta(ev^*(P))\cup(m(a\otimes b)=\\
m\left((\Delta(ev^*(P))\otimes 1\right)
+(-1)^{d(\vert P\vert-1)}m\left(1\otimes \Delta(ev^*(P)))\cup (a\otimes b)\right).
\end{multline*}

(3) By using the formula (2), the same argument 
as in~\cite[Proof of Theorem 4.5]{tamanoi:capproducts} deduces the derivation formula on the bracket. 

(4) Again, the arguments are identical as those given by Tamanoi: see~\cite[end of proof of Theorem 4.7]{tamanoi:capproducts}.

(5) As explained in~\cite[proof of Theorem 4.7]{tamanoi:capproducts} by Tamanoi,
(2), (3) and (4) are equivalent to the Poisson and Jacobi identities in the
Gerstenhaber algebra $H^*(X)\oplus\mathbb{H}^*(LX)$.
By definition of the bracket, this Gerstenhaber algebra is a Batalin-Vilkovisky algebra:
see~\cite[proof of Theorem 4.8]{tamanoi:capproducts}.

(6) Since $H^{*+d}(LX;\mathbb{F}_2)$ is a $H^*(X)$-algebra
(formula (1) of Theorem~\ref{Cup in string classifying}),
the map $s^!:H^*(X)\rightarrow H^{*+d}(LX)$, $P\mapsto ev^*(P)\cup \mathbb{I}$,
is a morphism of unital commutative graded algebras
(we denote this map $s^!$ because this map should coincide with some Gysin
map of the trivial section $s:X\hookrightarrow LX$~\cite{Chataur-Menichi:stringclass}).

Since the cup product with $\Delta(ev^*(P))$ is a derivation with respect to the dual of the loop coproduct,
$\Delta(ev^*(P))\cup \mathbb{I}=0$. Since $\mathbb{H}^{*}(LX)$ is a Batalin-Vilkovisky
algebra, $\Delta(\mathbb{I})=0$. Therefore, since $\Delta$ is a derivation with respect to the cup product,
$$\Delta(s^!(P))=\Delta(ev^*(P))\cup \mathbb{I}+(-1)^{\vert P\vert} ev^*(P)\cup \Delta(\mathbb{I})=0+0.$$
Now we can conclude using the same arguments as in~\cite[proof of Theorem 5.1]{tamanoi:capproducts}.

(7) The case $r=0$ is just (1). Now, by induction on $r$,

\begin{align*}
m(ev^*(P)\cup a\otimes ev^*(Q)\cup X_1\cup\cdots\cup X_{r-1}\cup (X_r\cup b))=(-1)^{(\vert a\vert-d)(\vert Q\vert+\vert X_1\vert+\dots+\vert X_{r-1}\vert)}\times\\
\sum_{0\leq j_1,\dots, j_{r-1}\leq 1}\pm
ev^*(P)\cup ev^*(Q)
\cup X_1^{1-j_1}\cup\cdots\cup X_{r-1}^{1-j_{r-1}}
\cup m(X_1^{j_1}\cup\cdots\cup X_{r-1}^{j_{r-1}}\cup a\otimes X_r\cup b)
\end{align*}

But by (2),
%
\begin{align*}
m(X_1^{j_1}\cup\cdots\cup X_{r-1}^{j_{r-1}}\cup a\otimes X_r\cup b)=\\
\sum_{j_r=0}^1 (-1)^{\vert X_r\vert(\vert a\vert -d)+j_r+ (1-j_r)\vert X_r\vert\sum_{l=1}^{r-1}j_{l}\vert X_{l}\vert}
X_{r}^{1-j_{r}} m(X_1^{j_1}\cup\cdots\cup X_{r}^{j_{r}}\cup a\otimes b)
\end{align*}
\end{proof}
\begin{rem}\label{dlcop when generated by Delta}
Suppose that the algebra $H^*(LX)$ is generated by $H^*(X)$ and $\Delta(H^*(X))$.
Then by formula (7) of Theorem~\ref{Cup in string classifying} in the case $b=1$,
we see that the dual of the loop coproduct $m$ is completely given by the cup product, by the
$\Delta$ operator and by its restriction on $\mathbb{H}^*(LX)\otimes 1$.
In the following section, we show that this is the case when
$H^*(X)$ is a polynomial (see remark~\ref{dlcop for polynomial}).
\end{rem}
\section{The cup product on free loops and the main theorem}\label{cup product main theorem}
Let $X$ be a simply-connected space with polynomial cohomology: $H^*(X)$ is a polynomial
algebra $\K[y_1, ..., y_N]$.
The cup product on the free loop space cohomology $H^*(LX;\K)$ was first computed
by the first author in~\cite[Theorem 1.6]{Kuri:moduleadjoint}.
We now explain how to recover simply this computation following~\cite[p. 648]{Kishi-Kono:freetwisted}.

By Borel theorem~\cite[Chapter VII. Corollary 2.8(2)]{Mimura-Toda:topliegroups}
(which can be easily proved using the Eilenberg-Moore spectral sequence associated to
the path fibration $\Omega X\hookrightarrow PX\twoheadrightarrow X$ since
$E_2^{*,*}\cong\Lambda(\sigma(y_1),\dots,\sigma(y_N))$),
$$
H^*(\Omega X;\K)=\Delta(\sigma(y_1),\dots,\sigma(y_N))
$$
where $\Delta \sigma(y_i)$ denotes an algebra with simple system of generators $\sigma(y_i)$~\cite[Definition p. 367]{Mimura-Toda:topliegroups}.
If $ch(\K) \neq 2$, $\Delta \sigma(y_i)$ is just the exterior algebra
$\Lambda \sigma(y_i)$.

Let $\Delta:H^*(LX)\rightarrow H^{*-1}(LX)$ be the operator induced
by the action of the circle on $LX$ (See section~\ref{Operateur Delta}).
Let ${\mathcal D}:=\Delta \circ ev^*$ denotes the module derivation of the first author
in~\cite{Kuri:moduleadjoint}.
Since $\Delta$ is a derivation with respect to the cup product,
$\mathcal{D}$ is a $(ev^*,ev^*)$-derivation~\cite[Proposition 3.3]{Kuri:moduleadjoint}.
Since $\Delta$ and $H^*(ev)$ commutes with the Steenrod operations,
$\mathcal{D}$ also~\cite[Proposition 3.3]{Kuri:moduleadjoint}.
Since the composite $i^*\circ \mathcal{D}$ is the suspension homomorphism $\sigma$~\cite[Proposition 2(1)]{Kishi-Kono:freetwisted}, $i^*$ is surjective and so by
Leray-Hirsch theorem,
$$
H^*(LX;\K)=H^*(X)\otimes \Delta \left(\mathcal{D}(y_1),\dots, \mathcal{D}(y_N)\right)
$$
as $H^*(X)$-algebra. Modulo $2$, it follows from above that
$H^*(LX;{\mathbb Z}/2))$ is the polynomial algebra $${\mathbb Z}/2[ev^*(y_i),\mathcal{D} y_i]$$
quotiented by the relations $$(\mathcal{D}y_i)^2=\mathcal{D}(\text{Sq}^{\vert y_i\vert-1}y_i).$$
In particular, we have $\Delta(ev^*(y_i))=\mathcal{D} y_i$
and $\Delta(\mathcal{D} y_i)=0$ since $\Delta\circ\Delta=0$.
Therefore, we know the cup product and the
$\Delta$ operator on $H^*(LX;\K)$. The following theorem claims that we also know
the dual of the loop coproduct.

\begin{thm}\label{dlcop given by formulas}
Let $X$ be a simply-connected space such that
$H^*(X;\K)$ is the polynomial algebra $\K[y_1,\dots,y_N]$.
Denote again by $y_i$, the element of $H^*(LX)$, $ev^*(y_i)$,
and by $x_i$, $\Delta\circ ev^*(y_i)$.
With respect to the cup product, as algebras
$$
H^*(LX)=\K[y_1,\dots,y_N]\otimes\Delta(x_1,\dots,x_N).
$$
Let $d$ be the degree of $x_1\dots x_N$.
Then the dual of the loop coproduct
$$
m:H^i(LX)\otimes H^j(LX)\rightarrow H^{i+j-d}(LX)
$$
is given inductively (see remark~\ref{dlcop for polynomial}) by the following four formulas

(1) For any $a$ and $b\in H^*(LX)$, $\forall 1\leq i\leq N$,
$$
m(a\otimes x_ib)=(-1)^{\vert x_i\vert(\vert a\vert-d)}x_i m(a\otimes b)-(-1)^{d\vert x_i\vert} m(ax_i\otimes b)
$$

(2) Let $\{i_1,\dots,i_l\}$ and $\{j_1,\dots,j_m\}$ be two disjoint subsets
of $\{1,\dots, N\}$ such that $\{i_1,\dots, i_l\}\cup\{j_1,\dots, j_m\}=\{1,\dots, N\}$.
If we orient $\tau:H^d(\Omega X)\buildrel{\cong}\over\rightarrow\K$
by $\tau\circ H^*(i)(x_1\dots x_N)=1$
then
$$m(x_{i_1}\dots x_{i_l}\otimes x_{j_1}\dots x_{j_m})=(-1)^{Nm+m}\varepsilon$$
where $\varepsilon$ is the signature of the permutation
$
\left(
\begin{array}{lr}
1\dots &l+m\\
i_1\dots i_lj_1\dots &j_m
\end{array}
\right)
$.

(3) Let $\{i_1,\dots,i_l\}$ and $\{j_1,\dots,j_m\}$ be two disjoint subsets
of $\{1,\dots, N\}$ such that $\{i_1,\dots, i_l\}\cup\{j_1,\dots, j_m\}\neq \{1,\dots, N\}$.
Then
$$m(x_{i_1}\dots x_{i_l}\otimes x_{j_1}\dots x_{j_m})=0.$$



(4) $m$ is a morphism of left $H^*(X)\otimes H^*(X)$-modules:
$\forall P$, $Q\in H^*(X)$, $\forall a$ and $b\in H^*(LX)$,
$m((-1)^{\vert Q\vert(\vert a\vert-d)}Pa\otimes Qb)=PQ m(a\otimes b)$.
\end{thm}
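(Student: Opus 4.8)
The plan is to establish the four formulas (1)--(4) one at a time, obtaining (1) and (4) from Theorem~\ref{Cup in string classifying}, (3) from a degree count, and (2) --- the substantial one --- from Theorem~\ref{product in degree d}; that the four formulas then determine $m$ completely is the content of Remark~\ref{dlcop for polynomial}. Throughout I use the description $H^*(LX)=\K[y_1,\dots,y_N]\otimes\Delta(x_1,\dots,x_N)$ recalled above, together with $x_i=\Delta(ev^*(y_i))=\mathcal{D}(y_i)$, $|x_i|=|y_i|-1$, $i^*(x_i)=\sigma(y_i)$, and $H^*(\Omega X;\K)=\Delta(\sigma(y_1),\dots,\sigma(y_N))$.

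\emph{Formulas (4), (1) and (3).} Formula (4) is precisely part (1) of Theorem~\ref{Cup in string classifying} restricted to the polynomial generators $P,Q\in H^*(X)=\K[y_1,\dots,y_N]$ (one identifies $H^*(X)$ with its image under the injection $ev^*$). For (1) I take $P=y_i$ in part (2) of Theorem~\ref{Cup in string classifying}: since $\Delta(ev^*(y_i))=x_i$ and $|y_i|-1=|x_i|$, that part reads
\[
x_i\cup m(a\otimes b)=m(x_i\cup a\otimes b)+(-1)^{|x_i|(|a|-d)}\,m(a\otimes x_i\cup b),
\]
and solving for $m(a\otimes x_ib)$, then rewriting $x_i\cup a=(-1)^{|x_i||a|}a\cup x_i$, gives exactly formula (1). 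Formula (3) is a matter of degree: $X$ is simply connected, so $|x_k|\ge 1$ for every $k$; hence if $\{i_1,\dots,i_l\}\cup\{j_1,\dots,j_m\}$ is a \emph{proper} subset of $\{1,\dots,N\}$ then $|x_{i_1}\cdots x_{i_l}|+|x_{j_1}\cdots x_{j_m}|<d$ and $m$ takes values in $H^{<0}(LX)=0$.

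\emph{Formula (2).} When $\{i_1,\dots,i_l\}\sqcup\{j_1,\dots,j_m\}=\{1,\dots,N\}$ the classes $a:=x_{i_1}\cdots x_{i_l}$ and $b:=x_{j_1}\cdots x_{j_m}$ satisfy $|a|+|b|=d$, so Theorem~\ref{product in degree d} gives, using $H^*(i)(x_k)=\sigma(y_k)$,
\[
m(a\otimes b)=(-1)^{d(d-|a|)}\,\tau\!\left(\sigma(y_{i_1})\cdots\sigma(y_{i_l})\cup S\bigl(\sigma(y_{j_1})\cdots\sigma(y_{j_m})\bigr)\right)1_{H^*(LX)}.
\]
The suspension classes $\sigma(y_k)$ are primitive in the Hopf algebra $H^*(\Omega X)$, hence $S(\sigma(y_k))=-\sigma(y_k)$, and $=\sigma(y_k)$ when $\mathrm{char}\,\K=2$; since $S$ is a graded anti-homomorphism, a short induction gives $S(\sigma(y_{j_1})\cdots\sigma(y_{j_m}))=(-1)^m\sigma(y_{j_1})\cdots\sigma(y_{j_m})$ when $\mathrm{char}\,\K\ne2$ (then the $y_k$, being polynomial generators over a field of characteristic $\ne2$, are of even degree, so the $\sigma(y_k)$ are of odd degree), and $=\sigma(y_{j_1})\cdots\sigma(y_{j_m})$ when $\mathrm{char}\,\K=2$. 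Reordering the product $\sigma(y_{i_1})\cdots\sigma(y_{i_l})\sigma(y_{j_1})\cdots\sigma(y_{j_m})$ into $\sigma(y_1)\cdots\sigma(y_N)$ introduces the signature $\varepsilon$ of the permutation appearing in the statement, and $\tau(\sigma(y_1)\cdots\sigma(y_N))=1$ by the chosen orientation. Finally $d\equiv N$ and $|a|\equiv l\pmod 2$, so $(-1)^{d(d-|a|)}=(-1)^{Nm}$; multiplying the three signs yields $m(a\otimes b)=(-1)^{Nm}(-1)^m\varepsilon=(-1)^{Nm+m}\varepsilon$ --- and over $\mathbb{F}_2$ all three signs are trivial and the value is $1$ --- which is formula (2).

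\emph{Reduction and expected difficulty.} To see that (1)--(4) compute $m$, one argues as in Remark~\ref{dlcop for polynomial}: every class in $H^*(LX)$ is a linear combination of monomials $P\,x_{k_1}\cdots x_{k_s}$ with $P\in\K[y_1,\dots,y_N]$ and $k_1<\dots<k_s$; formula (4) removes the polynomial factors, reducing to $m(\mu\otimes\nu)$ for square-free $x$-monomials $\mu,\nu$; formula (1) then lowers the number of $x$-factors in the second argument, at the cost of cup-product factors outside $m$ and of a term whose first argument gains an $x$-factor, and if that first argument ceases to be square-free one re-expands it using $x_i^2=0$ (if $\mathrm{char}\,\K\ne2$) or $x_i^2=\mathcal{D}(\mathrm{Sq}^{|y_i|-1}y_i)$ (if $\mathrm{char}\,\K=2$), which --- after one more application of (4) --- strictly decreases its $x$-length; iterating leaves only terms $m(\mu\otimes1)$ for $\mu$ a square-free $x$-monomial, evaluated by (2) (value $1$ if $\mu=x_1\cdots x_N$) or (3) (value $0$ otherwise). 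I expect the only real obstacle to be the sign bookkeeping in formula (2), together with the care needed over $\mathbb{F}_2$, where $\Delta(\sigma(y_1),\dots,\sigma(y_N))$ is not an exterior algebra and the primitivity of the $\sigma(y_k)$ (hence the value of $S$ on a product of them) has to be argued directly rather than read off from the exterior case.
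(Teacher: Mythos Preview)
Your proof is correct and follows essentially the same route as the paper: (1) and (4) from the two parts of Theorem~\ref{Cup in string classifying}, (3) by degree, and (2) from Theorem~\ref{product in degree d} via primitivity of the $\sigma(y_i)$ and the resulting action of the antipode. Your write-up is more explicit than the paper's (you spell out the derivation of (1), the parity reductions $d\equiv N$, $|a|\equiv l$, and the $\mathbb{F}_2$ case separately), and your reduction paragraph reproduces the content of Remark~\ref{dlcop for polynomial}, but the structure and the key inputs are the same.
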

\begin{proof}
Note that if $y_i$ is of odd degree then $2=0$ in $\K$.
(1) and (4) are particular cases of (1) and (2) of Theorem~\ref{Cup in
  string classifying}.
Since $x_{i_1}\dots x_{i_l}\otimes x_{j_1}\dots x_{j_m}$ is of degree less than $d$, for degree
reasons, we have (3).

(2) Since $i^*(x_i)=i^*\circ \Delta\circ ev^*(y_i)$ is the suspension of $y_i$, denoted $\sigma(y_i)$,   by Theorem~\ref{product in degree d},
$$m(x_{i_1}\dots x_{i_l}\otimes x_{j_1}\dots x_{j_m})
=(-1)^{Nm}\tau\left(\sigma(y_{i_1})\dots \sigma(y_{i_l})\cup S(\sigma(y_{j_1})\dots \sigma(y_{j_m})\right)1.$$
Since $\sigma(y_i)$ is a primitive element, $S(\sigma(y_i))=-\sigma(y_i)$.
Since also the antipode $S:H^*(\Omega X)\rightarrow H^*(\Omega X)$
is a morphism of commutative graded algebras,
$$m(x_{i_1}\dots x_{i_l}\otimes x_{j_1}\dots x_{j_m})=(-1)^{Nm+m}\varepsilon
\tau(\sigma(y_1)\dots \sigma(y_N)).$$
\end{proof}
\begin{rem}\label{dlcop for polynomial}
We explain now why the four formulas of Theorem~\ref{dlcop given by formulas}
determine inductively the dual of the loop coproduct $m$.
For $P\in H^*(X)$ and $\{i_1,\dots,i_l\}$ a strict subset of $\{1,\dots, N\}$, by (2), (3) and (4),
$m(Px_{i_1}\dots x_{i_l}\otimes 1)=0$ 
and $m(Px_{1}\dots x_{N}\otimes 1)=P$.
Therefore, we know the restriction of $m$ on $\mathbb{H}^*(LX)\otimes 1$.
Since the algebra $H^*(LX)$ is generated by
$H^*(X)$ and $\Delta(H^*(X))$, $m$ is now given inductively by (1) and (4) (see remark~\ref{dlcop when generated by Delta}).

The restriction of $m: \mathbb{H}^*(LX)\otimes 1\rightarrow H^*(X)$ looks similar
to the intersection morphism $i_!:\mathbb{H}_*(LM)\rightarrow H_*(\Omega M)$ for manifold given by
the loop product with the constant pointed loop.
\end{rem}
\section{Case $p$ odd or no $Sq_1$}
Let $Sq_1$ be  the operator $H^*(BG; {\mathbb Z}/2) \to H^*(BG; {\mathbb Z}/2)$ is defined by 
$Sq_1(x)=Sq^{\deg x-1}x$ for $x \in H^*(BG; {\mathbb Z}/2)$. 

Suppose that $H^*(BG)$ is a polynomial algebra, say ${\mathbb K}[V]$ and that 

\medskip
\noindent
(H) :  \hspace{2cm} $Sq_1\equiv 0$ on $H^*(BG)$ if $p=2$ or  $p$  is odd or $p=0$
(Since $Sq(xy)=x^2Sq_1(y)+Sq_1(x)y^2$, it suffices to check that $Sq_1\equiv 0$ on $V$).
\medskip

\noindent 
Then it follows that 
$$
H^*(LBG; {\mathbb Z}/p)\cong \wedge (sV) \otimes \K[V]
$$ 
as an algebra; see  \cite[Remark 3.4] {Kono-Kuri:modulesplitting} for example.  
We moreover have  

\begin{thm}
\label{thm:BG_explicit_cal}
Under the hypothesis  (H), 
an explicit form of the dual of the loop coproduct 
$
m: H^*(LBG; {\mathbb Z}/p)\otimes H^*(LBG; {\mathbb Z}/p) \to H^{*-\dim G}(LBG; {\mathbb Z}/p)
$
is given by 
$$
m(sv_{i_1}\cdots sv_{i_l}a\otimes sv_{j_1}\cdots sv_{j_m}b)
= (-1)^{\e' + \e + m + u + lu+Nm} sv_{k_1}\cdots sv_{k_u}ab
$$ 
if $\{i_1, ...i_l\}\cup \{j_1, ..., j_m\}=\{1, ..., N\}$ and 
$m(sv_{i_1}\cdots sv_{i_l}a\otimes sv_{j_1}\cdots sv_{j_m}b)=0$ otherwise, where 
 $\{i_1, ...i_l\}\cap \{j_1, ..., j_m\}=\{k_1, .., k_u\}$, $a, b \in H^*(BG)$, 
 $$
(-1)^\e= 
\text{sgn}\left(
\begin{array}{ccc}
j_1  ....  \ \  \ ....\  \  \ .... \  \  \ ...  j_m   \\
k_1  ... k_u   j_1 ... \widehat{k_1} . ..   \widehat{k_u} ...   j_m   
\end{array}
\right) 
 \text{and} \ 
(-1)^{\e'}= 
\text{sgn}\left(
\begin{array}{ccc}
i_1 ... i_l j_1 ... \widehat{k_1} . ..   \widehat{k_u} ...   j_m\\
1  ....  \ \  \ ....\  \  \ .... \  \  \ ...  N   \\
\end{array}
\right).
 $$
\end{thm}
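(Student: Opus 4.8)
The plan is to read everything off Theorem~\ref{dlcop given by formulas}. By the discussion preceding the statement, hypothesis (H) gives $H^*(LBG)=\K[V]\otimes\wedge(sV)$ as an algebra, with $sv_i$ corresponding to $x_i:=\Delta\circ ev^*(v_i)$ and $v_i$ to $ev^*(v_i)$; in particular $(sv_i)^2=0$, so $\Delta(x_1,\dots,x_N)$ is the honest exterior algebra $\wedge(sV)$, and Theorem~\ref{dlcop given by formulas} applies once $\tau$ is normalized by $\tau\circ H^*(i)(x_1\cdots x_N)=1$. When $p=2$ every sign below equals $1$; when $p\neq 2$ a polynomial algebra over the ground field forces the $v_i$, hence all of $H^*(BG)$, into even degrees and $sv_i$ into odd degree, which is exactly why the sign in the theorem is purely combinatorial. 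The first step is to strip the polynomial factors $a$ and $b$ using part~(4) of Theorem~\ref{dlcop given by formulas}: since $a$ commutes past the $sv_{i_r}$ and $|b|$ is even,
$$m(sv_{i_1}\cdots sv_{i_l}\,a\otimes sv_{j_1}\cdots sv_{j_m}\,b)=ab\cdot m(sv_{i_1}\cdots sv_{i_l}\otimes sv_{j_1}\cdots sv_{j_m}).$$

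Next I would reorder the second exterior monomial so that the $u$ common indices $k_1,\dots,k_u$ come first; the sign of this reordering is precisely the $(-1)^{\e}$ of the statement, and what is left on the right is $sv_{j'_1}\cdots sv_{j'_{m-u}}$, where $(j'_1,\dots,j'_{m-u})$ is $(j_1,\dots,j_m)$ with the $k$'s deleted. Then I would peel the $sv_{k_r}$ off one at a time with part~(1): in
$$m(sv_{i_1}\cdots sv_{i_l}\otimes sv_{k_r}c)=(-1)^{|sv_{k_r}|(|sv_{i_1}\cdots sv_{i_l}|-d)}sv_{k_r}\,m(sv_{i_1}\cdots sv_{i_l}\otimes c)-(-1)^{d|sv_{k_r}|}m(sv_{i_1}\cdots sv_{i_l}sv_{k_r}\otimes c)$$
the last term vanishes because $k_r\in\{i_1,\dots,i_l\}$ and $(sv_{k_r})^2=0$ in $\wedge(sV)$ --- this is precisely where (H) enters --- while the surviving sign works out to $(-1)^{l+N}$. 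After $u$ such steps one is left with $m$ of the two now \emph{disjoint} monomials $sv_{i_1}\cdots sv_{i_l}\otimes sv_{j'_1}\cdots sv_{j'_{m-u}}$.

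Finally I would apply parts~(2) and~(3) of Theorem~\ref{dlcop given by formulas} to this disjoint pair. Its two index sets have the same union as the original pair, so: if that union is not all of $\{1,\dots,N\}$, part~(3) gives $0$ --- and no other term survives, since all the would-be extra contributions were already killed by $(sv_{k_r})^2=0$ --- which is the ``otherwise'' case. If the union is $\{1,\dots,N\}$, then $l+(m-u)=N$ and part~(2) gives $(-1)^{N(m-u)+(m-u)}$ times the signature of the permutation carrying $(1,\dots,N)$ to $(i_1,\dots,i_l,j'_1,\dots,j'_{m-u})$, and this signature is exactly $(-1)^{\e'}$. Multiplying the four signs $(-1)^{\e}$, $(-1)^{u(l+N)}$, $(-1)^{N(m-u)+(m-u)}$, $(-1)^{\e'}$, reducing modulo $2$, and remembering the $ab$ from the first step yields $(-1)^{\e'+\e+m+u+lu+Nm}\,sv_{k_1}\cdots sv_{k_u}ab$, as claimed.

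There is no conceptual content beyond Theorem~\ref{dlcop given by formulas}; the only real work --- and the place most prone to error --- is the sign bookkeeping: verifying that the reordering sign is exactly $\e$, that the leftover permutation is exactly the one defining $\e'$, and that the parity reductions (commuting $a,b$ past the $sv_{i_r}$, triviality of the sign in part~(4)) are legitimate, which rests on $H^*(BG)$ being evenly graded for $p\neq 2$.
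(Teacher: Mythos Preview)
Your argument is correct and follows essentially the same route as the paper's own proof: reduce to $a=b=1$ via part~(4) of Theorem~\ref{dlcop given by formulas}, peel off the common factors $sv_{k_r}$ one at a time via part~(1) (using $(sv_{k_r})^2=0$ under hypothesis~(H) to kill the second term), and then invoke parts~(2)--(3) on the remaining disjoint pair. The only cosmetic difference is that you first reorder the second monomial to isolate $(-1)^{\e}$ and then peel, whereas the paper folds the reordering into the inductive peeling and records the combined sign as $(-1)^{u(l-d)+\e}$; since $d\equiv N\pmod 2$ this is the same $(-1)^{u(l+N)+\e}$ you obtain, and your final parity reduction to $\e'+\e+m+u+lu+Nm$ is correct.
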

Over $\mathbb{R}$, \cite[17.23]{BGNX:Stringstackspublie} have a similar formula
(surprisingly without any signs) for their dual hidden loop product
on $H^*([G/G])$.

\noindent
{\it Proof of Theorem \ref{thm:BG_explicit_cal}.} By (4) of Theorem~\ref{dlcop given by formulas}
to prove Theorem \ref{thm:BG_explicit_cal}, 
it suffices to show that the formula for the element $x_{i_1}\cdots x_{i_l}\otimes x_{j_1}\cdots x_{j_m}$, namely in the case where 
$a=b=1$. 

Since $x_{k_1}^2=0$, $
m(x_{i_1}\cdots x_{i_l}x_{k_1}\otimes x_{j_1}\cdots\widehat{x_{k_1}}\cdots x_{j_m})=0$.
So by (1) of Theorem~\ref{dlcop given by formulas},
\begin{eqnarray*}
&&m(x_{i_1}\cdots x_{i_l}\otimes x_{j_1}\cdots x_{j_m}) \\
&=&(-1)^{\vert x_{k_1}\vert(\vert x_{i_1}\cdots x_{i_l} x_{j_1}\cdots\widehat{x_{k_1}}\vert-d)}
x_{k_1}m(x_{i_1}\cdots x_{i_l}\otimes x_{j_1}\cdots\widehat{x_{k_1}}\cdots x_{j_m}).
\end{eqnarray*}
By induction on $u$,
\begin{eqnarray*}
&&m(x_{i_1}\cdots x_{i_l}\otimes x_{j_1}\cdots x_{j_m}) \\ 
&=&(-1)^{u(l-d)+\varepsilon}
x_{k_1}\dots x_{k_u}m(x_{i_1}\cdots x_{i_l}\otimes x_{j_1}\cdots\widehat{x_{k_1}}\cdots\widehat{x_{k_u}}\cdots x_{j_m}).
\end{eqnarray*}
By (2) and (3) of Theorem~\ref{dlcop given by formulas},
\begin{eqnarray*}
&&m(x_{i_1}\cdots x_{i_l}\otimes x_{j_1}\cdots\widehat{x_{k_1}}\cdots\widehat{x_{k_u}}\cdots x_{j_m}) \\ 
&=&
\begin{cases}
(-1)^{N(m-u)+m-u+\varepsilon'}&\text{If $\{i_1,\dots, i_l\}\cup\{j_1,\dots, j_m\}=\{1,\dots, N\}$},\\
0& \text{otherwise}.
\end{cases}
\end{eqnarray*}
Here $\widehat{x}$ means that the element $x$ disappears from the presentation. 
\hfill\qed

\begin{cor} \label{cor:unital p odd}
Under the hypothesis H, the graded associative commutative algebra 
$({\mathbb H}^*(LBG),m)$ of Corollary \ref{cor:shifted_alg} is unital. 
\end{cor}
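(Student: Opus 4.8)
The plan is to exhibit an explicit two-sided unit. The natural candidate is
$$\mathbb{I}:=sv_1\cdots sv_N\in H^{d}(LBG;\mathbb{Z}/p)=\mathbb{H}^{0}(LBG;\mathbb{Z}/p),$$
which lies in degree $d=\dim G=\sum_{i=1}^N(\vert v_i\vert-1)$ (the top degree of $H^*(\Omega BG;\mathbb{Z}/p)=H^*(G;\mathbb{Z}/p)$, which under (H) is the exterior algebra $\wedge(sV)$). Under (H) the algebra $H^*(LBG;\mathbb{Z}/p)$ equals $\wedge(sV)\otimes\K[V]$, so the monomials $sv_{j_1}\cdots sv_{j_m}b$ with $1\le j_1<\cdots<j_m\le N$ and $b$ running over a monomial basis of $\K[V]$ form a $\K$-basis; hence, by $\K$-linearity of $m$, it suffices to prove $m(\mathbb{I}\otimes sv_{j_1}\cdots sv_{j_m}b)=sv_{j_1}\cdots sv_{j_m}b$ for all such monomials.

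First I would plug $\mathbb{I}$ into the left slot of the formula of Theorem~\ref{thm:BG_explicit_cal}, i.e.\ take $l=N$, $(i_1,\dots,i_l)=(1,\dots,N)$ and $a=1$. Then $\{i_1,\dots,i_l\}\cup\{j_1,\dots,j_m\}=\{1,\dots,N\}$ holds automatically, the intersection $\{i_1,\dots,i_l\}\cap\{j_1,\dots,j_m\}$ is $\{j_1,\dots,j_m\}$, so $u=m$ and $(k_1,\dots,k_u)=(j_1,\dots,j_m)$. In this degenerate situation both permutations occurring in Theorem~\ref{thm:BG_explicit_cal} are the identity: the bottom row $k_1\cdots k_u\,j_1\cdots\widehat{k_1}\cdots\widehat{k_u}\cdots j_m$ of the first collapses to $j_1\cdots j_m$, and the top row $i_1\cdots i_l\,j_1\cdots\widehat{k_1}\cdots\widehat{k_u}\cdots j_m$ of the second collapses to $1\cdots N$. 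Therefore $\e=\e'=0$, and the total exponent $\e'+\e+m+u+lu+Nm=2m+2Nm$ is even, so the formula yields exactly $m(\mathbb{I}\otimes sv_{j_1}\cdots sv_{j_m}b)=sv_{j_1}\cdots sv_{j_m}b$. One may also bypass the sign bookkeeping entirely: writing $x_i=sv_i$ and noting $(x_i)^2=0$ under (H), formula (1) of Theorem~\ref{dlcop given by formulas} applied with $a=\mathbb{I}$ gives $m(\mathbb{I}\otimes x_ic)=x_i\,m(\mathbb{I}\otimes c)$ because $\mathbb{I}x_i=0$ and $\vert\mathbb{I}\vert=d$; combined with $m(\mathbb{I}\otimes b)=b\,m(\mathbb{I}\otimes 1)=b$, which follows from formula (4) of Theorem~\ref{dlcop given by formulas} and Remark~\ref{dlcop for polynomial}, a short induction on $m$ gives the same conclusion.

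This shows $\mathbb{I}$ is a left unit for $m$. Finally, since $(\mathbb{H}^*(LBG),m)$ is graded commutative (Corollary~\ref{cor:shifted_alg}; in fact it is a BV-algebra by Corollary~\ref{thm:B-V_algebra}) and $\mathbb{I}$ lies in the even degree $0$ of the shifted grading, one gets $m(z\otimes\mathbb{I})=m(\mathbb{I}\otimes z)=z$ for every $z$, so $\mathbb{I}$ is a two-sided unit and the corollary follows. The only point requiring care is the bookkeeping of the two permutation signs $\e,\e'$ in the boundary case $(i_1,\dots,i_l)=(1,\dots,N)$, $a=1$: once one observes that both permutations degenerate to the identity the sign is manifestly trivial, so there is no genuine obstacle.
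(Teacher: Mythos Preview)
Your proof is correct and follows essentially the same approach as the paper: both identify $\mathbb{I}=sv_1\cdots sv_N$ as the unit and verify this using the explicit formula of Theorem~\ref{thm:BG_explicit_cal}, with the same observation that in the relevant case both permutations degenerate to the identity and the exponent is even. The only difference is that the paper verifies the left and right unit properties separately by two direct applications of Theorem~\ref{thm:BG_explicit_cal}, whereas you check only the left unit directly and then invoke the graded commutativity of $m$ (from Corollary~\ref{cor:shifted_alg}) to obtain the right unit; this is a harmless and slightly more economical shortcut.
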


\begin{proof}
We see that $x_1\cdots x_N$ is the unit. Theorem \ref{thm:BG_explicit_cal} yield that
\begin{multline*}
m(x_1\cdots x_N\otimes x_{j_1}\cdots x_{j_m}b)=\\
\text{sgn}\left(
\begin{array}{ccc}
j_1 ......  j_m   \\
j_1 ......  j_m   
\end{array}
\right)  
\text{sgn}\left(
\begin{array}{ccc}
1  ....   N    \\
1  ....   N   \\
\end{array}
\right)
(-1)^{m+m+mN+Nm}x_{j_1}\cdots x_{j_m}b.
\end{multline*}
\begin{multline*}
m(ax_{i_1}\cdots x_{i_l}\otimes x_1\cdots x_N)=
\text{sgn}\left(
\begin{array}{ccc}
1  ....  \ \  \ ....\  \  \ .... \  \  \ ...  N   \\
i_1  ... i_l   1 ... \widehat{i_1} . ..   \widehat{i_l} ...   N   
\end{array}
\right) \\
\text{sgn}\left(
\begin{array}{ccc}
i_1  ... i_l   1 ... \widehat{i_1} . ..   \widehat{i_l} ...   N  \\
1  ....  \ \  \ ....\  \  \ .... \  \  \ ...  N   \\
\end{array}
\right)
(-1)^{N+l+l^2+N^2}ax_{i_1}\cdots x_{i_l}.
\end{multline*}
\end{proof}

\begin{thm}\label{structure BV quand pas de sq1}
Under the hypothesis $(H)$,
${\mathbb H}^*(LBG) =H^{*+\text{dim }G}(LBG;\K)$
is isomorphic as BV algebras to
the tensor product of algebras
$$
H^*(BG;\K)\otimes H_{-*}(G;\K)\cong\K[V]\otimes\wedge (sV)^\vee
$$

equipped with the BV-operator $\Delta$ given by
$\Delta(x_i^\vee\wedge x_j^\vee) = \Delta(y_i y_j)= \Delta(x_j^\vee) = \Delta(y_i)=0$ for any $i, j$ and 

$$
\Delta(y_i\otimes x_j^\vee) = 
\left\{
\begin{array}{ll}
0 & \text{if} \ i\neq j,  \\
1  & \text{if} \  i= j. 
\end{array}
\right.
$$ 
\end{thm}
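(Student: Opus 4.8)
The plan is to produce an explicit isomorphism of graded algebras onto $(\mathbb{H}^*(LBG),m)$, and then, transporting the Batalin--Vilkovisky operator along it, to show that the result is the operator displayed in the statement. For the second step it suffices to compare the two operators on the generators and on products of pairs of generators, since a BV operator annihilating the unit is an operator of order $\leq 2$ and is therefore determined by such values.

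I keep the notation of the discussion preceding Theorem~\ref{dlcop given by formulas}: write $y_i$ also for $ev^*(y_i)\in H^*(LBG)$ and set $x_i:=\mathcal{D}y_i=\Delta(ev^*(y_i))$, so that under (H) the cup-product algebra is $H^*(LBG)=\K[y_1,\dots,y_N]\otimes\wedge(x_1,\dots,x_N)$ with $x_i^2=0$, $\Delta(y_i)=x_i$ and $\Delta(x_i)=0$. By Corollary~\ref{cor:unital p odd} the element $\mathbb{I}:=x_1\cdots x_N$ is the unit of $(\mathbb{H}^*(LBG),m)$. I put $v_i:=s^!(y_i)=ev^*(y_i)\cup\mathbb{I}$ and $u_j:=\varepsilon_j\,x_1\cdots\widehat{x_j}\cdots x_N$, where $\widehat{x_j}$ means $x_j$ is omitted and $\varepsilon_j=\pm1$ is a normalising sign fixed below, and define $\phi\colon\K[V]\otimes\wedge(sV)^\vee\to\mathbb{H}^*(LBG)$ by $\phi(y_i)=v_i$ and $\phi(x_j^\vee)=u_j$. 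By part~(6) of Theorem~\ref{Cup in string classifying}, $s^!$ is an algebra map, injective because $ev^*$ is injective and $x_1\cdots x_N$ is the top exterior class, and $m(s^!(P)\otimes z)=ev^*(P)\cup z$ for all $z$; hence the $v_i$ generate the polynomial subalgebra $s^!(\K[V])$ and any iterated $m$-product $m(v_{i_1}\otimes\cdots\otimes v_{i_a}\otimes z)$ equals $\pm\,ev^*(y_{i_1}\cdots y_{i_a})\cup z$. From Theorem~\ref{thm:BG_explicit_cal} with $a=b=1$ one reads that $m(u_i\otimes u_i)=0$ and, more generally, that an iterated $m$-product of pairwise distinct $u_j$'s equals, up to sign, the $x$-monomial indexed by the complement of the indices involved; in particular $m(u_i\otimes u_j)\neq0$ for $i\neq j$. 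Hence the $u_j$ graded-commute, square to zero, and their products span $\wedge(x_1,\dots,x_N)$, so they generate an exterior subalgebra, which commutes with $s^!(\K[V])$ by graded commutativity of $m$. Finally $m(v_{i_1}\otimes\cdots\otimes v_{i_a}\otimes u_{j_1}\otimes\cdots\otimes u_{j_b})=\pm\,ev^*(y_{i_1}\cdots y_{i_a})\cup x_{\{1,\dots,N\}\setminus\{j_1,\dots,j_b\}}$, and as the multi-indices vary these elements form a basis of $H^*(LBG)=\K[y]\otimes\wedge(x)$; therefore $\phi$ is an isomorphism onto $(\mathbb{H}^*(LBG),m)$, its source being the tensor product of algebras $\K[V]\otimes\wedge(sV)^\vee\cong H^*(BG)\otimes H_{-*}(G)$.

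Next I transport the BV operator of $\mathbb{H}^*(LBG)$ (Corollary~\ref{thm:B-V_algebra}) along $\phi$. Since $\Delta(\mathbb{I})=0$, as recorded in the proof of part~(6) of Theorem~\ref{Cup in string classifying}, the transported operator annihilates the unit, hence is determined by its values on $y_i$, $x_j^\vee$ and on $y_iy_j$, $y_ix_j^\vee$, $x_i^\vee x_j^\vee$, i.e.\ by the values of $\Delta$ on $v_i$, $u_j$, $m(v_i\otimes v_j)$, $m(v_i\otimes u_j)$ and $m(u_i\otimes u_j)$. Using that $\Delta$ is a derivation for the cup product together with $\Delta(y_i)=x_i$, $\Delta(x_i)=0$, $x_i^2=0$ and $\Delta(\mathbb{I})=0$, one computes $\Delta(v_i)=\Delta(s^!(y_i))=0$; $\Delta(u_j)=\varepsilon_j\Delta(x_1\cdots\widehat{x_j}\cdots x_N)=0$; $\Delta(m(v_i\otimes v_j))=\Delta(s^!(y_iy_j))=0$; $\Delta(m(u_i\otimes u_j))=0$, being $\Delta$ of $0$ or of a scalar multiple of an $x$-monomial; and $\Delta(m(v_i\otimes u_j))=\Delta(ev^*(y_i)\cup u_j)=x_i\cup u_j$, which contains $x_i^2$ and hence vanishes for $i\neq j$, while it equals $\pm\mathbb{I}$ for $i=j$. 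Choosing $\varepsilon_j$ so that this last sign is $+1$, these values coincide with the operator prescribed in the statement, which identifies the transported operator and completes the proof.

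The step that I expect to require genuine care is the sign bookkeeping: fixing the normalisations $\varepsilon_j$, which really matters when $p$ is odd since the $x_i$ then lie in odd degree, and checking that the explicit formula of Theorem~\ref{thm:BG_explicit_cal} does produce a graded-commutative exterior algebra on the $u_j$ with correct signs. Everything else is formal once part~(6) of Theorem~\ref{Cup in string classifying} and the order $\leq 2$ property of the BV operator are in hand.
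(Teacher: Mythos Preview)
Your proof is correct and takes essentially the same route as the paper: build the algebra isomorphism by sending $y_i\mapsto y_i\cup\mathbb{I}$ and $x_j^\vee\mapsto \pm x_1\cdots\widehat{x_j}\cdots x_N$, check bijectivity via Theorem~\ref{thm:BG_explicit_cal}, and then compute the transported $\Delta$ using that it is a cup-product derivation. Your explicit appeal to the order-$\leq 2$ property of a BV operator is a helpful clarification the paper leaves implicit, and the paper in turn pins down your normalisation as $\varepsilon_j=(-1)^{j-1}$ (under (H) all $x_i$ are odd when $p\neq 2$), which is precisely what your last condition forces.
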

\begin{proof}
Since $H^*(G)$ is the Hopf algebra $\Lambda x_i$ with $x_i=\sigma(y_i)$ primitive, its dual
is the Hopf algebra $\Lambda x_i^\vee$.
By Corollary \ref{cor:shifted_alg} and Corollary \ref{cor:unital p odd}, we see that the shifted cohomology
${\mathbb H}^*(LBG)$ is a graded commutative algebra with unit $x_1\dots x_N$. 
This enables us to define a morphism of algebras $\Theta$
from  $$H^*(BG;\K)\otimes H_{-*}(G;\K)
=\K[y_1,\cdots,y_n]\otimes \Lambda(x_1^\vee,\cdots,x_N^\vee)$$
to $${\mathbb H}^*(LBG)=\K[y_1,\cdots,y_n]\otimes \Lambda(x_1,\cdots,x_N)$$ by
$$
\Theta(1\otimes x_{j}^\vee)=(-1)^{j-1}1\otimes (x_{1}\wedge\dots\wedge\widehat{x_{j}}\wedge\dots\wedge x_{N} )  \ \ \text{and} \  \ 
\Theta(a\otimes 1)=a\otimes (x_1\wedge\dots\wedge x_N)
$$
for any $a$ in  $\K[V]$.
By induction on $p$, using Theorem \ref{thm:BG_explicit_cal}, we have that
$$
\Theta(a\otimes (x_{j_1}^\vee\wedge\dots \wedge x_{j_p}^\vee))
=\pm a\otimes (x_{1}\wedge\dots\wedge\widehat{x_{j_{1}}}\wedge\dots\wedge\widehat{x_{j_p}}
\wedge\dots \wedge x_{N}) 
$$for any $a \in \K[V]$.
Therefore the map $\Theta$ is an isomorphism.

The isomorphism $\Theta$ sends $1\otimes \Lambda(x_1^\vee,\cdots,x_N^\vee)$ on
$1\otimes \Lambda(x_1,\cdots,x_N)$
and $\K[y_1,\cdots,y_N]\otimes 1$ on $\K[y_1,\cdots,y_N]\otimes x_1\cdots x_N$.
Since $\Delta$ is null on $1\otimes \Lambda(x_1,\cdots,x_N)$ and $\K[y_1,\cdots,y_N]\otimes x_1\cdots x_N$, $\Delta$ is null on $1\otimes \Lambda(x_1^\vee,\cdots,x_N^\vee)$ and $\K[y_1,\cdots,y_N]\otimes 1$:
we have the first equalities. Moreover, we see that 
$\Theta (y_i\otimes x_j^\vee)=(-1)^{j-1}y_i x_1\wedge \cdots \wedge \widehat{x_j}\wedge \cdots \wedge x_N$ 
and hence $\Delta \Theta  (y_i\otimes x_j^\vee)=0$ if $i\neq j$. 
The equalities $\Delta((-1)^{i-1}y_i x_1\wedge \cdots \wedge \widehat{x_i}\wedge 
\cdots \wedge x_N) = 
x_1\wedge  \cdots \wedge x_N= \Theta (1)$ enable us to obtain the second formula. 
\end{proof}

\section{mod 2 case}

In the case where the operation $Sq_1$ is non-trivial on $H^*(BG; {\mathbb Z}/2)$, the loop coproduct structure 
on $H^*(LBG; {\mathbb Z}/2)$ is more complicated in general. For example, we compute the dual to the loop coproduct 
on $H^*(LBG_2; {\mathbb Z}/2)$, where $G_2$ is the simply-connected compact exceptional Lie group of rank $2$. 
Recall that 
\begin{eqnarray*}
H^*(LBG_2; {\mathbb Z}/2)&\cong& \Delta(x_3, x_5, x_6)\otimes {\mathbb Z}/2[y_4, y_6, y_7] \\
&\cong & {\mathbb Z}/2[x_3, x_5]\otimes {\mathbb Z}/2[y_4, y_6, y_7] \left/ \right.
\left(
\begin{array}{cc}
  x_3^4+x_5y_7 + x_3^2 y_6   \\
  x_5^2 + x_3y_7 + x_3^2y_4   
\end{array}
\right)
\end{eqnarray*}
as algebras over $H^*(BG_2; {\mathbb Z}/2)\cong {\mathbb Z}/2[y_4, y_6, y_7]$, where 
$\deg x_i=i$ and $\deg y_j = j$; see  \cite[Theorem 1.7]{Kuri:moduleadjoint}. 
\begin{thm}
\label{thm:BG_2} The dual to the loop coproduct 
$$
Dlcop: H^*(LBG_2; {\mathbb Z}/2)\otimes H^*(LBG_2; {\mathbb Z}/2) \to H^{*-14}(LBG_2; {\mathbb Z}/2)
$$
is commutative strictly and the only non-trivial  forms restricted to the submodule 
$\Delta(x_3, x_5, x_6)\otimes \Delta(x_3, x_5, x_6)$ are  given by 
$Dlcop(x_3x_5x_6\otimes 1) = Dlcop(x_3x_5\otimes x_6) 
= Dlcop(x_3x_6\otimes x_5) = Dlcop(x_5x_6\otimes x_3)=1$,
$$
\begin{array}{l}
Dlcop(x_3x_5x_6\otimes x_3)=Dlcop(x_3x_5\otimes x_3x_6) = x_3,\\
Dlcop(x_3x_5x_6\otimes x_5)=Dlcop(x_3x_5\otimes x_5x_6) = x_5,\\
Dlcop(x_3x_5x_6\otimes x_6) =Dlcop(x_3x_6\otimes x_5x_6) = x_6+y_6, \\
Dlcop(x_3x_5x_6\otimes x_3x_5) =  x_3x_5, \\
Dlcop(x_3x_5x_6\otimes x_3x_6) =  x_3x_6+x_3y_6,\\ 
Dlcop(x_3x_5x_6\otimes x_5x_6) = x_5x_6 + x_5y_6 + y_4y_7, \\
Dlcop(x_3x_5x_6\otimes x_3x_5x_6) = x_3x_5x_6 + x_3x_5y_6 + x_3y_4y_7 + y_7^2. 
\end{array}
$$
\end{thm}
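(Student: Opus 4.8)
The strategy is to apply Theorem~\ref{dlcop given by formulas} directly, using the explicit presentation of $H^*(LBG_2;\mathbb{Z}/2)$ recalled above. First I would fix the polynomial generators $y_4,y_6,y_7$ of $H^*(BG_2;\mathbb{Z}/2)$ and set $x_i=\mathcal{D}(y_i)=\Delta(ev^*(y_i))$, so that $x_3=\mathcal{D}(y_4)$, $x_5=\mathcal{D}(y_6)$, $x_6=\mathcal{D}(y_7)$; by the general description of the cup product, $H^*(LBG_2)$ is $\mathbb{Z}/2[y_4,y_6,y_7]\otimes\Delta(x_3,x_5,x_6)$ with the quadratic relations $x_3^2=\mathcal{D}(\mathrm{Sq}^3 y_4)=\mathcal{D}(y_7)=x_6$ (since $\mathrm{Sq}^3 y_4=y_7$ in $H^*(BG_2)$), $x_5^2=\mathcal{D}(\mathrm{Sq}^5 y_6)$ and $x_6^2=\mathcal{D}(\mathrm{Sq}^6 y_7)$, which after identifying the relevant Steenrod operations in $H^*(BG_2;\mathbb{Z}/2)$ produce exactly the two listed relations $x_3^4+x_5y_7+x_3^2y_6=0$ and $x_5^2+x_3y_7+x_3^2y_4=0$. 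Here $d=\deg(x_3x_5x_6)=14=\dim G_2$.

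Next, the commutativity statement: since $d=14$ is even, the sign $(-1)^{d(d-|a|)}$ relating $m$ and $Dlcop$ is trivial, and by Corollary~\ref{thm:B-V_algebra} $m$ is graded commutative; over $\mathbb{Z}/2$ "graded commutative" is strict commutativity, giving the first assertion. Then I would compute the base cases using formulas (2) and (3) of Theorem~\ref{dlcop given by formulas}: with $N=3$ and the orientation normalized by $\tau\circ H^*(i)(x_3x_5x_6)=1$, one gets $Dlcop(x_{i_1}\dots x_{i_l}\otimes x_{j_1}\dots x_{j_m})$ equal to $(-1)^{Nm+m}\varepsilon = \varepsilon$ (mod $2$, signs vanish) when the index sets partition $\{1,2,3\}$ (i.e. the pair is one of $x_3x_5x_6\otimes 1$, $x_3x_5\otimes x_6$, $x_3x_6\otimes x_5$, $x_5x_6\otimes x_3$, all giving $1$), and $0$ otherwise on $\Delta(x_3,x_5,x_6)^{\otimes 2}$. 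For the mixed products I would then use formula (1) repeatedly to push factors $x_i$ across, together with formula (4) for the $H^*(BG_2)$-module structure; the key point is that each time a factor $x_i$ is moved from the right-hand slot to the front, one picks up $x_i\,m(\text{-})$ or, when $x_i$ already appears on the left, a term $m(a x_i\otimes b)$ which must be reduced using the relations $x_3^2=x_6$, etc., so that the quadratic relations feed nontrivial polynomial terms (like $y_6$, $y_4y_7$, $y_7^2$) into the answer. This is exactly how the terms beyond the "leading" monomial in each listed formula arise.

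Concretely, I would organize the computation by bidegree: $Dlcop(x_3x_5x_6\otimes w)$ for $w$ ranging over the six nontrivial monomials $x_3,x_5,x_6,x_3x_5,x_3x_6,x_5x_6,x_3x_5x_6$, and the "split" cases $x_3x_5\otimes x_3x_6$ etc. Using formula (1) to write, e.g., $m(x_3x_5x_6\otimes x_6 b) = x_6\,m(x_3x_5x_6\otimes b) + m(x_3x_5x_6 x_6\otimes b)$ and then $x_6^2$ via the relation, iterating down to the base cases already computed; the module formula (4) handles the polynomial coefficients $a,b\in H^*(BG_2)$. The remaining "only non-trivial forms" claim follows because any monomial $x_{i_1}\dots x_{i_l}a\otimes x_{j_1}\dots x_{j_m}b$ with $\{i_1,\dots\}\cup\{j_1,\dots\}\neq\{1,2,3\}$ reduces by (1) and (3) to a base case that vanishes, after using the relations to rewrite any repeated $x_i$.

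The main obstacle is purely computational bookkeeping: correctly tracking which Steenrod operations $\mathrm{Sq}^{|y_i|-1}y_i$ are nonzero in $H^*(BG_2;\mathbb{Z}/2)$ (this is where the specific structure of $G_2$ enters and where the relations $x_3^2=x_6$, $x_5^2=x_3x_6+\cdots$ get their precise form), and then carefully iterating formula (1) while reducing every $x_i^2$ via the two quadratic relations — it is in these reductions that the "extra" terms $x_3y_6$, $x_5y_6$, $y_4y_7$, $x_3y_4y_7$, $y_7^2$ appear, and getting their coefficients right (mod $2$) is the delicate part. No single step is deep; the difficulty is in not dropping any term.
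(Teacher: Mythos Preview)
Your proposal is correct and follows essentially the same approach as the paper: both apply Theorem~\ref{dlcop given by formulas} directly, using the Steenrod-operation computations $Sq^3y_4=y_7$, $Sq^5y_6=y_4y_7$, $Sq^6y_7=y_6y_7$ to obtain the relations $x_3^2=x_6$, $x_5^2=x_3y_7+y_4x_6$, $x_6^2=x_5y_7+y_6x_6$, then iterate formula~(1) together with~(2),~(3),~(4) to reduce every product to the base cases. The only difference is organizational: the paper packages the vanishing claim (your ``only non-trivial forms'' step) into a separate Lemma~\ref{lem:trivial loop coproduct}, and the equalities of type $Dlcop(x_I\otimes x_J)=Dlcop(x_{I\cup J}\otimes x_{I\cap J})$ into Theorem~\ref{un terme en plus}, whereas you propose to derive both directly from repeated application of~(1); these lemmas are themselves proved exactly the way you describe, so the routes coincide. (One small slip: you wrote $x_5^2=x_3x_6+\cdots$ in passing, but the actual relation is $x_5^2=x_3y_7+x_6y_4$; this does not affect your outline.)
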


\begin{lem}\label{product in free loop cohomology}
Let $k:\{1,\dots, q\}\rightarrow \{1,\dots, N\}$,
$j\mapsto k_j$ be a map such that $\forall 1\leq i\leq N$,
the cardinality of the inverse image $k^{-1}(\{ i \})$ is $\leq 2$.
In $H^*(LX;\mathbb{F}_2)=\mathbb{F}_2 [y_1,\dots,y_N]\otimes\Delta(x_1,\dots,x_N)$,
the cup product satisfies the equality
$$x_{k_1}\cdots x_{k_q}=
\sum_{\substack{ 0\leq l\leq\text{cardinal of }\{k_1,\dots,k_q\},\\1 \leq i_1<\dots<i_l\leq N}} P_{i_1,\dots,i_l}x_{i_1}\cdots x_{i_l}
$$
where $P_{i_1,\dots,i_l}$ are elements of $\mathbb{F}[y_1,\dots,y_N]$.
\end{lem}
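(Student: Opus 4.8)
\emph{Proof proposal.} The plan is to reduce the lemma to the single structural relation $x_i^2=\mathcal{D}(\text{Sq}^{\vert y_i\vert-1}y_i)$ recalled above, which says that squaring a generator $x_i=\mathcal{D}(y_i)$ produces something of ``$x$-degree at most one''. First I would note that, since $\mathcal{D}$ is an $(ev^*,ev^*)$-derivation and $\text{Sq}^{\vert y_i\vert-1}y_i$ lies in the polynomial algebra $H^*(X)=\mathbb{F}_2[y_1,\dots,y_N]$, the Leibniz rule for $\mathcal{D}$, applied monomial by monomial, gives
$$
x_i^2=\mathcal{D}(\text{Sq}^{\vert y_i\vert-1}y_i)=\sum_{t=1}^{N}Q^{(i)}_t\,x_t,\qquad Q^{(i)}_t\in\mathbb{F}_2[y_1,\dots,y_N].
$$
By the Leray--Hirsch computation $H^*(LX;\mathbb{F}_2)=H^*(X)\otimes\Delta(\mathcal{D}y_1,\dots,\mathcal{D}y_N)$ recalled above, this ring is a free $\mathbb{F}_2[y_1,\dots,y_N]$-module on the squarefree monomials $x_S:=\prod_{i\in S}x_i$, $S\subseteq\{1,\dots,N\}$; since the $x_i$ commute (we are in characteristic $2$), every product of generators has a well-defined expansion in this basis, and the lemma asserts precisely that only $x_S$ with $\vert S\vert\le c$ appear, where $c$ is the number of distinct indices among $k_1,\dots,k_q$.

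I would then prove a slightly more general statement by induction on $q$, dropping the fibre hypothesis entirely: for any multiset $K$ on $\{1,\dots,N\}$ with multiplicities $\mu_i$, set $g(K):=\sum_{i=1}^{N}\lceil\mu_i/2\rceil$; then $x_{k_1}\cdots x_{k_q}=\sum_{\vert S\vert\le g(K)}P_S\,x_S$ with $P_S\in\mathbb{F}_2[y_1,\dots,y_N]$. If every $\mu_i\le 1$ (in particular if $q\le 1$), this is immediate, since then $x_{k_1}\cdots x_{k_q}=x_S$ for $S$ the underlying set, with $\vert S\vert=q=g(K)$. If some $\mu_i\ge 2$, write, using commutativity, $x_{k_1}\cdots x_{k_q}=x_i^2\,x_{K'}$ where $K'=K\setminus\{i,i\}$ has $q-2$ elements and $g(K')=g(K)-1$; substituting the formula for $x_i^2$ gives
$$
x_{k_1}\cdots x_{k_q}=\sum_{t=1}^{N}Q^{(i)}_t\cdot x_{K'\cup\{t\}},
$$
a linear combination of products of $q-1$ generators. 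Applying the induction hypothesis to each $x_{K'\cup\{t\}}$, and using that adjoining one element to a multiset increases $g$ by at most one, so that $g(K'\cup\{t\})\le g(K')+1=g(K)$, yields the claimed form. Finally, when the fibres of $k$ all have cardinality $\le 2$ we have $\mu_i\in\{0,1,2\}$, hence $\lceil\mu_i/2\rceil=1$ exactly when $i$ occurs in $K$, so $g(K)=c$ and the lemma follows.

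The one point requiring care --- and the reason for introducing $g$ rather than inducting directly on the defect $q-c$ by killing squares one at a time --- is that after replacing $x_i^2$ by $\sum_t Q^{(i)}_t x_t$ the index $t$ may already occur with multiplicity $2$ among the surviving factors, creating a cube $x_t^3$ and violating the ``cardinality $\le 2$'' hypothesis. Inducting on the total number of factors $q$ and carrying along the quantity $g(K)=\sum_i\lceil\mu_i/2\rceil$, which coincides with $c$ under the hypothesis of the lemma and rises by at most $1$ when a single index is adjoined, lets the recursion close without any nested case analysis.
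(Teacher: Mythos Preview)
Your proof is correct, and it follows the same inductive skeleton as the paper's: isolate a repeated index, replace $x_i^2$ by $\sum_t Q_t^{(i)} x_t$ using the relation $x_i^2=\mathcal{D}(\text{Sq}^{|y_i|-1}y_i)$, and apply induction to the resulting products of $q-1$ factors.

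The difference is that you strengthen the statement before inducting, and this actually matters. The paper inducts on the lemma as stated, keeping the hypothesis that each index occurs at most twice. But after substituting $x_{k_q}^2=\sum_i P_i x_i$, the new index $i$ may already occur twice among $k_1,\dots,k_{q-2}$ (e.g.\ start from $(k_1,k_2,k_3,k_4)=(1,1,2,2)$, expand $x_2^2$, and the term $i=1$ gives the triple $(1,1,1)$), so the sequence $(k_1,\dots,k_{q-2},i)$ need not satisfy the lemma's hypothesis and the induction hypothesis does not literally apply. Your invariant $g(K)=\sum_i\lceil\mu_i/2\rceil$ is exactly what lets the recursion close without any fibre restriction: removing a pair drops $g$ by one, adjoining a single index raises $g$ by at most one, and under the lemma's hypothesis $g(K)$ equals the number of distinct $k_j$. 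So your version is not just a stylistic variant but a genuine repair of the induction.
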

\begin{proof}
Suppose by induction that the lemma is true for $q-1$.
If the elements $k_1$, \dots, $k_q$ are pairwise distinct,
take $\{i_1,\dots,i_l\}=\{k_1,\dots,k_q\}$.
Otherwise by permuting the elements $x_{k_1}$, \dots, $x_{k_q}$,
suppose that $k_{q-1}=k_q$.
$$
x_{k_q}^2=\Delta\circ ev^*\circ\text{Sq}^{\vert y_{k_q}\vert-1} (y_{k_q})
=\sum_{i=1}^N x_i P_i
$$
where $P_1$,\dots,$P_N$  are elements of $\mathbb{F}[y_1,\dots,y_N]$.
So $$
x_{k_1}\cdots x_{k_q}
=\sum_{i=1}^N x_{k_1}\cdots x_{k_{q-2}}x_i P_i
.$$
Since $k_q=k_{q-1}$, by hypothesis, $k_q\notin \{k_1,\dots,k_{q-2}\}$.
Therefore the cardinal of $\{k_1,\dots,k_{q-2},i\}$ is less or equal to the cardinal of $\{k_1,\dots,k_{q}\}$.
By our induction hypothesis,
$$
x_{k_1}\cdots x_{k_{q-2}}x_i
=\sum_{\substack{ 0\leq l\leq\text{cardinal of }\{k_1,\dots,k_{q-2},i\},\\1 \leq i_1<\dots<i_l\leq N}} P_{i_1,\dots,i_l}x_{i_1}\cdots x_{i_l}.
$$
\end{proof}
\begin{lem}\label{lem:trivial loop coproduct}
Let $k:\{1,\dots, q+r\}\rightarrow \{1,\dots, N\}$,
$j\mapsto k_j$ be a non-surjective map such that $\forall 1\leq i\leq N$,
the cardinality of the inverse image $k^{-1}(\{ i \})$ is $\leq 2$.
Then 
$$Dlcop(x_{k_1}\cdots x_{k_q}\otimes x_{k_{q+1}}\cdots x_{k_{q+r}}) = 0.
$$ 
\end{lem}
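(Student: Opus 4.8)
The plan is to reduce the statement to the basis elements of $\Delta(x_1,\dots,x_N)$ and then apply the formulas of Theorem~\ref{dlcop given by formulas}. First I would use Lemma~\ref{product in free loop cohomology} to expand each factor $x_{k_1}\cdots x_{k_q}$ and $x_{k_{q+1}}\cdots x_{k_{q+r}}$ as an $\mathbb{F}_2[y_1,\dots,y_N]$-linear combination of squarefree monomials $x_{i_1}\cdots x_{i_l}$ with $\{i_1,\dots,i_l\}$ contained in the image of $k$. The crucial observation is that, since $k$ is non-surjective, \emph{every} squarefree monomial appearing in either expansion is indexed by a proper subset of $\{1,\dots,N\}$ (namely a subset of $k(\{1,\dots,q+r\})\subsetneq\{1,\dots,N\}$).

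Next, by $H^*(X)\otimes H^*(X)$-bilinearity of $m$ (formula (4) of Theorem~\ref{dlcop given by formulas}), it suffices to show that $m(x_{i_1}\cdots x_{i_l}\otimes x_{j_1}\cdots x_{j_m})=0$ whenever $\{i_1,\dots,i_l\}$ and $\{j_1,\dots,j_m\}$ are both contained in a common proper subset $S\subsetneq\{1,\dots,N\}$. If the two subsets are disjoint, then $\{i_1,\dots,i_l\}\cup\{j_1,\dots,j_m\}\subseteq S\neq\{1,\dots,N\}$, so formula (3) of Theorem~\ref{dlcop given by formulas} gives $0$ directly. If they are not disjoint, I would peel off the common indices one at a time using formula (1): writing a shared index as $x_i$ and recalling $x_i^2=0$ in characteristic $2$ forces the term $m(\cdots x_i\otimes\cdots)$ to vanish, so formula (1) reduces $m(x_{i_1}\cdots x_{i_l}\otimes x_i\cdot(\text{rest}))$ to $\pm x_i\, m(x_{i_1}\cdots x_{i_l}\otimes(\text{rest}))$ — exactly the induction done in the proof of Theorem~\ref{thm:BG_explicit_cal}. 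Iterating removes all common indices, after which the two index sets are disjoint and still jointly contained in $S$, so formula (3) finishes the argument.

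The main obstacle, such as it is, is purely bookkeeping: one must make sure that after expanding via Lemma~\ref{product in free loop cohomology} and then stripping common indices via formula (1), the resulting index sets remain inside the (fixed, proper) image of $k$, so that formula (3) always applies — no term ever reaches the "full set $\{1,\dots,N\}$" case of formula (2). This is immediate because $k$ is non-surjective and all operations only ever remove indices or replace a product by a combination of monomials on a \emph{subset} of the same index set; the polynomial coefficients $P_{i_1,\dots,i_l}\in\mathbb{F}_2[y_1,\dots,y_N]$ just come along for the ride by bilinearity. Hence $Dlcop(x_{k_1}\cdots x_{k_q}\otimes x_{k_{q+1}}\cdots x_{k_{q+r}})=0$.
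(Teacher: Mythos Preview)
Your argument has two genuine gaps, both stemming from the fact that in the mod $2$ setting of Section 5 the relation $x_i^2=0$ \emph{fails}: one has $x_i^2=\mathcal{D}(\mathrm{Sq}^{|y_i|-1}y_i)$, which is in general a nonzero $\mathbb{F}_2[y_1,\dots,y_N]$-combination of the $x_j$'s (e.g.\ for $G_2$, $x_3^2=x_6$). First, Lemma~\ref{product in free loop cohomology} does \emph{not} assert that the squarefree monomials $x_{i_1}\cdots x_{i_l}$ appearing in the expansion have $\{i_1,\dots,i_l\}$ contained in the image of $k$; it only bounds the \emph{length} $l$ by the cardinality of that image. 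The indices themselves may escape the image (again, $x_3^2=x_6$ in $G_2$). So after expanding both factors you may well obtain pairs $(I,J)$ with $I\cup J=\{1,\dots,N\}$, and formula (3) does not apply. Second, your peeling step invokes $x_i^2=0$ to kill the term $m(ax_i\otimes b)$ in formula (1) when $x_i$ already divides $a$; that term is not zero here, so the reduction in the proof of Theorem~\ref{thm:BG_explicit_cal} (which lives under hypothesis (H)) does not transfer.

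The paper's proof sidesteps both issues by inducting on $r$, the number of $x$-factors on the right. For $r=0$ it applies Lemma~\ref{product in free loop cohomology} to the single factor $x_{k_1}\cdots x_{k_q}$ and uses only the bound $l<N$ (no containment claim), so that (3) and (4) of Theorem~\ref{dlcop given by formulas} give $Dlcop(P_{i_1,\dots,i_l}x_{i_1}\cdots x_{i_l}\otimes 1)=0$. For $r\geq 1$ it uses formula (1) in full---keeping \emph{both} terms---to move $x_{k_{q+1}}$ from the right slot to the left; each resulting term has $r-1$ factors on the right and the same non-surjective $k$, so the induction hypothesis finishes. No square of an $x_i$ is ever formed in the inductive step.
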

\begin{proof}
We do an induction on $r\geq 0$.

\noindent Case $r=0$:
By Lemma~\ref{product in free loop cohomology},
since the cardinal of $\{ k_1,\dots,k_q\}<N$,
$$Dlcop(x_{k_1}\cdots x_{k_q}\otimes 1)=
\sum_{\substack{ 0\leq l< N,\\1 \leq i_1<\dots<i_l\leq N}}Dlcop(P_{i_1,\dots,i_l}x_{i_1}\cdots x_{i_l}\otimes 1)
$$
where $P_{i_1,\dots,i_l}$ are elements of $\mathbb{F}[y_1,\dots,y_N]$.
By (3) and (4) of Theorem~\ref{dlcop given by formulas}, since $l<N$,
$$Dlcop(P_{i_1,\dots,i_l}x_{i_1}\cdots x_{i_l}\otimes 1)=0.$$
Suppose now by induction that the Lemma is true for $r-1$. Then by (1) of Theorem~\ref{dlcop given by formulas},
\begin{align*}
Dlcop(x_{k_1}\cdots x_{k_q}\otimes x_{k_{q+1}}\cdots x_{k_{q+r}}) =
x_{k_{q+1}}Dlcop(x_{k_1}\cdots x_{k_q}\otimes x_{k_{q+2}}\cdots x_{k_{q+r}})\\
+Dlcop(x_{k_1}\cdots x_{k_{q+1}}\otimes x_{k_{q+2}}\cdots x_{k_{q+r}})
=x_{k_{q+1}}\times 0+0.
\end{align*}
\end{proof}

Let $I=\{i_1, ..,i_l\}\subset\{1, ..., N\}$. In $\Delta(x_1,\cdots,x_N)$, denote by
$x_I$ the generator $x_{i_1}\cup x_{i_2}\cup\dots \cup x_{i_l}$. Since mod $2$, the cup product is
strictly commutative, we don't need to assume that $i_1<i_2<\cdots<i_l$. 

\begin{thm}\label{un terme en plus}
Let $I$ and $J$ be two subsets
of $\{1,\dots, N\}$.
Then 
$$
 Dlcop (x_I\otimes x_J)= 
\left\{
\begin{array}{ll}
Dlcop(x_1\dots x_N\otimes x_{I\cap J}) & \text{if} \ I\cup J=\{1,\dots, N\},  \\
0  & \text{otherwise}.
\end{array}
\right.
$$ 
In particular,
$\{x_I,x_J\}=\Delta(Dlcop(x_I\otimes x_J))=
\Delta(Dlcop(x_{I\cup J}\otimes x_{I\cap J}))=\{x_{I\cup J},x_{I\cap J}\}$.
\end{thm}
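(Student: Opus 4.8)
I would prove Theorem~\ref{un terme en plus} by induction on $|I\cap J|$, using the inductive formula~(1) of Theorem~\ref{dlcop given by formulas} to "move" the common factors $x_k$ with $k\in I\cap J$ from one side of the tensor to the other, one at a time, and Lemma~\ref{lem:trivial loop coproduct} to kill the terms that vanish for degree reasons.

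First I would dispose of the case $I\cup J\neq\{1,\dots,N\}$. Writing $x_I\otimes x_J = x_{k_1}\cdots x_{k_q}\otimes x_{k_{q+1}}\cdots x_{k_{q+r}}$ where $k$ enumerates $I$ then $J$ (with multiplicity $\le 2$ since $I,J\subseteq\{1,\dots,N\}$ are sets), the map $k$ is non-surjective, so Lemma~\ref{lem:trivial loop coproduct} gives $Dlcop(x_I\otimes x_J)=0$ directly. So from now on assume $I\cup J=\{1,\dots,N\}$.

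Now the main induction, on $u:=|I\cap J|$. If $u=0$ there is nothing to prove: $x_{I\cap J}=1$ and $x_I\cdot x_J$ (as a product over disjoint index sets with $I\cup J = \{1,\dots,N\}$) means $x_I\otimes x_J = x_{I}\otimes x_{\{1,\dots,N\}\setminus I}$, which is literally of the asserted shape with $I\cup J$ on the left. For the inductive step pick $k\in I\cap J$ and write $x_J = x_k x_{J'}$ with $J'=J\setminus\{k\}$ (up to a sign $+1$ since we are mod $2$). Apply formula~(1) of Theorem~\ref{dlcop given by formulas}:
$$
Dlcop(x_I\otimes x_k x_{J'}) = x_k\, Dlcop(x_I\otimes x_{J'}) + Dlcop(x_I x_k\otimes x_{J'}).
$$
In the second term, $x_I x_k$ contains $x_k^2$, which by the relation $x_k^2 = \mathcal{D}(\mathrm{Sq}^{|y_k|-1}y_k)$ lies in the ideal generated by $H^{>0}(BG)$; more precisely $x_I x_k = \sum_i P_i\, x_{I\setminus\{k\}}\, x_i$ with $P_i\in\mathbb{F}_2[y_1,\dots,y_N]$ of positive degree, and each summand has strictly fewer than $N$ distinct $x$-factors, hence by~(3) and~(4) of Theorem~\ref{dlcop given by formulas} it contributes $0$ to $Dlcop(\,\cdot\,\otimes x_{J'})$ — \emph{except} one has to be careful: the more robust way is to invoke Lemma~\ref{lem:trivial loop coproduct} after expanding via Lemma~\ref{product in free loop cohomology}, exactly as in the $r=0$ case of that lemma, so the second term vanishes. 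Thus $Dlcop(x_I\otimes x_J) = x_k\,Dlcop(x_I\otimes x_{J'})$. Since $|I\cap J'| = u-1$, the induction hypothesis applies to $(I,J')$ (note $I\cup J' $ may now be a proper subset, in which case both sides are $0$ and there is nothing to check; otherwise $I\cup J'=\{1,\dots,N\}$ and $I\cap J' = (I\cap J)\setminus\{k\}$), giving $Dlcop(x_I\otimes x_{J'}) = Dlcop(x_1\cdots x_N\otimes x_{(I\cap J)\setminus\{k\}})$, and then pulling $x_k$ back inside by the \emph{same} formula~(1) applied in reverse — $x_k\,Dlcop(x_1\cdots x_N\otimes x_{(I\cap J)\setminus\{k\}}) = Dlcop(x_1\cdots x_N\otimes x_{I\cap J}) + Dlcop(x_1\cdots x_N\cdot x_k\otimes x_{(I\cap J)\setminus\{k\}})$, and the last term vanishes as before since $x_1\cdots x_N\cdot x_k$ has a repeated factor — yields $Dlcop(x_I\otimes x_J) = Dlcop(x_1\cdots x_N\otimes x_{I\cap J})$, completing the induction. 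Finally the "in particular" clause is immediate: apply $\Delta$ to both sides of the identity just proved (both sides landing in $\mathbb{H}^*(LBG)$), and use the definition $\{a,b\}=\Delta(Dlcop(a\otimes b))$ valid since $\Delta$ vanishes on products of $x_i$'s so the correction terms in the bracket drop out — that is, $\{x_I,x_J\}=\Delta(Dlcop(x_I\otimes x_J))$ and likewise for $(x_{I\cup J},x_{I\cap J})$.

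**Main obstacle.** The one genuinely delicate point is bookkeeping: signs are trivial ($\mathbb{F}_2$), but one must make sure that when $x_k$ appears with multiplicity $2$ — i.e. already $k\in I$ so $x_I$ itself would have to be rewritten — the enumeration $k$ in Lemma~\ref{lem:trivial loop coproduct} still satisfies the multiplicity-$\le 2$ hypothesis, and that the "reverse" application of formula~(1) is legitimate (it is, since~(1) is an identity, not a reduction rule). I expect the cleanest writeup to avoid case analysis on whether $I\cup J'$ is all of $\{1,\dots,N\}$ by first proving the vanishing statement in full generality (the easy half above), so that in the inductive step one may freely reduce and both sides are known to be $0$ whenever the union fails to be everything.
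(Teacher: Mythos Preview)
Your argument has a genuine gap. You claim that the second term $Dlcop(x_I x_k\otimes x_{J'})$ vanishes because ``$x_I x_k$ contains $x_k^2$'', but neither of your proposed justifications works. First, the coefficients in $x_k^2=\sum_i P_i x_i$ need not have positive degree: for $SO(3)$ one has $x_1^2=x_2$ with constant coefficient. Second, and more seriously, Lemma~\ref{lem:trivial loop coproduct} requires the index map to be \emph{non-surjective}, and here it is surjective: since $k\in I$, the indices appearing in $x_I x_k\otimes x_{J'}$ cover $I\cup\{k\}\cup J'=I\cup J'=I\cup J=\{1,\dots,N\}$. Concretely, for $G=SO(3)$ take $I=\{2\}$, $J=\{1,2\}$, $k=2$, $J'=\{1\}$; then $x_Ix_k=x_2^2=x_2y_2+x_1y_3$ and
\[
Dlcop(x_2^2\otimes x_1)=y_2\,Dlcop(x_2\otimes x_1)+y_3\,Dlcop(x_1\otimes x_1)=y_2\cdot 1+0=y_2\neq 0.
\]
The same failure recurs in your ``pull back'' step: $Dlcop(x_1\cdots x_N\cdot x_k\otimes x_{(I\cap J)\setminus\{k\}})$ is again nonzero in general (equal to $y_2$ in the example above). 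It so happens that these two dropped terms are equal and cancel, but that cancellation is not obvious and is essentially the content of the theorem.

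The paper's proof sidesteps this by moving the \emph{non-common} indices $j\in J\setminus I$ from right to left, rather than the common ones. This makes both pieces of formula~(1) of Theorem~\ref{dlcop given by formulas} behave cleanly: since $j\notin I$, removing $j$ from $J$ gives $I\cup(J\setminus\{j\})=(I\cup J)\setminus\{j\}$, which misses $j$, so Lemma~\ref{lem:trivial loop coproduct} genuinely kills the term $x_j\,Dlcop(x_I\otimes x_{J\setminus\{j\}})$; and since $j\notin I$ there is no square in $x_Ix_j=x_{I\cup\{j\}}$, so the surviving term is $Dlcop(x_{I\cup\{j\}}\otimes x_{J\setminus\{j\}})$, which has the same union and intersection but one fewer element in $J\setminus I$. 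An induction on $|J\setminus I|$ then yields $Dlcop(x_I\otimes x_J)=Dlcop(x_{I\cup J}\otimes x_{I\cap J})$ directly, with the ``otherwise $0$'' case following from Lemma~\ref{lem:trivial loop coproduct} at the end. Your treatment of the ``in particular'' clause is fine.
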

\begin{proof}
Let $\{i_1, ..,i_l\}$ denote the elements of the relative complement $I-J$.
Let $\{j_1, ..,j_m\}$ denote the elements of the relative complement $J-I$.
Let $\{k_1, ..,k_u\}$ denote the elements of the intersection $I\cap J$.

By Lemma~\ref{lem:trivial loop coproduct},
$Dlcop(x_{i_1}\dots x_{i_l}x_{k_1}\dots x_{k_u}\otimes x_{j_2}\dots x_{j_m}x_{k_1}\dots x_{k_u})=0$.
So by (1) of Theorem~\ref{dlcop given by formulas},
\begin{align*}
Dlcop(x_{i_1}\dots x_{i_l}x_{k_1}\dots x_{k_u}\otimes x_{j_1}\dots x_{j_m}x_{k_1}\dots x_{k_u})
=x_{j_1}\times 0\\
+Dlcop(x_{i_1}\dots x_{i_l}x_{j_1}x_{k_1}\dots x_{k_u}\otimes x_{j_2}\dots x_{j_m}x_{k_1}\dots x_{k_u}).
\end{align*}
By induction on $m$, this is equal to
$$Dlcop(x_{i_1}\dots x_{i_l}x_{j_1}\dots x_{j_m}x_{k_1}\dots x_{k_u}\otimes x_{k_1}\dots x_{k_u}).$$
So we have proved that
$Dlcop (x_I\otimes x_J)=Dlcop(x_{I\cup J}\otimes x_{I\cap J})$.
By Lemma~\ref{lem:trivial loop coproduct}, if $I\cup J\neq \{1,\dots, N\}$
then $Dlcop (x_I\otimes x_J)=0$.
\end{proof}
\begin{thm}\label{unit for Dlcop}
Let $X$ be a simply-connected space such that
$H^*(X;\mathbb{F}_2)$ is the polynomial algebra $\mathbb{F}_2[y_1,\dots,y_N]$.
The dual of the loop coproduct admits
$Dlcop(x_1\dots x_N\otimes x_1\dots x_N)\in H^d(LX;\mathbb{F}_2)$
as unit.
\end{thm}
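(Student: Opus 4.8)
The plan is to show that $e := Dlcop(x_1\cdots x_N\otimes x_1\cdots x_N)\in H^d(LX;\mathbb{F}_2)$ is a left unit for $Dlcop$; since modulo $2$ the product $m$ of Corollary~\ref{thm:B-V_algebra} coincides with $Dlcop$ and is strictly commutative and associative, this makes $e$ a two-sided unit. Write $x_J := \prod_{i\in J}x_i\in H^*(LX;\mathbb{F}_2)$ for $J\subseteq\{1,\dots,N\}$, and for $v\in H^*(LX;\mathbb{F}_2)$ written in the $H^*(X)$-basis $\{x_S\}_{S\subseteq\{1,\dots,N\}}$ let $\langle v\rangle\in H^*(X;\mathbb{F}_2)$ denote the coefficient of $x_1\cdots x_N$. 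By part (4) of Theorem~\ref{dlcop given by formulas}, $Dlcop(e\otimes-)$ is left $H^*(X)$-linear, so it suffices to prove $Dlcop(e\otimes x_J)=x_J$ for every $J$.

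The first ingredient is a closed formula for $Dlcop$ against the top class $x_1\cdots x_N$. Applying part (7) of Theorem~\ref{Cup in string classifying} with $P=Q=1$, $a=w$, $b=1$, $X_i=\Delta(ev^*y_i)=x_i$, $r=N$ (mod $2$ all signs vanish), combined with strict commutativity and the base case $Dlcop(v\otimes1)=\langle v\rangle$ (immediate from Remark~\ref{dlcop for polynomial}) gives
$$
Dlcop(x_1\cdots x_N\otimes w)=\sum_{A\subseteq\{1,\dots,N\}}x_{\{1,\dots,N\}\setminus A}\cup\langle x_A\cup w\rangle .
$$
Set $g(A):=\langle x_A\cup x_1\cdots x_N\rangle\in H^*(X)$; then $g(\emptyset)=1$ and $e=\sum_A x_{\{1,\dots,N\}\setminus A}\cup g(A)$, so the coefficient of $x_S$ in $e$ is $g(\{1,\dots,N\}\setminus S)$. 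I would also record the bookkeeping identity: if $B\subseteq K$ then $x_{\{1,\dots,N\}\setminus B}\cup x_K=x_{K\setminus B}\cup x_1\cdots x_N$ (both equal $\prod_{i\in K\setminus B}x_i^2\cdot\prod_{i\notin K\setminus B}x_i$), hence $\langle x_{\{1,\dots,N\}\setminus B}\cup x_K\rangle=g(K\setminus B)$, whereas $\langle x_{\{1,\dots,N\}\setminus B}\cup x_K\rangle=0$ for $B\not\subseteq K$ by degree. Thus the coefficient of $x_B$ in $Dlcop(x_1\cdots x_N\otimes x_K)$ is $g(K\setminus B)$ if $B\subseteq K$ and $0$ otherwise.

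Next I would invoke Theorem~\ref{un terme en plus}: $Dlcop(x_S\otimes x_J)=Dlcop(x_1\cdots x_N\otimes x_{S\cap J})$ when $S\cup J=\{1,\dots,N\}$ and $0$ otherwise. Since the $S$ with $S\cup J=\{1,\dots,N\}$ are exactly the $S=(\{1,\dots,N\}\setminus J)\sqcup K$ with $K\subseteq J$, expanding $e=\sum_S g(\{1,\dots,N\}\setminus S)\,x_S$ and using $H^*(X)$-linearity gives
$$
Dlcop(e\otimes x_J)=\sum_{K\subseteq J}g(J\setminus K)\,Dlcop(x_1\cdots x_N\otimes x_K).
$$
Reading off the coefficient of $x_B$ with $B\subseteq J$ and substituting $K=B\sqcup L$ with $M:=J\setminus B$, this coefficient equals $\sum_{L\subseteq M}g(M\setminus L)\,g(L)$. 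Here is the decisive mod $2$ step: in the algebra with basis $\{t_S\}_{S\subseteq\{1,\dots,N\}}$ and $t_St_{S'}=t_{S\cup S'}$ if $S\cap S'=\emptyset$ and $t_St_{S'}=0$ otherwise, the Frobenius endomorphism yields $\bigl(\sum_S g(S)t_S\bigr)^2=\sum_S g(S)^2 t_S^2=g(\emptyset)^2 t_\emptyset=t_\emptyset$, because $t_S^2=0$ for $S\neq\emptyset$; comparing $t_M$-coefficients shows $\sum_{L\subseteq M}g(M\setminus L)g(L)$ is $1$ for $M=\emptyset$ and $0$ for $M\neq\emptyset$. Hence the coefficient of $x_B$ in $Dlcop(e\otimes x_J)$ is $1$ for $B=J$ and $0$ for $B\subsetneq J$, i.e. $Dlcop(e\otimes x_J)=x_J$, which finishes the proof.

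The main point to get right is the bookkeeping: fitting together Theorem~\ref{Cup in string classifying}(7), Theorem~\ref{un terme en plus} and Theorem~\ref{dlcop given by formulas}(4), and isolating the two facts $x_{\{1,\dots,N\}\setminus B}\cup x_K=x_{K\setminus B}\cup x_1\cdots x_N$ (for $B\subseteq K$) and $\langle x_{\{1,\dots,N\}\setminus B}\cup x_K\rangle=0$ (for $B\not\subseteq K$), so that the final coefficient is recognized as a subset-convolution square of the single function $g$; the rest is the $\mathbb{F}_2$-Frobenius collapse. As a sanity check, when there is no $\mathrm{Sq}_1$ (or $p$ is odd) one has $x_A\cup x_1\cdots x_N=0$ for $A\neq\emptyset$, so $g$ is supported at $\emptyset$ and the formula returns $e=x_1\cdots x_N$, in agreement with Corollary~\ref{cor:unital p odd}.
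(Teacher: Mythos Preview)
Your proof is correct and takes a genuinely different route from the paper's. The paper's argument is much shorter: it first proves a small lemma (Lemma~\ref{square of Dlcop}) saying that $x_i\cup Dlcop(a\otimes a)=0$ for all $i$ (from commutativity: $Dlcop(a\otimes ax_i)=Dlcop(ax_i\otimes a)$, so the cross-term in formula (1) of Theorem~\ref{dlcop given by formulas} vanishes), and hence $Dlcop(Dlcop(a\otimes a)\otimes b)=b\cdot Dlcop(Dlcop(a\otimes a)\otimes 1)$ for all $b$; then associativity together with $Dlcop(x_1\cdots x_N\otimes 1)=1$ gives $Dlcop(e\otimes 1)=1$, hence $Dlcop(e\otimes b)=b$. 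Your approach instead expands everything explicitly via Theorem~\ref{Cup in string classifying}(7) and Theorem~\ref{un terme en plus}, reducing to a subset-convolution identity $\sum_{L\subseteq M}g(L)g(M\setminus L)=[M=\emptyset]$ which you finish with a neat Frobenius trick in $H^*(X)\otimes\Lambda(t_1,\dots,t_N)$. The paper's proof is slicker and uses no explicit knowledge of $e$, while yours yields a concrete expansion $e=\sum_A g(A)\,x_{A^c}$ that may be useful in further computations.

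One small correction: your claim that $\langle x_{\{1,\dots,N\}\setminus B}\cup x_K\rangle=0$ for $B\not\subseteq K$ is not literally ``by degree'' (the cohomological degree $|x_K|-|x_B|$ can be non-negative). It follows instead from Lemma~\ref{product in free loop cohomology}: the index multiset in $x_{B^c}\cup x_K$ has image $B^c\cup K\subsetneq\{1,\dots,N\}$, so the expansion contains no $x_1\cdots x_N$ term. Equivalently, this is the case $r=0$ of Lemma~\ref{lem:trivial loop coproduct}.
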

\begin{lem}\label{square of Dlcop}
Let $a\in H^*(LX;\mathbb{F}_2)$

(1) For $1\leq i\leq N$, $x_iDlcop(a\otimes a)=0$.

(2) For any $b\in H^*(LX;\mathbb{F}_2)$,
$$
Dlcop(Dlcop(a\otimes a)\otimes b)=bDlcop(Dlcop(a\otimes a)\otimes 1).
$$
\end{lem}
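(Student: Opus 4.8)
The plan is to prove (1) first, by a characteristic-two cancellation, and then to deduce (2) formally from it.

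For (1), recall that modulo $2$ the product $m$ agrees with $Dlcop$, is strictly commutative, and that formula (1) of Theorem~\ref{dlcop given by formulas} becomes $m(a\otimes x_ib)=x_i\,m(a\otimes b)+m(x_ia\otimes b)$, since all signs vanish and $ax_i=x_ia$. Taking $b=a$ gives
$$
m(a\otimes x_ia)+m(x_ia\otimes a)=x_i\,m(a\otimes a),
$$
and by the commutativity of $m$ the left-hand side equals $m(a\otimes x_ia)+m(a\otimes x_ia)=0$. Hence $x_i\,Dlcop(a\otimes a)=0$. If $a$ is not homogeneous one first observes that the mixed terms cancel in pairs modulo $2$, so that $Dlcop(a\otimes a)=\sum_k Dlcop(a_k\otimes a_k)$ over the homogeneous components $a_k$ of $a$, and then applies the above to each summand.

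For (2), write $c:=Dlcop(a\otimes a)$, so that $x_ic=0$ for every $i$ by part (1). Expanding an arbitrary $b\in H^*(LX;\mathbb{F}_2)$ in the $H^*(X)$-basis $\{x_J : J\subseteq\{1,\dots,N\}\}$ afforded by the decomposition $H^*(LX;\mathbb{F}_2)=H^*(X)\otimes\Delta(x_1,\dots,x_N)$, say $b=\sum_{J}P_J\,x_J$ with $P_J\in H^*(X)=\mathbb{F}_2[y_1,\dots,y_N]$, formula (4) of Theorem~\ref{dlcop given by formulas} (with trivial first polynomial factor) gives $Dlcop(c\otimes b)=\sum_J P_J\,Dlcop(c\otimes x_J)$, while $b\,Dlcop(c\otimes 1)=\sum_J P_J\,x_J\,Dlcop(c\otimes 1)$. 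So it is enough to prove $Dlcop(c\otimes x_J)=x_J\,Dlcop(c\otimes 1)$ for each $J$, which I would do by induction on $|J|$, the case $J=\emptyset$ being trivial. For the inductive step write $x_J=x_j\,x_{J'}$ with $j\notin J'$; formula (1) of Theorem~\ref{dlcop given by formulas} gives
$$
Dlcop(c\otimes x_j x_{J'})=x_j\,Dlcop(c\otimes x_{J'})+Dlcop(cx_j\otimes x_{J'}),
$$
and the last term vanishes because $cx_j=x_jc=0$ by part (1); the induction hypothesis applied to $J'$ then finishes the argument.

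The whole point lies in (1): the observation that substituting the diagonal $a\otimes a$ into the $H^*(X)$-module formula for $m$ produces the term $m(a\otimes x_ia)$ twice, so that it dies in characteristic two. Part (2) is then pure bookkeeping with the $H^*(X)\otimes H^*(X)$-module structure of $m$ together with the evident induction, and I expect no real obstacle beyond keeping the mod-$2$ reformulations of the formulas in Theorem~\ref{dlcop given by formulas} straight. This lemma is what will allow the verification of the unit in Theorem~\ref{unit for Dlcop} to be reduced to the single case $b=1$.
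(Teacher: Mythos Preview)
Your proof is correct and follows essentially the same approach as the paper: for (1) you apply formula (1) of Theorem~\ref{dlcop given by formulas} with $b=a$ and use the commutativity of $Dlcop$ to cancel the two equal terms mod $2$; for (2) you use part (1) to kill the term $Dlcop(cx_i\otimes x_{J'})$ in the same recursion, then induct and invoke the $H^*(X)\otimes H^*(X)$-linearity (4). The paper argues identically, only presenting the induction before the decomposition via (4) rather than after.
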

\begin{proof}[Proof of Lemma~\ref{square of Dlcop}]
(1) By (1) of Theorem~\ref{dlcop given by formulas},
$$Dlcop(a\otimes ax_i)=x_iDlcop(a\otimes a)+Dlcop(ax_i\otimes a).$$
Since $Dlcop$ is graded commutative~\cite{Chataur-Menichi:stringclass},
$
Dlcop(a\otimes ax_i)=Dlcop(ax_i\otimes a)
$.
So $x_iDlcop(a\otimes a)=0$.

(2) By (1) and (1) of Theorem~\ref{dlcop given by formulas},
$$
Dlcop(Dlcop(a\otimes a)\otimes bx_i)=x_iDlcop(Dlcop(a\otimes a)\otimes b)+0.
$$
Therefore by induction
$$
Dlcop(Dlcop(a\otimes a)\otimes x_{i_1}\dots x_{i_l})=
x_{i_1}\dots x_{i_l}Dlcop(Dlcop(a\otimes a)\otimes 1).
$$
Using (4) of Theorem~\ref{dlcop given by formulas}, we obtain (2).
\end{proof}
\begin{proof}[Proof of Theorem~\ref{unit for Dlcop}]
Since $Dlcop$ is graded associative~\cite{Chataur-Menichi:stringclass}
and using (2) of Theorem~\ref{dlcop given by formulas} twice,
\begin{eqnarray*}
&&Dlcop(Dlcop(x_1\dots x_N\otimes x_1\dots x_N)\otimes 1)\\
&=&Dlcop(x_1\dots x_N\otimes Dlcop(x_1\dots x_N\otimes 1))
=Dlcop(x_1\dots x_N\otimes 1)=1.
\end{eqnarray*}
Therefore using (2) of Lemma~\ref{square of Dlcop},
\begin{eqnarray*}
&&Dlcop(Dlcop(x_1\dots x_N\otimes x_1\dots x_N)\otimes b)\\
&=&bDlcop(Dlcop(x_1\dots x_N\otimes x_1\dots x_N)\otimes 1) = b\times 1=b.
\end{eqnarray*}
\end{proof}
The simplest connected Lie group with non-trivial Steenrod operation $Sq_1$ in the cohomology of its classifying space is $SO(3)$.
\begin{thm}\label{thm:BSO_3}
The cup product and the dual of the loop coproduct on the mod $2$ free loop cohomology
of the classifying space of $SO(3)$ are given by
\begin{eqnarray*}
H^*(LBSO(3); {\mathbb Z}/2)&\cong& 
\Delta(x_1, x_2)\otimes {\mathbb Z}/2[y_2, y_3] \\
&\cong & {\mathbb Z}/2[x_1, x_2]\otimes {\mathbb Z}/2[y_2, y_3] \left/ \right.
\left(
\begin{array}{cc}
  x_1^2+x_2   \\
  x_2^2 + x_2y_2 + y_3x_1   
\end{array}
\right)
\end{eqnarray*}
as algebras over $H^*(BSO(3); {\mathbb Z}/2)\cong {\mathbb Z}/2[y_2, y_3]$, where 
$\deg x_i=i$ and $\deg y_j = j$.
$$
\begin{array}{l}
Dlcop(x_1x_2\otimes 1) = Dlcop(x_1\otimes x_2)=1,\\
Dlcop(x_1x_2\otimes x_1)= x_1,\quad
Dlcop(x_1x_2\otimes x_2)= x_2+y_2,\\
Dlcop(x_1x_2\otimes x_1x_2) = x_1x_2 + x_1y_2 + y_3,
\end{array}
$$
\end{thm}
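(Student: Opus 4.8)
The plan is to deduce Theorem~\ref{thm:BSO_3} by specializing the general machinery of Sections 2 and 3 to the case $X = BSO(3)$, where $N=2$, $y_1 = y_2$ (degree $2$), $y_2 = y_3$ (degree $3$), so that $x_1 = \mathcal{D}y_2$ has degree $1$, $x_2 = \mathcal{D}y_3$ has degree $2$, and $d = |x_1 x_2| = 3 = \dim SO(3)$. First I would record the cup product structure: by the discussion preceding Theorem~\ref{dlcop given by formulas}, $H^*(LBSO(3);\mathbb{F}_2) = \mathbb{F}_2[y_2,y_3]\otimes \Delta(x_1,x_2)$ with relations $(\mathcal{D}y_i)^2 = \mathcal{D}(\mathrm{Sq}^{|y_i|-1}y_i)$. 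Here $\mathrm{Sq}^1 y_2 = y_3$ gives $x_1^2 = \mathcal{D}(y_3) = x_2$, and $\mathrm{Sq}^1 y_3 = y_2 y_3$ (since $\mathrm{Sq}^1$ is a derivation and $y_3 = \mathrm{Sq}^1 y_2$, so $\mathrm{Sq}^1 y_3 = 0$... actually one must compute $\mathrm{Sq}^2 y_3$) — more carefully, $x_2^2 = \mathcal{D}(\mathrm{Sq}^2 y_3)$, and using $\mathrm{Sq}^2 y_3 = y_2 y_3 + \cdots$ together with $\mathcal{D}$ being an $(ev^*,ev^*)$-derivation over $ev^*$, this yields $x_2^2 = x_2 y_2 + x_1 y_3$. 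This reproduces the two stated relations and shows $\mathbb{F}_2[x_1,x_2]\otimes\mathbb{F}_2[y_2,y_3]/(x_1^2+x_2,\ x_2^2+x_2y_2+y_3x_1)\cong \Delta(x_1,x_2)\otimes\mathbb{F}_2[y_2,y_3]$ as an $H^*(BSO(3))$-algebra.

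Next I would compute the dual of the loop coproduct on the five listed inputs using Theorem~\ref{dlcop given by formulas}. The base cases come from part (2): with the orientation $\tau\circ H^*(i)(x_1x_2)=1$, we get $m(x_1\otimes x_2) = (-1)^{Nm+m}\varepsilon = (-1)^{2\cdot 1 + 1}\cdot 1 = -1 = 1$ over $\mathbb{F}_2$, and similarly $m(x_1 x_2\otimes 1) = 1$ (the case $l=2$, $m=0$, which is exactly the unit computation in remark~\ref{dlcop for polynomial}: $m(x_1x_2\otimes 1) = 1$). For $Dlcop(x_1x_2\otimes x_1)$: apply (1) of Theorem~\ref{dlcop given by formulas} to write $m(x_1x_2\otimes x_1\cdot 1) = x_1 m(x_1x_2\otimes 1) + m(x_1x_2x_1\otimes 1)$ (signs vanish mod $2$); since $x_1 x_2 x_1 = x_1^2 x_2 = x_2^2 = x_2 y_2 + x_1 y_3$, the second term is $m((x_2y_2 + x_1y_3)\otimes 1) = y_2 m(x_2\otimes 1) + y_3 m(x_1\otimes 1)$ by (4), and both $m(x_2\otimes 1)$ and $m(x_1\otimes 1)$ vanish by (3) (proper subsets of $\{1,2\}$), so $Dlcop(x_1x_2\otimes x_1) = x_1$. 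The same strategy handles $Dlcop(x_1x_2\otimes x_2)$ — here $x_1x_2\cdot x_2 = x_1 x_2^2 = x_1(x_2y_2+x_1y_3) = x_1x_2y_2 + x_1^2 y_3 = x_1x_2 y_2 + x_2 y_3$, leading after applying (1) and reducing to $x_2 + y_2$ — and finally $Dlcop(x_1x_2\otimes x_1x_2)$, which by Theorem~\ref{un terme en plus} equals $Dlcop(x_1x_2\otimes x_{\{1,2\}\cap\{1,2\}}) = Dlcop(x_1x_2\otimes x_1x_2)$ trivially, so instead I expand via (1): $m(x_1x_2\otimes x_1 x_2) = x_1 m(x_1 x_2\otimes x_2) + m(x_1^2 x_2\otimes x_2)$; the first term is $x_1(x_2 + y_2) = x_1 x_2 + x_1 y_2$, and the second is $m(x_2^2\otimes x_2) = m((x_2 y_2 + x_1 y_3)\otimes x_2) = y_2 m(x_2\otimes x_2) + y_3 m(x_1\otimes x_2) = y_2\cdot 0 + y_3\cdot 1 = y_3$, giving $x_1 x_2 + x_1 y_2 + y_3$.

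Having these values on $\Delta(x_1,x_2)\otimes\Delta(x_1,x_2)$, strict commutativity mod $2$ and the module structure (4) extend $m$ to all of $H^*(LBSO(3))\otimes H^*(LBSO(3))$, and by Lemma~\ref{lem:trivial loop coproduct} every product $Dlcop(x_I\otimes x_J)$ with $I\cup J\neq\{1,2\}$ vanishes; together with remark~\ref{dlcop for polynomial} this shows the listed formulas determine $m$ completely, so the theorem is proved. The main obstacle I anticipate is not any single formula but pinning down the Steenrod operation inputs — specifically computing $\mathrm{Sq}^2 y_3 \in H^5(BSO(3);\mathbb{F}_2)$ correctly to get the relation $x_2^2 = x_2 y_2 + x_1 y_3$, and then carefully tracking which monomials appear after each application of relation $x_1^2 = x_2$ inside the free loop algebra; the signs are a non-issue here since everything is mod $2$, which is precisely why $SO(3)$ is a clean first example. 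One should double-check that the orientation normalization $\tau\circ H^*(i)(x_1 x_2) = 1$ is the one making part (2) of Theorem~\ref{dlcop given by formulas} applicable with $\varepsilon$ the stated signature, so that the base case $m(x_1\otimes x_2)=1$ (rather than $m(x_2\otimes x_1)=1$) is the correct one.
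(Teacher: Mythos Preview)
Your approach is correct and is essentially identical to the paper's own proof, which simply records that $H^*(BSO(3);\mathbb{F}_2)=\mathbb{F}_2[y_2,y_3]$, cites the Wu formulas $\mathrm{Sq}^1 y_2=y_3$ and $\mathrm{Sq}^2 y_3=y_2 y_3$, and then says the cup product and $Dlcop$ follow from Theorem~\ref{dlcop given by formulas}. You have merely spelled out the inductive computations that the paper leaves implicit; one small clean-up is that $\mathrm{Sq}^2 y_3 = y_2 y_3$ exactly (no ``$+\cdots$''), which via $\mathcal{D}(y_2 y_3)=x_1 y_3 + y_2 x_2$ gives the relation $x_2^2 = x_2 y_2 + x_1 y_3$ directly.
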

\begin{proof}
The cohomology $H^*(BSO(3); {\mathbb Z}/2)$ is the polynomial algebra on the Stiefel-Whitney classes $y_2$ and $y_3$ of the tautological bundle $\gamma^3$ (\cite[Theorem 7.1]{Milnor-Stasheff}
or~\cite[III.Corollary 5.10]{Mimura-Toda:topliegroups}). By Wu formula~\cite[III.Theorem 5.12(1)]{Mimura-Toda:topliegroups},
$Sq^1 y_2=y_3$ and $Sq^2 y_3=y_2y_3$.
Now the computation of the cup product and of the dual of the loop coproduct follows from Theorem~\ref{dlcop given by formulas}.
\end{proof}
In the following proof, we detail the computation of the cup
product and the dual of the loop coproduct following Theorem~\ref{dlcop given by formulas}
for a more complicated example of Lie group.

\medskip
\noindent
{\it Proof of Theorem \ref{thm:BG_2}.}  
 Observe that $Sq^2y_4=y_6$, $Sq^1y_6=y_7$~\cite[VII.Corollary 6.3]{Mimura-Toda:topliegroups}
and hence 
$Sq^3y_4=Sq^1Sq^2y_4=y_7$. From~\cite[Proof of Theorem 1.7]{Kuri:moduleadjoint},
$Sq^5y_6=y_4y_7$ and $Sq^6y_7=y_6y_7$.
Therefore the computation of the cup product on $H^*(LBG_2;\mathbb{Z}/2)$ follows from Theorem~\ref{dlcop given by formulas}: $x_3^2=x_6$, $x_5^2=x_3y_7+y_4x_6$ and $x_6^2=x_5y_7+y_6x_6$. 

Lemma \ref{lem:trivial loop coproduct} implies that monomials with non-trivial loop coproduct 
are ones only listed in the theorem.

By (2) of Theorem~\ref{dlcop given by formulas},
$$Dlcop(x_3x_5x_6\otimes 1) = Dlcop(x_3x_5\otimes x_6) = Dlcop(x_3x_6\otimes x_5)= Dlcop(x_5x_6\otimes x_3)=1.$$
By~Lemma \ref{lem:trivial loop coproduct},
$Dlcop(x_3x_5^2\otimes 1)=0$.
By (1) of Theorem~\ref{dlcop given by formulas},
$$Dlcop(x_3x_5x_6\otimes x_6)=x_6 Dlcop(x_3x_5x_6\otimes 1)
+Dlcop(x_3x_5x_6^2\otimes 1).$$
Since $x_3x_5x_6^2=x_3x_5(x_5y_7+y_6x_6)$,
by (4) of Theorem~\ref{dlcop given by formulas},
$$Dlcop(x_3x_5x_6^2\otimes 1)=
y_7Dlcop(x_3x_5^2\otimes 1)
+y_6Dlcop(x_3x_5x_6\otimes 1)=y_7\times 0+y_6\times 1$$
 So finally
$Dlcop(x_3x_5x_6\otimes x_6)=x_6+ y_6$.

By Theorem~\ref{un terme en plus}, 
$Dlcop(x_3x_6\otimes x_5x_6)=Dlcop(x_3x_5x_6\otimes x_6)$.
 
Since 
$
x_3x_5^2x_6=x_5y_7^2+x_6y_6y_7+x_3x_5y_7y_4+x_3x_6y_6y_4
$,
by (1) of Theorem~\ref{dlcop given by formulas} and Lemma \ref{lem:trivial loop coproduct},
\begin{align*}
Dlcop(x_3x_5x_6\otimes x_5x_6)=
x_5 Dlcop(x_3x_5x_6\otimes x_6)+Dlcop(x_3x_5^2x_6\otimes x_6)
\\=x_5(x_6+y_6)+y_7^2\times 0+y_6y_7\times 0+y_7y_4\times 1+y_6y_4\times 0.
\end{align*}
The other computations are left to the reader. 
\hfill\qed

\medskip
\noindent
We would like to emphasize that Theorem~\ref{thm:BG_2} gives at the same time,
the cup product and the dual of the loop coproduct on $H^{*}(LBG_2;Z/2)$.
As mentioned in Introduction, if we forget the cup product, then the following Theorem shows that the
dual of the loop coproduct is really simple:

\begin{thm}\label{iso of algebras between free loop space cohomology and Hochschild}
Let $X$ be a simply-connected space such that $H^*(X;\mathbb{F}_2)$
is the polynomial algebra $\mathbb{F}_2[V]$.
Then with respect to the dual of the loop coproduct,
there is an isomorphism of graded algebras between 
$H^{*+d}(LX;\mathbb{F}_2)$ and the tensor product of algebras
$H^*(X;\mathbb{F}_2)\otimes H_{-*}(\Omega X;\mathbb{F}_2)\cong
\mathbb{F}_2[V]\otimes\Lambda(sV)^\vee$.
\end{thm}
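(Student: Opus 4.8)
The plan is to construct an explicit degree‑preserving morphism of $\mathbb{F}_2[V]$‑algebras
$\Theta\colon \mathbb{F}_2[V]\otimes\Lambda(sV)^{\vee}\to(\mathbb{H}^{*}(LX),m)$ and to prove it is an isomorphism by comparing Poincaré series and then establishing a triangularity statement. Throughout, recall that over $\mathbb{F}_2$ the sign in $m=(-1)^{d(d-|a|)}Dlcop$ disappears, so $m=Dlcop$, and that by Section~\ref{cup product main theorem} we have, as cup‑product algebras, $H^{*}(LX)=\mathbb{F}_2[y_1,\dots,y_N]\otimes\Delta(x_1,\dots,x_N)$ with $x_i=\Delta\,ev^{*}(y_i)$ of degree $|y_i|-1=|sv_i|$. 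In particular $H^{*}(LX)$ is $\mathbb{F}_2[V]$‑free on the $2^{N}$ square‑free monomials $z_T:=\prod_{i\in T}x_i$ ($T\subseteq\{1,\dots,N\}$), the top one $z_{\{1,\dots,N\}}$ has degree $d$, and $\mathfrak m:=ev^{*}(H^{>0}(X))\cdot H^{*}(LX)=\bigoplus_T\mathbb{F}_2[V]^{>0}z_T$ is the ideal with $H^{*}(LX)/\mathfrak m\cong H^{*}(\Omega X)$ concentrated in degrees $\le d$.

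First I would set up $\Theta$. By Theorem~\ref{unit for Dlcop} the algebra $(\mathbb{H}^{*}(LX),m)$ is unital, so Theorem~\ref{Cup in string classifying}(6) applies: $s^{!}(P)=ev^{*}(P)\cup\mathbb{I}$ is a ring map $H^{*}(X)\to\mathbb{H}^{*}(LX)$ with $m(s^{!}(P)\otimes a)=ev^{*}(P)\cup a$, so $\mathbb{H}^{*}(LX)$ is a commutative $\mathbb{F}_2[V]$‑algebra, still $\mathbb{F}_2[V]$‑free on the $z_T$. Put $\theta_j:=z_{\{1,\dots,N\}\setminus\{j\}}$, of lower degree $-|sv_j|$. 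By Theorem~\ref{un terme en plus}, $m(\theta_j\otimes\theta_j)=Dlcop(z_{\overline{\jmath}}\otimes z_{\overline{\jmath}})=0$ because $\overline{\jmath}\cup\overline{\jmath}\ne\{1,\dots,N\}$; since $\mathbb{H}^{*}(LX)$ is graded commutative there is then a unique morphism of $\mathbb{F}_2[V]$‑algebras $\Theta$ with $\Theta((sv_j)^{\vee})=\theta_j$. Because $d=\sum_i|sv_i|$, a one‑line computation gives $P_{\mathbb{F}_2[V]\otimes\Lambda(sV)^{\vee}}(t)=t^{-d}P_{H^{*}(LX)}(t)=P_{\mathbb{H}^{*}(LX)}(t)$, so it suffices to show $\Theta$ is surjective, i.e.\ that the elements $\Psi(S):=m\bigl(\bigotimes_{j\in S}\theta_j\bigr)$ form an $\mathbb{F}_2[V]$‑basis of $H^{*}(LX)$, since the image of $\Theta$ is the $\mathbb{F}_2[V]$‑submodule they generate.

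The heart of the argument would be the congruence
$$\Psi(S)\equiv z_{\overline S}\pmod{\mathfrak m}\qquad(\overline S:=\{1,\dots,N\}\setminus S).$$
Granting it, the matrix of $(\Psi(S))_S$ in the basis $(z_T)_T$ has entries in $\mathbb{F}_2[V]$, and reindexing the columns by $\overline S$ and ordering square‑free monomials by degree makes it upper triangular with $1$'s on the diagonal: a $z_U$‑coefficient of $\Psi(S)$ with $|z_U|<|z_{\overline S}|$ has positive degree, and one with $|z_U|=|z_{\overline S}|$, $U\ne\overline S$, is a degree‑zero element of $\mathbb{F}_2[V]^{>0}$, hence zero. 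Thus the matrix is invertible over $\mathbb{F}_2[V]$ and $\Theta$ is an isomorphism. The congruence I would prove by induction on $|S|$. Writing $S=S'\sqcup\{j\}$ and using $\Psi(S)=m(\Psi(S')\otimes\theta_j)$, $\mathbb{F}_2[V]$‑linearity of $m$ (Theorem~\ref{dlcop given by formulas}(4)), and Theorem~\ref{un terme en plus} (which turns $m(z_{\overline{S'}}\otimes\theta_j)$ into $Dlcop(z_{\{1,\dots,N\}}\otimes z_{\overline S})$), the inductive step reduces — with base case $\mathbb{I}\equiv z_{\{1,\dots,N\}}\pmod{\mathfrak m}$, forced by degree together with $m(\mathbb{I}\otimes1)=1$ and Remark~\ref{dlcop for polynomial} — to the auxiliary congruence $\phi(T):=Dlcop(z_{\{1,\dots,N\}}\otimes z_T)\equiv z_T\pmod{\mathfrak m}$. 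This in turn follows by a second induction, on $|T|$: Theorem~\ref{dlcop given by formulas}(1) gives, for $t\in T$,
$$\phi(T)=x_t\,\phi(T\setminus t)+Dlcop\bigl(z_{\{1,\dots,N\}}x_t\otimes z_{T\setminus t}\bigr),$$
where the first summand is $\equiv z_T\pmod{\mathfrak m}$ by the inductive hypothesis, and the second lies in $\mathfrak m$ because $z_{\{1,\dots,N\}}x_t$ already does (it sits in degree $d+|x_t|>d$, so all its $z_U$‑coefficients have positive degree) and $Dlcop$ is $\mathbb{F}_2[V]$‑linear.

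The main obstacle is organizing the two nested inductions and, above all, keeping the bookkeeping clean in the inductive step for $\Psi$ — in particular the use of Theorem~\ref{un terme en plus} to collapse a product of $\theta$'s into a single $Dlcop(z_{\{1,\dots,N\}}\otimes z_{(\cdot)})$, and the observation that $z_{\{1,\dots,N\}}\cdot x_t$ lands in $\mathfrak m$ purely for degree reasons (the square‑free part of $H^{*}(LX)$ vanishes above degree $d$). Once these two points are isolated, the rest is routine manipulation with the formulas of Theorem~\ref{dlcop given by formulas} and the Poincaré‑series identity.
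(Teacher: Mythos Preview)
Your proof is correct and is essentially the paper's first proof, reorganised. You build the same algebra map (their $\varphi=s^!\otimes(\sigma\circ\Theta)$, your $\Theta$), sending $(sv_j)^\vee\mapsto x_1\cdots\widehat{x_j}\cdots x_N$ and $P\mapsto ev^*(P)\cup\mathbb I$, and your key congruence $\phi(T)\equiv z_T\pmod{\mathfrak m}$ is exactly their Lemma~\ref{morphism of algebras from free loop space to pointed loop space}(1), namely $H^*(i)\circ Dlcop(z_{\{1,\dots,N\}}\otimes z_T)=z_T$ (since $\mathfrak m=\ker H^*(i)$); where they then quote Leray--Hirsch, you spell it out as triangularity of the change-of-basis matrix, which is the same thing. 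The paper also records a genuinely different second proof: using that $m(z_{\{1,\dots,N\}}\otimes z_{\{1,\dots,N\}})=\mathbb I$, the map $\psi(a\otimes z_T)=Dlcop(z_{\{1,\dots,N\}}\otimes a z_T)$ is an \emph{involutive} algebra isomorphism by a one-line computation with Theorem~\ref{un terme en plus}, which bypasses the inductions entirely.
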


\medskip

\begin{lem}~\label{morphism of algebras from free loop space to pointed loop space}
Let $X$ be a simply-connected space such that $H^*(X;\mathbb{F}_2)=\mathbb{F}_2[V]$. Let $x_1,\dots,x_N$ be a basis of $sV$.

1) Suppose that $\{i_1, ..,i_l\}\cup\{j_1, ..., j_m\}=\{1, ..., N\}$. Let $\{k_1, .., k_u\}:=\{i_1, ...,i_l\}\cap \{j_1, ..., j_m\}$.
Then $$
H^*(i)\circ Dlcop(x_{i_1}\cdots x_{i_l}\otimes x_{j_1}\cdots x_{j_m})
= x_{k_1}\cdots x_{k_u}.$$

2 ) 
Let $\Theta:H_{-*}(\Omega X)=\wedge (sV)^\vee\buildrel{\cong}\over\rightarrow H^{*+d}(\Omega X)=\Delta (sV)$
be the linear isomorphism defined by
$$
\Theta(x_{j_1}^\vee\wedge\dots \wedge x_{j_p}^\vee)
= x_{1}\cup\dots\cup\widehat{x_{j_{1}}}\cup\dots\cup\widehat{x_{j_p}}\cup\dots \cup x_{N}.
$$
Here $^\vee$ denote the dual and $\widehat{\quad}$ denotes omission.
Then the composite $\Theta^{-1}\circ H^*(i):H^{*+d}(LX)\rightarrow
H^{*+d}(\Omega X)\buildrel{\cong}\over\rightarrow H_{-*}(\Omega X)$
is a morphism of graded algebras preserving the units.
\end{lem}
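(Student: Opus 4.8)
Throughout we work over $\mathbb{F}_2$, so all signs vanish and $Dlcop$ coincides with the product $m$ of Theorem~\ref{dlcop given by formulas}. Recall that $H^*(LX)$ is free over $H^*(X)=\mathbb{F}_2[y_1,\dots,y_N]$ on the monomials $x_S:=\prod_{k\in S}x_k$ ($S\subseteq\{1,\dots,N\}$); write $(y)$ for the ideal $ev^*(H^{>0}(X))\cdot H^*(LX)$. Two elementary observations drive the proof. First, $ev\circ i\colon\Omega X\to X$ is the constant map, so $H^*(i)$ annihilates $(y)$, and since $H^*(i)(x_k)=\sigma(y_k)$ it sends $x_S$ to the like-named basis element of $H^*(\Omega X)=\Delta(sV)$. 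Second, by formula (4) of Theorem~\ref{dlcop given by formulas} $Dlcop$ is $H^*(X)$-bilinear, hence carries $(y)\otimes H^*(LX)$ and $H^*(LX)\otimes(y)$ into $(y)$; consequently $H^*(i)\circ Dlcop$ vanishes whenever one argument lies in $(y)$, and so is determined by its values on the pairs $x_S\otimes x_T$.

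The plan for part 1 is a one-variable induction. Assume $\{i_1,\dots,i_l\}\cup\{j_1,\dots,j_m\}=\{1,\dots,N\}$ and set $K=\{k_1,\dots,k_u\}=\{i_1,\dots,i_l\}\cap\{j_1,\dots,j_m\}$. By Theorem~\ref{un terme en plus}, $Dlcop(x_{i_1}\cdots x_{i_l}\otimes x_{j_1}\cdots x_{j_m})=Dlcop(x_1\cdots x_N\otimes x_K)$, so it suffices to show $H^*(i)\big(Dlcop(x_1\cdots x_N\otimes x_K)\big)=x_K$, which I do by induction on $|K|$: the case $K=\emptyset$ is $H^*(i)(Dlcop(x_1\cdots x_N\otimes 1))=H^*(i)(1)=1$ by remark~\ref{dlcop for polynomial}, and for the step one picks $k\in K$, sets $K'=K\setminus\{k\}$, and applies formula (1) of Theorem~\ref{dlcop given by formulas}:
$$Dlcop(x_1\cdots x_N\otimes x_k\,x_{K'})=x_k\,Dlcop(x_1\cdots x_N\otimes x_{K'})+Dlcop(x_1\cdots x_N\,x_k\otimes x_{K'}).$$
Since $x_1\cdots x_N\,x_k$ has degree $d+|x_k|>d$ while $\Delta(x_1,\dots,x_N)$ is concentrated in degrees $\le d$, we get $H^{>d}(LX)\subseteq(y)$, so the last summand dies under $H^*(i)$; the first maps to $x_k\cdot H^*(i)\big(Dlcop(x_1\cdots x_N\otimes x_{K'})\big)=x_k\,x_{K'}=x_K$ by the inductive hypothesis (using $k\notin K'$), proving part 1. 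I expect this to be the main obstacle to a careless argument: modulo $(y)$ the squares $x_k^2=\mathcal{D}(Sq^{|y_k|-1}y_k)$ need not vanish — already $x_1^2=x_2$ for $BSO(3)$ — so one cannot simply reuse the computation in the proof of Theorem~\ref{thm:BG_explicit_cal}; it is precisely the preliminary collapse to $x_1\cdots x_N\otimes x_{I\cap J}$ afforded by Theorem~\ref{un terme en plus} that makes the degree estimate available.

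For part 2, set $\phi:=\Theta^{-1}\circ H^*(i)\colon H^{*+d}(LX)\to H_{-*}(\Omega X)=\wedge(sV)^\vee$. The generators $x_k=\sigma(y_k)$ of $H^*(\Omega X)$ being primitive, the Pontryagin product on $\wedge(sV)^\vee$ satisfies $x_S^\vee\cdot x_T^\vee=x_{S\cup T}^\vee$ if $S\cap T=\emptyset$ and $=0$ otherwise, while $\Theta(x_S^\vee)=x_{S^c}$ with $S^c:=\{1,\dots,N\}\setminus S$. By the first paragraph $\phi$ kills $(y)$ and $\phi(x_S)=\Theta^{-1}(x_S)=x_{S^c}^\vee$; since both $(a,b)\mapsto\phi(Dlcop(a\otimes b))$ and $(a,b)\mapsto\phi(a)\cdot\phi(b)$ are $\mathbb{F}_2$-bilinear and vanish as soon as $a$ or $b$ lies in $(y)$, multiplicativity of $\phi$ reduces to the identity $\phi\big(Dlcop(x_S\otimes x_T)\big)=x_{S^c}^\vee\cdot x_{T^c}^\vee$ for all $S,T\subseteq\{1,\dots,N\}$. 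If $S\cup T\ne\{1,\dots,N\}$, the left side is $0$ by Lemma~\ref{lem:trivial loop coproduct} (each index occurs at most twice in $x_S\otimes x_T$ and some index is absent) and the right side is $0$ because $S^c\cap T^c=(S\cup T)^c\ne\emptyset$; if $S\cup T=\{1,\dots,N\}$, part 1 gives $H^*(i)(Dlcop(x_S\otimes x_T))=x_{S\cap T}$, whence $\phi(Dlcop(x_S\otimes x_T))=\Theta^{-1}(x_{S\cap T})=x_{(S\cap T)^c}^\vee=x_{S^c\cup T^c}^\vee=x_{S^c}^\vee\cdot x_{T^c}^\vee$, the last equality holding since $S^c\cap T^c=\emptyset$. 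Finally, by Theorem~\ref{unit for Dlcop} the unit of $(H^{*+d}(LX),Dlcop)$ is $\mathbb{I}=Dlcop(x_1\cdots x_N\otimes x_1\cdots x_N)$, and part 1 with $S=T=\{1,\dots,N\}$ yields $H^*(i)(\mathbb{I})=x_1\cdots x_N$, hence $\phi(\mathbb{I})=\Theta^{-1}(x_1\cdots x_N)=1$. Thus $\phi$ is a unit-preserving morphism of graded algebras, which is the assertion.
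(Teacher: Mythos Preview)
Your proof is correct. For part 2 it is essentially the paper's argument, just with the intermediate product $comp^!$ unpacked. For part 1 you take a genuinely different route: the paper orders the $k_i$ by degree, expands $x_{k_1}^2=\sum_i x_iP_i$, and argues term by term (degree-$0$ coefficients force $|x_i|>|x_{k_1}|$, hence $i\notin K$, so Lemma~\ref{lem:trivial loop coproduct} applies; positive-degree coefficients land in $(y)$), then runs the induction of Theorem~\ref{thm:BG_explicit_cal}. You instead first collapse via Theorem~\ref{un terme en plus} to $Dlcop(x_1\cdots x_N\otimes x_K)$ and then observe that the correction term $Dlcop(x_1\cdots x_N\,x_k\otimes x_{K'})$ has first argument in degree $d+|x_k|>d$, hence automatically lies in $(y)$. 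This sidesteps the degree-ordering and the case analysis on $\deg P_i$ entirely; the price is the dependence on Theorem~\ref{un terme en plus}, but that is already available. Your remark that the squares $x_k^2$ need not vanish in $\Delta(sV)$ (e.g.\ $x_1^2=x_2$ for $BSO(3)$) correctly identifies why Theorem~\ref{thm:BG_explicit_cal} cannot simply be quoted.
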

\begin{proof}[Proof of Lemma~\ref{morphism of algebras from free loop space to pointed loop space}]
1) Suppose that $\vert x_{k_1}\vert \geq \cdots\geq  \vert x_{k_u}\vert$.
There exists polynomials $P_1$,\dots,$P_N\in \mathbb{F}[y_1,\dots,y_N]$
possibly null such that
$$
x_{k_1}^2=\Delta\circ ev^*\circ\text{Sq}^{\vert y_{k_1}\vert-1} (y_{k_1})
=\sum_{i=1}^N x_i P_i.
$$
If $P_i$ is of degree $0$, since $\vert x_i\vert > \vert x_{k_1}\vert$,
$x_i$ is not one of the elements $x_{k_1}$,\dots, $x_{k_u}$
and so by Lemma~\ref{lem:trivial loop coproduct}
$
Dlcop(x_{i_1}\cdots\widehat{x_{k_{1}}}\cdots x_{i_l}x_i
\otimes x_{j_1}\cdots \widehat{x_{k_{1}}}\cdots x_{j_m})=0.
$

\noindent If $P_i$ is of degree $\geq 1$, by (4) of Theorem~\ref{dlcop given by formulas},
$H^*(i)\circ Dlcop(P_i x_{i_1}\cdots\widehat{x_{k_{1}}}\cdots x_{i_l}x_i
\otimes x_{j_1}\cdots \widehat{x_{k_{1}}}\cdots x_{j_m})=0$

\noindent Therefore $H^*(i)\circ Dlcop(x_{i_1}\cdots\widehat{x_{k_{1}}}\cdots x_{i_l}x_{k_{1}}^2
\otimes x_{j_1}\cdots \widehat{x_{k_{1}}}\cdots x_{j_m})=0$.
Now the same proof as the proof of Theorem~\ref{thm:BG_explicit_cal} shows 1).

2) Since $H^*(\Omega X;\mathbb{F}_2)$ is generated by the $x_i:=\sigma (y_i)$, $1\leq i\leq N$
which are primitives, by~\cite[4.20 Proposition]{Milnor-Moore}, all squares vanish in $H_*(\Omega X;\mathbb{F}_2)$. Therefore $H_*(\Omega X;\mathbb{F}_2)$ is the exterior algebra $\Lambda \sigma(y_i)^\vee$.

Let $I=\{i_1, ..,i_l\}\subset\{1, ..., N\}$. Recall from Theorem~\ref{un terme en plus}
that in $\Delta(x_1,\cdots,x_N)$, $x_I$ 
denotes the generator $x_{i_1}\cup x_{i_2}\cup\dots \cup x_{i_l}$.
Denote also in the exterior algebra $\Lambda(x_1^\vee,\cdots,x_N^\vee)$
by $x_I^\vee$ the element $x_{i_1}^\vee\wedge x_{i_2}^\vee\wedge\dots \wedge x_{i_l}^\vee$.
Then with this notation,
$\Theta(x_I^\vee)=x_{I^c}$ where $I^c$ is the complement of $I$ in $\{1, ..., N\}$.
Let $comp^!:H^{*+d}(\Omega X)\otimes H^{*+d}(\Omega X)\rightarrow  H^{*+d}(\Omega X)$
be the multiplication defined by
$comp^!(x_I\otimes x_J)=x_{I\cap J}$ if $I\cup J=\{1, ..., N\}$ and $0$ otherwise.
By (1) and Lemma~\ref{lem:trivial loop coproduct}, $H^*(i):H^{*+d}(LX)\rightarrow H^{*+d}(\Omega X)$ commutes with the products $Dlcop$ and $comp^!$.
Since $x_{(I\cup J)^c}=x_{I^c\cap J^c}$, $\Theta:H_{-*}(\Omega X)\rightarrow H^{*+d}(\Omega X)$
commutes with the exterior product and $comp^!$.

By Theorem~\ref{unit for Dlcop}, $Dlcop(x_1\dots x_N\otimes x_1\dots x_N)$ is the unit
of Dlcop. By (1),
$$\Theta^{-1}\circ H^*(i)\circ Dlcop(x_1\dots x_N\otimes x_1\dots x_N)= \Theta^{-1}(x_1\dots x_N)=1.$$
Therefore  $\Theta^{-1}\circ H^*(i)$ preserves also the units.
\end{proof}
\begin{proof}[Proof of Theorem~\ref{iso of algebras between free loop space cohomology and Hochschild}]
Denote by $\mathbb{I}:=Dlcop(x_1\dots x_N\otimes x_1\dots x_N)$
the unit of $H^{*+d}(LX;\mathbb{F}_2)$ (Theorem~\ref{unit for Dlcop}).
By (6) of Theorem~\ref{Cup in string classifying},
the map $s^!:H^*(X)\rightarrow H^{*+d}(LX)$, $a\mapsto ev^*(a)\mathbb{I}$,
is a morphism of unital commutative graded algebras.

By Lemma~\ref{lem:trivial loop coproduct}, we have 
$Dlcop(x_1\dots\widehat{x_i}\dots x_N\otimes x_1\dots\widehat{x_i}\dots x_N)=0$.
So let $\sigma:H^{*+d}(\Omega X)\rightarrow H^{*+d}(LX)$ be the unique linear
map such that for $\forall 1\leq i\leq N$,
$\sigma(x_1\dots\widehat{x_i}\dots x_N)=x_1\dots\widehat{x_i}\dots x_N$
and such that
$\sigma\circ\Theta:H_{-*}(\Omega X)=\Lambda(sV)^\vee\rightarrow H^{*+d}(LX)$
is a morphism of unital commutative graded algebras.
For $1\leq i\leq N$,
$\Theta^{-1}\circ H^*(i)\circ\sigma\circ\Theta (x_i^\vee)=x_i^\vee$.
By Lemma~\ref{morphism of algebras from free loop space to pointed loop space},
the composite $\Theta^{-1}\circ H^*(i):H^{*+d}(LX)\rightarrow
H^{*+d}(\Omega X)\buildrel{\cong}\over\rightarrow H_{-*}(\Omega X)$
is a morphism of graded algebras.
So the composite $\Theta^{-1}\circ H^*(i)\circ \sigma\circ\Theta$
is the identity map and $\sigma$ is a section of $H^*(i)$.
So by Leray-Hirsch theorem, the linear morphism of $H^*(X)$-modules
$
H^*(X)\otimes H^*(\Omega X)\rightarrow H^*(LX)
$,
$a\otimes g\mapsto ev^*(a)\sigma(g)$,
is an isomorphism.

The composite $$\varphi:H^*(X)\otimes H_{-*}(\Omega X)
\buildrel{s^!\otimes \sigma\circ\Theta}\over\rightarrow
H^{*+d}(LX)\otimes H^{*+d}(LX)\buildrel{Dlcop}\over\rightarrow H^{*+d}(LX)$$
is a morphism of commutative graded algebras with respect to the dual of the
loop coproduct. By (4) of Theorem~\ref{dlcop given by formulas} and since
$\mathbb{I}$ is an unit for $Dlcop$,
$\varphi(a\otimes g)=Dlcop(ev^*(a)\mathbb{I}\otimes \sigma\circ\Theta(g))
=ev^*(a)\sigma\circ\Theta(g)$.
Therefore $\varphi$ is an isomorphism.
\end{proof}
\begin{ex}\label{conjecture verifie pour SO3}
With respect to the dual of the loop coproduct,
there is an isomorphism of algebras between
$H^{*+3}(LBSO(3);Z/2)$ and 
$$H_{-*}(SO(3);Z/2)\otimes H^*(BSO(3);Z/2)\cong \wedge (u_{-1},u_{-2})\otimes Z/2[v_2,v_3].$$
\end{ex}
\begin{proof}
By Theorem~\ref{unit for Dlcop},
$Dlcop(x_1x_2\otimes x_1x_2) = x_1x_2 + x_1y_2 + y_3$
is an unit for the dual of the 
loop coproduct on $H^{*+3}(LBSO(3);Z/2)$.
By Lemma~\ref{lem:trivial loop coproduct},
$$Dlcop(x_1\otimes x_1)=Dlcop(x_2\otimes x_2)=0.$$
So let $\varphi:\wedge (u_{-1},u_{-2})\otimes Z/2[v_2,v_3]\rightarrow
H^{*+3}(LBSO(3);Z/2)$ be the unique morphism of algebras such
that $\varphi(u_{-2})=x_1$, $\varphi(u_{-1})=x_2$,
$\varphi(v_2)=y_2(x_1x_2 + x_1y_2 + y_3)$ and
$\varphi(v_3)=y_3(x_1x_2 + x_1y_2 + y_3)$.

For all $i$, $j\geq 0$, we see that 
$\varphi(v_2^iv_3^j)=y_2^iy_3^j(x_1x_2 + x_1y_2 + y_3)$, 
$\varphi(u_{-1}u_{-2}v_2^iv_3^j)=y_2^iy_3^j$,
$\varphi(u_{-1}v_2^iv_3^j)=x_2y_2^iy_3^j$ and
$\varphi(u_{-2}v_2^iv_3^j)=x_1y_2^iy_3^j$.
Therefore $\varphi$ sends a linear basis of $\wedge (u_{-1},u_{-2})\otimes Z/2[v_2,v_3]$ to a linear basis $H^{*+3}(LBSO(3);Z/2)$.
So $\varphi$ is an isomorphism.
\end{proof}
\begin{ex}\label{conjecture verifie pour G2}
With respect to the dual of the loop coproduct,
there is an isomorphism of algebras between
$H^{*+14}(LBG_2;Z/2)$ and $H_{-*}(G_2;Z/2)\otimes H^*(BG_2;Z/2)\cong \wedge (u_{-3},u_{-5},u_{-6})\otimes Z/2[v_4,v_6,v_7]$.
\end{ex}
\begin{proof}
By Theorem~\ref{unit for Dlcop}, $
Dlcop(x_3x_5x_6\otimes x_3x_5x_6)=
x_3x_5x_6+x_3x_5y_6+x_3y_4y_7+y_7^2$ is an unit for the dual of the 
loop coproduct on $H^{*+14}(LBG_2;Z/2)$.
By Lemma~\ref{lem:trivial loop coproduct},
$$Dlcop(x_5x_6\otimes x_5x_6)=Dlcop(x_3x_6\otimes x_3x_6)=Dlcop(x_3x_5\otimes x_3x_5)=0.$$
So let $\varphi:\wedge (u_{-3},u_{-5},u_{-6})\otimes Z/2[v_4,v_6,v_7]\rightarrow
H^{*+14}(LBG_2;Z/2)$ be the unique morphism of algebras such
that $\varphi(u_{-3})=x_5x_6$, $\varphi(u_{-5})=x_3x_6$, $\varphi(u_{-6})=x_3x_5$,
$\varphi(v_4)=y_4(x_3x_5x_6+x_3x_5y_6+x_3y_4y_7+y_7^2)$,
$\varphi(v_6)=y_6(x_3x_5x_6+x_3x_5y_6+x_3y_4y_7+y_7^2)$ and
$\varphi(v_7)=y_7(x_3x_5x_6+x_3x_5y_6+x_3y_4y_7+y_7^2)$.

For all $i$, $j$ and $k\geq 0$, we see that 
$\varphi(v_4^iv_6^jv_7^k)=y_4^iy_6^jy_7^k(x_3x_5x_6+x_3x_5y_6+x_3y_4y_7+y_7^2)$,
$\varphi(u_{-3}u_{-5}u_{-6}v_4^iv_6^jv_7^k)=y_4^iy_6^jy_7^k$,
$\varphi(u_{-3}u_{-5}v_4^iv_6^jv_7^k)=(x_6+y_6)y_4^iy_6^jy_7^k$,
$\varphi(u_{-3}u_{-6}v_4^iv_6^jv_7^k)=x_5y_4^iy_6^jy_7^k$,
$\varphi(u_{-5}u_{-6}v_4^iv_6^jv_7^k)=x_3y_4^iy_6^jy_7^k$,
$\varphi(u_{-3}v_4^iv_6^jv_7^k)=x_5x_6y_4^iy_6^jy_7^k$,
$\varphi(u_{-5}v_4^iv_6^jv_7^k)=x_3x_6y_4^iy_6^jy_7^k$
and $\varphi(u_{-6}v_4^iv_6^jv_7^k)=x_3x_5y_4^iy_6^jy_7^k$.
Therefore $\varphi$ sends a linear basis of $\wedge (u_{-3},u_{-5},u_{-6})\otimes Z/2[v_4,v_6,v_7]$ to a linear basis $H^{*+14}(LBG_2;Z/2)$.
So $\varphi$ is an isomorphism.
\end{proof}
\begin{lem}\label{multiplication par inverse}
Let $(A,\bullet)$ be a commutative unital associative graded algebra such that
$x\bullet x=1$. Let $\psi:A\rightarrow A$ be the linear morphism mapping $a$ to $x\bullet a$. Then $\psi$ is an involutive isomorphism such that $\psi(a)\bullet\psi(b)=a\bullet b$.
\end{lem}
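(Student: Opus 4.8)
The plan is to verify two things: that $\psi$ is an involution, hence an isomorphism of graded vector spaces, and that $\psi(a)\bullet\psi(b)=a\bullet b$ for all $a,b\in A$.

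First I would note that $\psi$ is linear since $\bullet$ is bilinear, and that it squares to the identity: for $a\in A$, associativity of $\bullet$ and the hypothesis $x\bullet x=1$ give
\[
\psi(\psi(a))=x\bullet(x\bullet a)=(x\bullet x)\bullet a=1\bullet a=a,
\]
the last step using that $1$ is the unit of $(A,\bullet)$. Thus $\psi\circ\psi=\mathrm{id}_A$, so $\psi$ is a bijective linear map equal to its own inverse, in particular an isomorphism.

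Next I would rearrange, using associativity and graded commutativity, so as to bring the two copies of $x$ together:
\[
\psi(a)\bullet\psi(b)=(x\bullet a)\bullet(x\bullet b)=(-1)^{\vert x\vert\,\vert a\vert}(x\bullet x)\bullet(a\bullet b)=(-1)^{\vert x\vert\,\vert a\vert}\,a\bullet b.
\]
The only delicate point is the Koszul sign $(-1)^{\vert x\vert\,\vert a\vert}$. If $\vert x\vert$ were odd with $2\neq 0$ in the ground ring, graded commutativity would give $x\bullet x=-x\bullet x$, hence $x\bullet x=0$, contradicting $x\bullet x=1$; so either $2=0$ or $\vert x\vert$ is even, and in both cases the sign equals $+1$. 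Therefore $\psi(a)\bullet\psi(b)=a\bullet b$.

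There is essentially no obstacle here: the statement is purely formal, and the one subtlety, the graded sign, disappears for the reason just indicated.
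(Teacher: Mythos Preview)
Your proof is correct and follows the same direct computation as the paper, which simply writes $\psi(a)\bullet\psi(b)=(x\bullet a)\bullet(x\bullet b)=(x\bullet x)\bullet(a\bullet b)=1\bullet(a\bullet b)=a\bullet b$ without further comment. Your additional remark disposing of the Koszul sign (noting that $x\bullet x=1$ forces either $\vert x\vert$ even or $2=0$) is a welcome clarification that the paper leaves implicit.
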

\begin{proof}
$\psi(a)\bullet\psi(b)=(x\bullet a)\bullet (x\bullet b)=(x\bullet x)\bullet (a\bullet b)=1\bullet (a\bullet b)=a\bullet b.$
\end{proof}
\begin{proof}[Second proof of Theorem~\ref{iso of algebras between free loop space cohomology and Hochschild} which gives another (better?) algebra isomorphism]

By commutativity and associativity of Dlcop and Theorem~\ref{unit for Dlcop}, applying Lemma~\ref{multiplication par inverse},
$
\psi:H^*(X)\otimes H^{*+d}(\Omega X)\rightarrow H^{*+d}(LX)
$
defined by $$\psi(a\otimes x_{k_1}\dots x_{k_u})=Dlcop(x_1\dots x_N\otimes ax_{k_1}\dots x_{k_u})$$ is an involutive isomorphism such that
$$
Dlcop(\psi(a\otimes x_I)\otimes\psi(b\otimes x_J))=Dlcop(ax_I\otimes bx_J)
$$ for any subsets $I$ and $J$ of $\{1, ..., N\}$.

Case $I\cup J=\{1, ..., N\}$. By Theorem~\ref{un terme en plus},
$$Dlcop(ax_I\otimes bx_J)=Dlcop(x_1\dots x_N\otimes abx_{I\cap J})=\psi(ab\otimes x_{I\cap J})=
\psi(ab\otimes comp^!(x_I\otimes x_J)).$$

Case $I\cup J\neq\{1, ..., N\}$.
By Theorem~\ref{un terme en plus}, $Dlcop(ax_I\otimes bx_J)=0$ and
$comp^!(x_I\otimes x_J)=0$.

Therefore $\psi$ is a morphism of graded algebras.

One can shows that $\{\psi(1\otimes \Theta(x_i^\vee)),\psi(1\otimes\Theta(x_j^\vee))\}=0$.
That is why this isomorphism might be better.
\end{proof}
\begin{thm}\label{thm: BV-algebra sur algebre commutative libre pour SO3}
As Batalin-Vilkovisky algebra, $$H^{*+3}(LBSO(3);Z/2)\cong \wedge (u_{-1},u_{-2})\otimes Z/2[v_2,v_3]$$ where for all $i,j\geq 0$,
$\Delta(v_2^iv_3^j)=0$,
$\Delta(u_{-1}u_{-2}v_2^iv_3^j)=i u_{-2}v_2^{i-1}v_3^j+ju_{-1}v_2^iv_3^{j-1}$,
$$\Delta(u_{-2}v_2^iv_3^j)=i u_{-1}v_2^{i-1}v_3^j+jv_2^iv_3^{j-1}+ju_{-2}v_2^{i+1}v_3^{j-1}+ju_{-1}u_{-2}v_2^iv_3^j\text{ and}$$
$$\Delta(u_{-1}v_2^iv_3^j)=iv_2^{i-1}v_3^j+(i+j)u_{-2}v_2^iv_3^j+iu_{-1}u_{-2}v_2^{i-1}v_3^{j+1}
+ju_{-1}v_2^{i+1}v_3^{j-1}.$$
In particular $1\notin\text{Im }\Delta$.
\end{thm}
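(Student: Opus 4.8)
The plan is to transport the BV-operator along the algebra isomorphism $\varphi:\wedge(u_{-1},u_{-2})\otimes\mathbb{Z}/2[v_2,v_3]\buildrel{\cong}\over\rightarrow H^{*+3}(LBSO(3);\mathbb{Z}/2)$ already produced in Example~\ref{conjecture verifie pour SO3}, whose values on the monomial basis are recorded there: $\varphi(v_2^iv_3^j)=y_2^iy_3^j\mathbb{I}$, $\varphi(u_{-1}u_{-2}v_2^iv_3^j)=y_2^iy_3^j$, $\varphi(u_{-1}v_2^iv_3^j)=x_2y_2^iy_3^j$ and $\varphi(u_{-2}v_2^iv_3^j)=x_1y_2^iy_3^j$, where $\mathbb{I}=x_1x_2+x_1y_2+y_3=Dlcop(x_1x_2\otimes x_1x_2)$ is the unit of $Dlcop$ (Theorem~\ref{unit for Dlcop}). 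The facts about $\Delta$ that I would feed in are: $\Delta$ is a derivation for the cup product; $x_1=\Delta(ev^*(y_2))$ and $x_2=\Delta(ev^*(y_3))$, so $\Delta(y_2)=x_1$, $\Delta(y_3)=x_2$ and $\Delta(x_1)=\Delta(x_2)=0$; from the proof of part~(6) of Theorem~\ref{Cup in string classifying}, $\Delta(ev^*(P))\cup\mathbb{I}=0$, hence $x_1\mathbb{I}=x_2\mathbb{I}=0$ and $\Delta(\mathbb{I})=0$; and the defining cup-product relations of Theorem~\ref{thm:BSO_3}, $x_1^2=x_2$ and $x_2^2=x_2y_2+x_1y_3$, which together give $x_1x_2=\mathbb{I}+x_1y_2+y_3$.

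Granting all this, the computation of $\Delta$ on the four families is routine. From the Leibniz rule, $\Delta(y_2^iy_3^j)=ix_1y_2^{i-1}y_3^j+jx_2y_2^iy_3^{j-1}$ (integer coefficients read mod $2$). For $\varphi(v_2^iv_3^j)=y_2^iy_3^j\mathbb{I}$, every term of $\Delta$ contains a factor $x_1\mathbb{I}$ or $x_2\mathbb{I}$, so $\Delta(v_2^iv_3^j)=0$. For $\varphi(u_{-1}u_{-2}v_2^iv_3^j)=y_2^iy_3^j$, the expression $\Delta(y_2^iy_3^j)$ is visibly $i\,\varphi(u_{-2}v_2^{i-1}v_3^j)+j\,\varphi(u_{-1}v_2^iv_3^{j-1})$. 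For $\varphi(u_{-2}v_2^iv_3^j)=x_1y_2^iy_3^j$ and $\varphi(u_{-1}v_2^iv_3^j)=x_2y_2^iy_3^j$, the Leibniz rule kills the $\Delta(x_i)$ terms and leaves $x_1\Delta(y_2^iy_3^j)$, resp. $x_2\Delta(y_2^iy_3^j)$, i.e. polynomial multiples of $x_1^2$, $x_1x_2$ and $x_2^2$; substituting the three relations above and collecting terms reproduces precisely the two stated formulas. The one point requiring attention is the bookkeeping: tracking which $x$-monomial lies in which of the four basis families after the substitution $x_1x_2=\mathbb{I}+x_1y_2+y_3$ (the $\mathbb{I}$-summand feeds the $v_2^iv_3^j$ family, the $x_1y_2$-summand the $u_{-2}$ family, the $y_3$-summand the $u_{-1}u_{-2}$ family). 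This is purely mechanical and is the only ``hard'' part.

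For the last assertion, I would use a degree count. With $\deg u_{-1}=-1$, $\deg u_{-2}=-2$, $\deg v_2=2$, $\deg v_3=3$, and $\Delta$ lowering degree by one, a preimage of $1$ must lie in the degree-$1$ component, which is spanned by the three monomials $u_{-1}v_2$, $u_{-2}v_3$ and $u_{-1}u_{-2}v_2^2$. The formulas obtained above give $\Delta(u_{-1}v_2)=\Delta(u_{-2}v_3)=1+u_{-2}v_2+u_{-1}u_{-2}v_3$ and $\Delta(u_{-1}u_{-2}v_2^2)=0$, so $\Delta$ carries the degree-$1$ component onto the line $\mathbb{Z}/2\cdot(1+u_{-2}v_2+u_{-1}u_{-2}v_3)$, which does not contain $1$; hence $1\notin\text{Im }\Delta$.
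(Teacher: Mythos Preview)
Your proof is correct and follows essentially the same route as the paper: transport $\Delta$ through the explicit isomorphism $\varphi$ of Example~\ref{conjecture verifie pour SO3}, using that $\Delta$ is a cup-product derivation together with the relations $x_1^2=x_2$, $x_2^2=x_2y_2+x_1y_3$ and $x_1\mathbb{I}=x_2\mathbb{I}=\Delta(\mathbb{I})=0$. Your treatment is in fact more explicit than the paper's, which simply states ``we now transport the operator $\Delta$ using $\varphi$'' and records only the degree-$1$ values; your careful bookkeeping of how the substitution $x_1x_2=\mathbb{I}+x_1y_2+y_3$ distributes terms among the four basis families is exactly the computation the paper leaves to the reader.
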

\begin{proof}
Theorem~\ref{thm:BSO_3} gives the BV-algebra $H^{*+3}(LBSO(3);Z/2)$ since $\Delta$ is a derivation with respect to the cup product.
In the proof of Example~\ref{conjecture verifie pour SO3},
the isomorphism of algebras $\varphi: \wedge (u_{-1},u_{-2})\otimes Z/2[v_2,v_3]\rightarrow
H^{*+3}(LBSO(3);Z/2)$ of Theorem~\ref{iso of algebras between free loop space cohomology and Hochschild} is 
made explicit on generators. We now transport the operator $\Delta$ using $\varphi$.

In degree $1$, the $\Delta$ operator is given by
$\Delta( u_{-1}u_{-2}v_2^2)=0$ and  $$\Delta(u_{-2}v_3)=\Delta(u_{-1}v_2)=1+u_{-2}v_2+u_{-1}u_{-2}v_3.$$
\end{proof}
\begin{thm}\label{thm: BV-algebra sur algebre commutative libre pour G2}
As Batalin-Vilkovisky algebra, $$H^{*+14}(LBG_2;Z/2)\cong \wedge (u_{-3},u_{-5},u_{-6})\otimes Z/2[v_4,v_6,v_7]$$
where for all $i,j,k\geq 0$,
$\Delta(v_4^iv_6^jv_7^k)=0$,
\begin{multline*}
\Delta(u_{-3}u_{-5}u_{-6}v_4^{i}v_6^{j}v_7^{k})
=iu_{-5}u_{-6}v_4^{i-1}v_6^jv_7^k
+ju_{-3}u_{-6}v_4^{i}v_6^{j-1}v_7^{k}\\
+ku_{-3}u_{-5}v_4^{i}v_6^{j}v_7^{k-1}
+ku_{-3}u_{-5}u_{-6}v_4^{i}v_6^{j+1}v_7^{k-1}
,
\end{multline*}
\begin{multline*}
\Delta(u_{-5}u_{-6}v_4^{i}v_6^{j}v_7^{k})
=iu_{-3}u_{-5}v_4^{i-1}v_6^{j}v_7^{k}
+iu_{-3}u_{-5}u_{-6}v_4^{i-1}v_6^{j+1}v_7^{k}\\
+ju_{-6}v_4^{i}v_6^{j-1}v_7^{k}
+ku_{-5}v_4^{i}v_6^{j}v_7^{k-1},
\end{multline*}
\begin{multline*}
\Delta(u_{-3}u_{-6}v_4^{i}v_6^{j}v_7^{k})
=iu_{-6}v_4^{i-1}v_6^{j}v_7^{k}
+ju_{-5}u_{-6}v_4^{i}v_6^{j-1}v_7^{k+1}\\
+ju_{-3}u_{-5}v_4^{i+1}v_6^{j-1}v_7^{k}
+ju_{-3}u_{-5}u_{-6}v_4^{i+1}v_6^{j}v_7^{k}
+ku_{-3}v_4^{i}v_6^{j}v_7^{k-1},
\end{multline*}
\begin{multline*}
\Delta(u_{-3}u_{-5}v_4^{i}v_6^{j}v_7^{k})
=iu_{-5}v_4^{i-1}v_6^{j}v_7^{k}
+iu_{-5}u_{-6}v_4^{i-1}v_6^{j+1}v_7^{k}\\
+ju_{-3}v_4^{i}v_6^{j-1}v_7^{k}
+(j+1+k)u_{-3}u_{-6}v_4^{i}v_6^{j}v_7^{k}
\end{multline*}
\begin{multline*}
\Delta(u_{-6}v_4^{i}v_6^{j}v_7^{k})
=iu_{-3}v_4^{i-1}v_6^{j}v_7^{k}
+ju_{-5}v_4^{i+1}v_6^{j-1}v_7^{k}
+ju_{-3}u_{-5}v_4^{i}v_6^{j-1}v_7^{k+1}\\
+(j+k)u_{-3}u_{-5}u_{-6}v_4^{i}v_6^{j}v_7^{k+1}
+kv_4^{i}v_6^{j}v_7^{k-1}
+ku_{-6}v_4^{i}v_6^{j+1}v_7^{k-1}
+ku_{-5}u_{-6}v_4^{i+1}v_6^{j}v_7^{k},
\end{multline*}
\begin{multline*}
\Delta(u_{-3}v_4^{i}v_6^{j}v_7^{k})
=iv_4^{i-1}v_6^{j}v_7^{k}
+iu_{-6}v_4^{i-1}v_6^{j+1}v_7^{k}
+(i+k)u_{-5}u_{-6}v_4^{i}v_6^{j}v_7^{k+1}\\
+iu_{-3}u_{-5}u_{-6}v_4^{i-1}v_6^{j}v_7^{k+2}
+ju_{-5}v_4^{i}v_6^{j-1}v_7^{k+1}
+ju_{-3}u_{-6}v_4^{i+1}v_6^{j-1}v_7^{k+1}\\
+(j+k)u_{-3}u_{-5}v_4^{i+1}v_6^{j}v_7^{k}
+(j+k)u_{-3}u_{-5}u_{-6}v_4^{i+1}v_6^{j+1}v_7^{k}
+ku_{-3}v_4^{i}v_6^{j+1}v_7^{k-1}\text{ and}
\end{multline*}
\begin{multline*}
\Delta(u_{-5}v_4^{i}v_6^{j}v_7^{k})
=iu_{-3}u_{-5}v_4^{i-1}v_6^{j+1}v_7^{k}
+iu_{-3}u_{-5}u_{-6}v_4^{i-1}v_6^{j+2}v_7^{k}
+jv_4^{i}v_6^{j-1}v_7^{k}\\
+(j+k)u_{-6}v_4^{i}v_6^{j}v_7^{k}
+ju_{-5}u_{-6}v_4^{i+1}v_6^{j-1}v_7^{k+1}
+ju_{-3}u_{-5}u_{-6}v_4^{i}v_6^{j-1}v_7^{k+2}
+ku_{-5}v_4^{i}v_6^{j+1}v_7^{k-1}.
\end{multline*}
In particular $1\notin\text{Im }\Delta$.
\end{thm}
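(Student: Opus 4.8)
We compute this Batalin--Vilkovisky algebra exactly as in the proof of Theorem~\ref{thm: BV-algebra sur algebre commutative libre pour SO3}. By Example~\ref{conjecture verifie pour G2}, the morphism of algebras
$$
\varphi:\wedge(u_{-3},u_{-5},u_{-6})\otimes\mathbb{Z}/2[v_4,v_6,v_7]\buildrel{\cong}\over\rightarrow H^{*+14}(LBG_2;\mathbb{Z}/2)
$$
of Theorem~\ref{iso of algebras between free loop space cohomology and Hochschild} is an isomorphism with respect to the dual of the loop coproduct, and it was made completely explicit on all monomials in the proof of Example~\ref{conjecture verifie pour G2}. Since $\varphi$ is an isomorphism of algebras, to describe the Batalin--Vilkovisky structure it is enough to transport the operator $\Delta$ through $\varphi$, i.e.\ to compute $\varphi^{-1}\circ\Delta\circ\varphi$ on the standard monomial basis $u_{k_1}\cdots u_{k_t}v_4^iv_6^jv_7^k$; the displayed formulas are these values.

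The computation uses only four ingredients. First, on $H^*(LBG_2;\mathbb{Z}/2)$ with its cup product, $\Delta$ is the derivation determined by $\Delta(y_4)=x_3$, $\Delta(y_6)=x_5$, $\Delta(y_7)=x_6$ and $\Delta(x_3)=\Delta(x_5)=\Delta(x_6)=0$ (Theorem~\ref{dlcop given by formulas} together with $\Delta\circ\Delta=0$). Second, the cup-product relations $x_3^2=x_6$, $x_5^2=x_3y_7+y_4x_6$, $x_6^2=x_5y_7+y_6x_6$ recalled in the proof of Theorem~\ref{thm:BG_2}. Third, setting $\mathbb{I}:=Dlcop(x_3x_5x_6\otimes x_3x_5x_6)=x_3x_5x_6+x_3x_5y_6+x_3y_4y_7+y_7^2$, one has $x_3\mathbb{I}=x_5\mathbb{I}=x_6\mathbb{I}=0$ for the cup product, by Lemma~\ref{square of Dlcop}(1). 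Fourth, the eight families of equalities recorded verbatim in the proof of Example~\ref{conjecture verifie pour G2}: $\varphi(v_4^iv_6^jv_7^k)=y_4^iy_6^jy_7^k\mathbb{I}$, $\varphi(u_{-3}u_{-5}u_{-6}v_4^iv_6^jv_7^k)=y_4^iy_6^jy_7^k$, $\varphi(u_{-5}u_{-6}v_4^iv_6^jv_7^k)=x_3y_4^iy_6^jy_7^k$, $\varphi(u_{-3}u_{-5}v_4^iv_6^jv_7^k)=(x_6+y_6)y_4^iy_6^jy_7^k$, $\varphi(u_{-3}v_4^iv_6^jv_7^k)=x_5x_6y_4^iy_6^jy_7^k$, and the remaining three of the same shape.

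The plan is then: first record $\Delta(y_4^iy_6^jy_7^k)=ix_3y_4^{i-1}y_6^jy_7^k+jx_5y_4^iy_6^{j-1}y_7^k+kx_6y_4^iy_6^jy_7^{k-1}$ (Leibniz, modulo $2$) and check that $\Delta(\mathbb{I})=0$; combined with $x_i\mathbb{I}=0$ this already gives $\varphi^{-1}\Delta\varphi(v_4^iv_6^jv_7^k)=0$. For each of the remaining seven families one applies the Leibniz rule to $\Delta(\varphi(u_{k_1}\cdots u_{k_t}v_4^iv_6^jv_7^k))$, rewrites every occurrence of $x_3^2,x_5^2,x_6^2$ by the relations of the second ingredient (iterating, since these substitutions cascade), drops the terms containing $x_i\mathbb{I}$, and finally re-expresses the result in the monomial basis by inverting the dictionary of the fourth ingredient (for instance $x_6y_4^ay_6^by_7^c=\varphi(u_{-3}u_{-5}v_4^av_6^bv_7^c)+\varphi(u_{-3}u_{-5}u_{-6}v_4^av_6^{b+1}v_7^c)$). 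The outputs are the seven displayed formulas, obtained exactly as in the $SO(3)$ case, Theorem~\ref{thm: BV-algebra sur algebre commutative libre pour SO3}. For the last assertion, since $\Delta$ lowers by $1$ the internal degree in which $\deg u_{-s}=-s$ and $\deg v_t=t$, and $1$ lies in degree $0$, it suffices to restrict $\Delta$ to the finite set of monomials of internal degree $1$, evaluate it there using the displayed formulas, and verify by linear algebra over $\mathbb{F}_2$ that $1$ is not in the span of the images (note that $\Delta(u_{-3}v_4)$, $\Delta(u_{-5}v_6)$ and $\Delta(u_{-6}v_7)$ each contain $1$, but always together with further terms that cannot all be cancelled).

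The only real difficulty is the length and delicacy of the bookkeeping: the substitutions for $x_5^2$ and $x_6^2$ reintroduce $x$'s and $y$'s, so the reductions have to be iterated; the coefficients modulo $2$ depend on the exponents $i,j,k$ (hence the terms $iu_{-5}u_{-6}\cdots$, $(j+k)u_{-3}u_{-5}u_{-6}\cdots$, and so on); and each of the eight families must be matched correctly against the $\varphi$-basis of Example~\ref{conjecture verifie pour G2}. There is no conceptual obstacle beyond what was already handled for $SO(3)$.
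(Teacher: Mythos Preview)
Your proposal is correct and follows essentially the same approach as the paper: transport the $\Delta$ operator through the explicit algebra isomorphism $\varphi$ of Example~\ref{conjecture verifie pour G2}, using that $\Delta$ is a cup-product derivation together with the relations $x_3^2=x_6$, $x_5^2=x_3y_7+y_4x_6$, $x_6^2=x_5y_7+y_6x_6$, and then check $1\notin\text{Im }\Delta$ by listing the values of $\Delta$ on the finitely many degree-$1$ monomials. Your write-up is in fact more detailed than the paper's own proof, which simply states the method and records the degree-$1$ values.
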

\begin{proof}
Theorem~\ref{thm:BG_2} gives the BV-algebra $H^{*+14}(LBG_2;Z/2)$ since $\Delta$ is a derivation with respect to the cup product.
In the proof of Example~\ref{conjecture verifie pour G2},
the isomorphism of algebras $\varphi: \wedge (u_{-3},u_{-5},u_{-6})\otimes Z/2[v_4,v_6,v_7]\rightarrow
H^{*+14}(LG_2;Z/2)$ of Theorem~\ref{iso of algebras between free loop space cohomology and Hochschild} is 
made explicit on generators. We now transport the operator $\Delta$ using $\varphi$.

In degree $1$, the $\Delta$ operator is given by $\Delta( u_{-5}u_{-6}v_6^2)=0$,
$$\Delta(u_{-3}u_{-5}u_{-6}v_4^{2}v_7)=\Delta( u_{-5}u_{-6}v_4^3)=u_{-3}u_{-5}v_4^{2}+u_{-3}u_{-5}u_{-6}v_4^{2}v_6,$$
$$
\Delta(u_{-3}u_{-6}v_4v_6)=u_{-6}v_6+u_{-5}u_{-6}v_4v_7
+u_{-3}u_{-5}v_4^{2}+u_{-3}u_{-5}u_{-6}v_4^{2}v_6\text{ and}
$$
$$\Delta(u_{-6}v_7)=\Delta(u_{-5}v_6)=\Delta(u_{-3}v_4)=1+u_{-6}v_6+u_{-5}u_{-6}v_4v_7
 +u_{-3}u_{-5}u_{-6}v_7^{2}.$$
\end{proof}
Note that
$\varphi^{-1}\circ\Delta\circ\varphi(y_i\otimes x_i^\vee)=\varphi^{-1}(x_1\dots x_N)$
is independent of $i$.
%

\section{Relation with Hochschild cohomology}\label{Relation Hochschild cohomology}
Let $\K$ be any field.
Let $G$ be a connected compact Lie group of dimension $d$.
\begin{conjecture}\cite[Conjecture 68]{Chataur-Menichi:stringclass}\label{conjecture iso Hochschild}
There is an isomorphism of Gerstenhaber algebras
$$H^{*+d}(LBG)\buildrel{\cong}\over\rightarrow HH^{*}(S_{*}(G),S_{*}(G)).$$
\end{conjecture}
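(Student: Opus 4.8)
The plan is to compare two descriptions of the Gerstenhaber algebra $\mathbb{H}^*(LBG)$: the geometric one, given by the dual of the loop coproduct $m$ together with the operator $\Delta$, and the algebraic one, given by the Hochschild cohomology of the $A_\infty$-algebra of chains $S_*(G)$. As a preliminary reduction, one treats first the case where $H^*(BG;\mathbb{F}_p)$ is a polynomial algebra $\mathbb{F}_p[V]$, which covers all but finitely many pairs $(G,p)$; the remaining exceptional pairs would be dealt with by direct computation in the style of Theorems~\ref{thm:BSO_3} and~\ref{thm:BG_2}. In the polynomial case, Theorem~\ref{iso of algebras between free loop space cohomology and Hochschild}, combined with the computation of $HH^*(S(V),S(V))$ in Section 12 (Theorem~\ref{thm: Hochschild cohomology exterior algebra}), already identifies $(\mathbb{H}^*(LBG),m)$ with $HH^*(H_*(G),H_*(G))$ as graded algebras; so the whole content of the conjecture lies in the coincidence of the two Gerstenhaber brackets and in the replacement of $H_*(G)$ by the chains $S_*(G)$.

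First I would fix a chain-level identification. By the Burghelea--Fiedorowicz--Goodwillie theorem, $S_*(LBG)$ computes $HH_*(S_*(\Omega BG))=HH_*(S_*(G))$, compatibly with the circle action. Since $G$ is a closed oriented manifold, Poincar\'e duality endows $S_*(G)$ with a degree-$d$ symmetric Frobenius ($d$-Calabi--Yau) $A_\infty$-structure --- exactly the structure used by Chataur and the second author in~\cite{Chataur-Menichi:stringclass} --- and this structure converts the previous isomorphism, after dualizing and using the finiteness of $H_*(G;\mathbb{F}_p)$, into a preferred isomorphism $\mathbb{H}^*(LBG)=H^{*+d}(LBG)\cong HH^*(S_*(G),S_*(G))$. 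One then checks, exactly as in the closed-manifold case of Cohen--Jones and F\'elix--Thomas, that this isomorphism carries the dual of the loop coproduct to the Hochschild cup product; for $H^*(BG)$ polynomial this is consistent with Theorem~\ref{iso of algebras between free loop space cohomology and Hochschild}.

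Next I would match the brackets. By Theorem~\ref{Cup in string classifying} the bracket $\{\ ,\ \}$ on $\mathbb{H}^*(LBG)$ is a biderivation for $m$, hence is determined by its values on the algebra generators $ev^*(y_i)$ and $x_i=\Delta(ev^*(y_i))$ together with the derivation identities (3) and (4) of that theorem; moreover Theorem~\ref{un terme en plus} already gives the clean identity $\{x_I,x_J\}=\{x_{I\cup J},x_{I\cap J}\}$, which is precisely the shape of the Gerstenhaber bracket on the Hochschild cohomology of an exterior algebra. On the Hochschild side the Gerstenhaber bracket is likewise a biderivation for the cup product, with values on generators read off from Theorem~\ref{thm: Hochschild cohomology exterior algebra}. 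The plan is to upgrade the isomorphism of the previous paragraph to an equivalence of $E_2$-algebras --- using the $E_2$-structure on $CH^*(S_*(G))$ furnished by the Deligne conjecture, and on the string side the $E_2$-structure underlying the genus-zero operations of the $d$-dimensional homological conformal field theory of~\cite{Chataur-Menichi:stringclass} together with the circle action, with the framings forgotten --- and then to verify agreement on the finitely many generators by the explicit formulas of Theorems~\ref{dlcop given by formulas} and~\ref{thm: Hochschild cohomology exterior algebra}. When $p$ is odd or $p=0$, $S_*(G)$ is formal, so $HH^*(S_*(G),S_*(G))\cong HH^*(H_*(G),H_*(G))$ and one in fact recovers the stronger BV statement of Theorem~\ref{iso BV with Hochschild when no steenrod operation}; when $p=2$ one must keep the chains $S_*(G)$, since Theorem~\ref{thm:non BV-iso modulo 2} shows the BV structure on $HH^*(H_*(G),H_*(G))$ is genuinely wrong, and argue that the ``Steenrod correction terms'' visible in $m$ over $\mathbb{F}_2$ (such as the summand $y_4y_7$ in $Dlcop(x_3x_5x_6\otimes x_5x_6)$ for $G_2$) are exactly accounted for by the $A_\infty$-deviation of $S_*(G)$ from $H_*(G)$.

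The hard part is the $E_2$-comparison above --- equivalently, showing that the string-topology bracket is the one transported from $HH^*(S_*(G),S_*(G))$ and not a twist of it. The subtlety is sharpened by the counterexamples of this paper: the analogous statement for the \emph{BV} operator fails over $\mathbb{F}_2$, so the circle action on $LBG$ is \emph{not} carried to Connes' $B$-operator under the algebra isomorphism, the discrepancy being detected by $Sq_1$. One must therefore show that this discrepancy, while nontrivial for $\Delta$ itself, disappears from the induced biderivation $\{a,b\}=(-1)^{|a|}\Delta(m(a\otimes b))-(-1)^{|a|}m(\Delta(a)\otimes b)-m(a\otimes\Delta(b))$, and control this uniformly in $G$ and $p$; I expect this to be the principal technical obstacle.
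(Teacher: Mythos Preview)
This statement is labelled a \emph{conjecture} in the paper, not a theorem: the paper does not prove it, and as far as the paper is concerned it remains open in general. What the paper actually establishes toward it is: an isomorphism of graded \emph{algebras} $\mathbb{H}^*(LBG)\cong HH^*(H_*(G),H_*(G))$ whenever $H^*(BG)$ is polynomial (Theorem~\ref{iso of algebras between free loop space cohomology and Hochschild} together with Theorem~\ref{thm: Hochschild cohomology exterior algebra} and the formality/coformality discussion at the start of Section~\ref{Relation Hochschild cohomology}); an isomorphism of BV (hence Gerstenhaber) algebras under hypothesis~(H) (Theorem~\ref{iso BV with Hochschild when no steenrod operation}); and, for $G=SO(3)$ over $\mathbb{F}_2$, a brute-force verification that the Gerstenhaber structures match (end of the proof of Theorem~\ref{thm:non BV-iso modulo 2}), even though the BV structures do not. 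The Gerstenhaber isomorphism is \emph{not} checked for $G_2$ over $\mathbb{F}_2$, and no uniform argument is given.

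Your proposal is a plausible strategy, and you have correctly located the crux: upgrading the algebra isomorphism to an $E_2$-equivalence between the string-topology operations of~\cite{Chataur-Menichi:stringclass} and the Deligne-conjecture $E_2$-structure on $CH^*(S_*(G))$. But this step is exactly what is open. Your appeal to ``exactly as in the closed-manifold case of Cohen--Jones and F\'elix--Thomas'' is not a proof here: the construction in~\cite{Chataur-Menichi:stringclass} uses integration along the fibre for the fibration $map(in,X)$, not a Gysin map for an embedding, and no comparison of the resulting bracket with the Hochschild bracket under a specified isomorphism has been carried out. You acknowledge this yourself (``the principal technical obstacle''), so what you have written is an outline, not a proof. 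The paper's own $SO(3)$ computation is a warning sign: the Gerstenhaber isomorphisms found there are not the naive identity on generators but involve genuine correction terms, so any uniform argument must produce those corrections, and nothing in your sketch explains how the $A_\infty$-deviation of $S_*(G)$ from $H_*(G)$ would do so on the bracket while failing on $\Delta$.
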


Suppose that $H^*(BG;\K)$ is a polynomial algebra $\K[V]=K[y_1,\dots, y_N]$.
It follows from~\cite[Theorem 9, p. 572]{Halperin-Stasheff:diff} (See also~\cite[Proposition 8.21]{McCleary:usersguide}) that $BG$ is $\K$-formal.
Then $BG$ is $\K$-coformal and $H_*(G;\K)$ is
the exterior algebra $\wedge (sV)^\vee$.
Indeed, since $BG$ is $\K$-formal, the Cobar construction
$\Omega H_*(BG)$ is weakly equivalent as algebras to $S_*(G)$.
Let $A_i$ denote the exterior algebra $\Lambda s^{-1}(y_i^\vee)$.
Then $EZ$, the Eilenberg-Zilber map and $\varepsilon$, the counit of the adjonction between the Bar and the Cobar construction give
the quasi-isomorphims of algebras
$$
\xymatrix@1{
&\Omega H_*(BG)=\Omega(\otimes_{i=1}^N BA_i)
& \otimes_{i=1}^N \Omega BA_i\ar[l]^-\simeq_-{EZ}\ar[r]_-\simeq^-{\otimes_{i=1}^N \varepsilon_i}
&\otimes_{i=1}^N A_i=\Lambda s^{-1}V^\vee.
}
$$
Alternatively, since $BG$ is $\K$-formal, you can use the implication $(2)\Rightarrow (1)$ in Theorem 2.14 of~\cite{Berglund-Borjeson:freeloopmanifold}.

Therefore, we have the isomorphism of Gerstenhaber algebras
$$
HH^{*}(S_{*}(G),S_{*}(G))\cong HH^{*}(H_*(G;\K),H_*(G;\K))\cong HH^{*}(\wedge (sV)^\vee,\wedge (sV)^\vee).
$$
By Theorem~\ref{thm: Hochschild cohomology exterior algebra} as graded algebras
$$
HH^{*}(\wedge (sV)^\vee,\wedge (sV)^\vee)\cong \wedge (sV)^\vee\otimes \K[V]\cong H_{-*}(G;\K)\otimes H^*(BG;\K).
$$

So in Theorem~\ref{iso of algebras between free loop space cohomology and Hochschild},
we have checked Conjecture~\ref{conjecture iso Hochschild} only for the algebra structure when $\K=\mathbb{F}_2$.
When $\K=\mathbb{F}_2$,
we would like to check conjecture~\ref{conjecture iso Hochschild} also for the Gerstenhaber algebra structure.

The following theorem shows that the conjecture is true for the Gerstenhaber algebra structure when $\K$ is a 
field of characteristic different
from $2$.
\begin{thm}\label{iso BV with Hochschild when no steenrod operation}
Under the hypothesis $(H)$,
the free loop space cohomology of the classifying space of $G$, $H^{*+\text{dim} G}(LBG;\mathbb{F})$
is  isomorphic as Batalin-Vilkovisky algebra to the Hochschild cohomology of $H_*(G;\mathbb{F})$, $HH^*(H_*(G;\mathbb{F});H_*(G;\mathbb{F}))$. In particular the underlying Gerstenhaber algebras are isomorphic.
\end{thm}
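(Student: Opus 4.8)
The plan is to deduce the statement from the two explicit computations already available in the paper. By Theorem~\ref{structure BV quand pas de sq1}, hypothesis $(H)$ identifies $H^{*+\dim G}(LBG;\mathbb{F})$, as a Batalin--Vilkovisky algebra, with the tensor product of algebras $\mathbb{F}[V]\otimes\Lambda(sV)^\vee$ carrying the operator $\Delta$ that annihilates every generator $y_i$ and $x_j^\vee$, annihilates the products $y_iy_j$ and $x_i^\vee\wedge x_j^\vee$, and sends $y_i\otimes x_j^\vee$ to $\delta_{ij}$. On the other side, under $(H)$ the algebra $H_*(G;\mathbb{F})$ is the exterior Hopf algebra $\Lambda(sV)^\vee$ on primitive generators (as recalled just before the statement; when $\mathbb{F}$ has characteristic $2$ the relevant squares vanish by the Milnor--Moore proposition invoked in the proof of Theorem~\ref{iso of algebras between free loop space cohomology and Hochschild}), so that $HH^*(H_*(G;\mathbb{F}),H_*(G;\mathbb{F}))=HH^*(\Lambda(sV)^\vee,\Lambda(sV)^\vee)$, and Theorem~\ref{thm: Hochschild cohomology exterior algebra} presents the latter, as a Batalin--Vilkovisky algebra, as the tensor product of algebras $\Lambda(sV)^\vee\otimes\mathbb{F}[V]$ with its own explicit operator $\Delta$. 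The obvious graded-commutative flip is an isomorphism of graded algebras $\mathbb{F}[V]\otimes\Lambda(sV)^\vee\cong\Lambda(sV)^\vee\otimes\mathbb{F}[V]$, and the whole theorem will follow once I check that it carries one $\Delta$ to the other.

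For that matching step I would use the fact that a Batalin--Vilkovisky operator on a unital graded-commutative algebra is a differential operator of order $\leq 2$, hence is entirely determined by its values on the algebra generators and on the products of pairs of generators: one extends it to monomials of arbitrary length by a short induction on the length, using the seven-term Batalin--Vilkovisky identity. It therefore suffices to compare the two operators on the finitely many elements $y_i$, $x_j^\vee$, $y_iy_j$, $x_i^\vee\wedge x_j^\vee$ and $y_i\otimes x_j^\vee$. Reading the corresponding formulas off Theorem~\ref{thm: Hochschild cohomology exterior algebra}, one sees that on the Hochschild side $\Delta$ also kills the generators and the products of pairs of generators, except that a polynomial generator paired with its Hochschild-dual exterior generator is sent to $1$; under the flip this is precisely the value $\Delta(y_i\otimes x_j^\vee)=\delta_{ij}$ of Theorem~\ref{structure BV quand pas de sq1}. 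The one point requiring genuine care is degree and sign bookkeeping: one has to make sure that the internal degrees, and in odd characteristic the Koszul signs, line up so that the shifted string-topology grading on $H^{*+\dim G}(LBG;\mathbb{F})$ agrees with the Hochschild grading on $HH^*$ under these identifications; this comes down to keeping track of the shift $d=\dim G$ and of the fact that $x_j^\vee$ sits in the degree of the suspension of $y_j$.

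Granting the comparison, the flip becomes an isomorphism of Batalin--Vilkovisky algebras $H^{*+\dim G}(LBG;\mathbb{F})\cong HH^*(H_*(G;\mathbb{F}),H_*(G;\mathbb{F}))$. Since in any Batalin--Vilkovisky algebra the Gerstenhaber bracket is recovered from the product and $\Delta$ through the defining formula for $\{\,\cdot\,,\,\cdot\,\}$, such an isomorphism is automatically an isomorphism of Gerstenhaber algebras, which yields the last assertion. I expect the only real obstacle to be this matching step, but in practice most of its weight has already been carried by Theorem~\ref{thm: Hochschild cohomology exterior algebra}: once the Batalin--Vilkovisky structure on the Hochschild cohomology of an exterior algebra is known explicitly, what is left is a finite check made possible by the order $\leq 2$ property of $\Delta$.
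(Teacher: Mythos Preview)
Your proposal is correct and follows essentially the same route as the paper: invoke Theorem~\ref{structure BV quand pas de sq1} for the loop side, Theorem~\ref{thm: Hochschild cohomology exterior algebra} for the Hochschild side, and match the two explicit BV structures on the common model $\Lambda(sV)^\vee\otimes\K[V]$ by checking on generators and pairs of generators. The only cosmetic difference is that the paper writes the Hochschild output as $\Lambda(sV)^\vee\otimes\K[s^{-1}(sV)^{\vee\vee}]$ and passes to $\K[V]$ through the explicit bidual isomorphism $\Psi:sV\to(sV)^{\vee\vee}$, tracking the sign $\Delta((1\otimes v)(\varphi\otimes 1))=-(-1)^{\vert\varphi\vert\vert sv\vert}\varphi(sv)$ directly from the bracket formula of Theorem~\ref{thm: Hochschild cohomology exterior algebra}, whereas you absorb this identification into your flip and appeal to the order-$\leq 2$ property of $\Delta$ to justify the reduction; these are the same argument presented with slightly different bookkeeping.
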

\begin{proof}
By hypothesis, $H^*(BG)\cong \K[V]=K[y_i]$ as algebras.
Then $H_*(G)\cong \Lambda (sV)^\vee=\Lambda x_j^\vee$ as algebras.

Let $\Psi:sV\rightarrow (sV)^{\vee\vee}$ be the canonical isomorphism
of the graded vector space $sV$ into its bidual.
By definition,
$\Psi(sv)(\varphi)=(-1)^{\vert\varphi\vert\vert sv\vert}\varphi(sv)$
for any linear form $\varphi$ on $sV$.

By Theorem~\ref{thm: Hochschild cohomology exterior algebra},
we have the BV-algebra isomorphism
$HH^*(H_*(G);H_*(G))\cong \Lambda (sV)^\vee\otimes \K[s^{-1}(sV)^{\vee\vee}]$
where for any $v\in V$ and $\varphi\in (sV)^\vee$,
$$
\Delta((1\otimes s^{-1}\Psi(sv))(\varphi\otimes 1))=
(-1)^{\vert v\vert}\{s^{-1}\Psi(sv),\varphi\}=
-\Psi(sv)(\varphi)
=-(-1)^{\vert\varphi\vert\vert sv\vert}\varphi(sv)
$$
and where $\Delta$ is trivial on $\Lambda (sV)^\vee$ and on $\K[s^{-1}(sV)^{\vee\vee}]$.

The isomorphism of algebras
$Id\otimes \K[s^{-1}\Psi]:
\Lambda (sV)^\vee\otimes \K[V]\rightarrow
\Lambda (sV)^\vee\otimes \K[s^{-1}(sV)^{\vee\vee}]$
is a isomorphism of BV-algebras if for any $v\in V$ and $\varphi\in (sV)^\vee$,
$\Delta((1\otimes v)(\varphi\otimes 1))=-(-1)^{\vert\varphi\vert\vert sv\vert}\varphi(sv)
$
and if $\Delta$ is trivial on $\Lambda (sV)^\vee$ and on $\K[V]$.

Taking $v=y_i$ and $\varphi=\sigma(y_j)^\vee=x_j^\vee$, we obtained that
$\Delta (y_i\otimes x_j^\vee)=1$ if $i=j$ and $0$ otherwise like in Theorem~\ref{structure BV quand pas de sq1}.
\end{proof}
\begin{thm}\label{thm:non BV-iso modulo 2}
For $G=SO(3)$ or $G=G_2$,
the free loop space modulo 2 cohomology of the classifying space of $G$, $H^{*+\text{dim} G}(LBG;\mathbb{F}_2)$
is not isomorphic as Batalin-Vilkovisky algebra to the Hochschild cohomology of $H_*(G;\mathbb{F}_2)$, $HH^*(H_*(G;\mathbb{F}_2);H_*(G;\mathbb{F}_2))$ although when $G=SO(3)$
the underlying Gerstenhaber algebras are isomorphic.
\end{thm}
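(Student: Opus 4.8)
The plan is to pin down a Batalin--Vilkovisky invariant on which the two algebras disagree: whether the unit lies in the image of $\Delta$. Any isomorphism of BV algebras is a unital algebra morphism commuting with $\Delta$, hence it sends $1$ to $1$ and carries $\operatorname{Im}\Delta$ onto $\operatorname{Im}\Delta$; so if $1\in\operatorname{Im}\Delta$ holds on one side but not on the other, no such isomorphism can exist. On the free loop space side the work is already done: Theorems~\ref{thm: BV-algebra sur algebre commutative libre pour SO3} and~\ref{thm: BV-algebra sur algebre commutative libre pour G2} assert precisely that $1\notin\operatorname{Im}\Delta$ in $H^{*+\dim G}(LBG;\mathbb{F}_2)$ for $G=SO(3)$ and $G=G_2$. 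So the point is to check that, on the contrary, $1\in\operatorname{Im}\Delta$ in $HH^*(H_*(G;\mathbb{F}_2),H_*(G;\mathbb{F}_2))$.

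For this I would first identify $H_*(G;\mathbb{F}_2)$ as an exterior algebra. Since $H^*(SO(3);\mathbb{F}_2)=\mathbb{F}_2[t]/(t^4)$ with $|t|=1$ and $H^*(G_2;\mathbb{F}_2)=\mathbb{F}_2[t_3]/(t_3^4)\otimes\Lambda(t_5)$, with all displayed generators primitive (easily checked, e.g. $H^*(G_2;\mathbb{F}_2)$ vanishes in degrees $1,2,4$), dualizing the Hopf algebra gives, as algebras, $H_*(SO(3);\mathbb{F}_2)\cong\Lambda(a_1,a_2)$ and $H_*(G_2;\mathbb{F}_2)\cong\Lambda(a_3,a_5,a_6)$; in both cases this is the exterior algebra $\wedge(sV)^\vee$ on the duals of the suspended polynomial generators of $H^*(BG;\mathbb{F}_2)$, just as under hypothesis $(H)$. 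Now Theorem~\ref{thm: Hochschild cohomology exterior algebra} applies and yields a BV-algebra isomorphism $HH^*(\wedge(sV)^\vee,\wedge(sV)^\vee)\cong\wedge(sV)^\vee\otimes\mathbb{F}_2[V]$ --- the same commutative algebra as $H^{*+\dim G}(LBG;\mathbb{F}_2)$ in Theorem~\ref{iso of algebras between free loop space cohomology and Hochschild} --- in which $\Delta$ vanishes on each tensor factor while $\Delta\bigl((1\otimes y_i)((sy_j)^\vee\otimes 1)\bigr)=\pm\delta_{ij}$; this is exactly the formula extracted from Theorem~\ref{thm: Hochschild cohomology exterior algebra} in the proof of Theorem~\ref{iso BV with Hochschild when no steenrod operation}, whose derivation never used $(H)$ beyond the identification of $H_*(G)$ with $\wedge(sV)^\vee$. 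Taking $i=j$ and using $\pm 1=1$ over $\mathbb{F}_2$ exhibits the unit as $\Delta$ of an explicit class, so $1\in\operatorname{Im}\Delta$. Combined with the previous paragraph this shows $H^{*+\dim G}(LBG;\mathbb{F}_2)$ and $HH^*(H_*(G;\mathbb{F}_2),H_*(G;\mathbb{F}_2))$ are not isomorphic as BV algebras, for $G=SO(3)$ and for $G=G_2$.

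It remains to see that for $G=SO(3)$ the two Gerstenhaber algebras are nonetheless isomorphic. By Theorem~\ref{iso of algebras between free loop space cohomology and Hochschild} and Theorem~\ref{thm: Hochschild cohomology exterior algebra}, both are carried by the same commutative algebra $\wedge(u_{-1},u_{-2})\otimes\mathbb{F}_2[v_2,v_3]$, so the task is to match the two brackets. On the Hochschild side Theorem~\ref{thm: Hochschild cohomology exterior algebra} gives the ``symplectic'' bracket: the generators bracket pairwise to zero except $\{y_i,a_j\}=\pm\delta_{ij}$. On the free loop side the bracket is read off from the explicit operator $\Delta$ of Theorem~\ref{thm: BV-algebra sur algebre commutative libre pour SO3} through the BV formula, which over $\mathbb{F}_2$ reads $\{a,b\}=\Delta(ab)+\Delta(a)\,b+a\,\Delta(b)$; one finds $\{u_{-i},u_{-j}\}=0$ and $\{v_i,v_j\}=0$, but the mixed brackets acquire decomposable corrections, e.g. $\{u_{-1},v_2\}=1+u_{-2}v_2+u_{-1}u_{-2}v_3$ and $\{u_{-2},v_2\}=u_{-1}$. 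Since a Gerstenhaber bracket is a biderivation, hence determined by its values on the finitely many algebra generators, it then suffices to produce a single algebra automorphism of $\wedge(u_{-1},u_{-2})\otimes\mathbb{F}_2[v_2,v_3]$ --- the identity on $u_{-1},u_{-2}$ together with corrections $v_i\mapsto v_i+(\text{decomposables})$, for instance a refinement of the isomorphism built in the second proof of Theorem~\ref{iso of algebras between free loop space cohomology and Hochschild} --- that conjugates the loop-space bracket into the symplectic one, and to verify this on generators.

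The main obstacle is exactly this last verification. The leading terms of the two brackets already agree (each pairs $v_i$ with the appropriate $u_{-j}$), so in effect one runs a ``Moser-type'' argument correcting the bracket order by order in the internal degree; but with the explicit extra terms above, and over $\mathbb{F}_2$, this has to be carried out by hand, and it is the only part of the argument that is not formal. Everything else --- the identification of $H_*(G;\mathbb{F}_2)$, the invocations of Theorems~\ref{thm: Hochschild cohomology exterior algebra}, \ref{iso of algebras between free loop space cohomology and Hochschild}, \ref{thm: BV-algebra sur algebre commutative libre pour SO3} and~\ref{thm: BV-algebra sur algebre commutative libre pour G2}, and the $\operatorname{Im}\Delta$ dichotomy --- is routine.
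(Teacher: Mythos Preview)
Your argument for the non-isomorphism of BV algebras is essentially identical to the paper's: both use the invariant ``$1\in\operatorname{Im}\Delta$'', invoking Theorems~\ref{thm: BV-algebra sur algebre commutative libre pour SO3} and~\ref{thm: BV-algebra sur algebre commutative libre pour G2} on the loop side and the exterior-algebra computation (the paper cites Example~\ref{ex: unit belongs to image of Delta}(b), you re-derive it from Theorem~\ref{thm: Hochschild cohomology exterior algebra}) on the Hochschild side, together with the trivial observation that a BV-isomorphism preserves this invariant (the paper's Lemma~\ref{lem:iso BV implique unite et image de Delta}).

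For the Gerstenhaber isomorphism when $G=SO(3)$, your strategy is also the paper's, but you stop at the outline whereas the paper carries out the computation in full. Two points are worth noting. First, your ansatz that the sought automorphism fixes $u_{-1}$ and $u_{-2}$ is too restrictive: the paper parametrises \emph{all} graded algebra isomorphisms $\varphi$ (using Lemma~\ref{surjectivity morphism of algebras} to reduce to surjectivity on indecomposables), writes them with eight $\mathbb{F}_2$-parameters, and the explicit bracket checks force $\varphi(x_{-1})=u_{-1}+u_{-1}u_{-2}v_2$, i.e.\ a nontrivial correction on the exterior generator. Second, the paper does not run a step-by-step Moser-type induction; it simply computes $\{\varphi(g),\varphi(h)\}$ for every pair of generators $g,h$ and solves the resulting system over $\mathbb{F}_2$, finding exactly two Gerstenhaber isomorphisms (parameters $\varepsilon=b=c=\alpha=1$, $\beta=0$, $a=\gamma=\delta\in\{0,1\}$). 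So the missing work in your proposal is precisely this finite computation; without it the claim that the brackets can be matched remains unverified, and your suggested form of the automorphism would in fact fail.
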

The main result of~\cite{menichi:stringtopspheres} is that the same phenomenon appears for
Chas-Sullivan string topology even in the simple case of the two dimensional sphere $S^2$.
\begin{lem}\label{lem:iso BV implique unite et image de Delta}
Let $A$ and $B$ two unital BV-algebras.
Let $\varphi:A\rightarrow B$ be a linear map preserving the units and commuting with
the BV-operators $\Delta$ (For example if $\varphi$ is an isomorphism preserving the multiplications and the $\Delta$'s).
If $1_A\in\text{Im } \Delta$ then $1_B\in\text{Im } \Delta$.
\end{lem}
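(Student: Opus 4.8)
The plan is to chase through $\varphi$ the element realizing $1_A$ as a $\Delta$-boundary. By hypothesis there is some $a\in A$ with $\Delta(a)=1_A$. First I would apply $\varphi$ to this equality; since $\varphi$ is linear this is legitimate and gives $\varphi(\Delta(a))=\varphi(1_A)$. Now invoke the two hypotheses on $\varphi$: it commutes with the BV-operators, so the left-hand side equals $\Delta(\varphi(a))$, and it preserves the units, so the right-hand side equals $1_B$. Hence $1_B=\Delta(\varphi(a))\in\text{Im }\Delta$, which is the conclusion.

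I do not expect any obstacle here: only linearity of $\varphi$, the fact that it intertwines the two $\Delta$'s, and the preservation of units are used, while the multiplicative structure plays no role — this is precisely why the parenthetical remark in the statement points out that an isomorphism of BV-algebras is merely a special case. The only point worth attention is to phrase the hypotheses in the minimal form actually needed, so that the lemma applies as broadly as possible in what follows.

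This lemma is the device behind Theorem~\ref{thm:non BV-iso modulo 2}: Theorems~\ref{thm: BV-algebra sur algebre commutative libre pour SO3} and~\ref{thm: BV-algebra sur algebre commutative libre pour G2} establish that $1\notin\text{Im }\Delta$ in $\mathbb{H}^*(LBG;\mathbb{F}_2)$ for $G=SO(3)$ and $G=G_2$, whereas the formula for $\Delta$ on $HH^*(H_*(G;\mathbb{F}_2),H_*(G;\mathbb{F}_2))$ furnished by Theorem~\ref{thm: Hochschild cohomology exterior algebra} does hit the unit. The contrapositive of this lemma then forbids any linear isomorphism between the two BV-algebras that preserves units and commutes with $\Delta$, hence in particular any isomorphism of BV-algebras.
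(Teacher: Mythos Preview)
Your proof is correct and essentially identical to the paper's own argument: both pick $a\in A$ with $\Delta(a)=1_A$ and compute $1_B=\varphi(1_A)=\varphi(\Delta(a))=\Delta(\varphi(a))$. The additional paragraph explaining how the lemma feeds into Theorem~\ref{thm:non BV-iso modulo 2} is accurate and matches how the paper uses it.
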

\begin{proof}
There exists $a\in A$ such that $\Delta(a)=1_A$. So $$1_B=\varphi(1_A)=\varphi(\Delta(a))=\Delta(\varphi(a))\in\text{Im } \Delta.$$
\end{proof}
\begin{lem}\label{surjectivity morphism of algebras}
Let $d\in\mathbb{N}$ be a non-negative integer.
Let $f:A\rightarrow B$ be a morphism of augmented graded algebras
such that $B=B_{\geq -d}$, i. e. $B$ is concentrated in degrees greater or equal than $-d$
and such that $B_0=\mathbb{F}$.
Then $f$ is surjective iff $Q(f)$ is surjective.
\end{lem}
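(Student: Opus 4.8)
The plan is to treat the two implications separately; the forward one is formal, and the reverse one is where the hypotheses $B=B_{\geq -d}$ and $B_0=\mathbb{F}$ actually get used. Write $\bar{A}$ and $\bar{B}$ for the augmentation ideals, so that $Q(A)=\bar{A}/\bar{A}^2$ and $Q(B)=\bar{B}/\bar{B}^2$. If $f$ is surjective then it restricts to a surjection $\bar{A}\to\bar{B}$, hence to a surjection $\bar{A}^2\to\bar{B}^2$, hence descends to a surjection $Q(A)\to Q(B)$; this is one direction.

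For the converse, suppose $Q(f)$ is surjective. First I would reduce to the case of a subalgebra inclusion: factoring $f$ as $A\twoheadrightarrow C:=\mathrm{Im}(f)\hookrightarrow B$, the graded subalgebra $C$ still satisfies $C_0=\mathbb{F}$ and $C=C_{\geq -d}$, and by functoriality of $Q$ the inclusion $C\hookrightarrow B$ is still surjective on indecomposables; so it suffices to prove $C=B$. Surjectivity of $Q(C)\to Q(B)$ says exactly $\bar{B}=\bar{C}+\bar{B}^2$. Then a purely formal induction on $k$ (substitute $\bar{B}=\bar{C}+\bar{B}^2$ into the first factor of $\bar{B}^{k}=\bar{B}\cdot\bar{B}^{k-1}$, and use $\bar{C}\cdot\bar{C}\subseteq\bar{C}$) yields $\bar{B}\subseteq\bar{C}+\bar{B}^k$ for every $k\geq 2$. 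So everything comes down to showing that in each fixed degree $n$ one has $(\bar{B}^k)_n=0$ for $k$ large enough; granting this, taking $k$ large in $\bar{B}_n\subseteq\bar{C}_n+(\bar{B}^k)_n$ gives $B_n=C_n$ for all $n$, i.e. $C=B$.

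The hard part is precisely this last vanishing (in the classical connected case $d=0$ it is trivial, since every factor of a product has positive degree). The key observations I would use are: (i) no nonzero product $z_1\cdots z_m$ of elements of $\bar{B}$ with $m\geq 1$ can have degree $0$ — otherwise $z_1\cdot(\lambda^{-1}z_2\cdots z_m)=1$ for some $0\neq\lambda\in\mathbb{F}$, and using $B_0=\mathbb{F}$ one checks $z_1$ is then a two-sided unit, whose inverse would have nonzero powers in arbitrarily negative degrees, contradicting $B=B_{\geq -d}$; (ii) consequently, in a nonzero product $z_1\cdots z_k$ of degree $n$ the partial products $P_j=z_1\cdots z_j$ are nonzero, lie in degrees $\geq -d$, and have \emph{pairwise distinct} degrees $D_j$ (if $D_i=D_j$ with $i<j$ then $z_{i+1}\cdots z_j$ would be a nonzero $\bar{B}$-product of degree $0$); and (iii) if $D_j$ is the largest of the $D_i$, then $z_{j+1}\cdots z_k$ is a nonzero element of $B$ of degree $n-D_j$, so $n-D_j\geq -d$, i.e. $D_j\leq n+d$. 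Hence the $D_j$ are distinct elements of the finite set $\{-d,\dots,n+d\}$, which forces $k\leq n+2d$; therefore $(\bar{B}^k)_n=0$ for $k>n+2d$, which is all that is needed.
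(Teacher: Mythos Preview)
Your proof is correct. Both your argument and the paper's hinge on the same key fact: in each fixed degree $n$ one has $(\bar{B}^k)_n=0$ for $k$ large. The paper asserts this (phrased as ``$F^k(B)=\bar{B}^k$ is concentrated in degrees $\geq k-2d$'') without proof, and then runs a descending induction on the wordlength filtration using the five lemma on $0\to F^{k+1}\to F^k\to F^k/F^{k+1}\to 0$ together with the natural surjection $Q(B)^{\otimes k}\twoheadrightarrow F^k/F^{k+1}$. You instead run a Nakayama-style argument ($\bar{B}\subseteq\bar{C}+\bar{B}^k$), which is an equivalent packaging of the same induction. The genuine addition in your write-up is that you actually \emph{prove} the vanishing, via the observation that for a nonzero homogeneous product $z_1\cdots z_k\in\bar{B}^k$ of degree $n$ the partial products $P_j=z_1\cdots z_j$ have pairwise distinct nonzero degrees lying in $\{-d,\dots,n+d\}\setminus\{0\}$, forcing $k\leq n+2d$; the paper leaves precisely this step implicit.

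One small simplification: your argument for (i) via two-sided units is more than you need. Since $\bar{B}$ is an ideal, any product $z_1\cdots z_m$ with $z_i\in\bar{B}$ already lies in $\bar{B}$, and $B_0=\mathbb{F}$ forces $\bar{B}_0=0$ directly; so a nonzero such product cannot have degree $0$.
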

\begin{proof}
When $d=0$, this Lemma is Proposition 3.8 of~\cite{Milnor-Moore}. But the proof of~\cite{Milnor-Moore} cannot be easily generalized. Therefore we provide a proof.

Filter $A$ by wordlength: $F^n(A):=\overline{A}\cdot \overline{A} \dots \cdot \overline{A}$
for any $n\geq 0$.
The sequence
$$
\oplus_{i=1}^n \overline{A}^{\otimes i-1}\otimes \overline{A}\cdot \overline{A}\otimes \overline{A}^{\otimes n-i}
\rightarrow \overline{A} ^{\otimes n}\twoheadrightarrow Q(A)^{\otimes n}\rightarrow 0
$$
is exact.
Alternatively, since over a field $\mathbb{F}$, $\overline{A}=\overline{A}\cdot \overline{A}\oplus Q(A)$,
$$
0\rightarrow +_{i=1}^n \overline{A}^{\otimes i-1}\otimes \overline{A}\cdot \overline{A}\otimes \overline{A}^{\otimes n-i}
\hookrightarrow \overline{A} ^{\otimes n}\twoheadrightarrow Q(A)^{\otimes n}\rightarrow 0
$$
is a short exact sequence. Therefore the iterated multiplication of $\overline{A}$
induces a natural map
$
Q(A)^{\otimes n}\twoheadrightarrow F^n(A)/F^{n+1}(A)
$
obviously surjective.

Assume that $Q(f)$ is surjective.
Then
$
Q(f)^{\otimes n}:Q(A)^{\otimes n}\twoheadrightarrow Q(B)^{\otimes n}
$
is also surjective.
Since the following square is commutative by naturality,
$$
\xymatrix{
Q(A)^{\otimes n}\ar[d]_{Q(f)^{\otimes n}}\ar[r]
&F^n(A)/F^{n+1}(A)\ar[d]^{Gr_nf}\\
Q(B)^{\otimes n}\ar[r]
&F^n(B)/F^{n+1}(B),
}
$$
the map induced by $f$, $Gr_nf$, is also surjective.
In a fixed degree, consider the commutative diagram
$$
\xymatrix{
0\ar[r]
&F^{n+1}(A)\ar[r]\ar[d]_{f|F^{n+1}(A)}
&F^{n}(A)\ar[r]\ar[d]^{f|F^{n}(A)}
&F^n(A)/F^{n+1}(A)\ar[d]^{Gr_nf}\ar[r]
&0\\
0\ar[r]
&F^{n+1}(B)\ar[r]
&F^{n}(B)\ar[r]
&F^n(B)/F^{n+1}(B)\ar[r]
&0
}
$$
with exact rows.
Suppose by induction that the restriction of $f$ to $F^{n+1}(A)$, $f|F^{n+1}(A)$, is surjective.
Then by the five Lemma, $f|F^{n}(A)$, is also surjective.
Since $F^{n}(B)$ is concentrated in degrees greator or equal than $n-2d$, in a fixed degree,
for large $n$, $F^{n}(B)$ is trivial and we can start the induction.
Therefore $f=f|F^{0}(A)$ is surjective.
\end{proof}
\begin{proof}[Proof of Theorem~\ref{thm:non BV-iso modulo 2}]
Since $H_*(G)$ is an exterior algebra, by Example~\ref{ex: unit belongs to image of Delta} b),
$1\in\text{Im }\Delta$ in the BV-algebra $HH^*(H_*(G);H_*(G))$.
On the contrary, by Theorems~\ref{thm: BV-algebra sur algebre commutative libre pour SO3} and~\ref{thm: BV-algebra sur algebre commutative libre pour G2}, the unit $1$ does not belong to the image of $\Delta$ in the BV-algebra
$H^{*+\text{dim} G}(LBG;\mathbb{F}_2)$. So by Lemma~\ref{lem:iso BV implique unite et image de Delta},
the BV-algebras $HH^*(H_*(G);H_*(G))$ and $H^{*+\text{dim} G}(LBG;\mathbb{F}_2)$ are not isomorphic.

The BV-algebra $HH^*(H_*(SO(3)),H_*(SO(3)))$ is explicitly computed in the proof of Theorem
\ref{iso BV with Hochschild when no steenrod operation}
and is isomorphic to the tensor product of algebras
$\Lambda(x_{-2},x_{-1})\otimes \mathbb{F}_2[y_2,y_3]$
with $\Delta(x_{-2}y_3)=1$, $\Delta(x_{-2}y_2)=0$, $\Delta(x_{-1}y_2)=1$, $\Delta(x_{-1}y_3)=0$,
and $\Delta$ is trivial on $\Lambda(x_{-2},x_{-1})\otimes 1$ and on
 $1\otimes \mathbb{F}_2[y_2,y_3]$.
 The BV-algebra $H^{*+3}(LBSO(3);\mathbb{F}_2)\cong \Lambda(u_{-2},u_{-1})\otimes \mathbb{F}_2[v_2,v_3]$ is explicited by Theorem~\ref{thm: BV-algebra sur algebre commutative libre pour SO3}.
 
Let $\varphi:\Lambda(x_{-2},x_{-1})\otimes \mathbb{F}_2[y_2,y_3]
\rightarrow \Lambda(u_{-2},u_{-1})\otimes \mathbb{F}_2[v_2,v_3]$ be any morphism of graded algebras.
Since $\Lambda(x_{-2},x_{-1})\otimes \mathbb{F}_2[y_2,y_3]$ and
$\Lambda(u_{-2},u_{-1})\otimes \mathbb{F}_2[v_2,v_3]$ are of the same finite dimension in each degree, $\varphi$ is an isomorphism iff $\varphi$ is surjective.
By Lemma~\ref{surjectivity morphism of algebras}, $\varphi$ is surjective iff $Q(\varphi)$
is surjective.
Therefore if $\varphi$ is an isomorphism of algebras iff
$$\varphi(x_{-2})=u_{-2},$$
$$\varphi(x_{-1})=u_{-1}+\varepsilon u_{-1}u_{-2}v_2,$$
$$\varphi(y_2)=v_2+a u_{-2}v_2^2+bu_{-1}u_{-2}v_2v_3+cu_{-1}v_3,$$
$$\varphi(y_3)=v_3+ \alpha u_{-2}v_2v_3+\beta u_{-1}u_{-2}v_3^2+\gamma u_{-1}u_{-2}v_2^3+\delta u_{-1}v_2^2$$
where $\varepsilon$, $a$, $b$, $c$, $\alpha$, $\beta$, $\gamma$, $\delta$ are 8 elements
of $\mathbb{F}_2$.
Since $(u_{-2})^2=(u_{-1}+\varepsilon u_{-1}u_{-2}v_2)^2=0$,
the above 4 formulas define always a morphism $\varphi$ of algebras.

By the Poisson rule, a morphism of algebras between Gerstenhaber algebras is a morphism
of Gerstenhaber algebras iff the brackets are compatible on the generators.

Note that modulo $2$, in a BV-algebra, for any elements $z$ and $t$,
$\{z+t,z+t\}=\{z,z\}+\{t,t\}$ and $\{z,z\}=\Delta(z^2)$.
Therefore it is easy to check
that
$
\varphi(\{x_{-2},x_{-2}\})=0=\{\varphi (x_{-2}),\varphi(x_{-2})\}
$,
$
\varphi(\{x_{-1},x_{-1}\})=0=\{\varphi (x_{-1}),\varphi(x_{-1})\}
$,
$
\varphi(\{y_2,y_2\})=0=\{\varphi (y_2),\varphi(y_2)\}
$ and 
$
\varphi(\{y_3,y_3\})=0=\{\varphi (y_3),\varphi(y_3)\}.
$

Note that $\Delta\varphi(x_{-1})=\varepsilon u_{-2}$, $\Delta\varphi(x_{-2})=0$, 
$\Delta\varphi(y_2)=(b+c)(u_{-2}v_3+u_{-1}v_2)$
and $\Delta\varphi(y_3)=\alpha u_{-1}v_3+\alpha v_2+(\alpha+\gamma)u_{-2}v_2^2
+\alpha u_{-1}u_{-2}v_2v_3$.

Therefore
$
\varphi(\{x_{-2},y_2\})=0
$,
$
\{\varphi (x_{-2}),\varphi(y_2)\}=(1+c)u_{-1}+(b+c)u_{-1}u_{-2}v_2
$,

$
\varphi(\{x_{-1},y_2\})=1
$,
$
\{\varphi (x_{-1}),\varphi(y_2)\}=1+(1+\varepsilon)u_{-2}v_2+(\varepsilon c+1+b+c)u_{-1}u_{-2}v_3
$,

$
\varphi(\{x_{-2},x_{-1}\})=0=\{\varphi (x_{-2}),\varphi(x_{-1})\}
$,

$
\varphi(\{x_{-2},y_3\})=1
$,
$
\{\varphi (x_{-2}),\varphi(y_3)\}=1+(1+\alpha)u_{-2}v_2+(1+\alpha)u_{-1}u_{-2}v_3
$,

$
\varphi(\{x_{-1},y_3\})=0
$,
$
\{\varphi (x_{-1}),\varphi(y_3)\}=
(1+\alpha+\varepsilon+\alpha)u_{-1}v_2
+(\varepsilon+1+\alpha+\varepsilon)u_{-2}v_3+(\varepsilon\delta+\alpha+\gamma+\varepsilon\alpha)u_{-1}u_{-2}v_2^2
$,

$
\varphi(\{y_2,y_3\})=0
$,
\begin{multline*}
\{\varphi (y_2),\varphi(y_3)\}=
\Delta\varphi(y_2)\varphi(y_3)+\Delta(\varphi(y_2)\varphi(y_3))+\varphi(y_2)\Delta\varphi(y_3)\\
=(b+c)(u_{-2}v_3^2+u_{-1}v_2v_3+(\alpha+\delta)u_{-1}u_{-2}v_2^2v_3)\\
+\Delta\left((a+\alpha)u_{-2}v_2^2v_3+(b+c\alpha+\beta)u_{-1}u_{-2}v_2v_3^2+\delta u_{-1}v_2^3\right)+\varphi(y_2)\Delta\varphi(y_3)\\
=(a+\alpha+\delta+\alpha)v_2^2+(a+\alpha+\delta+\alpha+\gamma+a\alpha)u_{-2}v_2^3\\
+((b+c)(\alpha+\delta)+a+\alpha+\delta+\alpha+a\alpha+b\alpha+c\alpha+c\gamma)u_{-1}u_{-2}v_2^2v_3\\
+(b+c+\alpha+c\alpha)u_{-1}v_2v_3
+(b+c+b+c\alpha+\beta)u_{-2}v_3^2.
\end{multline*}
Therefore, by symmetry of the Lie brackets, $\varphi$ is a morphism
of Gerstenhaber algebras iff
$\varepsilon=b=c=\alpha=1$, $\beta=0$ and $a=\gamma=\delta$.
Conclusion: we have found two isomorphisms of Gerstenhaber algebras between
$H^{*+3}(LBSO(3);\mathbb{F}_2)$ and $HH^*(H_*(SO(3)),H_*(SO(3)))$.
\end{proof}
\section{Review of~\cite{Chataur-Menichi:stringclass} with signs corrections}\label{review of chataurmenichi}
In this section, we review the results of Chataur and the second author in~\cite{Chataur-Menichi:stringclass}.
And we correct a sign mistake.

\noindent{\bf Integration along the fibre in homology with corrected sign.} Let $F \to E \stackrel{p}{\to} B$ be an oriented fibration with $B$ path-connected; that is, the homology $H_*(F; \K)$ is concentrated in degree less than or equal to $n$, 
$\pi_1(B)$ acts on $H_n(F; \K)$ trivially and $H_n(F; \K)\cong \K$. In what follows, we write $H_*(X)$ for $H_*(X; \K)$. 
We choose a generator $\omega$ of $H_n(F; \K)$, which is called 
an orientation class. Then the integration along the fibre $p_!^\omega : H_*(B) \to H_{*+n}(E)$ is defined by the composite 
$$
H_s(B) \stackrel{\eta}{\to} H_s(B)\otimes H_n(F) = E_{s, n}^2 \twoheadrightarrow E_{s, n}^\infty = F^s/F^{s-1}= F^s \subset H_{s+n}(E),  
$$
where $\eta$ sends the $x \in H_s(B)$ to the element $(-1)^{sn}x\otimes \omega \in H_s(B)\otimes H_n(F)$ 
and $\{F^l\}_{l\geq 0}$ denotes the filtration of the Leray-Serre spectral sequence 
$\{E_{*,*}^r, d^r\}$ of the fibration $F \to E \stackrel{p}{\to} B$.
This Koszul sign $(-1)^{sn}$ does not appear in the usual definition of integration along the fibre recalled in~\cite[2.2.1]{Chataur-Menichi:stringclass}.

\noindent{\bf Products:} Let $F' \to E' \stackrel{p'}{\to} B'$ be another oriented fibration with orientation class $\omega' \in H_{n'}(F')$.
We will choose $\omega \otimes \omega' \in H_{n+n'}(F\times F')$ as an orientation class of the fibration 
$F\times F' \to E\times E' \stackrel{p\times p'}{\to} B\times B'$. 
By~\cite[3 Theorem, page 493]{Spanier:livre},
the cross product $\times$ induces a morphism of spectral sequences between the tensor product
of the Serre spectral sequences associated to $p$ and $p'$ and the Serre spectral sequence
associated to $p\times p'$.
Therefore the interchange on $H_*(B) \otimes H_n(F)\otimes H_*(B') \otimes H_{n'}(F')$ between the orientation class $\omega \in H_n(F)$ and elements in $H_*(B')$ yields the formula given (without proof) in~\cite[section 2.3]{Chataur-Menichi:stringclass}
$$
(p\times p')_!^{\omega \times \omega'}(a\times b) = (-1)^{\vert\omega'\vert\vert a \vert}p_!^\omega(a)\times p'^{\omega'}_!(b). 
\eqnlabel{add-1}.
$$
Note that with the usual definition of integration along the fibre recalled in~\cite[2.2.1]{Chataur-Menichi:stringclass},
the Koszul sign $(-1)^{\vert\omega'\vert\vert a \vert}$ must be replaced by the awkward sign
$(-1)^{\vert\omega\vert\vert b\vert}$.
Therefore there is a sign mistake in~\cite[section 2.3]{Chataur-Menichi:stringclass}.

\noindent{\bf Integration along the fibre in cohomology with corrected sign.}
Let $F\buildrel{incl}\over\hookrightarrow E
\buildrel{p}\over\twoheadrightarrow B$ be an oriented fibration
with orientation $\tau:H^n(F)\rightarrow\K$.
By definition, $p^!_\tau:H^{s+n}(E)\rightarrow H^s(B)$ is the composite
$$
H^{s+n}(E)\twoheadrightarrow E_\infty^{s,n}\subset E_2^{s,n}=H^s(B)\otimes H^n(F)
\buildrel{id\otimes \tau}\over\rightarrow H^s(B)
$$
where $(id\otimes \tau)(b\otimes f)=(-1)^{n\vert b\vert} b\tau(f)$.
This Koszul sign $(-1)^{n\vert b\vert}$ does not appear in the usual definition of integration along the fibre recalled in~\cite[p. 268]{Bredon:Sheaf}.

By~\cite[IV.14.1]{Bredon:Sheaf},
$$
p^!_\tau(H^*(p)(\beta)\cup \alpha) = (-1)^{\vert \beta\vert n}\beta\cup p^!_\tau(\alpha)
$$
for $\alpha\in H^*(E)$ and $\beta\in H^*(B)$.
This means that the degree $-n$ linear map $p^!_\tau:H^*(E)\rightarrow H^{*-n}(B)$
is a morphism of left $H^*(B)$-modules 
in the sense that $f(xm)=(-1)^{\vert f \vert \vert x\vert} xf(m)$
as quoted in \cite[p. 44]{Felix-Halperin-Thomas:ratht}. 

\noindent{\bf Example: trivial fibrations.}
Let $\omega\in H_n(F; \K)$ be a generator.
Define the orientation $\tau:H^n(F)\rightarrow\K$ as the image of $\omega$
by the natural isomorphism of the homology into its double dual,
$\psi:H_n(F; \K)\rightarrow\text{Hom}(H^n(F; \K),\K)$.
Explicitly, $\tau(f)=(-1)^{n\vert f\vert}<f,\omega>$ where $<\;,\;>$ is the Kronecker bracket.

Let $p_1:B\times F\twoheadrightarrow B$ be the projection on the first factor.
Then for any $f\in H^*(F)$ and $b\in H^*(B)$, $p^!_{1\tau}(b\times f)=(-1)^{\vert f\vert\vert b\vert} b\tau(f)$.
Let $p_2:F\times B\twoheadrightarrow B$ be the projection on the second factor.
Since $p_2$ is the composite of $p_1$ and the exchange homeomorphism, by naturality
of integration along the fibre,
$$p^!_{2\tau}(f\times b)=p^!_{1\tau}((-1)^{\vert f\vert\vert b\vert}b\times f)=b\tau(f)=(-1)^{n\vert f\vert}<f,\omega>b.$$

\noindent{\bf Main coTheorem of~\cite{Chataur-Menichi:stringclass} with signs.}
The main theorem of~\cite{Chataur-Menichi:stringclass} states that $H_*(LX)$ is a $d$-dimensional (non-unital non
co-unital) homological conformal field theory: that is  $H_*(\mathcal{L}X)$ is an algebra over the tensor product of graded linear props
$$
\bigoplus_{F_{p+q}}\text{det}H_1(F,\partial_{in};\mathbb{Z})^{\otimes d}\otimes_\mathbb{Z} H_*(BDiff^+(F,\partial);\K).
$$
See \cite[Sections 3 and 11]{Chataur-Menichi:stringclass} for the definition of this prop. The prop $\text{det}H_1(F,\partial_{in};\mathbb{Z})$ manages the degree shift
and the sign of each operation.
In~\cite{Chataur-Menichi:stringclass}, Chataur and the second author did not pay attention to this prop $\text{det}H_1(F,\partial_{in};\mathbb{Z})$
(\cite[p. 120]{BGNX:Stringstackspublie} neither, it seems).
Therefore, in order to get the signs correctly,
we need to review all the results of~\cite{Chataur-Menichi:stringclass} by taking this prop into account. 
Explicitly, we have maps
$$
\nu_*(F_{q+p}):\text{det}H_1(F_{q+p},\partial_{in};\mathbb{Z})^{\otimes d}\otimes_\mathbb{Z} H_*(BDiff^+(F_{q+p},\partial))\otimes H_*(LX)^{\otimes q}\rightarrow H_*(LX)^{\otimes p}
$$
$s\otimes a\otimes v\mapsto \nu_*(F_{q+p})^{s\otimes a}(v)$.

Therefore (Compare with \cite[Section 6.3]{Chataur-Menichi:stringclass}), its dual $H^*(LX)$ is an algebra over the opposite
prop
$$
\bigoplus_{F_{p+q}}\text{det}H_1(F,\partial_{in};\mathbb{Z})^{op\otimes d}\otimes_\mathbb{Z} H_*(BDiff^+(F,\partial))^{op}.
$$
which is isomorphic to the prop
$$
\bigoplus_{F_{p+q}}\text{det}H_1(F,\partial_{out};\mathbb{Z})^{\otimes d}\otimes_\mathbb{Z} H_*(BDiff^+(F,\partial)).
$$
since $\text{det}H_1(F_{p+q},\partial_{out};\mathbb{Z})=\text{det}H_1(F_{q+p},\partial_{in};\mathbb{Z})$
and $Diff^+(F_{p+q},\partial)=Diff^+(F_{q+p},\partial)$.
Explicitly, the degree $0$ map
$$
\nu^*(F_{p+q}):\text{det}H_1(F_{q+p},\partial_{in};\mathbb{Z})^{\otimes d}\otimes_\mathbb{Z} H_*(BDiff^+(F_{q+p},\partial))\otimes H^*(LX)^{\otimes p}\rightarrow H^*(LX)^{\otimes q}
$$
send the element
$s\otimes a\otimes \alpha$ to 
$$\nu^*(F_{p+q})^{s\otimes a}(\alpha):= ^t(\nu_*(F_{q+p})^{s\otimes a})(\alpha)=
(-1)^{\vert\alpha\vert(\vert s\vert+\vert a\vert)}\alpha\circ \nu_*(F_{q+p})^{s\otimes a}.$$
Note that here, we have defined the transposition of a map $f$ as
$$ ^t f(\alpha)=(-1)^{\vert\alpha\vert\vert f\vert}\alpha\circ f.$$

This means the following five propositions.
\begin{prop}\label{prop:equivalent}(Compare with~\cite[Proposition 24]{Chataur-Menichi:stringclass})
Let $F$ and $F'$ be two cobordisms with same incoming boundary and same outgoing boundary.
Let $\phi:F\rightarrow F'$ be an orientation preserving diffeomorphism, fixing the boundary
(i. e. an equivalence between the two cobordisms $F$ and $F'$).
Let $c_\phi:Diff^+(F,\partial)\rightarrow Diff^+(F',\partial)$ be the isomorphism
of groups, mapping $f$ to $\phi\circ f\circ\phi^{-1}$.
Then 
for $s\otimes a\in \text{det}H_1(F,\partial_{out};\mathbb{Z})^{\otimes d}\otimes_\mathbb{Z} H_*(BDiff^+(F,\partial))$,
$$\nu^{*s\otimes a}(F)=\nu^{*\text{det}H_1(\phi,\partial_{out};\mathbb{Z})^{\otimes d}(s)\otimes H_*(Bc_\phi)(a)}(F').$$
\end{prop}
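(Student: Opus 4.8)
The plan is to deduce this from its homological counterpart by passing through the transposition that defines the opposite prop. First I would recall that, writing $s':=\det H_1(\phi,\partial_{out};\mathbb{Z})^{\otimes d}(s)$ and $a':=H_*(Bc_\phi)(a)$, the very definition of $\nu^*$ recalled just above gives
$$
\nu^{*s\otimes a}(F)(\alpha)=(-1)^{\vert\alpha\vert(\vert s\vert+\vert a\vert)}\,\alpha\circ\nu_*(F_{q+p})^{s\otimes a},\qquad
\nu^{*s'\otimes a'}(F')(\alpha)=(-1)^{\vert\alpha\vert(\vert s'\vert+\vert a'\vert)}\,\alpha\circ\nu_*(F'_{q+p})^{s'\otimes a'},
$$
where one uses the identification $\det H_1(F_{p+q},\partial_{out};\mathbb{Z})=\det H_1(F_{q+p},\partial_{in};\mathbb{Z})$ to view $s$ and $s'$ as orientation data for the ingoing fibrations of $map(F_{q+p},X)$ and $map(F'_{q+p},X)$. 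So it suffices to prove two things: that the Koszul prefactors agree, and that the homological operations coincide, $\nu_*(F_{q+p})^{s\otimes a}=\nu_*(F'_{q+p})^{s'\otimes a'}$.

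For the prefactors, I would observe that $\det H_1(\phi,\partial_{in};\mathbb{Z})$ is an isomorphism between the free $\mathbb{Z}$-modules $H_1(F_{q+p},\partial_{in};\mathbb{Z})$ and $H_1(F'_{q+p},\partial_{in};\mathbb{Z})$, which have the same rank; since the degree of $\det H_1(-,\partial_{in};\mathbb{Z})^{\otimes d}$ depends only on that rank, $\vert s'\vert=\vert s\vert$. Likewise $c_\phi$ is an isomorphism of topological groups, so $Bc_\phi$ is a homotopy equivalence and $H_*(Bc_\phi)$ preserves homological degree, whence $\vert a'\vert=\vert a\vert$. Therefore $(-1)^{\vert\alpha\vert(\vert s'\vert+\vert a'\vert)}=(-1)^{\vert\alpha\vert(\vert s\vert+\vert a\vert)}$ and the two prefactors coincide.

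For the homological identity I would invoke~\cite[Proposition 24]{Chataur-Menichi:stringclass}, enriched with the naturality of the degree-shift prop. Since $\phi$ fixes the boundary, $map(\phi,X):map(F'_{q+p},X)\rightarrow map(F_{q+p},X)$ is a homeomorphism over $LX^{\times(q+p)}$ intertwining the ingoing and outgoing fibrations and equivariant, via $c_\phi$, for the diffeomorphism group actions; as the operation $\nu_*$ is assembled from the integration along the fibre of $map(in,X)$ and the homology of $map(out,X)$ parametrised by $H_*(BDiff^+(F,\partial))$ (see~\cite[Sections 3 and 6]{Chataur-Menichi:stringclass}), the pair $(map(\phi,X),H_*(Bc_\phi))$ transports the first operation onto the second. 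The only point not already in~\cite{Chataur-Menichi:stringclass} is to carry along the prop $\det H_1(-,\partial_{in};\mathbb{Z})^{\otimes d}$, which was overlooked there: the orientations of the ingoing fibres used in the two operations are matched precisely by $\det H_1(\phi,\partial_{in};\mathbb{Z})^{\otimes d}$, which is why $s$ is replaced by $s'$.

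The hard part here is not conceptual but a matter of signs: one must verify that passing to the opposite prop (transposition, carrying the Koszul sign $(-1)^{\vert\alpha\vert\vert f\vert}$) and the identification of $\det H_1(F_{p+q},\partial_{out};\mathbb{Z})$ with $\det H_1(F_{q+p},\partial_{in};\mathbb{Z})$ introduce no stray signs, which is exactly what the degree-preservation of $\det H_1(\phi,\partial_{in};\mathbb{Z})$ and of $H_*(Bc_\phi)$ established above guarantees. Everything else is the naturality of the construction of~\cite{Chataur-Menichi:stringclass} with respect to equivalences of cobordisms.
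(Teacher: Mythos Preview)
Your proposal is correct and follows the paper's approach. In fact the paper does not give a separate proof of this proposition: Section~\ref{review of chataurmenichi} states Propositions~\ref{prop:equivalent}--\ref{prop:symmetry} as the five explicit clauses unpacking the assertion that $H^*(LX)$ is an algebra over the opposite prop, itself obtained by transposing the homological action established in~\cite{Chataur-Menichi:stringclass}. Your argument---reduce to the homological operation via the transposition formula for $\nu^*$, invoke~\cite[Proposition~24]{Chataur-Menichi:stringclass} for the homological side, and check that $\det H_1(\phi,\partial_{in};\mathbb{Z})^{\otimes d}$ and $H_*(Bc_\phi)$ preserve degree so that the Koszul signs agree and the orientation data is transported correctly---is exactly the content the paper leaves implicit in the phrase ``Compare with~\cite[Proposition~24]{Chataur-Menichi:stringclass}''.
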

\begin{rem}\label{equivalencesamecobordism}
In Proposition~\ref{prop:equivalent}, suppose that $F=F'$.
By a variant of~\cite[Proposition 19]{Chataur-Menichi:stringclass},
$H_1(\phi,\partial_{out};\mathbb{Z})$ is of determinant $+1$.
Since the natural surjection $Diff^+(F,\partial))\buildrel{\simeq}\over\rightarrow
\pi_0(Diff^+(F,\partial))$ is a homotopy equivalence~\cite{Earle-Schatz} and
$\pi_0(c_\phi)$ is the conjugation by the isotopy class of $\phi$, $H_*(Bc_\phi)$ is
the identity. So the conclusion of Proposition~\ref{prop:equivalent} is just
 $\nu^{*s\otimes a}(F)=\nu^{*s\otimes a}(F)$.
\end{rem}
Using Proposition~\ref{prop:equivalent}, it is enough to define the operation
$\nu^*(F)$ for a set of representatives $F$ of oriented classes of cobordisms
(therefore the direct sum over a set $\oplus_F$ in the above definition of the prop has a meaning).
Conversely, if $\nu^*(F)$ is defined for a cobordism $F$ then using
Proposition~\ref{prop:equivalent}, we can define $\nu^*(F')$ for any equivalent cobordism $F'$ using an equivalence of cobordism $\phi:F\rightarrow F'$.
Two equivalences of cobordism $\phi,\phi':F\rightarrow F'$ define the same operation $\nu^*(F')$ since
$\text{det}H_1(\phi)\circ \text{det}H_1(\phi')^{-1}=\text{det}H_1(\phi\circ\phi'^{-1})
=Id$ and $H_*(Bc_\phi)\circ H_*(Bc_{\phi'})^{-1}=H_*(Bc_{\phi\circ{\phi'}^{-1}})=Id$
by Remark~\ref{equivalencesamecobordism}.
\begin{prop}\label{prop:disjointunion}(Compare with~~\cite[Proposition 30 Monoidal]{Chataur-Menichi:stringclass})
Let $F$ and $F'$ be two cobordisms. 
For $s\otimes a\in \text{det}H_1(F,\partial_{out};\mathbb{Z})^{\otimes d}\otimes_\mathbb{Z} H_*(BDiff^+(F,\partial))$ and $t\otimes b\in \text{det}H_1(F',\partial_{out};\mathbb{Z})^{\otimes d}\otimes_\mathbb{Z} H_*(BDiff^+(F',\partial))$
$$
\nu^{*(s\otimes t)\otimes (a\otimes b)}(F\coprod F')
 = 
(-1)^{ \vert t\vert \vert a\vert }
\nu^{*s\otimes a}(F)\otimes  \nu^{*t\otimes b}(F').
$$
\end{prop}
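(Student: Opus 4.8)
The plan is to obtain this by transposing the corresponding statement in homology, exactly as $\nu^*(F_{p+q})$ was defined from $\nu_*(F_{q+p})$. So I first record the homological monoidal relation, i.e. the corrected form of \cite[Proposition 30]{Chataur-Menichi:stringclass}. All the operations $\nu_*$ are built by applying integration along the fibre to the two fibrations of the correspondence associated with a cobordism, parametrised over $BDiff^+$; for a disjoint union $F\coprod F'$ every space and every map in this correspondence is the product of the ones for $F$ and for $F'$, the parameter space being $BDiff^+(F\coprod F',\partial)=BDiff^+(F,\partial)\times BDiff^+(F',\partial)$. Hence, choosing on $F\coprod F'$ the orientation class which is the cross product of the chosen orientation classes of $F$ and of $F'$ (as in the ``Products'' paragraph of \S\ref{review of chataurmenichi}), the homological claim reduces to the behaviour of integration along the fibre on cross products, that is, to the \emph{corrected} formula \ref{add-1}. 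Threading \ref{add-1} through the definition of $\nu_*$, and recording in addition the Koszul sign produced when the prop element $(s\otimes a)\otimes(t\otimes b)$ is rewritten as $(s\otimes t)\otimes(a\otimes b)$ (moving $t$, of degree $\vert t\vert$, past $a$, of degree $\vert a\vert$), one gets
$$
\nu_*((F\coprod F')_{(q+q')+(p+p')})^{(s\otimes t)\otimes(a\otimes b)}
=(-1)^{\vert t\vert\vert a\vert}\,\nu_*(F_{q+p})^{s\otimes a}\otimes\nu_*(F'_{q'+p'})^{t\otimes b},
$$
the right-hand side being the tensor product of maps of graded vector spaces, with its own internal Koszul sign. (With the un-corrected sign of \cite[\S 2.3]{Chataur-Menichi:stringclass} one would find here a sign of the awkward type $(-1)^{\vert s\vert\vert b\vert}$ instead; this is exactly the discrepancy repaired in \S\ref{review of chataurmenichi}.)

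Now I transpose. With the transposition convention ${}^tf(\alpha)=(-1)^{\vert\alpha\vert\vert f\vert}\alpha\circ f$ fixed in \S\ref{review of chataurmenichi} and the standard Koszul sign for the Kronecker pairing on a tensor product, one checks, by evaluating both sides on $\alpha\otimes\beta$ against $v\otimes w$, that for graded linear maps $g_1,g_2$ of finite type
$$
{}^t(g_1\otimes g_2)={}^tg_1\otimes {}^tg_2 ,
$$
all the Koszul contributions cancelling (no spurious $(-1)^{\vert g_1\vert\vert g_2\vert}$). Applying this with $g_1=\nu_*(F_{q+p})^{s\otimes a}$ and $g_2=\nu_*(F'_{q'+p'})^{t\otimes b}$, transposing the displayed homological relation, and using $\nu^*(F_{p+q})^{s\otimes a}={}^t\bigl(\nu_*(F_{q+p})^{s\otimes a}\bigr)$ together with the analogous identity for $F\coprod F'$, the sign $(-1)^{\vert t\vert\vert a\vert}$ is carried through unchanged and we obtain
$$
\nu^{*(s\otimes t)\otimes(a\otimes b)}(F\coprod F')=(-1)^{\vert t\vert\vert a\vert}\,\nu^{*s\otimes a}(F)\otimes \nu^{*t\otimes b}(F'),
$$
which is the assertion.

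The one real difficulty is the sign bookkeeping, of two kinds. First, deriving the displayed homological relation from \ref{add-1}: one must keep track of the identification $\det H_1(F\coprod F',\partial_{out};\mathbb{Z})^{\otimes d}\cong\det H_1(F,\partial_{out};\mathbb{Z})^{\otimes d}\otimes\det H_1(F',\partial_{out};\mathbb{Z})^{\otimes d}$ and of the Künneth identification on the $BDiff^+$-factors, and check that after all of this the total sign is exactly $(-1)^{\vert t\vert\vert a\vert}$ and nothing more. Second, verifying ${}^t(g_1\otimes g_2)={}^tg_1\otimes{}^tg_2$: a different convention for the Kronecker pairing or for identifying $(A\otimes B)^\vee$ with $A^\vee\otimes B^\vee$ would introduce an extra $(-1)^{\vert g_1\vert\vert g_2\vert}=(-1)^{(\vert s\vert+\vert a\vert)(\vert t\vert+\vert b\vert)}$ and spoil the final formula, so the conventions must be handled consistently with \S\ref{review of chataurmenichi}. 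These are the two computations to carry out in detail.
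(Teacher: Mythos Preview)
The paper does not give its own proof of this proposition: Section~\ref{review of chataurmenichi} merely states it as the sign-corrected version of \cite[Proposition 30 Monoidal]{Chataur-Menichi:stringclass}, leaving the verification implicit in the preceding discussion of the corrected integration along the fibre (formula~\ref{add-1}). Your approach---derive the homological monoidal relation from~\ref{add-1} and the Koszul rearrangement $(s\otimes a)\otimes(t\otimes b)\mapsto(s\otimes t)\otimes(a\otimes b)$, then transpose---is exactly the argument the paper is gesturing at, and it is correct.

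Your two self-flagged checks are routine. The identity ${}^t(g_1\otimes g_2)={}^tg_1\otimes{}^tg_2$ does hold with no extra sign under the Koszul conventions used throughout (evaluate both sides on $\alpha\otimes\beta$ against $v\otimes w$ and watch the four Koszul contributions cancel pairwise). For the homological side, the only sign beyond~\ref{add-1} is indeed the single swap of $t$ past $a$; the identifications $\det H_1(F\coprod F',\partial_{out})^{\otimes d}\cong\det H_1(F,\partial_{out})^{\otimes d}\otimes\det H_1(F',\partial_{out})^{\otimes d}$ and the K\"unneth map on $BDiff^+$ introduce no further sign because they are taken as the defining identifications of the monoidal structure of the prop. So the proof is complete as written, modulo actually recording those two short computations rather than pointing at them.
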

\begin{prop}\label{prop:gluing} (Compare with~\cite[Proposition 31 Gluing]{Chataur-Menichi:stringclass})
Let $F_{p+q}$ and $F_{q+r}$ be two composable cobordisms.
Denote by $ F_{q+r}\circ F_{p+q}$ the cobordism obtained by gluing.
For $s_1\otimes  m_1\in \text{det}H_1(F_{p+q},\partial_{out};\mathbb{Z})^{\otimes d}\otimes_\mathbb{Z} H_*(BDiff^+(F_{p+q},\partial))$ and $s_2\otimes  m_2\in \text{det}H_1(F_{q+r},\partial_{out};\mathbb{Z})^{\otimes d}\otimes_\mathbb{Z} H_*(BDiff^+(F_{q+r},\partial))$
$$
\nu^{*s_2\otimes m_2}(F_{ q+r})
\circ
\nu^{*s_1\otimes m_1}( F_{p+q})
 = 
(-1)^{ \vert m_2\vert \vert s_1\vert }
\nu^{*(s_2\circ s_1)\otimes ( m_2\circ m_1)}(F_{ q+r}\circ   F_{p+q}).
$$
Here
$$
\circ: H_*(BDiff^+(F_{q+r},\partial))\otimes H_*(BDiff^+(F_{p+q},\partial))\rightarrow H_*(BDiff^+(F_{ q+r}\circ F_{p+q},\partial))
$$ mapping $m_2\otimes m_1$ to $m_2\circ m_1$ is induced by the gluing of $F_{p+q}$ and $F_{q+r}$.
\end{prop}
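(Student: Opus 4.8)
The plan is to obtain Proposition~\ref{prop:gluing} by transposing the corresponding gluing identity in homology, carrying the Koszul signs carefully. Recall that, by the definition of the opposite prop action set up just above, $\nu^{*s\otimes m}(F_{p+q})$ is the signed transpose ${}^t\!\bigl(\nu_*^{s\otimes m}(F_{q+p})\bigr)$, where ${}^t\!f(\alpha)=(-1)^{|\alpha|\,|f|}\alpha\circ f$ and $|f|=|s|+|m|$. Since $H_*(LX)$ is an algebra over the corrected prop $\bigoplus_F\det H_1(F,\partial_{in};\mathbb Z)^{\otimes d}\otimes_{\mathbb Z}H_*\bigl(BDiff^+(F,\partial)\bigr)$, the homological operations satisfy a gluing law
$$
\nu_*^{s_1\otimes m_1}(F_{q+p})\circ\nu_*^{s_2\otimes m_2}(F_{r+q})=(-1)^{\epsilon}\,\nu_*^{(s_1\circ s_2)\otimes(m_1\circ m_2)}\bigl(F_{q+p}\circ F_{r+q}\bigr),
$$
the sign $\epsilon$ being the Koszul sign coming from the tensor product of the determinant-line prop and the $BDiff$-prop and depending only on $|s_i|,|m_i|$. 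First I would make sure this homological identity, with its exact sign, is on record; if not, I would prove it exactly as Proposition~31 of~\cite{Chataur-Menichi:stringclass} is proved --- the homological operations are built from maps induced by $map(-,X)$ and from integrations along the fibre arranged in a base-change square for the correspondence diagrams of the glued cobordism --- now feeding in the sign-corrected integration along the fibre and the corrected product formula of this section.

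Next I would transpose. Using the elementary identity ${}^t\!f\circ{}^t\!g=(-1)^{|f|\,|g|}\,{}^t\!(g\circ f)$ for composable graded maps together with the homological gluing above, one obtains
$$
\nu^{*s_2\otimes m_2}(F_{q+r})\circ\nu^{*s_1\otimes m_1}(F_{p+q})=(-1)^{(|s_2|+|m_2|)(|s_1|+|m_1|)+\epsilon}\;{}^t\!\Bigl(\nu_*^{(s_1\circ s_2)\otimes(m_1\circ m_2)}\bigl(F_{q+p}\circ F_{r+q}\bigr)\Bigr).
$$
Then I would identify the right-hand side using that reversal of cobordisms gives $\det H_1(F_{p+q},\partial_{out};\mathbb Z)=\det H_1(F_{q+p},\partial_{in};\mathbb Z)$ and $Diff^+(F_{p+q},\partial)=Diff^+(F_{q+p},\partial)$: under these identifications the homological glued cobordism $F_{q+p}\circ F_{r+q}$ and the homological composites $s_1\circ s_2$, $m_1\circ m_2$ become the cohomological glued cobordism $F_{q+r}\circ F_{p+q}$ and the composites $s_2\circ s_1$, $m_2\circ m_1$ appearing in the statement, so the right-hand side is $(-1)^{(|s_2|+|m_2|)(|s_1|+|m_1|)+\epsilon}\,\nu^{*(s_2\circ s_1)\otimes(m_2\circ m_1)}\bigl(F_{q+r}\circ F_{p+q}\bigr)$. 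It then remains to verify that the accumulated exponent is congruent to $|m_2|\,|s_1|$ modulo $2$.

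That sign verification is the part I expect to be the actual obstacle; everything else is formal. Concretely I would reduce by bilinearity to the case where $s_1,s_2$ are generators of the determinant lines and $m_1,m_2$ are homogeneous --- so all degrees become honest integers, with $|s_i|$ equal to the fixed degree $d\cdot\dim_{\mathbb F_2}H_1(F,\partial_{in};\mathbb F_2)$ carried by the determinant line --- and then expand $\epsilon$ together with the transposition sign and check that they telescope to $(-1)^{|m_2|\,|s_1|}$. A more conceptual way to see the same thing: $(-1)^{|m_2|\,|s_1|}$ is exactly the structure sign of the composition law of the opposite prop $\bigoplus_F\det H_1(F,\partial_{out};\mathbb Z)^{\otimes d}\otimes_{\mathbb Z}H_*(BDiff^+(F,\partial))$, which moves $m_2$ past $s_1$, so Proposition~\ref{prop:gluing} is just the assertion that $\nu^*$ respects this composition, forced by the homological structure. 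As a sanity check on the bookkeeping I would run the same transposition procedure on disjoint union, where it should reproduce the sign $(-1)^{|t||a|}$ of Proposition~\ref{prop:disjointunion}, and on equivalences, where it should reproduce Proposition~\ref{prop:equivalent}.
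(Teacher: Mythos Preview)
Your approach is correct and is exactly the approach the paper takes, though the paper is much more terse: it does not give a separate proof of Proposition~\ref{prop:gluing} at all. The paper simply defines $\nu^{*s\otimes a}(F_{p+q}):={}^t\bigl(\nu_*^{s\otimes a}(F_{q+p})\bigr)$ and then declares that the fact that $H^*(LX)$ is an algebra over the opposite prop ``means the following five propositions,'' of which Proposition~\ref{prop:gluing} is one. In other words, the paper regards the gluing identity in cohomology as the \emph{definition} of composition in the opposite prop, spelled out; your transposition argument is precisely the verification that this definition is consistent with the homological gluing law (the corrected Proposition~31 of \cite{Chataur-Menichi:stringclass}), together with the Koszul sign in the tensor product of graded props.

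Your identification of the sign $(-1)^{|m_2||s_1|}$ as the structure sign of the opposite prop is the right way to see why the bookkeeping closes up, and your sanity check against Proposition~\ref{prop:disjointunion} is sensible. The only caveat is that you leave the homological sign $\epsilon$ unspecified and defer the cancellation to a computation; this is fine, but be aware that the paper never writes $\epsilon$ down either, so if you want a fully self-contained argument you will indeed have to extract it from the corrected integration-along-the-fibre formulae in this section.
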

As noted by~\cite{Hepworth-Lahtinen:stringclassifying} with their notion of
$h$-graph cobordism, \cite{Chataur-Menichi:stringclass} never used the smooth structure
of the cobordisms. So in fact, our cobordisms are topological.
Therefore the cobordism $ F_{q+r}\circ F_{p+q}$ obtained by gluing
is canonically defined~\cite[1.3.2]{Kock:Frob2TQFT}.
Note that by~\cite{Earle-Schatz} and~\cite{Hamstrom:homeosurface}, the inclusion
$
Diff^{+}(F,\partial)\buildrel{\approx}\over\hookrightarrow Homeo^+(F,\partial)
$
is a homotopy equivalence since $\pi_0(Diff^{+}(F,\partial))\cong\pi_0(Homeo^+(F,\partial))$~\cite[p. 45]{Farb-Margalit:primermcg}.

\begin{prop}\label{prop:identity}(Compare with~\cite[Corollary 28 i) identity]{Chataur-Menichi:stringclass}) 
Let $id_1\in \text{det}H_1(F_{0,1+1},\partial_{out};\mathbb{Z})$
and $id_1\in H_0(BDiff^+(F_{0,1+1},\partial))$
be the identity morphisms of the object $1$ in the two props.
Then 
$$\nu^{*id_1^{\otimes d}\otimes id_1}(F_{0,1+1})=Id_{H^*(LX)}.$$
\end{prop}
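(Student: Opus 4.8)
\noindent The plan is to deduce this from the corresponding statement in homology, \cite[Corollary 28 i) identity]{Chataur-Menichi:stringclass}, and then to dualize using the transpose formula for $\nu^*$ recalled above.

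First I would identify $F_{0,1+1}$ with the cylinder $S^1\times[0,1]$, regarded as the identity cobordism from one circle to one circle. Since the cylinder deformation retracts onto its incoming boundary $\partial_{in}=S^1\times\{0\}$, one has $H_1(F_{0,1+1},\partial_{in};\mathbb{Z})=0$; hence $\text{det}H_1(F_{0,1+1},\partial_{in};\mathbb{Z})^{\otimes d}\cong\mathbb{Z}$ is concentrated in degree $0$, and $id_1^{\otimes d}$ is its canonical generator, of degree $0$. Likewise $id_1\in H_0(BDiff^+(F_{0,1+1},\partial))$ has degree $0$. In particular the $\text{det}H_1$-factor introduces neither a degree shift nor a sign here, so the homology operation $\nu_*(F_{0,1+1})^{id_1^{\otimes d}\otimes id_1}$ agrees with the one of Chataur and the second author and, by \cite[Corollary 28 i) identity]{Chataur-Menichi:stringclass}, equals $Id_{H_*(LX)}$. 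Concretely this is because both $map(in,X)$ and $map(out,X)$ from $map(F_{0,1+1},X)\simeq LX$ are homotopy equivalences, so $map(out,X)_!$ is an isomorphism and its composite with $H_*(map(in,X))$ is the identity.

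Then I would apply the definition of the opposite prop structure. By the transpose formula stated above, with $p=q=1$, $s=id_1^{\otimes d}$ and $a=id_1$, one has for every $\alpha\in H^*(LX)$
$$
\nu^*(F_{0,1+1})^{id_1^{\otimes d}\otimes id_1}(\alpha)
=(-1)^{\vert\alpha\vert(\vert id_1^{\otimes d}\vert+\vert id_1\vert)}\,\alpha\circ\nu_*(F_{0,1+1})^{id_1^{\otimes d}\otimes id_1},
$$
and since $\vert id_1^{\otimes d}\vert=\vert id_1\vert=0$ the Koszul sign is $+1$, so this equals $\alpha\circ Id_{H_*(LX)}=\alpha$. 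Hence $\nu^*(F_{0,1+1})^{id_1^{\otimes d}\otimes id_1}=Id_{H^*(LX)}$.

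The only genuinely substantive point --- the \emph{main obstacle}, modest as it is --- is checking that passing to the cylinder $F_{0,1+1}$ really does trivialize the $\text{det}H_1$-bookkeeping introduced here (i.e.\ that $H_1$ of the cylinder relative to $\partial_{in}$ vanishes and that $id_1$ is the corresponding canonical degree-$0$ generator) and that this is compatible with the conventions and sign corrections of this section; once that is in place the statement is immediate from the transpose formula and the homological identity of \cite{Chataur-Menichi:stringclass}.
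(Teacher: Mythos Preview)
Your proof is correct and follows the same route the paper implicitly takes. In the paper this proposition is not proved separately: it is listed as one of the five properties that constitute the statement that $H^*(LX)$ is an algebra over the opposite prop, obtained by transposing the homological prop structure of \cite{Chataur-Menichi:stringclass}; the paragraph immediately following the propositions makes exactly your observation that $\chi(F_{0,1+1})=0$ forces $H_1(F_{0,1+1},\partial_{out};\mathbb{Z})=\{0\}$, so that $id_1$ is the canonical degree-$0$ generator of $\text{det}H_1(F_{0,1+1},\partial_{out};\mathbb{Z})=\Lambda^0\{0\}=\mathbb{Z}$, and that $id_1\in H_0(BDiff^+(F_{0,1+1},\partial))$ is just $\iota_{F_{0,1+1}}$. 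Your explicit dualization via the transpose formula, with the vanishing Koszul sign coming from $\vert id_1^{\otimes d}\vert=\vert id_1\vert=0$, is precisely what the paper's ``This means the following five propositions'' unpacks to in this case.
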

\begin{prop}\label{prop:symmetry}(Compare with~\cite[Corollary 28 ii) symmetry]{Chataur-Menichi:stringclass})
Let $C_\phi$ be the twist cobordism of $S^1\coprod S^1$.
Let $\tau\in \text{det}H_1(C_\phi,\partial_{out};\mathbb{Z})$,
$\tau\in H_0(BDiff^+(C_\phi,\partial))$ and $\tau\in \text{End}(H^*(LX)^{\otimes 2})$
be the exchange isomorphisms of the three props. Then
$$
\nu^{*\tau^{\otimes d}\otimes \tau}(C_\phi)=\tau.$$
\end{prop}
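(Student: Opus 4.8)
The plan is to reduce this to its homological counterpart in~\cite{Chataur-Menichi:stringclass} and then run a short Koszul-sign check. First I would unwind the definition of $\nu^*$ recalled above: by that definition,
$$
\nu^{*\tau^{\otimes d}\otimes\tau}(C_\phi)(\alpha)={}^t\bigl(\nu_*^{\tau^{\otimes d}\otimes\tau}(C_\phi)\bigr)(\alpha)=(-1)^{\vert\alpha\vert(\vert\tau^{\otimes d}\vert+\vert\tau\vert)}\,\alpha\circ\nu_*^{\tau^{\otimes d}\otimes\tau}(C_\phi).
$$
The preliminary point is that the twist cobordism $C_\phi$ is, as a topological space, the disjoint union of two cylinders $S^1\times[0,1]$, each of which deformation retracts onto its outgoing boundary circle; hence $H_1(C_\phi,\partial_{out};\mathbb{Z})=0$, so $\text{det}H_1(C_\phi,\partial_{out};\mathbb{Z})^{\otimes d}=\mathbb{Z}$ is concentrated in degree $0$, and likewise $\tau\in H_0(BDiff^+(C_\phi,\partial))$ has degree $0$. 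Therefore $\vert\tau^{\otimes d}\vert=\vert\tau\vert=0$, the Koszul sign in the transpose is trivial, and $\nu^{*\tau^{\otimes d}\otimes\tau}(C_\phi)$ is simply precomposition with $\nu_*^{\tau^{\otimes d}\otimes\tau}(C_\phi)$.

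Next I would invoke~\cite[Corollary 28 ii)]{Chataur-Menichi:stringclass}, which asserts that $\nu_*^{\tau^{\otimes d}\otimes\tau}(C_\phi):H_*(LX)^{\otimes 2}\to H_*(LX)^{\otimes 2}$ is the exchange isomorphism $u\otimes v\mapsto(-1)^{\vert u\vert\vert v\vert}v\otimes u$. I would stress that this is one of the few statements of~\cite{Chataur-Menichi:stringclass} left untouched by the sign corrections of this section: for $C_\phi$ the maps $map(in,X)$ and $map(out,X)$ are homotopy equivalences (the fibre of $map(in,X)$ is contractible), so the operation is assembled without any integration along a positive-dimensional fibre, which is the only place our corrected Koszul sign enters.

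Finally it remains to verify that, under the standard identification $(H_*(LX)^{\otimes 2})^\vee\cong H^*(LX)^{\otimes 2}$ with its Koszul sign, the transpose of the graded exchange isomorphism of $H_*(LX)^{\otimes 2}$ is the graded exchange isomorphism of $H^*(LX)^{\otimes 2}$, i.e. the operator $\tau$ of the statement. This is a direct computation: one evaluates both candidate maps on an element $u\otimes v\in H_*(LX)^{\otimes 2}$, uses that the Kronecker pairing forces $\vert\phi\vert=\vert v\vert$ and $\vert\psi\vert=\vert u\vert$ whenever the value is nonzero, and checks that all Koszul signs cancel. This last sign bookkeeping --- together with double-checking that the sign corrections genuinely leave~\cite[Corollary 28 ii)]{Chataur-Menichi:stringclass} intact --- is the only real, and rather modest, obstacle; the rest is immediate from the definitions.
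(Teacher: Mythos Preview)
Your proposal is correct and is precisely what the paper's implicit argument amounts to. The paper does not give a standalone proof of this proposition; it states Propositions~\ref{prop:equivalent}--\ref{prop:symmetry} as the five conditions that together \emph{mean} that $H^*(LX)$ is an algebra over the opposite prop, the latter being obtained by transposing the homological prop-algebra structure of~\cite{Chataur-Menichi:stringclass}. Your argument simply unwinds this transposition explicitly for the symmetry axiom: since $|\tau^{\otimes d}|=|\tau|=0$ the Koszul sign in the definition of $\nu^*$ is trivial, so one is reduced to the homological statement~\cite[Corollary 28 ii)]{Chataur-Menichi:stringclass}, which is indeed untouched by the sign corrections of this section (no positive-dimensional fibre integration occurs), and the transpose of the graded exchange on $H_*(LX)^{\otimes 2}$ is the graded exchange on $H^*(LX)^{\otimes 2}$.
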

Let $F$ be a cobordism.
Let $\iota_F$ be the generator of $H_0(BDiff^+(F,\partial))$ which is represented by the connected component of $BDiff^+(F,\partial)$.
We may write $\iota$ instead of $\iota_F$ for simplicity.
If $\chi(F)=0$ then $H_1(F,\partial_{out};\mathbb{Z})=\{0\}$
has an unique orientation class which correspond to the generator
$1\in \text{det}H_1(F,\partial_{out};\mathbb{Z})=\Lambda^{-\chi(F)}H_1(F,\partial_{out};\mathbb{Z})=\mathbb{Z}$.

The identity morphim $id_1$ and the exchange isomorphism $\tau$ of the prop
$\text{det}H_1(F,\partial_{out};\mathbb{Z})$ correspond to these unique
orientation classes of $H_1(F_{0,1+1},\partial_{out};\mathbb{Z})$
and $H_1(C_\phi,\partial_{out};\mathbb{Z})$.

The identity morphim $id_1$ and the exchange isomorphism $\tau$ of the prop
$H_*(BDiff^+(F,\partial))$ are just $\iota_{F_{0,1+1}}$ and $\iota_{C_\phi}$.

\section{Commutativity and associativity of the dual to the Loop coproduct}
\begin{thm}\label{Associativity-Commutativity}
Let $d\geq 0$.
Let $H^*$ (upper graded) be an algebra over the (lower graded) prop
$$\text{det}H_1(F,\partial_{out};\mathbb{Z})^{\otimes d}\otimes_\mathbb{Z} H_0(BDiff^+(F,\partial)).$$
Let $s\in\text{det}H_1(F_{0,2+1},\partial_{out};\mathbb{Z})^{\otimes d}$
be a chosen orientation.
Let $$Dlcop:=\nu^{*s\otimes\iota}(F_{0,2+1}).$$
Let $m$ be 
the product
defined by 
$$m(a\otimes b)=(-1)^{d(i-d)}Dlcop(a\otimes b)$$ 
for $a\otimes b \in H^i\otimes H^j$.
Let $\mathbb{H}^*:=H^{*+d}$.
 Then  $(\mathbb{H}^*,m)$ is a graded associative and commutative algebra.
\end{thm}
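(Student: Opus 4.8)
The plan is to deduce both statements formally from the five Propositions \ref{prop:equivalent}--\ref{prop:symmetry} describing the action $\nu^*$ of the prop $\text{det}H_1(F,\partial_{out};\mathbb{Z})^{\otimes d}\otimes_\mathbb{Z} H_0(BDiff^+(F,\partial))$, the only genuine work being the sign bookkeeping coming from the determinant summand when $d$ is odd. Throughout, all the relevant classes $\iota\in H_0(BDiff^+(F,\partial))$ and the orientation classes of the $\text{det}H_1$-prop lie in degree $0$, so the Koszul signs $(-1)^{|m_2||s_1|}$ of Proposition~\ref{prop:gluing} and $(-1)^{|t||a|}$ of Proposition~\ref{prop:disjointunion} all vanish; what survives are the $\pm1$'s produced by the gluing isomorphism of determinant lines and by $\text{det}H_1(\phi,\partial_{out};\mathbb{Z})$ for the equivalences $\phi$.

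\smallskip\noindent\emph{Commutativity.} Gluing the twist cobordism $C_\phi$ of $S^1\coprod S^1$ on the two incoming circles of $F_{0,2+1}$ produces a cobordism $F_{0,2+1}\circ C_\phi$ which is equivalent, by an orientation preserving diffeomorphism $\phi$ fixing the boundary, to $F_{0,2+1}$ itself. By Proposition~\ref{prop:symmetry}, $\nu^{*\tau^{\otimes d}\otimes\tau}(C_\phi)$ is the Koszul exchange $\tau$ of $H^*(LX)^{\otimes 2}$; applying Proposition~\ref{prop:gluing} (trivial sign) and then Proposition~\ref{prop:equivalent}, using that $H_*(Bc_\phi)$ fixes $\iota$ and that $\iota\circ\tau=\iota$ is the unique $H_0$-class of the connected glued cobordism, one gets
$$
Dlcop\circ\tau=\nu^{*\,\text{det}H_1(\phi,\partial_{out};\mathbb{Z})^{\otimes d}(s\circ\tau^{\otimes d})\otimes\iota}(F_{0,2+1}).
$$
I would then compute $\text{det}H_1(\phi,\partial_{out};\mathbb{Z})^{\otimes d}(s\circ\tau^{\otimes d})$ from the explicit action on $H_1(F_{0,2+1},\partial_{out};\mathbb{Z})\cong\mathbb{Z}$ and $H_1(C_\phi,\partial_{out};\mathbb{Z})=\{0\}$; the answer is $(-1)^d s$, independently of the chosen $s$ (since everything in sight is linear in $s$), so $Dlcop\circ\tau=(-1)^d Dlcop$, i.e. $(-1)^{|a||b|}Dlcop(b\otimes a)=(-1)^d Dlcop(a\otimes b)$. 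A short sign check shows that with $m(a\otimes b)=(-1)^{d(|a|-d)}Dlcop(a\otimes b)$ this becomes $m(a\otimes b)=(-1)^{(|a|-d)(|b|-d)}m(b\otimes a)$, i.e. $m$ is graded commutative on $\mathbb{H}^*=H^{*+d}$.

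\smallskip\noindent\emph{Associativity.} The two ways of composing three copies of the pair of pants, namely $F_{0,2+1}\circ(F_{0,2+1}\coprod \text{id}_1)$ and $F_{0,2+1}\circ(\text{id}_1\coprod F_{0,2+1})$, are equivalent cobordisms (the genus zero surface with three inputs and one output), say via $\psi$. Expanding each side by Proposition~\ref{prop:disjointunion} (to split the disjoint union), Proposition~\ref{prop:identity} (which gives $\nu^{*id_1^{\otimes d}\otimes id_1}(F_{0,1+1})=Id_{H^*(LX)}$), and Proposition~\ref{prop:gluing}, and then matching the two outcomes through Proposition~\ref{prop:equivalent}, one obtains
$Dlcop\circ(Dlcop\otimes Id)=\pm\,Dlcop\circ(Id\otimes Dlcop)$ for a universal sign determined by the gluing of determinant lines, by $\text{det}H_1(\psi,\partial_{out};\mathbb{Z})$ on $H_1$ of the four–holed sphere, and by $H_*(Bc_\psi)$ on $H_0(BDiff^+)$ (which is the identity, by the homotopy equivalence $Diff^+\simeq\pi_0 Diff^+$ invoked in Remark~\ref{equivalencesamecobordism}). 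Feeding this through $m(a\otimes b)=(-1)^{d(|a|-d)}Dlcop(a\otimes b)$ and checking that the accumulated signs cancel yields $m(m(a\otimes b)\otimes c)=m(a\otimes m(b\otimes c))$.

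\smallskip\noindent The main obstacle is precisely this determinant-prop sign arithmetic for odd $d$: one must describe $H_1$ relative to the outgoing boundary of the pair of pants, of $C_\phi$, and of the three–holed sphere, work out the maps these groups receive from the gluing operations and from the equivalence diffeomorphisms $\phi$ and $\psi$, and verify that the resulting signs (raised to the $d$-th power) are exactly those needed — in particular that they do not depend on the chosen orientation $s$. Once these local computations are in hand, commutativity and associativity of $m$ are purely formal consequences of Propositions~\ref{prop:equivalent}--\ref{prop:symmetry}.
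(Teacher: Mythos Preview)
Your framework is the paper's: compose $Dlcop$ with itself and with the exchange via Propositions~\ref{prop:disjointunion}--\ref{prop:symmetry}, then match the glued cobordisms through Proposition~\ref{prop:equivalent}, reducing everything to two universal signs in the determinant prop. The paper writes these as $\beta\circ\tau=\varepsilon_{\mathrm{com}}\beta$ and $\beta\circ(\beta\otimes id_1)=\varepsilon_{\mathrm{ass}}\,\beta\circ(id_1\otimes\beta)$ for a generator $\beta$ of $\text{det}H_1(F_{0,2+1},\partial_{out};\mathbb{Z})$, so that $Dlcop\circ\tau=\varepsilon_{\mathrm{com}}^d\,Dlcop$ and $Dlcop\circ(Dlcop\otimes 1)=\varepsilon_{\mathrm{ass}}^d\,Dlcop\circ(1\otimes Dlcop)$; translating to $m$ then gives commutativity and associativity up to $\varepsilon_{\mathrm{com}}^d(-1)^d$ and $\varepsilon_{\mathrm{ass}}^d(-1)^d$.

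Where you diverge from the paper is in determining these signs. You propose to compute them directly from the action of gluing and of the equivalence diffeomorphisms on $H_1(F,\partial_{out};\mathbb{Z})$, and you correctly flag this as ``the main obstacle'' --- but you do not carry it out, you only assert the answer. The paper avoids this geometric computation entirely by \emph{evaluating on a known example}: any $H^*(LBG;\K)$ with $H^*(BG;\K)$ polynomial is an algebra over the prop, so taking e.g.\ $G=(S^1)^d$ with $d$ odd over a field of characteristic $\neq 2$, Theorem~\ref{dlcop given by formulas}(2) gives concrete values of $m$ (testing on $a=x_1\cdots x_N$, $b=1$, $c=x_1\cdots x_N$) from which one reads off $\varepsilon_{\mathrm{ass}}^d(-1)^d=1=\varepsilon_{\mathrm{com}}^d(-1)^d$, hence $\varepsilon_{\mathrm{ass}}=\varepsilon_{\mathrm{com}}=-1$. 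The paper notes that Godin did carry out your direct computation for the $\partial_{in}$-prop; the example trick is quicker but relies on earlier results of the paper, while your route would be self-contained once the missing computation is supplied.

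One small correction: your claim that ``the orientation classes of the $\text{det}H_1$-prop lie in degree $0$'' is wrong for $s$ --- the generator $\beta$ has degree $-\chi(F_{0,2+1})=1$, so $s$ has degree $d$. The Koszul signs in Propositions~\ref{prop:disjointunion} and~\ref{prop:gluing} still vanish, but because the $H_*(BDiff^+)$-factors $a$ and $m_2$ lie in $H_0$, not because the determinant factors do.
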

\begin{proof}
Using Propositions~\ref{prop:disjointunion},~\ref{prop:gluing} and~\ref{prop:identity},
$$
Dlcop\circ (Dlcop\otimes 1)=
\nu^{*s\circ (s\otimes id_1)\bigotimes \iota\circ(\iota\otimes id_1)}(F_{0,2+1}\circ (F_{0,2+1}\coprod F_{0,1+1}))\text{ and}
$$
$$
Dlcop\circ (1\otimes Dlcop)=
\nu^{*s\circ (id_1\otimes s)\bigotimes \iota\circ(id_1\otimes\iota)}
(F_{0,2+1}\circ (F_{0,1+1}\coprod F_{0,2+1})).
$$
The cobordisms $F_{0,2+1}\circ (F_{0,2+1}\coprod F_{0,1+1})$ and
$F_{0,2+1}\circ (F_{0,1+1}\coprod F_{0,2+1})$ are equivalent.
When you identified them,
$\iota\circ (\iota\otimes id_1)=\iota\circ (id_1\otimes \iota)$.
Also $F_{0,2+1}\circ C_\phi=F_{0,2+1}$ and $\iota\circ\tau=\iota$.
 
Let $\beta \in\text{det}H_1(F_{0,2+1},\partial_{out};\mathbb{Z})$ the generator such
that $\beta^{\otimes d}=s$.
The compositions of the $\mathbb{Z}$-linear prop $\text{det}H_1(F,\partial_{out};\mathbb{Z})$ are isomorphisms. Therefore, they send generators to generators.
Moreover $\text{det}H_1(F,\partial_{out};\mathbb{Z}):=\Lambda^{-\chi(F)} H_1(F,\partial_{out};\mathbb{Z})$ is an abelian group on
a single generator of lower degree $-\chi(F)$.
So $\beta\circ (\beta\otimes id_1)=\varepsilon_{ass} \beta\circ (id_1\otimes\beta)$
and $\beta\circ\tau= \varepsilon_{com}\beta$
for given signs $\varepsilon_{ass}$ and $\varepsilon_{com}\in\{-1,1\}$.
Therefore
$$
s\circ (s\otimes id_1)=
\beta^{\otimes d}\circ (\beta\otimes id_1)^{\otimes d}=
(-1)^{\frac{d(d-1)}{2}\vert \beta\vert^2}
\left(\beta\circ(\beta\otimes id_1)\right)^{\otimes d}
=
\varepsilon_{ass}^d s\circ (id_1\otimes s)
$$
and
$
s\circ \tau=\beta^{\otimes d}\circ\tau^{\otimes d}=(\beta\circ\tau)^{\otimes d}=
(\varepsilon_{com}\beta)^{\otimes d}=\varepsilon_{com}^{ d}\beta^{\otimes d}
=\varepsilon_{com}^{ d}s
$.

Therefore, by proposition~\ref{prop:equivalent}
$$Dlcop\circ (Dlcop\otimes 1)=
\varepsilon_{ass}^d Dlcop\circ (1\otimes Dlcop)$$
and $Dlcop\circ \tau=\varepsilon_{com}^d Dlcop$.
This means that for $a,b,c\in H^*(LX)$,
$$
m(m(a\otimes b)\otimes c)=\varepsilon_{ass}^d (-1)^{d}m(a\otimes m(b\otimes c)
$$
and 
$
m(b\otimes a)=\varepsilon_{com}^d(-1)^{(\vert a\vert-d)(\vert b\vert-d)+d}m(a\otimes b))
$
since $$m(m(a\otimes b)\otimes c)
=(-1)^{d\vert b\vert+d}Dlcop\circ (Dlcop\otimes 1)(a\otimes b\otimes c)$$
and $$m(a\otimes m(b\otimes c))=(-1)^{d(\vert a\vert+\vert b\vert)}Dlcop (a\otimes Dlcop(b\otimes c))
=(-1)^{d\vert b\vert}Dlcop\circ (1\otimes Dlcop)(a\otimes b\otimes c).$$

In~\cite[Proof of Proposition 21]{Godin:higherstring}, Godin has shown geometrically
that $\varepsilon_{ass}=-1$ for the prop $\text{det}H_1(F,\partial_{in};\mathbb{Z})$.
To determine the signs $\varepsilon_{ass}$ and $\varepsilon_{com}$ for
the prop $\text{det}H_1(F,\partial_{out};\mathbb{Z})$, we prefer to use
our computations of $m$.

Consider a particular connected compact Lie group $G$ of a particular dimension $d$
and a particular field $\K$ of characteristic different from $2$
such that $H^*( BG;\K)$ is polynomial,
for example $G=(S^1)^d$ or $\K=\mathbb{Q}$.
Then $H^*(LBG;\mathbb{Q})$ is an algebra over our prop
and we can apply (2) of Theorem~\ref{dlcop given by formulas}
or Corollary~\ref{cor:unital p odd}.
Taking  $a=x_1\dots x_N$, $b=1$ and $c=x_1\dots x_N$,
we obtain 
$1=\varepsilon_{ass}^d(-1)^{d}$ and $1=\varepsilon_{com}^d(-1)^d$.
So if we have chosen $d$ odd, $\varepsilon_{ass}=\varepsilon_{com}=-1$
and $m$ is associative and graded commutative.
\end{proof} 
\begin{rem}\label{rem:HCFT}
When $d$ is even, 
the $d$-th power of the prop $\text{det}H_1(F,\partial_{in};\mathbb{Z})$ is isomorphic to the $d$-th power of the trivial prop with a degree 
shift $\chi(F)$.

More precisely, let $\mathcal{P}$ the prop
such that
$$
\mathcal{P}(p,q):=\bigoplus_{F_{p+q}}s^{-\chi(F_{p+q})}\mathbb{Z},
$$
$s^{-\chi(F')}1\circ s^{-\chi(F)}1=s^{-\chi(F'\circ F)}1$
and
$s^{-\chi(F)}1\otimes s^{-\chi(F')}1=s^{-\chi(F\coprod F')}1$.
This prop $\mathcal{P}$ is the the trivial prop with a degree 
shift $\chi(F)$.

For any cobordism $F$, let
$\Theta_F:s^{-\chi(F)}\mathbb{Z}\rightarrow \text{det}H_1(F,\partial_in;\mathbb{Z})$
be an chosen isomorphism.
Then $\Theta_F^{\otimes d}:\mathcal{P}^{\otimes d}\rightarrow \text{det}H_1(F,\partial_{in};\mathbb{Z})^{\otimes d}$ is an isomorphim of props if $d$ is even.
This prop $\mathcal{P}^{\otimes d}$ is the $d$-th power of the trivial prop with a degree 
shift $\chi(F)$ and is not isomorphic to the trivial prop with a degree shift
$-d\chi(F)$.
\end{rem}
\begin{proof}
The following upper square commutes always, while the following lower square commutes if $d$ is even.

\xymatrix{
(s^{-\chi(F')}\mathbb{Z})^{\otimes d}\otimes (s^{-\chi(F)}\mathbb{Z})^{\otimes d}
\ar[r]^-{\Theta_{F'}^{\otimes d}\otimes\Theta_{F}^{\otimes d}}\ar[d]^\tau
&\text{det}H_1(F',\partial_{in};\mathbb{Z})^{\otimes d}
\otimes \text{det}H_1(F,\partial_{in};\mathbb{Z})^{\otimes d}\ar[d]^\tau\\
(s^{-\chi(F')}\mathbb{Z}\otimes s^{-\chi(F)}\mathbb{Z})^{\otimes d}
\ar[r]_-{\left(\Theta_{F'}\otimes\Theta_{F}\right)^{\otimes d}}\ar[d]_{\circ^{\otimes d}}
&\left(\text{det}H_1(F',\partial_{in};\mathbb{Z})
\otimes \text{det}H_1(F,\partial_{in};\mathbb{Z})\right)^{\otimes d}\ar[d]^{\circ^{\otimes d}}\\
(s^{-\chi(F'\circ F)}\mathbb{Z})^{\otimes d}
\ar[r]_-{\left(\Theta_{F'\circ F}\right)^{\otimes d}}
&\text{det}H_1(F'\circ F,\partial_{in};\mathbb{Z})^{\otimes d}
}

Replacing $\circ$ by the tensor product $\otimes$ of props, we have proved that
$\Theta_F^{\otimes d}$ is an  isomorphism of props if $d$ is even.
\end{proof}

Observe that the dual of the loop coproduct $Dlcop$ on $H^*(LX)$ satisfies the same commutative and associative formulae as those of  the Chas-Sullivan loop product on the loop homology of $M$. See \cite[Remark 3.6]{tamanoi:capproducts} or~\cite[Proposition 2.7]{KuriMeniNaito:DerivedEMSS}.
So we wonder if the prop $\text{det}H_1(F,\partial_{out};\mathbb{Z})$
is isomorphic to the prop $\text{det}H_1(F,\partial_{in};\mathbb{Z})$.
\begin{cor} \label{cor:shifted_alg}
Let $X$ be a simply connected space such that $H_*(\Omega X;\K)$ is finite dimensional.
 The shifted cohomology ${\mathbb H}^*(LX) := H^{*+d}(LX)$ 
is a graded commutative, associative algebra endowed with the product $m$
defined by 
$$m(a\otimes b)=(-1)^{d(i-d)}Dlcop(a\otimes b)$$ 
for $a\otimes b \in H^i(LX)\otimes H^j(LX)$. 
\end{cor}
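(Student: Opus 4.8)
The plan is to deduce Corollary~\ref{cor:shifted_alg} immediately from Theorem~\ref{Associativity-Commutativity}; the only thing to do is to recognise $H^*(LX)$ as an algebra over the prop occurring there and $Dlcop$ as the operation $\nu^{*s\otimes\iota}(F_{0,2+1})$. First I would record that, with the sign corrections of Section~\ref{review of chataurmenichi}, the (corrected) main cotheorem of~\cite{Chataur-Menichi:stringclass} asserts that $H^*(LX)$ is an algebra over $\bigoplus_{F_{p+q}}\text{det}H_1(F,\partial_{out};\mathbb{Z})^{\otimes d}\otimes_\mathbb{Z} H_*(BDiff^+(F,\partial))$. Here $d$ is the integer determined by $H^d(\Omega X;\K)\cong\K$ and $H^i(\Omega X;\K)=0$ for $i>d$, which exists and is unique precisely because $H_*(\Omega X;\K)$ is finite dimensional, and which is the degree shift of the orientable fibration $map(in,X):map(F,X)\twoheadrightarrow LX$ whose fibre is homotopy equivalent to $\Omega X$. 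Since $H_0(BDiff^+(F,\partial))$ is closed under the prop composition and the monoidal product, restriction along the inclusion of subprops shows that $H^*(LX)$ is in particular an algebra over $\text{det}H_1(F,\partial_{out};\mathbb{Z})^{\otimes d}\otimes_\mathbb{Z} H_0(BDiff^+(F,\partial))$, which is exactly the prop in the hypothesis of Theorem~\ref{Associativity-Commutativity}; the operations $\nu^*$ satisfy Propositions~\ref{prop:disjointunion},~\ref{prop:gluing} and~\ref{prop:identity}, again by Section~\ref{review of chataurmenichi}.

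Next I would observe that, by the definition of the dual of the loop coproduct following~\cite{Chataur-Menichi:stringclass} (see Section~\ref{review of chataurmenichi}), $Dlcop=\nu^{*s\otimes\iota}(F_{0,2+1})$, where $s\in\text{det}H_1(F_{0,2+1},\partial_{out};\mathbb{Z})^{\otimes d}$ is the orientation induced by the chosen orientation $\tau$ of $\Omega X$ together with the chosen pointed homotopy equivalence $f:F/\partial_{in}\to S^1$, and $\iota$ is the canonical generator of $H_0(BDiff^+(F_{0,2+1},\partial))$. Thus $Dlcop$ is literally the operation $\nu^{*s\otimes\iota}(F_{0,2+1})$ appearing in Theorem~\ref{Associativity-Commutativity} for this fixed choice of orientation $s$. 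Applying that theorem with $H^*=H^*(LX)$ then yields at once that $\mathbb{H}^*(LX)=H^{*+d}(LX)$, equipped with $m(a\otimes b)=(-1)^{d(i-d)}Dlcop(a\otimes b)$ for $a\otimes b\in H^i(LX)\otimes H^j(LX)$, is a graded associative and graded commutative algebra, which is precisely the assertion of the corollary. (Both the odd-$d$ case of Theorem~\ref{Associativity-Commutativity}, handled there via the $(S^1)^d$ or $\mathbb{Q}$ computation, and the even-$d$ case, handled via Remark~\ref{rem:HCFT}, are already subsumed in the statement we are quoting.)

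There is no genuinely new argument here, only bookkeeping; the single point that has to be verified carefully is that the $\nu^*$-structure used throughout the paper is literally the prop algebra structure set up in Section~\ref{review of chataurmenichi}, and that every occurrence of $Dlcop$ elsewhere — in Theorems~\ref{product in degree d} and~\ref{Cup in string classifying}, for instance — refers to this same operation attached to the fixed orientation $s$. Both are established in Section~\ref{review of chataurmenichi}, so the corollary follows with no further computation. The mild obstacle, if any, is purely notational: making sure the identification $Dlcop=\nu^{*s\otimes\iota}(F_{0,2+1})$ and the resulting normalisation of the shifted product $m$ are consistent across the two presentations of the theory.
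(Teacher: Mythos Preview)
Your proposal is correct and matches the paper's intent: the corollary is stated immediately after Theorem~\ref{Associativity-Commutativity} with no separate proof, precisely because it is the specialisation of that theorem to $H^*=H^*(LX)$ via the prop action set up in Section~\ref{review of chataurmenichi}. One small inaccuracy: the even-$d$ case in Theorem~\ref{Associativity-Commutativity} is not handled by Remark~\ref{rem:HCFT} but is automatic in the proof itself (since $\varepsilon_{ass}^d=\varepsilon_{com}^d=(-1)^d=1$ when $d$ is even); this does not affect your argument, which correctly treats the theorem as a black box.
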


\section{The Batalin-Vilkovisky identity}\label{section:BV-identity}
For any simple closed curve $\gamma$ in a cobordism $F$,
let us denote by $\overline{\gamma}$ the image of the Dehn twist $T_\gamma$ by the
hurewicz map $\Theta$ 
$$
\pi_0(Diff^+(F,\partial))\maprightud{\partial^{-1}}{\cong} 
\pi_1(BDiff^+(F,\partial))\maprightud{\Theta}{} H_1(BDiff^+(F,\partial)).
$$
In this section, we prove the following theorem.
\begin{thm}\label{dualHCFTisBV}
Let $H^*$ be an algebra over the prop
$$\text{det}H_1(F,\partial_{out};\mathbb{Z})^{\otimes d}\otimes_\mathbb{Z} H_*(BDiff^+(F,\partial)).$$
Consider the the graded associative and commutative algebra $(\mathbb{H}^*,m)$
given by Theorem~\ref{Associativity-Commutativity}.
Let $\alpha$ be a closed curve in the cylinder $F_{0,1+1}$ parallel to one of the boundary components.
Let $$
\Delta=\nu^{*id_1\otimes\overline{\alpha}}(F_{0,1+1}).$$
Then 
 $(\mathbb{H}^*,m,\Delta)$ is a Batalin-Vilkovisky algebra.
\end{thm}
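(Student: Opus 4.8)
The plan is to derive the seven-term BV identity directly from the relations between Dehn twists in the mapping class groups of the cobordisms involved, transported through the prop structure on $H^*$. Recall that a graded commutative associative algebra $(\mathbb{H}^*,m)$ together with a degree $-1$ operator $\Delta$ with $\Delta^2=0$ is a BV algebra precisely when the ``seven-term relation''
$$
\Delta(abc)=\Delta(ab)c+(-1)^{|a|}a\Delta(bc)+(-1)^{(|a|+1)|b|}b\Delta(ac)-\Delta(a)bc-(-1)^{|a|}a\Delta(b)c-(-1)^{|a|+|b|}ab\Delta(c)
$$
holds (after the degree shift by $d$). Since both $m$ and $\Delta$ are realized as operations $\nu^*$ of specific cobordisms decorated by elements of $H_*(BDiff^+(F,\partial))$, each of the seven terms above is, by Propositions~\ref{prop:disjointunion}, \ref{prop:gluing}, \ref{prop:identity} and~\ref{prop:symmetry}, the operation associated to one and the same underlying cobordism $F$ (the pair of pants composed appropriately to have three inputs and one output, i.e.\ $F_{0,3+1}$) decorated by the image under the Hurewicz map of a product of Dehn twists and pinch-type mapping classes. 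The BV identity will therefore reduce to an identity in $H_1(BDiff^+(F_{0,3+1},\partial);\mathbb{Z})$ between these seven decorations, tensored with the corresponding identity in $\text{det}\,H_1(F_{0,3+1},\partial_{out};\mathbb{Z})^{\otimes d}$.

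First I would fix the cobordism $F_{0,3+1}$ (three ingoing circles, one outgoing) obtained as an iterated composition of pairs of pants, and identify the seven mapping classes occurring: the ``full rotation'' Dehn twist $\overline{\alpha}$ around the outgoing boundary, the three twists around the individual ingoing boundaries, and the ``pairwise'' twists encircling pairs of ingoing circles. The key geometric input is the \emph{lantern relation} in the mapping class group of a sphere with four boundary components: the product of the four boundary Dehn twists equals the product of the three interior (pairwise) Dehn twists. This is exactly the statement flagged in the introduction (``the BV identity arises from the lantern relation'', Section~9) and spelled out as ``seven equalities involving Dehn twists'' in Section~10. Applying the Hurewicz homomorphism $\Theta\circ\partial^{-1}$ turns this multiplicative relation in $\pi_0(Diff^+)$ into an \emph{additive} relation in $H_1(BDiff^+(F_{0,3+1},\partial);\mathbb{Z})$, which is precisely the alternating sum of the seven decorations needed.

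Next I would feed this homological relation through the prop: by Proposition~\ref{prop:equivalent} (naturality under equivalences of cobordisms) together with the monoidal and gluing formulas (Propositions~\ref{prop:disjointunion} and~\ref{prop:gluing}), each decorated cobordism $F_{0,3+1}$ with one of the seven Dehn-twist decorations maps under $\nu^*$ to one of the seven terms of the BV identity applied to $(\mathbb{H}^*,m,\Delta)$; here one uses that $\Delta=\nu^{*id_1\otimes\overline{\alpha}}(F_{0,1+1})$ and $Dlcop=\nu^{*s\otimes\iota}(F_{0,2+1})$, and that $\Delta$ commutes past the gluing maps with the bookkeeping signs recorded in Proposition~\ref{prop:gluing}. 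One also needs $\Delta^2=0$: this comes from $\overline{\alpha}\cdot\overline{\alpha}$ (or rather the corresponding class in $H_2$) together with the fact that $H_*(BDiff^+(F_{0,1+1},\partial))$ is concentrated so that the relevant square vanishes --- equivalently, from the circle action $S^1\times S^1\to S^1$ being trivial on the second factor, as will be made precise in Section~11. Finally, the determinant prop $\text{det}\,H_1(F,\partial_{out};\mathbb{Z})^{\otimes d}$ contributes only signs, and one checks (exactly as in the proof of Theorem~\ref{Associativity-Commutativity}, by comparing with the already-computed commutative case $G=(S^1)^d$ or $\K=\mathbb{Q}$) that these signs are the ones making the seven-term relation come out with the standard BV signs after the shift $\mathbb{H}^*=H^{*+d}$.

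The main obstacle I expect is the careful sign bookkeeping: tracking the Koszul signs from Proposition~\ref{prop:gluing} (the $(-1)^{|m_2||s_1|}$ factors), from Proposition~\ref{prop:disjointunion}, and from the definition of $\nu^*$ as a signed transpose $(-1)^{|\alpha|(|s|+|a|)}\alpha\circ\nu_*$, and reconciling these with the degree shift in $m(a\otimes b)=(-1)^{d(|a|-d)}Dlcop(a\otimes b)$ and the definition of the bracket $\{a,b\}=(-1)^{|a|}\Delta(m(a\otimes b))-(-1)^{|a|}m(\Delta a\otimes b)-m(a\otimes\Delta b)$. The geometric heart --- the lantern relation --- is classical and will be invoked; the labor is in Section~10, verifying that the seven equalities it produces, once pushed through the prop, assemble into the BV identity with precisely matching signs, and in handling the parity issue when $d$ is even (cf.\ Remark~\ref{rem:HCFT}) by again calibrating against a known commutative example.
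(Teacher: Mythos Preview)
Your proposal is correct and follows essentially the same approach as the paper: the lantern relation in $\pi_0(Diff^+(F_{0,3+1},\partial))$, pushed to $H_1$ via the Hurewicz map, becomes the additive identity $\overline{a_1}+\overline{a_2}+\overline{a_3}+\overline{a_4}=\overline{x}+\overline{y}+\overline{z}$, and the seven prop equalities of Section~10 (Proposition~\ref{prop_structure}) identify each of these classes with the decoration producing one term of the BV identity via Propositions~\ref{prop:disjointunion}--\ref{prop:symmetry}. One small sharpening: the paper's argument for $\Delta^2=0$ is simply that $BDiff^+(F_{0,1+1},\partial)\simeq B\mathbb{Z}$ has $H_2=0$, so $\overline{\alpha}\circ\overline{\alpha}=0$; and the sign calibration you anticipate is in fact carried out directly in Section~9 by explicit tracking of the Koszul signs against the seven terms, without needing a further appeal to a known example beyond what was already used for $\varepsilon_{ass}$ and $\varepsilon_{com}$ in Theorem~\ref{Associativity-Commutativity}.
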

In the case $d=0$, Wahl~\cite[Rem 2.2.4]{thesedewahl} or Kupers~\cite[4.1, page 158]{Kupers:stringop}
give an incomplete proof that we complete. Moreover, we pay attention to signs.

The shifted cohomology algebra $({\mathbb H}^*,m)$ equipped with the operator $\Delta$
is a BV-algebra if and only if $\Delta\circ\Delta=0$ and if the Batalin-Vilkovisky identity holds;  
that is,  for any elements $a$, $b$ and $c$ in 
${\mathbb H}^*$, 
\begin{eqnarray*}
\Delta(a\cdot b\cdot c)&=& \Delta(a\cdot b)\cdot c + (-1)^{\parallel a \parallel}a\cdot \Delta (b\cdot c) + 
(-1)^{\parallel b \parallel \parallel a \parallel+ \parallel b \parallel}b\cdot\Delta(a\cdot c) \\ 
&&- \Delta(a)\cdot b\cdot c -(-1)^{\parallel a \parallel}a\cdot \Delta(b)\cdot c 
- (-1)^{\parallel a \parallel+ \parallel b \parallel}a\cdot b\cdot\Delta(c), 
\end{eqnarray*}
where $\alpha\cdot \beta = m(\alpha \otimes \beta)$ and $\parallel \alpha \parallel$ stands for the degree of 
an element $\alpha$ in ${\mathbb H}^*$, namely $\parallel \alpha \parallel = \vert \alpha \vert -d$.  

Since $BDiff^+(F_{0,1+1})$ is $B\mathbb{Z}$,
 $\overline{\alpha}\circ\overline{\alpha}\in H_2(BDiff^+(F_{0,1+1}))=\{0\}$.
 Therefore
$\Delta\circ\Delta=\pm \nu^{*id_1\otimes\overline{\alpha}\circ\overline{\alpha}}(F_{0,1+1})=0
$

The BV-identity will arise up to signs from the lantern relation
(~\cite[Rem 2.2.4]{thesedewahl} or \cite[4.1, page 158]{Kupers:stringop}):
\begin{prop} \cite{Johnson:homeosurface}\cite[Section 5.1]{Farb-Margalit:primermcg} \label{prop:lantern}
Let $a_1, ..., a_4$ and $x, y, z$ be the simple closed curves described in \cite[Figure 6.89, page 161]{Kupers:stringop}. 
Then one has 
$$
T_{a_1}T_{a_2}T_{a_3}T_{a_4} = T_xT_yT_z
$$
in the mapping class group of $F_{0, 4}$, where $T_\gamma$ denotes the Dehn twist around 
a simple closed curve $\gamma$ in the surface.  
\end{prop}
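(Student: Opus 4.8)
The plan is to prove the lantern relation by the \emph{Alexander method} (see~\cite[Section 2.3]{Farb-Margalit:primermcg}): a mapping class of a compact surface with boundary that fixes $\partial$ pointwise is determined, up to isotopy, by its action on a collection of disjoint essential arcs whose complement is a disjoint union of disks. Writing $\Phi:=T_{a_1}T_{a_2}T_{a_3}T_{a_4}$ and $\Psi:=T_xT_yT_z$, it therefore suffices to fix a filling system of arcs on $F_{0,4}$ and to check that $\Phi$ and $\Psi$ carry each arc to the same isotopy class rel $\partial F_{0,4}$. First I would fix a concrete model, realizing $F_{0,4}$ as a large disk with three open subdisks removed, with inner boundary circles $c_1,c_2,c_3$ and outer boundary $c_4$; I take $a_i$ parallel to $c_i$ (so $a_4$ is parallel to the outer boundary) and $x$, $y$, $z$ to be the three interior simple closed curves each bounding a pair of inner holes, exactly as in the configuration of~\cite[Figure 6.89]{Kupers:stringop}. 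A useful preliminary remark is that the $T_{a_i}$ are twists about boundary-parallel curves, hence central in $\mathrm{Mod}(F_{0,4})$; in particular they commute with one another, so the order of the four left-hand factors is immaterial.

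Next I would take the filling system to consist of three arcs, each joining the outer boundary $c_4$ to one of the inner circles $c_1,c_2,c_3$, chosen so that cutting $F_{0,4}$ along their union yields a single disk (possible since $\chi(F_{0,4})=-2$, so three arc cuts raise the Euler characteristic to $1$). The core of the proof is then to track the image of each of these three arcs under $\Phi$ and under $\Psi$. Away from fixed annular neighborhoods of $a_1,\dots,a_4,x,y,z$ every twist acts as the identity, so each arc is modified only where it crosses these annuli, and at each crossing one applies the prescribed right-handed twisting. The boundary-parallel twists $T_{a_i}$ contribute a full wrap near each $c_i$, and the point of the computation is that this boundary winding precisely compensates the net winding produced by the interior product $T_xT_yT_z$, which reroutes each arc around the appropriate pair of holes; after straightening by an isotopy rel endpoints one reads off that $\Phi$ and $\Psi$ agree on all three arcs.

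The main obstacle is exactly this last bookkeeping step: keeping the seven twisting pictures mutually consistent, respecting the common chirality (all twists right-handed) and the composition order, and verifying that the straightening isotopies are legitimate rel boundary. The computation is deterministic but delicate and error-prone, so I would cross-check it by a second, independent route—writing down explicit homeomorphisms supported in the twisting annuli and composing them directly, or lifting the configuration through a suitable branched double cover. Once the arc images match, the Alexander method gives $\Phi=\Psi$, which is the asserted identity $T_{a_1}T_{a_2}T_{a_3}T_{a_4}=T_xT_yT_z$ in $\mathrm{Mod}(F_{0,4})$. Since the relation is classical, one may of course instead simply invoke~\cite{Johnson:homeosurface} and~\cite[Section 5.1]{Farb-Margalit:primermcg}, which is the route taken in the statement above.
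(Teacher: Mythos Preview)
The paper does not give its own proof of this proposition: it is stated with the citations to Johnson and to Farb--Margalit and is used as a known input. Your proposal goes further, outlining a proof via the Alexander method, which is indeed the standard argument (and is the one carried out in \cite[Section~5.1]{Farb-Margalit:primermcg}); you correctly note at the end that one may simply invoke the references, which is exactly what the paper does.
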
 

In order to prove Theorem \ref{thm:B-V_algebra}, 
we  represent each term of the B-V identity in terms of elements of the prop  with 
a HCF theoretical way: this means using the horizontal (coproduct) composition $\otimes$ and the vertical composition $\circ$ on the prop.
We start by the most complicated element $b\cdot \Delta(a\cdot c)$.

By Propositions~\ref{prop:disjointunion},~\ref{prop:gluing},~\ref{prop:identity}
and~\ref{prop:symmetry},
\begin{eqnarray*}
Dlcop\circ\left[Id\otimes (\Delta\circ Dlcop)\right]\circ (\tau\otimes Id)=\\
\nu^{*s\otimes\iota}(F_{0,2+1})\circ
\left[\nu^{*id_1\otimes id_1}(F_{0,1+1})\otimes
(\nu^{*id_1\otimes \overline{\alpha}}(F_{0,1+1})\circ \nu^{*s\otimes\iota}(F_{0,2+1}))
\right]\\
\circ (\nu^{*\tau\otimes \tau}(C_\phi)\otimes\nu^{*id_1\otimes id_1}(F_{0,1+1}))=\\
\pm\nu^{*s\circ [id_1\otimes s]\circ (\tau\otimes id_1)
\bigotimes \iota\circ [id_1\otimes (\overline{\alpha}\circ\iota)]\circ (\tau\otimes id_1)}(F_{0,2+1}\circ(F_{0,1+1}\coprod F_{0,2+1})
\circ(C_\phi\coprod F_{0,1+1}))
\end{eqnarray*}
Here $\pm$ is the Koszul sign $(-1)^{\vert s\vert\vert\overline{\alpha}\vert}=(-1)^d$, since only $s$ and $\overline{\alpha}$ have positive degrees.

We choose $s'=s\circ (s\otimes id_1)$.
In the proof of Theorem~\ref{Associativity-Commutativity}, we saw
$
s\circ (s\otimes id_1)=
(-1)^d s\circ (id_1\otimes s)
$
and
$
s\circ \tau=(-1)^{ d}s
$. Therefore
$$
s\circ (id_1\otimes s)\circ (\tau\otimes id_1)=
(-1)^d s\circ (s\otimes id_1)\circ (\tau\otimes id_1)=
(-1)^d s\circ \left[(s\circ\tau)\otimes (id_1\circ id_1)\right]=
s'.
$$
Since $\iota\circ [id_1\otimes (\overline{\alpha}\circ\iota)]\circ (\tau\otimes id_1)$ coincides with $\overline{z}$ by Proposition~\ref{prop_structure}, we have proved that
$$
Dlcop\circ\left(Id\otimes (\Delta\circ Dlcop)\right)\circ (\tau\otimes Id)=(-1)^d \nu^{*s'\otimes\overline{z}}(F_{0, 3+1}).
$$
Similar computations shows that
\begin{align*}
Dlcop\circ\left(Id\otimes (\Delta\circ Dlcop)\right)=\\
\pm\nu^{*s\circ [id_1\otimes s]
\bigotimes \iota\circ [id_1\otimes (\overline{\alpha}\circ\iota)]}(F_{0,2+1}\circ(F_{0,1+1}\coprod F_{0,2+1}))=
\nu^{*s'\otimes\overline{x}}(F_{0, 3+1}),\\
Dlcop\circ\left((\Delta\circ Dlcop)\otimes Id\right)=\\
\pm\nu^{*s\circ [s\otimes id_1]
\bigotimes \iota\circ [(\overline{\alpha}\circ\iota)\otimes id_1]}(F_{0,2+1}\circ(F_{0,2+1}\coprod F_{0,1+1}))=
(-1)^d \nu^{*s'\otimes\overline{y}}(F_{0, 3+1}),\\
\Delta\circ Dlcop\circ (Dlcop\circ Id)=\\
\nu^{*s\circ [s\otimes id_1]
\bigotimes \overline{\alpha}\circ\iota\circ(\iota\otimes id_1)}(F_{0,2+1}\circ(F_{0,2+1}\coprod F_{0,1+1}))=
\nu^{*s'\otimes\overline{a_4}}(F_{0, 3+1}),\\
Dlcop\circ (\Delta\otimes Dlcop)=\\
\pm\nu^{*s\circ [id_1\otimes s]
\bigotimes \iota\circ [\overline{\alpha}\otimes\iota]}(F_{0,2+1}\circ(F_{0,1+1}\coprod F_{0,2+1}))=
\nu^{*s'\otimes\overline{a_1}}(F_{0, 3+1}), \\
Dlcop\circ (Id\otimes Dlcop)\circ(Id\otimes \Delta\otimes Id)=\\
\nu^{*s\circ [id_1\otimes s]
\bigotimes \iota\circ (id_1\otimes\iota)\circ (id_1\otimes\overline{\alpha}\otimes id_1)}(F_{0,2+1}\circ(F_{0,1+1}\coprod F_{0,2+1}))=
(-1)^d \nu^{*s'\otimes\overline{a_2}}(F_{0, 3+1})\\ \ \text{and} \  \
  Dlcop\circ (Dlcop\otimes \Delta)=\\
  \nu^{*s\circ [s\otimes id_1]
\bigotimes \iota\circ [\iota\otimes \overline{\alpha}]}(F_{0,2+1}\circ(F_{0,1+1}\coprod F_{0,2+1}))=
  \nu^{*s'\otimes\overline{a_3}}(F_{0, 3+1}).
\end{align*}
Therefore using the definition of the product $m$, straightforward computations show that 
\begin{eqnarray*} 
\Delta((a\cdot b)\cdot c)
&=(-1)^{d\vert b\vert+d}
\nu^{*s'\otimes\overline{a_4}}(F_{0, 3+1})(a\otimes b\otimes c)\\
\Delta(a)\cdot b\cdot c 
 &=(-1)^{d\vert b\vert+d} \nu^{*s'\otimes\overline{a_1}}(F_{0, 3+1})(a\otimes b\otimes c) \\
(-1)^{\parallel a\parallel}a\cdot \Delta(b)\cdot c 
&=(-1)^{d\vert b\vert+d}\nu^{*s'\otimes\overline{a_2}}(F_{0, 3+1})(a\otimes b\otimes c)\\
(-1)^{\parallel a\parallel +\parallel b\parallel}a\cdot b\cdot\Delta(c)
&=(-1)^{d\vert b\vert+d} \nu^{*s'\otimes\overline{a_3}}(F_{0, 3+1})(a\otimes b\otimes c)  \\
\Delta(a\cdot b)\cdot c
&=(-1)^{d\vert b\vert+d}\nu^{*s'\otimes\overline{y}}(F_{0, 3+1})(a\otimes b\otimes c) \\
(-1)^{\parallel a\parallel}a\cdot \Delta (b\cdot c)
&=(-1)^{d\vert b\vert+d}\nu^{*s'\otimes\overline{x}}(F_{0, 3+1})(a\otimes b\otimes c) \\
(-1)^{\parallel b\parallel \parallel a\parallel  + \parallel b\parallel}b\cdot \Delta(a\cdot c)  
&=(-1)^{d\vert b\vert+d}\nu^{*s'\otimes\overline{z}}(F_{0, 3+1})(a\otimes b\otimes c).
\end{eqnarray*}
The lantern relation gives rise to the equality 
\begin{align*}
\nu^{*s'\otimes\overline{a_4}}(F_{0, 3+1}) + \nu^{*s'\otimes\overline{a_1}}(F_{0, 3+1}) +
\nu^{*s'\otimes\overline{a_2}}(F_{0, 3+1}) + \nu^{*s'\otimes\overline{a_3}}(F_{0, 3+1})\\ =
 \nu^{*s'\otimes\overline{x}}(F_{0, 3+1})
 +\nu^{*s'\otimes\overline{y}}(F_{0, 3+1})  +\nu^{*s'\otimes\overline{z}}(F_{0, 3+1})
 \end{align*}
 since the hurewicz map is a morphism of groups. 
Thus 
\begin{align*}
\Delta(a\cdot b\cdot c) +\Delta(a)\cdot b\cdot c
+ (-1)^{\parallel a\parallel}a\cdot \Delta(b)\cdot c 
+(-1)^{\parallel a\parallel +\parallel b\parallel}a\cdot b\cdot\Delta(c) \\
=
\Delta(a\cdot b)\cdot c + (-1)^{\parallel a\parallel}a\cdot \Delta (b
\cdot c)+
(-1)^{\parallel b\parallel \parallel a\parallel  + \parallel b\parallel}b\cdot 
\Delta(a\cdot c).
\end{align*}
\begin{cor}\label{thm:B-V_algebra}
Let $G$ be a connected compact Lie group of dimension $d$.
Consider the graded associative and commutative algebra $(\mathbb{H}^*(LBG),m)$
given by Corollary~\ref{cor:shifted_alg}.
Let $\Delta$ be the operator
induced by the action of the circle on $LBG$ (See our definition in section~\ref{Operateur Delta})).
Then 
the shifted cohomology ${\mathbb H}^*(LBG)$ carries the structure of a Batalin-Vilkovisky algebra. 
\end{cor}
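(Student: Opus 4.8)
The plan is to obtain Corollary~\ref{thm:B-V_algebra} as an immediate specialization of Theorem~\ref{dualHCFTisBV}, once two inputs are in place: that $H^*(LBG;\K)$ is an algebra over the prop $\text{det}H_1(F,\partial_{out};\mathbb{Z})^{\otimes d}\otimes_\mathbb{Z} H_*(BDiff^+(F,\partial))$, and that the prop-theoretic operator appearing in that theorem coincides with the operator $\Delta$ induced by the circle action.

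First I would record the input from Section~\ref{review of chataurmenichi}. Since $G$ is a connected compact Lie group, $\Omega BG$ is weakly equivalent to $G$, so $H^*(\Omega BG;\K)\cong H^*(G;\K)$ is finite dimensional, concentrated in degrees $\leq d=\dim G$, with $H^d\cong\K$; hence $BG$ satisfies the hypotheses of the corrected main cotheorem of~\cite{Chataur-Menichi:stringclass}, and $H^*(LBG;\K)$ is an algebra over the prop above. Feeding this into Theorem~\ref{Associativity-Commutativity} (equivalently, into Corollary~\ref{cor:shifted_alg}) produces the graded commutative associative algebra $(\mathbb{H}^*(LBG),m)$ with $m(a\otimes b)=(-1)^{d(\vert a\vert-d)}Dlcop(a\otimes b)$, and Theorem~\ref{dualHCFTisBV} then yields directly that $(\mathbb{H}^*(LBG),m,\Delta_{\mathrm{prop}})$ is a Batalin-Vilkovisky algebra, where $\Delta_{\mathrm{prop}}=\nu^{*id_1\otimes\overline{\alpha}}(F_{0,1+1})$ for $\alpha$ a simple closed curve in the cylinder $F_{0,1+1}$ parallel to a boundary component.

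It then remains only to identify $\Delta_{\mathrm{prop}}$ with the operator $\Delta\colon H^*(LBG)\to H^{*-1}(LBG)$ induced by the $S^1$-action on $LBG$. The cylinder realises the identity self-correspondence of $LBG$, the Dehn twist $T_\alpha$ about a boundary-parallel curve generates $\pi_0(Diff^+(F_{0,1+1},\partial))\cong\mathbb{Z}\cong\pi_1(BDiff^+(F_{0,1+1},\partial))$, and unwinding the definition of the prop action over the connected component $BDiff^+(F_{0,1+1},\partial)\simeq B\mathbb{Z}\simeq S^1$ shows that $\Delta_{\mathrm{prop}}$ is, up to sign, the operator obtained from the monodromy of the family of loop reparametrisations by slant product with the fundamental class of $S^1$, i.e. the cohomological BV-operator of the circle action. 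This comparison is exactly what is carried out in section~\ref{Operateur Delta}, which I would invoke rather than reprove.

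I expect this last identification to be the only genuine obstacle: assembling the cotheorem with Theorem~\ref{dualHCFTisBV} is formal, whereas matching the abstract Dehn-twist operation with the concrete $S^1$-action on free loops requires unwinding the geometric model of the $d$-dimensional homological conformal field theory and keeping track of the sign conventions fixed in Section~\ref{review of chataurmenichi}.
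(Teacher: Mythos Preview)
Your proposal is correct and follows exactly the paper's approach: invoke the prop structure on $H^*(LBG)$ from the corrected cotheorem, apply Theorem~\ref{dualHCFTisBV}, and then identify the prop-theoretic operator $\nu^{*id_1\otimes\overline{\alpha}}(F_{0,1+1})$ with the $\Delta$ coming from the circle action. The only minor imprecision is in your attribution of the last step: Section~\ref{Operateur Delta} (via Proposition~\ref{Delta en cohomologie dual du Delta en homologie}) only shows that the cohomological $\Delta$ is the transpose of the homological circle-action operator $\Gamma_*\circ([S^1]\times -)$; the identification of that homological operator with the Dehn-twist prop operation is not proved in this paper but imported from \cite[Proposition~60]{Chataur-Menichi:stringclass}, and the paper's proof cites both explicitly.
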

\begin{proof}
By Proposition~\ref{Delta en cohomologie dual du Delta en homologie} and by~\cite[Proposition 60]{Chataur-Menichi:stringclass}),
$$
\Delta=\nu^{*id_1\otimes\overline{\alpha}}(F_{0,1+1}).$$
\end{proof}
\section{Seven prop structure equalities on the homology of mapping class groups proving the BV identity}
Recall that for any simple closed curve $\gamma$ in a cobordism $F$, 
we write $\overline{\gamma}$ for the image of the Dehn twist $T_\alpha$ by the hurewicz map $\Theta$
$$
\pi_0(Diff^+(F,\partial))\maprightud{\partial^{-1}}{\cong} 
\pi_1(BDiff^+(F,\partial))\maprightud{\Theta}{} H_1(BDiff^+(F,\partial)).
$$
Here $\partial$ is the connecting homomorphism associated to the universal principal
fibration.

Let $\alpha$ be a closed curve in the cylinder $F_{0,1+1}$ parallel to one of the boundary components.
Let $a_1, ..., a_4$ and $x, y, z$ be the simple closed curves in $F_{0,3+1}$ described in \cite[Figure 6.89, page 161]{Kupers:stringop}. 
In what follows, we denote by $\circ$ the vertical product in the prop 
$$\bigoplus_F H_*(BDiff^+(F,\partial); \K)$$ which acts (up to signs) on $H^{*+\dim G}(LBG; \K)$.  
The goal of this section is to show the following equalities
needed in the proof of the BV-identity, given in section~\ref{section:BV-identity}. 
\begin{prop}\label{prop_structure}
$
\overline{z}=\iota\circ [id_1\otimes (\overline{\alpha}\circ\iota)]\circ [\tau\otimes id_1]
$,
$\overline{x}= \iota\circ [id_1\otimes (\overline{\alpha}\circ\iota)]$,
$\overline{y}= \iota\circ [(\overline{\alpha}\circ\iota)\otimes id_1]$,
$\overline{a_4}= \overline{\alpha}\circ\iota\circ(\iota\otimes id_1)$,
$\overline{a_1}=\iota\circ [\overline{\alpha}\otimes\iota]$,
$\overline{a_2}=\iota\circ (id_1\otimes\iota)\circ (id_1\otimes\overline{\alpha}\otimes id_1)$
and $ \overline{a_3}=\iota\circ [\iota\otimes \overline{\alpha}]$.
\end{prop}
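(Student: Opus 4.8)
The statement is a list of seven identities in $H_1(BDiff^+(F_{0,3+1},\partial))$, each expressing the Hurewicz image $\overline\gamma$ of a Dehn twist $T_\gamma$ (for $\gamma$ one of the seven curves $a_1,\dots,a_4,x,y,z$ in the lantern) as a prop-composite of the elementary generators $\iota$ (the connected components of the mapping class groups), $id_1$, $\tau$, and $\overline\alpha$ (the image of the Dehn twist around a boundary-parallel curve in the cylinder $F_{0,1+1}$). My plan is to prove all seven uniformly by tracking a single curve through the cobordism-gluing decompositions that define the right-hand sides, and identifying, up to isotopy, which simple closed curve in $F_{0,3+1}$ is carried the Dehn twist supported near $\alpha$ inside one of the glued cylinders.

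\textbf{Step 1: the gluing functoriality for Dehn twists.} First I would record the key naturality fact. If $F_{q+r}\circ F_{p+q}$ is a gluing and $\gamma$ is a simple closed curve lying in the interior of one of the two pieces, say $F_{p+q}$, then under the map $\circ$ of Proposition~\ref{prop:gluing} on $H_*(BDiff^+(-,\partial))$, the class $\iota_{F_{q+r}}\circ\overline\gamma$ (with $\overline\gamma\in H_1(BDiff^+(F_{p+q},\partial))$ coming from $T_\gamma$) equals $\overline{\gamma'}$, where $\gamma'$ is the image of $\gamma$ under the inclusion $F_{p+q}\hookrightarrow F_{q+r}\circ F_{p+q}$. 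This is because the gluing map $Diff^+(F_{p+q},\partial)\to Diff^+(F_{q+r}\circ F_{p+q},\partial)$, extend-by-identity, sends the Dehn twist $T_\gamma$ to the Dehn twist $T_{\gamma'}$, and the Hurewicz map is natural. Tensoring $\overline\alpha$ (resp.\ $\iota$) on the left or right, and composing with $id_1$ and $\tau$, just relocates the supporting curve in the standard way; a symmetry $\tau$ composed in permutes the boundary circles and hence moves $\gamma'$ by the corresponding homeomorphism of $F_{0,3+1}$.

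\textbf{Step 2: identify the seven curves.} With Step 1 in hand, each of the seven right-hand sides becomes: ``take a cylinder $F_{0,1+1}$ with its boundary-parallel curve $\alpha$, place it in a specific slot of a pair-of-pants decomposition of $F_{0,3+1}$ (possibly preceded or followed by $\tau$'s), and read off the resulting curve.'' I would match each of these seven placements with the corresponding curve of \cite[Figure 6.89]{Kupers:stringop}: $\overline{a_1},\dots,\overline{a_4}$ come from inserting $\alpha$ right next to one of the four boundary components (the four boundary-parallel curves of the lantern), while $\overline x,\overline y,\overline z$ come from inserting $\alpha$ ``between'' a composition of two pair-of-pants, i.e.\ around a curve encircling two of the boundary components (the three interior curves of the lantern, with the $\tau$ in the $\overline z$ formula accounting for which pair of legs is encircled). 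This is purely a matter of drawing the pictures: each composite on the right is a specific surface built by gluing cylinders and pairs of pants, and isotoping the inserted $\alpha$ into the glued surface $F_{0,3+1}$ gives exactly the labeled curve.

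\textbf{Main obstacle.} The real content — and the place where care is needed — is purely combinatorial-topological bookkeeping: making the conventions for $F_{0,2+1}$, its incoming/outgoing boundary labels, the gluing order, and the placement of $\alpha$ in the cylinder all consistent with the specific drawing in \cite[Figure 6.89]{Kupers:stringop}, so that, e.g., it is genuinely $\overline z$ and not $\overline x$ or $\overline y$ that arises from the $\tau$-twisted composite. There are no signs to worry about here (these are identities among Dehn twists in the mapping class group, before passing to homology, where the Hurewicz map is a group homomorphism), and the Earle--Schatz fact that $Diff^+(F,\partial)\simeq\pi_0$ means we may argue entirely at the level of isotopy classes; so once the seven pictures are correctly drawn and matched, the proof is complete. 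I would present it by giving, for each of the seven identities, the one-line description of the inserted-cylinder picture and the statement that isotoping identifies it with the named lantern curve, citing the figure.
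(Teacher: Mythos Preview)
Your proposal is correct and follows essentially the same route as the paper. The paper isolates your Step~1 as a separate lemma (Proposition~\ref{prop:gluinghurewicz}), proving via an explicit commutative diagram that $\Theta\partial^{-1}(id_F\circ D')=\iota_F\circ\Theta\partial^{-1}D'$, the symmetric version with $id_{F'}$ on the right, and the disjoint-union version; it then carries out your Step~2 in detail only for $\overline z$ (writing $T_z=id_{F_{0,2+1}}\circ(id_{F_{0,1+1}}\sqcup T_\alpha)\circ id_F$ for an explicit cobordism $F$ and unwinding $\iota_F$ via the prop structure on $H_0$), and declares the remaining six ``similar.''
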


Let $\widetilde{F}$ denote the group $Diff^+(F,\partial)$ (or the mapping class group of a surface $F$ with boundary $\partial$). 
Recall that $\iota_F$ or simply $\iota$ denote the generator of $H_0(B\widetilde{F})$ which is represented by the connected component of $B\widetilde{F}$.
\begin{prop}\label{prop:gluinghurewicz}
Let $F$ and $F'$ be two cobordisms.
In i) and ii), suppose that $F$ and $F'$ are gluable.
Let $\circ:\widetilde{F}\times \widetilde{F'}\rightarrow \widetilde{F\circ F'}$
be the map induced by gluing on diffeomorphisms.
Let $id_F\in \widetilde{F}$ be the identity map of $F$.
For $D$ in $\pi_0(\widetilde{F})$ and $D'$ in $\pi_0(\widetilde{F'})$,

i) $\Theta \partial^{-1}(id_F\circ D') = \iota_F \circ \Theta\partial^{-1} D'$

ii) $\Theta \partial^{-1}(D\circ id_{F'}) = \Theta\partial^{-1} D\circ \iota_{F'}$. 

iii) $\Theta \partial^{-1}(id_F\sqcup D') = \iota_F \otimes \Theta\partial^{-1} D'$
\end{prop}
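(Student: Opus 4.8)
The plan is to reduce everything to the behaviour of the Hurewicz map $\Theta$ under the various maps between classifying spaces, using that $\Theta\partial^{-1}$ identifies $\pi_0(\widetilde{F})$ (with its group structure) with $H_1(B\widetilde{F})$ after abelianization. First I would observe that gluing and disjoint union of diffeomorphisms induce maps $B(\circ):B\widetilde{F}\times B\widetilde{F'}\rightarrow B\widetilde{F\circ F'}$ and $B(\sqcup):B\widetilde{F}\times B\widetilde{F'}\rightarrow B\widetilde{F\sqcup F'}$ of classifying spaces, and that both the connecting homomorphism $\partial$ of the universal fibration and the Hurewicz map $\Theta$ are natural with respect to such maps. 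Concretely, for a continuous homomorphism $\phi:G\rightarrow H$ of topological groups, the square relating $\partial_G,\partial_H$ via $\pi_0(\phi)$ and $\pi_1(B\phi)$ commutes, and $\Theta$ is natural in the space; so $\Theta\partial^{-1}$ is natural in the topological group.

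For part iii), I would note that $B(\sqcup)$ is, up to the standard homotopy equivalence $B(\widetilde F\times\widetilde{F'})\simeq B\widetilde F\times B\widetilde{F'}$, exactly the classifying map of the product homomorphism; hence $\Theta\partial^{-1}(id_F\sqcup D')$ is the image of $(0,\Theta\partial^{-1}D')\in H_1(B\widetilde F)\oplus H_1(B\widetilde{F'})$ under $H_1(B(\sqcup))$, which by definition of the horizontal composition $\otimes$ on the prop $\bigoplus_F H_*(B\widetilde F)$ is precisely $\iota_F\otimes\Theta\partial^{-1}D'$ — here one uses that $\iota_F=\Theta\partial^{-1}(id_F)$ is the class of the basepoint component and that the Künneth/cross product sends $\iota_F\times(\text{a }1\text{-class})$ to the corresponding class. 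Parts i) and ii) are the analogous statements for the vertical composition $\circ$: the map $B(\circ)$ induces on $H_*$ the vertical prop composition, and naturality of $\Theta\partial^{-1}$ along the two partial homomorphisms $D'\mapsto id_F\circ D'$ and $D\mapsto D\circ id_{F'}$ gives $\Theta\partial^{-1}(id_F\circ D')=H_*(B(\circ))(\iota_F\otimes\Theta\partial^{-1}D')=\iota_F\circ\Theta\partial^{-1}D'$ and symmetrically for ii).

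The only real subtlety — and the step I expect to be the main obstacle — is bookkeeping the basepoints and the homotopy equivalences $B(G\times H)\simeq BG\times BH$ carefully enough that "naturality of $\Theta\partial^{-1}$" is literally applicable: one must check that the partial maps $D'\mapsto id_F\circ D'$ are genuine group homomorphisms $\widetilde{F'}\rightarrow \widetilde{F\circ F'}$ (they are, since gluing is functorial and $id_F$ is the identity element of $\widetilde F$), that they are basepoint preserving, and that the connecting homomorphism $\partial$ for the universal principal bundle is natural for them. Once this is set up, everything follows formally; there are no hard computations, only a diagram chase through the naturality squares for $\partial$ and for the Hurewicz homomorphism, together with the definition of the prop compositions $\otimes$ and $\circ$ on $\bigoplus_F H_*(B\widetilde F;\K)$ recalled in Section~\ref{review of chataurmenichi}.
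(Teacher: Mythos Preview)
Your proposal is correct and is essentially the same argument as the paper's. The paper packages exactly your naturality-of-$\Theta\partial^{-1}$ argument into one large commutative diagram: it uses the second-factor inclusion $i_2:\widetilde{F'}\hookrightarrow\widetilde{F}\times\widetilde{F'}$ (your partial homomorphism $D'\mapsto id_F\circ D'$ is $\circ\,\circ\, i_2$), the canonical equivalence $\xi:B(\widetilde{F}\times\widetilde{F'})\simeq B\widetilde{F}\times B\widetilde{F'}$, and the K\"unneth map $\kappa$ sending $x\mapsto\iota_F\otimes x$, then reads off i) from commutativity; ii) and iii) are the obvious variants.
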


\begin{proof} 

We consider the diagram:

\xymatrix{
&\pi_0(\widetilde{F})\times \pi_0(\widetilde{F'}) \ar[d]^-{\varphi}_-{\cong}
\\
\pi_0(\widetilde{F'})\ar[ur]^{i_2}\ar[r]_{\pi_0(i_2)}
&\pi_0(\widetilde{F}\times \widetilde{F'})\ar[r]^{\pi_0(\circ)}
&\pi_0(\widetilde{F\circ F'})\\
\pi_1(B(\widetilde{F'}))\ar[d]^{\Theta} \ar[u]^{\cong}_{\partial}\ar[r]_{\pi_1(B(i_2))}\ar[dr]_{\pi_1(i_2)}
&\pi_1(B(\widetilde{F}\times \widetilde{F'}))\ar[ddr]^{\Theta} \ar[u]^{\cong}_{\partial}\ar[d]^{\pi_1(\xi)}_\cong\ar[r]_{\pi_1(B(\circ))}
& \pi_1(B\widetilde{F\circ F'})\ar[ddr]^{\Theta}\ar[u]^{\cong}_{\partial}\\
H_1(B\widetilde{F'}) \ar[d]_{k_2}\ar[dr]_{H_1(i_2)}
&\pi_1(B\widetilde{F}\times B\widetilde{F'})\ar[d]^{\Theta}  \\
H_0(B\widetilde{F})\otimes H_1(B\widetilde{F'})\ar[r]_{\kappa}
&H_1(B\widetilde{F}\times B\widetilde{F'})
& H_1(B(\widetilde{F}\times \widetilde{F'}))\ar[l]^{H_1(\xi)}_\cong\ar[r]_{H_1(B(\circ))}
&H_1(B\widetilde{F\circ F'}) 
}

Here $\varphi$ is the natural isomorphism, $\kappa$ is the K\"unneth map, $\xi:B(\widetilde{F}\times \widetilde{F'})\buildrel{\approx}\over\rightarrow B(\widetilde{F})\times B(\widetilde{F'})$ is the canonical homotopy equivalence,
$k_2$ is the isomorphism defined by $k_2(x) = \iota_F\otimes x$
and $i_2$ denotes various inclusions on the second factor.
Note that by the definition of the prop structure, the bottom line
coincides with $\circ: H_0(B\widetilde{F})\otimes H_1(B\widetilde{F'})\rightarrow H_1(B\widetilde{F\circ F'})$.
The commutativity of the diagram shows i).

Replacing the $i_2$'s and $k_2$ by inclusions on the first factor, we obtain ii).
Replacing $\circ:\widetilde{F}\times \widetilde{F'}\rightarrow \widetilde{F\circ F'}$ by 
the map
$
\widetilde{F}\times \widetilde{F'}\rightarrow \widetilde{F\coprod F'}
$, $(D,D')\mapsto D\sqcup D'$, we obtain iii).

\end{proof}

\begin{proof}[Proof of Proposition \ref{prop_structure}]
Let $F = (F_{0, 1+1}\coprod F_{0, 2+1})\circ (C_\phi\coprod F_{0, 1+1})$.
We can identify $F_{0,3+1}$ with
$F_{0,2+1}\circ (F_{0, 1+1}\coprod F_{0, 1+1})\circ F$.
Let $emb_2:F_{0, 1+1}\hookrightarrow F_{0,3+1}$ be the second embedding due to this
identification.
The composite of the curve $\alpha$ and of $emb_2$,
$S^1\buildrel{\alpha}\over\rightarrow F_{0, 1+1}\buildrel{emb_2}\over\hookrightarrow F_{0,3+1}$,
coincides with the curve $z$.
Taking the same tubular neighborhood around $\alpha$ and $z$, the Dehn twists of $\alpha$
and $z$, $T_\alpha$ and $T_z$, coincide on this tubular neighborhood.
Outside of this tubular neighborhood, $T_\alpha$ and $T_z$ coincide with the identity maps
of $F_{0, 1+1}$ and of $F_{0,3+1}$, $id_{F_{0, 1+1}}$ and $id_{F_{0, 3+1}}$.
Therefore
$$
T_z=id_{F_{0, 2+1}}\circ (id_{F_{0, 1+1}}\sqcup T_\alpha)\circ id_F.
$$
By virtue of 
Proposition \ref{prop:gluinghurewicz} i), ii) and then iii), we have
\begin{eqnarray*}
\overline{z}=\Theta\partial^{-1} T_z
=\Theta\partial^{-1}\left(id_{F_{0, 2+1}}\circ (id_{F_{0, 1+1}}\sqcup T_\alpha)\circ id_F\right)\\
=\iota_{F_{0, 2+1}} \circ \Theta\partial^{-1}\left((id_{F_{0, 1+1}}\sqcup T_\alpha)\circ id_F\right)\\
=\iota_{F_{0, 2+1}} \circ \Theta\partial^{-1}\left(id_{F_{0, 1+1}}\sqcup T_\alpha\right)\circ \iota_F\\
=\iota_{F_{0, 2+1}} \circ \left(\iota_{F_{0, 1+1}}\otimes \Theta\partial^{-1}T_\alpha\right)\circ \iota_F
=\iota_{F_{0, 2+1}}\circ [id_1\otimes \overline{\alpha}]\circ \iota_F
\end{eqnarray*}
The prop structure on the 0th homology gives
$\iota_F=[id_1\otimes \iota_{F_{0, 2+1}}]\circ [\tau\otimes id_1]
$.
Finally, the prop structure on the homology of mapping class groups gives
$$
\overline{z}=\iota_{F_{0, 2+1}}\circ [id_1\otimes \overline{\alpha}]\circ [id_1\otimes \iota_{F_{0, 2+1}}]\circ [\tau\otimes id_1]
=\iota_{F_{0, 2+1}}\circ [id_1\otimes (\overline{\alpha}\circ\iota_{F_{0, 2+1}})]\circ [\tau\otimes id_1].
$$
By similar fashion, we have the other six  equalities.
\end{proof}


\section{The cohomological BV-operator $\Delta$}\label{Operateur Delta}
The goal of this section is to give our definition of the BV-operator $\Delta$ in cohomology and to compare it to others definitions given in the literature.

Let $\Gamma : S^1\times LX \to LX$ be the $S^1$-action map.  
Then in this paper the Batalin-Vilkovisky operator $\Delta : H^*(LX) \to H^{*-1}(LX)$ is defined~\cite[Proposition 3.3]{Kuri:moduleadjoint} by 
$\Delta := \int_{S^1} \circ \Gamma^*$, where  $\int_{S^1} : H^*(S^1\times LX) \to H^{*-1}(LX)$ denotes the integration along the fibre of the 
trivial fibration $S^1\times LX\twoheadrightarrow LX$.

By our example in section~\ref{review of chataurmenichi} (see also up to the sign~\cite[Proof of Proposition 3.3]{Kuri:moduleadjoint}),
$\int_{S^1} f\times b=(-1)^{\vert f\vert}<f,[ S^1] > b$. Up to some signs, this is the slant with $[S^1]$ (Compare~\cite[Definition 1]{Kishi-Kono:freetwisted}).

Therefore for any $\beta\in H^*(LX)$, the image of $\beta$ by $\Delta$, $\Delta(\beta)$, is the unique element
such that (see~\cite{tamanoi:capproducts} up to the sign $-$ )
$$
\Gamma^*(\beta)=1\times \beta - \{S^1\}\times \Delta(\beta)
$$
where $\{S^1\}$ is the fundamental class in cohomology defined by
$<\{S^1\},[S^1]>=1$.

So finally, we have proved that with our definition of integration along the fibre,
since we define the BV-operator $\Delta$ using integration along the fibre as~\cite[Proposition 3.3]{Kuri:moduleadjoint}, our $\Delta$ is exactly the opposite of the one defined by~\cite{tamanoi:capproducts} or~\cite[p. 648 line 4]{Kishi-Kono:freetwisted}.

In particular, observe that $\Delta$ satisfies $\Delta^2=0$ and is a derivation on the cup product on $H^*(LX)$~\cite[Proposition 4.1]{tamanoi:capproducts}. 

In section~\ref{section:BV-identity}, we will need another characterisation of our BV-operator $\Delta$:
\begin{prop}\label{Delta en cohomologie dual du Delta en homologie}
The BV-operator $\Delta := \int_{S^1} \circ \Gamma^*$ is the dual (=transposition) of the
composite
$$
H_*(LX)\buildrel{[S^1]\times -}\over\rightarrow H_{*+1}(S^1\times LX)\buildrel{\Gamma_*}\over\rightarrow H_{*+1}(LX).
$$
\end{prop}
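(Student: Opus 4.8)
The plan is to recognise each of the two factors of $\Delta=\int_{S^{1}}\circ\,\Gamma^{*}$ as a transpose in the sense of Section~\ref{review of chataurmenichi}, where for a graded linear map $g$ one sets ${}^{t}g(\alpha)=(-1)^{\vert\alpha\vert\vert g\vert}\alpha\circ g$, and then to assemble them using the (signed) contravariant functoriality ${}^{t}g_{2}\circ{}^{t}g_{1}=(-1)^{\vert g_{1}\vert\vert g_{2}\vert}\,{}^{t}(g_{1}\circ g_{2})$, which is an elementary sign bookkeeping. The map $\Gamma^{*}$ is, tautologically, the transpose of the degree $0$ map $\Gamma_{*}:H_{*}(S^{1}\times LX)\to H_{*}(LX)$, with no sign intervening. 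So the whole statement reduces to the single identity
$$
\textstyle\int_{S^{1}}\;=\;{}^{t}\!\big([S^{1}]\times-\big):\;H^{*}(S^{1}\times LX)\longrightarrow H^{*-1}(LX),
$$
where $[S^{1}]\times-:H_{*}(LX)\to H_{*+1}(S^{1}\times LX)$ has degree $1$; granting this, and since $\vert\Gamma_{*}\vert=0$,
$$
\Delta=\Big(\textstyle\int_{S^{1}}\Big)\circ\Gamma^{*}={}^{t}\!\big([S^{1}]\times-\big)\circ{}^{t}(\Gamma_{*})
=(-1)^{\vert\Gamma_{*}\vert\cdot 1}\,{}^{t}\!\big(\Gamma_{*}\circ([S^{1}]\times-)\big)={}^{t}\!\big(\Gamma_{*}\circ([S^{1}]\times-)\big),
$$
which is exactly the assertion.

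To prove $\int_{S^{1}}={}^{t}([S^{1}]\times-)$, I would use the K\"unneth isomorphism $H^{*}(S^{1}\times LX)\cong H^{*}(S^{1})\otimes H^{*}(LX)$ (valid because $S^{1}$ has finite-dimensional homology), so that it suffices to evaluate both maps on a decomposable class $f\times b$ and pair the result against an arbitrary $c\in H_{*}(LX)$. Using the formula for $p_{2}^{!}$ of a trivial fibration recalled in Section~\ref{review of chataurmenichi} (here with fibre $S^{1}$, $n=1$, orientation class $[S^{1}]$) one has $\int_{S^{1}}(f\times b)=(-1)^{\vert f\vert}\langle f,[S^{1}]\rangle\,b$, hence $\langle\int_{S^{1}}(f\times b),c\rangle=(-1)^{\vert f\vert}\langle f,[S^{1}]\rangle\langle b,c\rangle$. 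On the other side, by the definition of the transpose and the Koszul sign in the Kronecker pairing of cross products,
$$
\langle{}^{t}([S^{1}]\times-)(f\times b),c\rangle=(-1)^{\vert f\vert+\vert b\vert}\langle f\times b,[S^{1}]\times c\rangle=(-1)^{\vert f\vert+\vert b\vert}(-1)^{\vert b\vert}\langle f,[S^{1}]\rangle\langle b,c\rangle=(-1)^{\vert f\vert}\langle f,[S^{1}]\rangle\langle b,c\rangle.
$$
The two expressions agree (and both vanish unless $\vert f\vert=1$), which gives the desired identity of maps.

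The genuinely delicate point is the sign bookkeeping: one must use consistently the transposition convention of Section~\ref{review of chataurmenichi}, the Koszul sign in the cross-product pairing, and the chosen orientation of the trivial fibration $S^{1}\times LX\twoheadrightarrow LX$ (equivalently, the normalisation $\int_{S^{1}}(f\times b)=(-1)^{\vert f\vert}\langle f,[S^{1}]\rangle b$ fixed in the preceding paragraph of the paper). Once these are pinned down the two computations above cancel as stated, and everything else is routine.
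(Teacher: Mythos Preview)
Your proof is correct and takes essentially the same approach as the paper. The paper phrases the argument via an explicit commutative diagram with the universal-coefficient isomorphisms $\mu_X:H^*(X)\to H_*(X)^\vee$, but the left square is exactly your ``$\Gamma^*={}^t(\Gamma_*)$'' (trivial because $\Gamma_*$ has degree $0$), and the right square is exactly your Kronecker-pairing computation identifying $\int_{S^1}$ with ${}^t([S^1]\times-)$; the signs you track agree with those in the paper.
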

\begin{proof}
For any space $X$, let $\mu_X:H^*(X;\K)\rightarrow H_*(X;\K)^\vee$ be the map
sending $\alpha$ to the form on $H_*(X;\K)$, $<\alpha,->$. Here $<-,->$ is the
Kronecker bracket. By the universal coefficient theorem for cohomology,
$\mu_X$ is an isomorphism. Consider the two squares
$$\xymatrix{
H^*(LX)\ar[r]^{\Gamma^*}\ar[d]_{\mu_{LX}}
&H^*(S^1\times LX)\ar[r]^{\int_{S^1}}\ar[d]_{\mu_{S^1\times LX}}
&H^{*-1}(LX)\ar[d]^{\mu_{LX}}\\
H_*(LX)^\vee\ar[r]_-{(\Gamma_*)^\vee}
&H_*(S^1\times LX)^\vee\ar[r]_{([S^1]\times -)^\vee}
&H_{*-1}(LX)^\vee.
}
$$
The left square commutes by naturality of $\mu_X$.
For any $\alpha\in H^*(S^1)$ and $\beta\in H^*(LX)$ and $y\in H_*(LX)$,
\begin{align*}
(\mu_{LX}\circ \int_{S^1})(\alpha\times\beta)(y)
=\mu_{LX}\left((-1)^{\vert\alpha\vert\vert [S^1]\vert}<\alpha, [S^1]>\beta\right)(y)
\\=(-1)^{\vert\alpha\vert\vert [S^1]\vert}<\alpha, [S^1]><\beta,y>
\end{align*}
and
\begin{align*}
([S^1]\times -)^\vee\left(\mu_{S^1\times LX}(\alpha\times\beta)\right)(y)
=(-1)^{\vert\alpha\times\beta\vert\vert [S^1]\vert}\mu_{S^1\times LX}(\alpha\times\beta)\circ ([S^1]\times -)(y)\\
=(-1)^{\vert\alpha\vert\vert [S^1]\vert+\vert\beta\vert\vert [S^1]\vert}<\alpha\times\beta, [S^1]\times y>.
\end{align*}
Since $<\alpha\times\beta, [S^1]\times y>=(-1)^{\vert\beta\vert\vert [S^1]\vert}
<\alpha, [S^1]><\beta,y>$, the right square commutes also.
\end{proof}
\section{Hochschild cohomology computations}
\begin{prop}\label{unite image delta dans Hochschild}
Let $A$ be a graded (or ungraded) algebra equipped with an isomorphism of
$A$-bimodules $\Theta:A\buildrel{\cong}\over\rightarrow A^\vee$ between $A$ and its dual of any degree $\vert\Theta\vert$. Denote by $tr:=\Theta(1)$ the induced graded trace on $A$.
Let $a\in Z(A)$ be an element of the center of $A$.
Let $d:A\rightarrow A$ be a derivation of $A$.
Obviously $\overline{a}\in\mathcal{C}^0(A,A)=\text{Hom}(\K, A)$ defined by $\overline{a}(1)=a$
and $d\circ s^{-1}\in\mathcal{C}^1(A,A)=\text{Hom}(s\overline{A}, A)$ are two Hochschild cocycles.
Then in the Batalin-Vilkovisky algebra
$HH^*(A,A)\cong HH^{*+\vert\Theta\vert}(A,A^\vee)$,

1) $\Delta([\overline{a}])=0$,

2) $\Delta([d\circ s^{-1}])$ is equal to $[\overline{a}]$ the cohomology class of $\overline{a}$
if and only if for any $a_0\in A$,
$$
(-1)^{1+\vert d\vert} tr\circ d(a_0)=tr(aa_0).
$$

3) In particular, the unit belongs to the image of
$\Delta$ if and only if there exists a derivation $d:A\rightarrow A$ of degree $0$
commuting with the trace: $tr\circ d(a_0)=tr(a_0)$ for any element $a_0$ in $A$.
\end{prop}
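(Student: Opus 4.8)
The plan is to work with the standard model for computing the BV-operator on the Hochschild cohomology of a symmetric Frobenius algebra (Connes' $B$-operator on Hochschild homology, transported across the duality isomorphism). Recall that $HH^*(A,A)\cong HH^{*+\vert\Theta\vert}(A,A^\vee)$, and that on the normalized complex $\mathcal{C}^*(A,A^\vee)=\mathrm{Hom}(s\overline{A}^{\otimes *},A^\vee)\cong\mathrm{Hom}(A\otimes s\overline{A}^{\otimes *},\K)$, the BV-operator $\Delta$ is the dual of Connes' boundary $B:HH_n(A)\to HH_{n+1}(A)$, which on normalized chains sends $a_0[a_1|\cdots|a_n]$ to $\sum_i (\pm)1[a_i|\cdots|a_n|a_0|a_1|\cdots|a_{i-1}]$. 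First I would fix signs and conventions precisely, matching the ones used in Section~\ref{Operateur Delta} and Theorem~\ref{thm: Hochschild cohomology exterior algebra}, so that the claimed sign $(-1)^{1+\vert d\vert}$ comes out right.

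For part (1): $\overline{a}\in\mathcal{C}^0$ has Hochschild degree zero, so $B$ lowers nothing onto it from the left — more precisely, the dual of $B$ applied to a $0$-cochain lands in $\mathcal{C}^{-1}=0$, or alternatively $B$ vanishes on $HH_0$ when composed appropriately; in any case $\Delta$ drops Hochschild degree by one and there is nothing in degree $-1$, so $\Delta([\overline a])=0$. I would spell this out by identifying $\overline a$ with the functional $a_0\mapsto \mathrm{tr}(aa_0)=\Theta(a)(a_0)$ on $A=\mathcal C_0(A)^\vee$ and noting $B^\vee$ of it is zero for degree reasons.

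For part (2), the heart of the matter: the cochain $d\circ s^{-1}\in\mathcal{C}^1(A,A)$ corresponds under $\Theta$ to the functional $a_0\otimes sa_1\mapsto \mathrm{tr}\bigl(d(a_1)a_0\bigr)$ (up to a Koszul sign from moving $\Theta$, degree $\vert\Theta\vert$, past $s a_1$). Applying $\Delta=B^\vee$ lowers Hochschild degree to $0$, and $B$ on normalized $1$-chains sends $a_0[a_1]\mapsto 1[a_0|a_1]-1[a_1|a_0]$ (with the appropriate sign), so $\Delta([d\circ s^{-1}])$ evaluated on $a_0\in\mathcal C_0$ is, up to sign, $\mathrm{tr}(d(a_0)\cdot 1)-\mathrm{tr}(d(1)a_0)=\mathrm{tr}(d(a_0))$, since $d(1)=0$ for a derivation. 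Meanwhile $[\overline a]$ is the class of $a_0\mapsto\mathrm{tr}(aa_0)$. Hence $\Delta([d\circ s^{-1}])=[\overline a]$ in $HH^0$ exactly when $(-1)^{1+\vert d\vert}\mathrm{tr}(d(a_0))=\mathrm{tr}(a a_0)$ for all $a_0$ — here I must be careful that equality of these $0$-cocycles on the nose is the same as equality of cohomology classes, which holds because $HH^0(A,A)=Z(A)$ injects into $\mathrm{Hom}(\K,A)$, i.e. there are no coboundaries in degree $0$ of that complex. Part (3) is then immediate: take $a=1$, which is central; the condition becomes the existence of a degree-$0$ derivation $d$ with $(-1)^{1+0}\mathrm{tr}(d(a_0))=\mathrm{tr}(a_0)$, i.e. $\mathrm{tr}\circ d=-\mathrm{tr}$ — and I would reconcile the sign with the statement's $\mathrm{tr}\circ d(a_0)=\mathrm{tr}(a_0)$ by absorbing it into $d\mapsto -d$, or by re-checking the Koszul sign in the identification of $d\circ s^{-1}$ with its functional; this sign bookkeeping is exactly the step I expect to be the main obstacle, since everything else is formal once the $B$-operator formula and the Frobenius duality are in place.
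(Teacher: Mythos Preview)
Your approach is essentially the same as the paper's: transport $\Delta$ along $\Theta$ and the adjunction isomorphism to $B^\vee$ on Hochschild chains, then compute directly. The paper sets this up via the square
\[
\xymatrix{
\mathcal{C}^p(A,A)\ar[r]^-{\mathcal{C}^p(A,\Theta)}\ar[d]_\Delta
&\mathcal{C}^p(A,A^\vee)\ar[r]^-{Ad}
&\mathcal{C}_p(A,A)^\vee\ar[d]^{B^\vee}\\
\mathcal{C}^{p-1}(A,A)\ar[r]_-{\mathcal{C}^{p-1}(A,\Theta)}
&\mathcal{C}^{p-1}(A,A^\vee)\ar[r]_-{Ad}
&\mathcal{C}_{p-1}(A,A)^\vee
}
\]
which commutes up to $(-1)^{\vert\Theta\vert}$, and then computes both sides exactly as you describe.

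There is one slip worth flagging. To evaluate $\Delta([d\circ s^{-1}])$ on $a_0\in\mathcal{C}_0$, you need $B:\mathcal{C}_0\to\mathcal{C}_1$, which on the normalized complex is simply $a_0[\;]\mapsto 1[sa_0]$ (a single term). You instead wrote down the formula for $B$ on $1$-chains, $a_0[a_1]\mapsto 1[a_0|a_1]\pm 1[a_1|a_0]$, which goes to $\mathcal{C}_2$ and is irrelevant here. Your two-term expression $\mathrm{tr}(d(a_0)\cdot 1)-\mathrm{tr}(d(1)a_0)$ happens to collapse to the right answer $\mathrm{tr}(d(a_0))$, but the second term is spurious. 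The paper's computation is just $(\widehat{\Theta}(d)\circ B)(a_0[\;])=\widehat{\Theta}(d)(1[sa_0])=\mathrm{tr}(d(a_0))$, with the global sign $(-1)^{\vert\Theta\vert+1+\vert d\vert}$ coming from the transposition convention and the square above; the $(-1)^{\vert\Theta\vert}$ then cancels against the same factor on the $\overline{a}$ side, leaving exactly $(-1)^{1+\vert d\vert}$. Your remark that there are no coboundaries in $\mathcal{C}^0$ is exactly what the paper uses to pass from cochain equality to cohomology equality, and your handling of part (3) by replacing $d$ with $-d$ is fine.
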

\begin{proof}
By definition of $\Delta$, the following diagram commutes up to the sign $(-1)^{\vert\Theta\vert}$ for any $p\geq 0$.
$$
\xymatrix{
\mathcal{C}^p(A,A)\ar[r]^{\mathcal{C}^p(A,\Theta)}\ar[d]_\Delta
&\mathcal{C}^p(A,A^\vee)\ar[r]^{Ad}
&\mathcal{C}_p(A,A)^\vee\ar[d]_{B^\vee}\\
\mathcal{C}^{p-1}(A,A)\ar[r]_{\mathcal{C}^{p-1}(A,\Theta)}
&\mathcal{C}^{p-1}(A,A^\vee)\ar[r]_{Ad}
&\mathcal{C}_{p-1}(A,A)^\vee.
}
$$
Taking $p=0$ we obtain 1).

\noindent The image of the cocycle $d\circ s^{-1}\in\mathcal{C}^1(A;A)$ by $Ad\circ\mathcal{C}^*(A;\Theta)$
is the form $\widehat{\Theta}(d)$ on $\mathcal{C}_1(A;A)=A\otimes s\overline{A}$
defined by (Compare with~\cite[Proof of Proposition 20]{menichi:stringtopspheres})
$$
\widehat{\Theta}(d)(a_0[sa_1])=(-1)^{\vert sa_1\vert\vert a_0\vert}
(\Theta\circ d)(a_1)(a_0)=(-1)^{\vert sa_1\vert\vert a_0\vert}tr (d(a_1)a_0).
$$
For any $a_0\in A$,
$$
(-1)^{\vert\Theta\vert +1+\vert d\vert}B^\vee(\widehat{\Theta}(d))(a_0)
=(\widehat{\Theta}(d)\circ B)(a_0[])=\widehat{\Theta}(d)(1[sa_0])=tr\circ d(a_0).
$$
The image of the cocycle $\overline{a}\in\mathcal{C}^0(A;A)$ by $Ad\circ\mathcal{C}^*(A;\Theta)$
is the form on $A$, mapping $a_0$ to $(\Theta\circ\overline{a})([])(a_0)=\Theta (a)(a_0)=tr(aa_0)$.

\noindent Therefore $\Delta(d\circ s^{-1})=a$
if and only if for any
$a_0\in A$, $(-1)^{\vert\Theta\vert +1+\vert d\vert}tr\circ d(a_0)=(-1)^{\vert\Theta\vert}tr(aa_0)$.
Since there is no coboundary in $\mathcal{C}^0(A,A)$, this proves 2).
\end{proof}
\begin{ex}\label{ex: unit belongs to image of Delta}
a) Let $A=\Lambda x_{-d}$ be the exterior algebra on a generator of lower degree
$-d\in\mathbb{Z}$. If $d\geq 0$ then $A=H^*(S^d;\mathbb{F})$.
Denote by $1^\vee$ and $x^\vee$ the dual basis of $A^\vee$.
The trace on $A$ is $x^\vee$.
Let $d:A\rightarrow A$ be the linear map such that $d(1)=0$ and $d(x)=x$.
Since $d(x\wedge x)=0$ and $dx\wedge x+x\wedge dx=2x\wedge x=2\times 0=0$, even over a field of characteristic different from $2$, $d$ is a derivation commuting with the trace.
Therefore by Theorem~\ref{unite image delta dans Hochschild}, $1\in \text{Im } \Delta$
in $HH^*(A;A)$. When $\mathbb{F}=\mathbb{F}_2$, compare with~\cite[Proposition 20]{menichi:stringtopspheres}.

b)Let $V$ be a graded vector space. Let $A:=\Lambda (V)$ be the graded exterior algebra
on $V$. If $V$ is in non-positive degrees, then $A$ is just the cohomology algebra
of a product of spheres.
Let $x_1$, \dots, $x_N$ be a basis of $V$. The trace of $A$ is $(x_1\dots x_N)^\vee$.
Let $d_1$ be the derivation on $\Lambda x_1$ considered in the previous example.
Then $d:=d_1\otimes id$ is a derivation on
$\Lambda x_1\otimes \Lambda(x_2,\dots,x_N)\cong \Lambda V$.
Obviously $d$ commutes with the trace.
So $1\in \text{Im } \Delta$.

c) Let $A=F[x]/x^{n+1}$, $n\geq 1$ be the truncated polynomial algebra on a generator $x$
of even degree different from $0$. If $x$ is of upper degree $2$ then
$A=H^*(\mathbb{CP}^n;\mathbb{F})$.
The trace of $A$ is $(x^n)^\vee$.
Let $d:A\rightarrow A$ be the unique derivation of $A$ such that $d(x)=x$
(The case $n=1$ was considered in example a)).
Then $d(x^i)=ix^i$. For degree reason, $d$ is a basis of the derivations of degree $0$ of $A$. Then $\lambda d$ commutes with the trace if and only if $\lambda n=1$ in $\mathbb{F}$.
Therefore $1\in \text{Im } \Delta$ in $HH^*(A;A)$ if and only $n$ is invertible
in $\mathbb{F}$(Compare with~\cite{Westerland:stringhomologyspheres} modulo 2 and with~\cite{Yang:BV} otherwise).
\end{ex}
\begin{thm}\label{thm: Hochschild cohomology exterior algebra}
Let $V$ be a graded vector space (non-negatively lower graded or concentrated in upper degree $\geq 1$)
such that in each degre, $V$ is of finite dimension.

i) Let $A={\bf S}(V),0$ be the free strictly commutative graded algebra on $V$:
$A=\Lambda V^{odd}\otimes \mathbb{F}[V^{even}]$ is the graded tensor product on the exterior algebra on $V^{odd}$, the odd degree elements and on $V^{even}$ the even degree elements~\cite[p. 46]{Felix-Halperin-Thomas:ratht}.
Then the Hochschild cohomology of $A$, $HH^*(A,A)$, is isomorphic as Gerstenhaber algebras
to $A\otimes {\bf S}(s^{-1}V^\vee)$. For $\varphi$ a linear form on $V$ and $v\in V$,
$
\{1\otimes s^{-1}\varphi,v\otimes 1\}=(-1)^{\vert\varphi\vert}\varphi(v).
$
The Lie bracket is trivial on $(A\otimes 1)\otimes (A\otimes 1)$ and on
$(1\otimes {\bf S}(s^{-1}V^\vee))\otimes (1\otimes {\bf S}(s^{-1}V^\vee))$.

ii) Suppose that $\mathbb{F}$ is a field of characteristic $2$. Then we can extend i) in the following way:
Let $U$ and $W$ are two graded vector spaces such that $U\oplus W=V$.
(i. e. we don't assume anymore that $U=V^{odd}$ and $W=V^{even}$).
Let $A=\Lambda U\otimes \mathbb{F}[W]$.
Then $HH^*(A,A)$ is isomorphic as Gerstenhaber algebras
to $A\otimes \mathbb{F}(s^{-1}U^\vee)\otimes \Lambda(s^{-1}W^\vee)$
and the Lie bracket is the same as in i).

iii) Suppose that $V$ is concentrated in odd degres or that $\K$ is a field of characteristic $2$.
Let $A=\Lambda V$ be the exterior algebra on $V$.
Then the BV-algebra extending the Gerstenhaber algebra $HH^*(A,A)\cong A\otimes \K[s^{-1}V^\vee]$
has trivial BV-operator $\Delta$ on $A$ and on $\K[s^{-1}V^\vee]$.
\end{thm}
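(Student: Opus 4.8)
The plan is to compute $HH^*(A,A)=\operatorname{Ext}^*_{A^e}(A,A)$ from an explicit small bimodule resolution, read off the Gerstenhaber structure, and — in part iii) — identify the BV operator via Proposition~\ref{unite image delta dans Hochschild}. First I would reduce to a single generator: since ${\bf S}(V\oplus V')\cong{\bf S}(V)\otimes{\bf S}(V')$ and the Hochschild cohomology of a tensor product of algebras of finite type over a field is the tensor product of the Hochschild cohomologies \emph{as Gerstenhaber algebras} (and, when both factors are Frobenius, as BV algebras), it suffices to treat $\dim V=1$, which also covers $A=\Lambda(x)$ in part iii).

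For a single even-degree generator, $A=\mathbb{F}[x]$ and the Koszul resolution $0\to A^e\to A^e\to A\to 0$, where the map $A^e\to A^e$ is multiplication by $x\otimes 1-1\otimes x$, is free of length one; applying $\operatorname{Hom}_{A^e}(-,A)$ kills the differential, so $HH^n(A,A)=A$ for $n=0,1$ and vanishes otherwise. Writing $\xi:=s^{-1}x^\vee$ for the degree-one class, represented by the cocycle dual to the derivation $\partial/\partial x$, one gets $HH^*(A,A)\cong A\otimes\Lambda(\xi)=A\otimes{\bf S}(s^{-1}V^\vee)$, the parity flip being exactly $|\xi|=-|x|-1$. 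For a single odd-degree generator, $A=\Lambda(x)$ and one uses instead the Tate/Koszul resolution $A^e\otimes\Gamma(sx)\to A$ by a divided-power (hence, in the relevant degrees, polynomial) algebra on one generator of even homological degree; again the differential dies after $\operatorname{Hom}_{A^e}(-,A)$ and $HH^*(A,A)\cong A\otimes\mathbb{F}[\xi]=A\otimes{\bf S}(s^{-1}V^\vee)$. In both cases the Gerstenhaber bracket computed from the Koszul model is the Schouten bracket of polyvector fields on $\operatorname{Spec}A$: a function brackets trivially with a function, the chosen commuting coordinate derivations $\partial/\partial x_i$ bracket trivially, and $\{1\otimes s^{-1}\varphi,\,v\otimes 1\}$ is the derivation dual to $\varphi$ applied to the linear function $v$, i.e.\ $(-1)^{|\varphi|}\varphi(v)$. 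Tensoring the one-generator answer over a basis of $V$ and matching with the HKR description $HH^*({\bf S}(V),{\bf S}(V))\cong\Lambda_A(A\otimes s^{-1}V^\vee)=A\otimes{\bf S}(s^{-1}V^\vee)$ proves i); in characteristic $2$ parity is irrelevant, so the same argument applied to any splitting $V=U\oplus W$ gives ii).

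For iii), $A=\Lambda V$ is a symmetric Frobenius algebra with trace $\operatorname{tr}=(x_1\cdots x_N)^\vee$, so $HH^*(A,A)\cong HH^{*+|\Theta|}(A,A^\vee)$ carries the BV operator $\Delta$ of Proposition~\ref{unite image delta dans Hochschild}. The algebra $HH^*(A,A)=A\otimes\mathbb{F}[s^{-1}V^\vee]$ is generated by $A\otimes 1$ together with the classes $\xi_i:=1\otimes s^{-1}x_i^\vee$. On $A\otimes 1\subset HH^0$, part 1) of Proposition~\ref{unite image delta dans Hochschild} gives $\Delta=0$. For $\xi_i$, part 2) identifies $\Delta(\xi_i)\in HH^0=A$ as the element $a$ with $\operatorname{tr}(aa_0)=(-1)^{1+|\partial/\partial x_i|}\operatorname{tr}\bigl(\partial a_0/\partial x_i\bigr)$ for all $a_0$; but $\partial a_0/\partial x_i$ never involves $x_i$, so it is never a nonzero multiple of $x_1\cdots x_N$, hence $\operatorname{tr}\circ(\partial/\partial x_i)\equiv 0$ and, by nondegeneracy of the trace, $\Delta(\xi_i)=0$. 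Since the bracket is trivial on $A\otimes 1$ and on $1\otimes\mathbb{F}[s^{-1}V^\vee]$ (parts i)/ii)) and $\Delta$ is a second-order differential operator (the BV/Koszul identity), an induction on word-length in the generators shows $\Delta$ vanishes on $A$ and on $\mathbb{F}[s^{-1}V^\vee]$, which is iii).

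The main obstacle I expect is the sign bookkeeping in iii) — checking that the BV operator of Proposition~\ref{unite image delta dans Hochschild} is compatible with the Gerstenhaber structure produced by the Koszul model — together with verifying in the reduction step that the product, the Gerstenhaber bracket and $\Delta$ all factor \emph{compatibly} through the Künneth decomposition; this is where the finite-type hypothesis on $V$ and the precise sign conventions really get used.
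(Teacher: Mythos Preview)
Your argument for part iii) is essentially identical to the paper's: both invoke Proposition~\ref{unite image delta dans Hochschild} part 1) for $\Delta|_{A}=0$, part 2) together with the wordlength observation that $\partial/\partial x_i$ never hits the top monomial to get $\Delta(\xi_i)=0$, and then the triviality of the bracket on $1\otimes\K[s^{-1}V^\vee]$ to propagate $\Delta=0$ to all of $\K[s^{-1}V^\vee]$.

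For parts i) and ii) you take a genuinely different route. The paper works directly with the full $V$: it uses the small $A$-bimodule resolution $(A\otimes\Gamma(sV)\otimes A,D)\buildrel{\simeq}\over\to A$, observes that the induced differential on $\operatorname{Hom}(\Gamma(sV),A)$ vanishes, identifies this with $A\otimes{\bf S}(s^{-1}V^\vee)$ via $\Gamma(sV)^\vee\cong{\bf S}((sV)^\vee)$, and then computes the Gerstenhaber bracket on generators by hand using the explicit formulas for $\{-,-\}$ on $\mathcal{C}^0$ and $\mathcal{C}^1$ from \cite{Felix-Menichi-Thomas:GerstduaiHochcoh}; in particular the vanishing $\{s^{-1}\varphi,s^{-1}\varphi'\}=0$ is checked by a direct cancellation. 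Your approach instead reduces to $\dim V=1$ and reassembles via the K\"unneth theorem for Hochschild cohomology of tensor products \emph{as Gerstenhaber algebras}. This is a valid and conceptually cleaner strategy, but it trades the paper's self-contained computation for a nontrivial external input: the K\"unneth isomorphism respecting the Gerstenhaber (and, for iii), BV) structure is exactly what carries the triviality of $\{\xi_i,\xi_j\}$ and the compatibility you flag at the end, and it deserves a precise reference (e.g.\ Le--Zhou) rather than being asserted. You should also note that $V$ is only assumed degreewise finite, so your ``tensor over a basis'' is a filtered colimit of finite tensor products; this is harmless here but worth saying. The paper's approach buys self-containment and explicit cocycle representatives; yours buys brevity once the K\"unneth input is granted.
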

\begin{proof}
i) Recall that the Bar resolution $B(A,A,A)=A\otimes TsA\otimes A\buildrel{\simeq}\over\twoheadrightarrow A$
is a resolution of $A$ as $A\otimes A^{op}$-modules.
When $A={\bf S}(V),0$, there is another smaller resolution
$(A\otimes \Gamma(sV)\otimes A,D)\buildrel{\simeq}\over\twoheadrightarrow A$.
Here $\Gamma(sV)$ is the free divided power graded algebra on $sV$ and $D$ is the unique derivation
such that 
$D(\gamma^k(sv))=v\otimes \gamma^{k-1}(sv)\otimes 1-1\otimes \gamma^{k-1}(sv)\otimes v$
~\cite{MenichiL:cohrfl}.
Since $\Gamma(sV)$ is the invariants of $T(sV)$ under the action of the permutation groups,
there is a canonical inclusion of graded algebras~\cite[p. 278]{Halperin:unieal}
$$
i:\Gamma(sV)\hookrightarrow T(sV)\hookrightarrow T(sA).
$$
This map $i$ maps $\gamma^k(sv)$ to $[sv|\dots|sv]$.
Since both $(A\otimes \Gamma(sV)\otimes A,D)$ and $B(A,A,A)$ are $A\otimes A$-free resolutions of $A$,
the inclusion of differential graded algebras
$$
A\otimes i\otimes A:(A\otimes \Gamma(sV)\otimes A,D)\buildrel{\simeq}\over\hookrightarrow B(A,A,A)
$$
is a quasi-isomorphism.
So by applying the functor $\text{Hom}_{A\otimes A}(-,A)$,
$\text{Hom}(i,A):\mathcal{C}^*(A,A)\buildrel{\simeq}\over\twoheadrightarrow (\text{Hom}(\Gamma (sV),A),0)$
is a quasi-isomorphism of complexes.
The differential on $\text{Hom}_{A\otimes A}((A\otimes \Gamma(sV)\otimes A,D),(A,0))$
is zero since
$$
f\circ D(\gamma^{k_1}(sv_1)\dots \gamma^{k_r}(sv_r))=0.
$$
The inclusion $i:\Gamma(sV)\hookrightarrow T(sA)$
is a morphism of graded coalgebras with respect to the diagonal~\cite[p. 279]{Halperin:unieal}
$$
\Delta[sa_1|\dots|sa_r]=\sum_{p=0}^r [sa_1|\dots|sa_p]\otimes [sa_{p+1}|\dots|sa_r].
$$
Therefore the quasi-isomorphism of complexes $\text{Hom}(i,A):\mathcal{C}^*(A,A)\buildrel{\simeq}\over\twoheadrightarrow (\text{Hom}(\Gamma (sV),A),0)$ is also a morphism of graded algebras with respect to the cup product on the Hochschild cochain complex $\mathcal{C}^*(A,A)$
and the convolution product on $\text{Hom}(\Gamma (sV),A)$.

The morphism of commutative graded algebras
$j:A\otimes \Gamma(sV)^\vee\rightarrow \text{Hom}(\Gamma (sV),A)$
mapping $a\otimes \phi$ to the linear map $j(a\otimes \phi)$ from $\Gamma (sV)$ to $A$
defined by $j(a\otimes \phi)(\gamma)=\phi(\gamma)a$
is an isomorphim.
By~\cite[(A.7)]{Halperin:unieal}, the canonical map $(sV)^\vee\rightarrow \Gamma(sV)^\vee$ extends to an isomorphism
of graded algebras $k:{\bf S}(sV)^\vee\buildrel{\cong}\over\rightarrow \Gamma(sV)^\vee$.
The composite
$\Theta:(sV)^\vee\buildrel{s^\vee}\over\rightarrow V^\vee \buildrel{s^{-1}}\over\rightarrow s^{-1}(V^\vee) $,
 mapping $x$ to $\Theta(x)=(-1)^{\vert x\vert}s^{-1}(x\circ s)$, is a chosen isomorphism between $(sV)^\vee$ and $s^{-1}(V^\vee)$. Note that $\Theta^{-1}$ is the opposite of the
composite $(s^{-1})^\vee\circ s$.
Finally, the composite
\begin{equation*}
A\otimes {\bf S}(s^{-1}(V^\vee))\buildrel{A\otimes {\bf S}(\Theta)}\over\rightarrow A\otimes {\bf S}((sV)^\vee)
\buildrel{A\otimes k}\over\rightarrow A\otimes (\Gamma(sV))^\vee\buildrel{j}\over\
\rightarrow \text{Hom}(\Gamma (sV),A)
\end{equation*}
is an isomorphism of graded algebras. So we have obtained an explicit isomorphism of graded algebras
$l:HH^*(A,A)\buildrel{\cong}\over\rightarrow A\otimes {\bf S}(s^{-1}(V^\vee))$.
To compute the bracket, it is sufficient to compute it on the generators on $A\otimes {\bf S}(s^{-1}(V^\vee))$.
For $m\in A$, let $\overline{m}\in\mathcal{C}^0(A,A)=\text{Hom}((sA)^{\otimes 0},A)$ defined by $\overline{m}([])=m$.
Obviously, $l^{-1}(m\otimes 1)$ is the cohomology class of the cocycle $\overline{m}$.
For any linear form $\varphi$ on $V$, let $\overline{\varphi}\in\mathcal{C}^1(A,A)=\text{Hom}(sA,A)$ be the map
defined by
$$
\overline{\varphi}([sv_1v_2\dots v_n])=\sum_{i=1}^n (-1)^{\vert\varphi\vert\vert sv_1v_2\dots v_{i-1}\vert}
\varphi(v_i)v_1\dots \widehat{v_i}\dots v_n.
$$
Since the composite $\overline{\varphi}\circ s$ is a derivation of $A$, $\overline{\varphi}$ is a cocycle.
Since $\overline{\varphi}([sv_1])=(-1)^{\vert\varphi\vert} \varphi(v_1)1$, the composite $\overline{\varphi}\circ i$
is the image of $1\otimes s^{-1}\varphi$ by the composite
$
j\circ (A\otimes k)\otimes (A\otimes {\bf S}(\Theta)):A\otimes {\bf S}(s^{-1}(V^\vee))\rightarrow \text{Hom}(\Gamma (sV),A)
$.
Therefore $l^{-1}(1\otimes s^{-1}\varphi)$ is the cohomology class of the cocycle $\overline{\varphi}$.
By~\cite[p. 48-9]{Felix-Menichi-Thomas:GerstduaiHochcoh},

\noindent a) the Lie bracket is null on $\mathcal{C}^0(A,A)\otimes \mathcal{C}^0(A,A)$,

\noindent b) the Lie bracket restricted to
$\{\quad,\quad\}:\mathcal{C}^1(A,A)\otimes \mathcal{C}^0(A,A)\rightarrow \mathcal{C}^0(A,A)$
is given by $\{g,\overline{a}\}=\overline{g(sa)}$ for any $g:sA\rightarrow A$
and $a\in A$,

\noindent c) the Lie bracket restricted to
$\{\quad,\quad\}:\mathcal{C}^1(A,A)\otimes \mathcal{C}^1(A,A)\rightarrow \mathcal{C}^1(A,A)$
is given by
$$\{f,g,\}([sa])=
f\circ s\circ g\circ s(a)-(-1)^{(\vert f\vert+1)(\vert g\vert+1)}g\circ s\circ f\circ s(a).$$
By a), the Lie bracket is trivial on $(A\otimes 1)\otimes (A\otimes 1)$.
By b), for $ \varphi\in V^\vee$ and $v\in V$,
$$\{1\otimes s^{-1}\varphi,v\otimes 1\}=(-1)^{\vert\varphi\vert}\varphi(v)1\otimes 1.$$
Let $\varphi$ and $\varphi'$ be two linear forms on $V$. Then
$$
\overline{\varphi}\circ s \circ \overline{\varphi'}\circ s ([v_1\dots v_n])=
\sum_{1\leq j<i\leq n}\left( (-1)^{\vert\varphi\vert \vert\varphi'\vert}\varepsilon_{ij}(\varphi,\varphi')
+\varepsilon_{ij}(\varphi',\varphi)\right) v_1\dots \widehat{v_j}\dots \widehat{v_i}\dots v_n
$$
where $\varepsilon_{ij}(\varphi,\varphi')=(-1)^{\vert\varphi\vert\vert sv_1\dots v_{i-1}\vert+\vert\varphi'\vert\vert sv_1\dots v_{j-1}\vert}\varphi(v_i)\varphi'(v_j)$.
Therefore $\overline{\varphi}\circ s \circ \overline{\varphi'}\circ s-(-1)^{\vert\varphi\vert\vert\varphi'\vert}\overline{\varphi'}\circ s \circ \overline{\varphi}\circ s=0$.
So by c), the Lie bracket
$\{1\otimes s^{-1}\varphi,1\otimes s^{-1}\varphi'\}=0.$

iii) By 1) of Proposition~\ref{unite image delta dans Hochschild}, $\Delta([\overline{m}])=0$ and so $\Delta$ is trivial 
on all $m\otimes 1\in A\otimes 1$.
Let $x_1$, \dots, $x_N$ be a basis of $V$. The trace of $A$ is $(x_1\dots x_N)^\vee$.
Therefore the trace vanishes on elements of wordlength strictly less than $N$.
For any $\varphi\in V^{\vee}$, the derivation $\overline{\varphi}\circ s$ decreases wordlength by $1$.
So $tr\circ  \overline{\varphi}\circ s=0$. By 2) of Proposition~\ref{unite image delta dans Hochschild},
$\Delta(1\otimes s^{-1}\varphi)=0$.
Since the Lie bracket is trivial on
$(1\otimes \K[s^{-1}V^\vee])\otimes (1\otimes \K[s^{-1}V^\vee])$,
$\Delta$ is trivial on $1\otimes \K[s^{-1}V^\vee]$.

ii) The proof is the same as in i): for example, $\Gamma(sV)$ is the graded tensor product of the free divided power
algebra on $sU$ and of the exterior algebra on $sW$.
\end{proof}
\begin{rem}
Suppose that $V$ is concentrated in degree $0$.
We have obtained a quasi-isomorphism of differential graded algebras
$$
\mathcal{C}^*({\bf S}(V),{\bf S}(V))\buildrel{\simeq}\over\twoheadrightarrow
({\bf S}(V)\otimes\Lambda(s^{-1}V^\vee),0).
$$
In particular, the differential graded algebra $\mathcal{C}^*({\bf S}(V),{\bf S}(V))$
is formal.

It is easy to see that if $V$ is of dimension 1 then the inclusion
$$({\bf S}(V)\otimes\Lambda(s^{-1}V^\vee),0)\hookrightarrow \mathcal{C}^*({\bf S}(V),{\bf S}(V))$$ is a quasi-isomorphism of differential graded Lie algebras.
In particular, the differential graded Lie algebra $\mathcal{C}^*({\bf S}(V),{\bf S}(V))$
is formal. Kontsevich formality theorem says that over a field $\mathbb{F}$ of characteristic zero, the differential graded Lie algebra $\mathcal{C}^*({\bf S}(V),{\bf S}(V))$
is formal even if $V$ is not of dimension $1$~\cite[Theorem 2.4.2 (Tamarkin)]{Keller:Deformation}.
\end{rem}

\section{Triviality of the loop product when $H^*(BG)$ is polynomial} 
This section is independent of the rest of the paper.
Recall the dual of the loop coproduct introduced by Sullivan for manifolds
$
H^*(LM)\otimes H^*(LM)\rightarrow H^{*+d}(LM)
$
is (almost) trivial~\cite{tamanoi-2007}.
In this section, we prove that the loop product for classifying spaces of Lie groups
$H_*(LBG)\otimes H_*(LBG)\rightarrow H_{*+d}(LBG)$
is trivial if the inclusion of the fibre in cohomology
$j^*:H^*(LBG; \K)\twoheadrightarrow H^*(G; \K)$ is surjective
(Theorem~\ref{thm:BG}).
We also explain that this condition $j^*:H^*(LBG; \K)\twoheadrightarrow H^*(G; \K)$
surjective is equivalent to our hypothesis
$H^*(BG)$ polynomial (Theorem~\ref{thm:iff}). 
\begin{thm}
\label{thm:BG}
Let $BG$ be the classifying space of a connected Lie group $G$. Suppose that the map induced in cohomology
$H^*(LBG; \K)\twoheadrightarrow H^*(G; \K)$ is surjective. Then the loop product on $H_*(LBG; \K)$ is trivial 
while the loop coproduct is injective.  
\end{thm}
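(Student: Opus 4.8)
The plan is to exhibit the loop product and the loop coproduct as dual operations arising from the correspondence
$$
LBG \xleftarrow{\; Comp\;} LBG\times_{BG} LBG \xhookrightarrow{\; q\;} LBG\times LBG,
$$
and then to feed in the hypothesis that $j^*:H^*(LBG;\K)\twoheadrightarrow H^*(G;\K)$ is surjective through the Eilenberg--Moore (or Serre) spectral sequence of the fibration $G\simeq \Omega BG \xhookrightarrow{i} LBG \xrightarrow{ev} BG$. First I would recall from Section~\ref{review of chataurmenichi} that, up to signs, the loop product on $H_{*-d}(LBG)$ is $H_*(Comp)\circ q_!$ and the dual of the loop coproduct on $H^{*+d}(LBG)$ is $Comp^!\circ H^*(q)$, where $q_!$ and $Comp^!$ are the integrations along the fibre of the (orientable) fibrations $map(out,X)$ and $map(in,X)$ respectively, transported along the homotopy equivalence $map(r,X):LBG\times_{BG} LBG \xrightarrow{\simeq} map(F,BG)$. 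Dualizing, it suffices to prove the cohomological statement: the dual of the loop coproduct $Dlcop:H^*(LBG)^{\otimes 2}\to H^{*+d}(LBG)$ is identically zero, which by the universal coefficient theorem immediately gives that the loop product is trivial and the loop coproduct is injective (its dual being surjective onto its image $=\{0\}$ forces injectivity degreewise, using finite-dimensionality in each degree).

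The key step is to show $Dlcop\equiv 0$ under the surjectivity hypothesis. Here I would use the already-proven Lemma~\ref{relations cup and loop product}: for $a=\sum a_1\otimes a_2$ and $A$ with $H^*(Comp)(A)=H^*(q)(a)$, one has $A\cup m(z)=\sum (-1)^{d|a_2|} m(a_1\otimes a_2\cup z)$. The point is that surjectivity of $j^*=H^*(i)$ means $H^*(LBG)$ is generated as an algebra over $ev^*H^*(BG)$ by classes whose restriction to the fibre $G$ exhausts $H^*(G)$; combined with the fact that $H^*(Comp)$ and $H^*(q)$ agree after composing with $H^*(i\times i)$ in a controlled way (cf.\ the commutative diagram in the proof of Theorem~\ref{Cup in string classifying}(1), where $Comp$, $q$ both cover $\Delta\circ ev$ resp.\ $ev\times ev$), every element of $H^*(LBG\times LBG)$ can be written, modulo the image of $H^*(q)$ applied to decomposables, in a form to which Lemma~\ref{relations cup and loop product} and part~(4) of Theorem~\ref{dlcop given by formulas}-type module relations apply. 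The degree count then closes the argument: $Dlcop$ raises degree by $d=\dim G$, but the surjectivity of $i^*$ forces $H^*(LBG)$ to be ``$H^*(BG)$-free on $H^*(G)$'' (Leray--Hirsch), and on such a module the only way for a graded-commutative associative product shifting degree up by $d$ to be compatible with the $H^*(BG)\otimes H^*(BG)$-module structure of part~(1) of Theorem~\ref{Cup in string classifying} and the restriction formula of Theorem~\ref{product in degree d} is to vanish, because the top class of $H^*(G)$ is not hit by any cup product of lower classes when $i^*$ is onto.

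Concretely I would argue as follows. By Leray--Hirsch, $H^*(LBG)\cong H^*(BG)\otimes H^*(G)$ as $H^*(BG)$-modules. By Theorem~\ref{product in degree d}, the restriction of $m$ to the ``top degree $d$'' piece is computed by the formula $m(a\otimes b)=\pm\,\tau(H^*(i)(a)\cup S\,H^*(i)(b))\,1$; so $m$ on $\mathbb{H}^*(LBG)\otimes 1$ lands in $H^*(BG)\cdot 1$, and by the module formula (1) of Theorem~\ref{Cup in string classifying} the whole of $Dlcop$ is determined by this restriction together with the cup product and the $\Delta$-operator (exactly as in Remark~\ref{dlcop for polynomial} / Remark~\ref{dlcop when generated by Delta}). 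Finally one checks that $Dlcop(\text{anything}\otimes 1)=0$: if $H^*(BG)$ were \emph{not} polynomial the nonzero unit-type class $x_1\dots x_N$ would survive, but surjectivity of $i^*$ together with the fact that, in this generality, the ``fake fundamental class'' is a genuine cup product of classes already in lower degree, forces the relevant $\tau$-pairing to vanish on all pairs, hence $Dlcop\equiv 0$. I expect the main obstacle to be making this last vanishing rigorous without assuming $H^*(BG)$ polynomial outright: the cleanest route is probably to observe that surjectivity of $i^*$ is, by Theorem~\ref{thm:iff} (referenced but to be proved), \emph{equivalent} to $H^*(BG)$ being polynomial, at which point Theorem~\ref{dlcop given by formulas}(3) applies verbatim to give $Dlcop(x_I\otimes x_J)=0$ whenever $I\cup J\neq\{1,\dots,N\}$, and a short degree argument rules out $I\cup J=\{1,\dots,N\}$ in the homological (as opposed to cohomological) direction — so the loop \emph{product} dies while the co\emph{product}, being $Comp^!\circ H^*(q)$ with $H^*(q)$ injective after the Leray--Hirsch identification, stays injective.
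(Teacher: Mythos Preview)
Your proposal rests on a fundamental confusion. You set out to show that $Dlcop \equiv 0$, but $Dlcop = Comp^{!}\circ H^{*}(q)$ is the dual of the loop \emph{coproduct}: if it vanished, the loop coproduct in homology would be the zero map, not injective as the theorem asserts. Your parenthetical ``its dual being surjective onto its image $=\{0\}$ forces injectivity'' is simply false --- a linear map is zero iff its dual is zero. In fact $Dlcop$ cannot vanish here: once you invoke Theorem~\ref{thm:iff} to reduce to $H^{*}(BG)$ polynomial (as you do at the end), Theorem~\ref{dlcop given by formulas}(2) already gives $Dlcop(x_{1}\cdots x_{N}\otimes 1)=1$, so your intended vanishing argument collapses. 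You have also reversed the degree shift ($Dlcop$ lands in $H^{*-d}$, not $H^{*+d}$) and at the end asserted that $H^{*}(q)$ is injective, which is the opposite of what is true and of what is needed.

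The paper's argument is short and different in spirit. From surjectivity of $j^{*}$ one gets Leray--Hirsch freeness of $H^{*}(LBG)$ over $H^{*}(BG)$; the Eilenberg--Moore spectral sequence of the pullback defining $q$ then collapses and shows that $H^{*}(q)$ is \emph{surjective}, and a Serre spectral sequence collapse for $\Omega BG \to LBG\times_{BG}LBG \xrightarrow{Comp} LBG$ shows $Comp^{!}$ is surjective. Hence $Dlcop$ is surjective, and by duality the loop coproduct is injective. The loop product is handled via its cohomological dual $q^{!}\circ H^{*}(Comp)$: since $q^{!}$ is an $H^{*}(LBG\times LBG)$-module map of degree $-d$, one has $q^{!}\bigl(H^{*}(q)(\beta)\bigr)=\pm\beta\cup q^{!}(1)=0$ (as $q^{!}(1)$ lies in negative degree), and surjectivity of $H^{*}(q)$ then forces $q^{!}\equiv 0$, whence the loop product vanishes.
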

This result is a generalization of  \cite[Theorem D]{Felix-Thomas:stringtopGorenstein} in which it is assumed 
that  the underlying field is of characteristic zero.  
If $\text{Char}\K \neq 2$, the triviality of the loop product was first proved by
Tamanoi~\cite[Theorem 4.7 (2)]{Tamanoi:stabletrivial}.
The second author and David Chataur conjecture that the loop coproduct
on $H_*(LBG)$ has always a counit. Assuming that the loop coproduct
on $H_*(LBG)$ has a counit, obviously the loop coproduct is injective
and it follows from~\cite[Theorem 4.5 (i)]{Tamanoi:stabletrivial}
that the loop product on $H_*(LBG)$ is trivial.

The injectivity described in Theorem \ref{thm:BG} follows from 
a consideration of the Eilenberg-Moore spectral sequences associated with appropriate pullback diagrams. 
In fact, the induced maps $Comp^!$ and $H(q)$ in the cohomology are epimorphisms; see Proposition \ref{prop:key2}. 

Let $
\Omega X\buildrel{i}\over\hookrightarrow LX\twoheadrightarrow X
$
be the free loop fibration. The following proposition is a key to proving Theorem \ref{thm:BG}. 

\begin{prop} \label{prop:key2}
Let $X$ be a simply-connected space.
Suppose that $i^* : H^*(LX) \to H^*(\Omega X)$ induced by the inclusion is surjective. 
Then one has \\
{\em (1)} the map $H^*(q)$ induced by the inclusion 
$q : LX\times_X LX \to LX \times LX$ is an epimorphism.  \\
{\em (2)} Suppose that $G$ is a connected Lie group. Then, for the map $Comp: LBG\times_{BG} LBG \to LBG$, $Comp^!$ is an epimorphism.  \\
\end{prop}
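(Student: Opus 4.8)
The plan is to prove (1) and (2) separately, in each case reducing the assertion to the surjectivity of a restriction-to-the-fibre map, which will follow directly from the hypothesis $i^*:H^*(LX)\twoheadrightarrow H^*(\Omega X)$.

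First I would treat (1). The map $q:LX\times_X LX\to LX\times LX$ is the pullback of the fibration $ev\times ev:LX\times LX\to X\times X$, with fibre $\Omega X\times\Omega X$, along the diagonal $\Delta:X\to X\times X$; write $\bar p:LX\times_X LX\to X$ for the resulting fibration (again with fibre $\Omega X\times\Omega X$) and $pr_1,pr_2:LX\times_X LX\to LX$ for the two projections, so that $\bar p=ev\circ pr_1=ev\circ pr_2$. Since $q$ restricts to the identity on fibres, the fibre inclusions $j_1:\Omega X\times\Omega X\hookrightarrow LX\times_X LX$ and $j_2=i\times i:\Omega X\times\Omega X\hookrightarrow LX\times LX$ satisfy $q\circ j_1=j_2$; as $j_2^*=i^*\otimes i^*$ is onto (Künneth and the hypothesis), so is $j_1^*$. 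Next, choosing a linear basis $(b_\alpha)$ of $H^*(\Omega X)$ and lifts $\tilde b_\alpha\in H^*(LX)$ with $i^*(\tilde b_\alpha)=b_\alpha$, the classes $pr_1^*(\tilde b_\alpha)\cup pr_2^*(\tilde b_\beta)$ restrict along $j_1$ to the basis $b_\alpha\otimes b_\beta$ of $H^*(\Omega X\times\Omega X)$, so by Leray--Hirsch they form a basis of $H^*(LX\times_X LX)$ over $\bar p^*\bigl(H^*(X)\bigr)$. Finally $pr_i=p_i\circ q$ (with $p_i:LX\times LX\to LX$ the projections) gives $pr_1^*(\tilde b_\alpha)\cup pr_2^*(\tilde b_\beta)=q^*(\tilde b_\alpha\times\tilde b_\beta)$, while $q^*\circ(ev\times ev)^*=\bar p^*\circ\Delta^*$ and surjectivity of $\Delta^*$ give $\bar p^*\bigl(H^*(X)\bigr)\subseteq\operatorname{Im}q^*$. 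As $\operatorname{Im}q^*$ is a subalgebra containing the module generators $pr_1^*(\tilde b_\alpha)\cup pr_2^*(\tilde b_\beta)$ and the coefficient ring $\bar p^*\bigl(H^*(X)\bigr)$, it is all of $H^*(LX\times_X LX)$, which is (1).

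For (2), I would use the geometric picture preceding Theorem~\ref{product in degree d} together with Section~\ref{review of chataurmenichi}: through the homotopy equivalence $map(r,BG)$ the map $Comp$ is identified with the oriented fibration $map(in,BG):map(F,BG)\twoheadrightarrow LBG$, whose fibre is homotopy equivalent to $\Omega BG$ (of dimension $d$) with fibre inclusion corresponding to $j:\Omega BG\to LBG\times_{BG}LBG$, $\omega\mapsto(\omega,\omega^{-1})$. Chasing the orientation conventions and the degree-$d$ description of integration along the fibre recalled in the proof of Theorem~\ref{product in degree d}, one checks that in degree $d$ the map $Comp^!$ is the composite $H^d(LBG\times_{BG}LBG)\xrightarrow{\,j^*\,}H^d(\Omega BG)\xrightarrow{\,\tau\,}\K$ with $\tau$ the chosen orientation. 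Hence, once $j^*$ is shown to be onto, there is $e_0\in H^d(LBG\times_{BG}LBG)$ with $Comp^!(e_0)=\tau(j^*(e_0))=1$; and since $Comp^!$ is a morphism of left $H^*(LBG)$-modules — the identity $Comp^!\bigl(H^*(Comp)(A)\cup e_0\bigr)=(-1)^{d\vert A\vert}A\cup Comp^!(e_0)=(-1)^{d\vert A\vert}A$ from the proof of Lemma~\ref{relations cup and loop product} — every class of $H^*(LBG)$ lies in $\operatorname{Im}Comp^!$. It remains to see $j^*$ is onto, which goes exactly as in (1): one has $q\circ j=(i\times i)\circ g$ with $g:\Omega BG\to\Omega BG\times\Omega BG$, $\omega\mapsto(\omega,\omega^{-1})$, so $j^*\circ q^*=g^*\circ(i^*\otimes i^*)$, and this composite is onto because $g^*(\alpha\otimes 1)=\alpha$ and $i^*$ is onto.

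The genuinely routine steps are the pullback and fibre identifications and the two applications of Leray--Hirsch. The point I expect to be the main obstacle is the dictionary of Section~\ref{review of chataurmenichi}: one must carefully verify that $Comp^!$ really is (conjugate, via $map(r,BG)^*$, to) integration along the fibre of $map(in,BG)$ and that $j$ is the corresponding fibre inclusion, so that ``$j^*$ surjective'' is precisely the condition making $Comp^!$ hit the unit. Once that identification is pinned down, both parts reduce to the module-theoretic bookkeeping above.
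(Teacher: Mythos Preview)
Your proof is correct. Part~(1) is essentially the paper's argument: the paper packages the same Leray--Hirsch step as an Eilenberg--Moore spectral sequence collapse (since $H^*(LX)$ is free over $H^*(X)$, the EMSS of the pullback along $\Delta$ is concentrated on the $0$-th column), but the content is identical to your direct Leray--Hirsch computation.

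Part~(2) is where you diverge a bit. The paper first uses the vector-space isomorphism $H^*(LX\times_X LX)\cong H^*(LX)\otimes H^*(\Omega X)$ coming out of (1), and then invokes a dimension-counting lemma (Mimura--Toda) to force the Leray--Serre spectral sequence of the fibration $\Omega BG\to LBG\times_{BG}LBG\xrightarrow{Comp}LBG$ to collapse at $E_2$; surjectivity of $Comp^!$ then drops out of $E_\infty^{*,d}=E_2^{*,d}$. Your route is more local: you show directly that the fibre restriction $j^*$ is onto via the factorisation $q\circ j=(i\times i)\circ g$, extract a class $e_0$ with $Comp^!(e_0)=1$, and then use the $H^*(LBG)$-linearity of $Comp^!$. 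This avoids appealing to (1) and to the Poincar\'e-series comparison lemma, at the cost of the small extra check identifying $j$ with the fibre inclusion of $Comp$ through $map(r,BG)$. Both arguments are short; yours is a touch more self-contained.
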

\noindent
{\it Proof of Theorem \ref{thm:BG}.}  
By Proposition \ref{prop:key2} (1) and (2), we see that the dual to the loop coproduct $Dlcop:=Comp^!\circ H^*(q)$ 
on $H^*(LBG)$ is surjective.  
Since $q^!$ is $H^*(LBG\times LBG)$-linear and decreases the degrees, $q^!\circ H^*(q)=0$.
By Proposition \ref{prop:key2} (1), $H^*(q)$ is an epimorphism.
Therefore $q^!$ is trivial and the dual of the loop product
$Dlp:=H^*(q^!)\circ H^*(Comp)$ on $H^*(LBG)$ is also trivial.
\hfill\qed
\medskip

\noindent
{\it Proof of Proposition \ref{prop:key2}.}
Consider the two Eilenberg-Moore spectral sequences associated
to the free loop fibration mentioned above 
and to the pull-back diagram
$$
\xymatrix{
LX\times_X LX\ar[r]^q\ar[d]_p
& LX\times LX\ar[d]_{p\times p}\\
X\ar[r]^\Delta
& X\times X
}
$$
Since $H^*(LX)$ is a free $H^*(X)$-module by Leray-Hirsch theorem, these two Eilenberg-Moore spectral
sequences are concentrated on the $0$-th column.
So the two morphisms of graded algebras
$$
H^*(i)\otimes_{H^*(X)} \eta:
H^*(LX)\otimes_{H^*(X)} \K
\buildrel{\cong}\over\rightarrow H^*(\Omega X)
$$
and
$$
H^*(q)\otimes_{H^*(X)^{\otimes 2}} H^*(p):
(H^*(LX)\otimes H^*(LX))\otimes_{H^*(X)^{\otimes 2}}H^*(X)
\buildrel{\cong}\over\rightarrow H^*(LX\times_X LX)
$$
are isomorphisms. In particular, 
$H^*(q)$ is an epimorphism and we have an isomorphism of graded vector spaces
between $H^*(LX\times_X LX)$ and $H^*(LX)\otimes H^*(\Omega X)$.

Consider the Leray-Serre spectral sequence 
$\{\widehat{E}_r^{*,*}, \widehat{d}_r \}$ of the homotopy fibration 
$\Omega X \stackrel{j}{\to} LX\times_{X} LX \stackrel{Comp}{\to} LX$. 
Since $H^*(LX\times_XLX)$ is isomorphic to
$H^*(LX)\otimes H^*(\Omega X)$, by~\cite[III.Lemma 4.5 (2)]{Mimura-Toda:topliegroups},
$\{\widehat{E}_r^{*,*}, \widehat{d}_r \}$ collapses at the 
$E_2$-term.
Then for $X=BG$, the integration along the fibre 
$Comp^! : H^*(LBG\times_{BG} LBG) \to H^{*-\dim G}(LBG)$
is surjective.
\hfill\qed

%
%
Let $G$ be a connected Lie group and $\K$ a field of arbitrary characteristic. Let ${\mathcal F} : G \stackrel{j}{\to} LBG \to BG$ be the free loop 
fibration. 
\begin{thm}\label{thm:iff}
The induced map $j^* : H^*(LBG; \K) \to H^*(G; \K)$ is surjective if and only if $H^*(BG; \K)$ is a polynomial algebra. 
\end{thm}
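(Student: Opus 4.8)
The plan is to prove the two implications separately; the forward one is essentially contained in Section~\ref{cup product main theorem}, while the converse is the substantial point. First suppose $H^*(BG;\K)=\K[y_1,\dots,y_N]$ is polynomial. By Borel's theorem --- recalled and proved in Section~\ref{cup product main theorem} via the Eilenberg--Moore spectral sequence of the path fibration $G\to PBG\to BG$ --- one has $H^*(G;\K)=\Delta(\sigma(y_1),\dots,\sigma(y_N))$, so $H^*(G;\K)$ is generated as a $\K$-algebra by the suspensions $\sigma(y_i)$. Since $i^*\circ\mathcal D=\sigma$ for the module derivation $\mathcal D=\Delta\circ ev^*$, every $\sigma(y_i)$ lies in the image of the algebra map $j^*=i^*\colon H^*(LBG;\K)\to H^*(G;\K)$, and hence $j^*$ is surjective. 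This is exactly the argument used to make $i^*$ surjective in Section~\ref{cup product main theorem} and in the proof of Proposition~\ref{prop:key2}.

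For the converse, assume $j^*=i^*$ is surjective. By the Leray--Hirsch theorem $H^*(LBG;\K)$ is then a free module over $A:=H^*(BG;\K)$, and, since $H^*(G;\K)$ is finite dimensional, it is \emph{finitely generated} and free of rank $\dim_\K H^*(G;\K)$. The task is therefore to deduce from this that $A$ is a free graded-commutative algebra. I would do this by examining the $A$-algebra structure of $H^*(LBG;\K)$ via the module derivation $\mathcal D\colon A\to H^*(LBG;\K)$ and the spectral sequence of the first author \cite{Kuri:moduleadjoint} computing $H^*(LX)$ from $H^*(X)$: that spectral sequence collapses, and $H^*(LX)$ is a free $H^*(X)$-module, exactly when $H^*(X)$ is polynomial (this is the circle of ideas around \cite{Kuri:moduleadjoint} and \cite{Kono-Kuri:modulesplitting}). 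Concretely, a single minimal-degree relation among the generators of $A$ would force into $H^*(LBG;\K)$ an $\bar A$-torsion summand that is not finitely generated over $A$ --- as already happens for $A=\K[x]/(x^2)$ --- contradicting finiteness; absence of relations, conversely, is what allows $H^*(LX)=H^*(X)\otimes\Delta(\mathcal D(y_1),\dots,\mathcal D(y_N))$ to be free, recovering the forward direction.

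The main obstacle is precisely this converse step. It is delicate because the cheap homological comparisons point the wrong way: the Eilenberg--Moore spectral sequence of the homotopy pullback realizing $LBG$ (with $E_2$-term the Hochschild homology $HH_*(A;A)$, cf.\ Section~\ref{Relation Hochschild cohomology}) only bounds $H^*(LBG;\K)$ \emph{above} by $HH_*(A;A)$, and the Eilenberg--Moore spectral sequence of the path fibration only bounds $H^*(G;\K)$ above by $\mathrm{Tor}_A(\K,\K)$, so neither by itself detects the relations in $A$. What does detect them is the finer structure carried by the module derivation, which tracks how a relation obstructs the collapse of the spectral sequence of \cite{Kuri:moduleadjoint} and hence the $A$-freeness of $H^*(LBG;\K)$; carrying this out carefully is where the work lies. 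The remaining ingredients --- convergence of the spectral sequences (automatic, $BG$ being simply connected and $\K$ a field) and the identifications from Section~\ref{cup product main theorem} --- are routine.
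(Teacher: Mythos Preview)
Your forward implication is fine and matches the paper's alternative argument via the module derivation. The converse, however, is not a proof: you yourself say ``carrying this out carefully is where the work lies,'' and indeed the central claim --- that the spectral sequence of \cite{Kuri:moduleadjoint} collapses and $H^*(LX)$ is $H^*(X)$-free \emph{exactly} when $H^*(X)$ is polynomial --- is precisely the converse you are trying to prove, so invoking it is circular. Your torsion heuristic (``a minimal-degree relation forces a non-finitely-generated $\bar A$-torsion summand, as for $A=\K[x]/(x^2)$'') does not stand on its own either: it implicitly compares $H^*(LBG)$ with $HH_*(A,A)$, but as you note the Eilenberg--Moore spectral sequence only gives an upper bound, and you do not know $BG$ is formal until \emph{after} you have proved $A$ is polynomial. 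So the argument never gets started.

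The paper takes a completely different route, working on the side of $G$ rather than $BG$. The key point you are missing is that surjectivity of $j^*$ constrains the \emph{Hopf algebra} structure of $H^*(G;\K)$: by the adjoint-action argument of \cite[Remark~1.4]{Kuri:moduleadjoint} (or \cite{Iwase:adjoint}), surjectivity forces $H^*(G;\K)$ to be cocommutative, hence primitively generated, so $H^*(G;\K)\cong\Delta(x_1,\dots,x_N)$ with each $x_i$ primitive. One then runs the Leray--Serre spectral sequence of the \emph{universal} bundle $G\to EG\to BG$ and uses the $G$-action to show each primitive $x_i$ is transgressive (the paper gives the explicit contradiction if some $x_i$ supported an earlier differential). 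Transgressive primitive generators yield a polynomial $H^*(BG;\K)$. In characteristic $p\neq 2$ the same adjoint-action input shows $H^*(G;\mathbb Z)$ has no $p$-torsion, after which one cites the classical result \cite[Chapter~7, Theorem~2.12]{Mimura-Toda:topliegroups}. None of this passes through $H^*(LBG)$ as an $A$-module; the free-loop space enters only to supply the hypothesis.
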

\begin{proof}
The "if" part follows from the usual EMSS argument. In fact, suppose that 
 $H^*(BG; \K)\cong \K[V]$. Then the EMSS for the universal bundle  
${\mathcal F}' : G \to EG \to BG$ allows one to deduce that $H^*(G; \K)\cong \Delta(sV)$. By using the EMSS for the fibre square (\cite[Proof of Theorem 1.2]{Kono-Kuri:modulesplitting} or~\cite[Proof of Theorem 1.6]{Kuri:moduleadjoint})
$$
\xymatrix@C15pt@R15pt{
LBG \ar[r]\ar[d] & BG^I \ar[d] \\
BG \ar[r]_{\Delta} & BG \times BG, 
}
$$
we see that $H^*(LBG; \K) \cong H^*(BG; \K)\otimes \Delta(sV)$ as an $H^*(BG)=\K[V]$-algebra. This implies that the Leray-Serre spectral sequence (LSSS) 
for ${\mathcal F}$ collapses at the $E_2$-term and hence 
$j^*$ is surjective. See the beginning of section~\ref{cup product main theorem}
for an alternative proof which uses module derivations.
 
Suppose that $j^*$ is surjective. We further assume that $\text{Char}\K=2$. 
By the argument in \cite[Remark 1.4]{Kuri:moduleadjoint} or \cite[Proof of Theorem 2.2]{Iwase:adjoint}, 
we see that the Hopf algebra $A=H^*(G; \K)$ is cocommutative and so primitively generated; that is,   
the natural map $\iota : P(A) \to Q(A)$ is surjective. By~\cite[Lemma 4.3]{Kuri:moduleadjoint}, this yields that 
$H^*(G; \K) \cong \Delta (x_1, ..., x_N)$,  
where $x_i$ is primitive for any $1\leq i \leq N$. 
The same argument as in the proof of \cite[Chapter 7, Theorem 2.26(2)]{Mimura-Toda:topliegroups} allows us to deduce 
that  each $x_i$ is transgressive in the LSSS $\{E_r, d_r\}$ for ${\mathcal F}'$. 
To see this more precisely, we recall that 
the action of $G$ on $EG$ gives rise to a morphism of spectral sequence 
$$
\{\mu_r^*\} : \{E_r, d_r\} \to \{E_r\otimes H^*(G; \K), d_r\otimes 1\}
$$
for which $\mu_2^*=1\otimes\mu^* : H^*(BG;\K)\otimes H^*(G;\K)\to 
H^*(BG;\K)\otimes H^*(G;\K)\otimes H^*(G;\K)$, where $\mu : G \times G \to G$ denotes  
the multiplication on $G$; see \cite[Chapter 7, Section 2]{Mimura-Toda:topliegroups}. 

Suppose that there exists an integer $i$ such that $x_j$ is transgressive for $j < i$ but not 
$x_i$. Then we see that for some $r < \deg x_i +1$, 
$d_r(x_i) \neq 0$ and $d_p(x_i) = 0$ if $p <r$.  
We write 
$$
d_r(x_i) = \sum_lb_l\otimes x_{l_1}\cdots x_{l_{s_l}}, 
$$
where each $b_l$ is a non-zero element of $H^*(BG; \K)$ and $1 \leq l_u \leq N$ for any 
$l$ and $u$. The equality $\mu_r^*d_r(x_i) = (d_r\otimes 1)\mu_r^*(x_i)$ implies that 
\begin{eqnarray*}
\sum_lb_l\otimes x_{l_1}\cdots x_{l_{s_l-1}}\otimes x_{l_{s_l}} + \cdots 
&=& d_r\otimes 1(1\otimes x_i \otimes1 + 1\otimes 1\otimes x_i) \\
&=&
\sum_lb_l\otimes x_{l_1}\cdots x_{l_{s_l}}\otimes 1, 
\end{eqnarray*}
which is a contradiction. Observe that $x_i$ and $x_{l_u}$ are primitive. 
Thus it follows that $x_i$ is transgressive for any $1\leq i \leq N$.  

In the case where $\text{Char}\K=p\neq 2$, since $j^*$ is surjective by assumption, 
it follows from the argument in \cite[Remark 1.4]{Kuri:moduleadjoint} that 
$H^*(G; {\mathbb Z})$ has no $p$-torsion. Observe that to obtain the result, 
the connectedness of the loop space is assumed. 
By virtue of  \cite[Chapter 7, Theorem 2.12]{Mimura-Toda:topliegroups}, 
we see that $H^*(BG; \K)$ is a polynomial algebra. This completes the proof. 
\end{proof}
The following theorem give another characterisation of our hypothesis 
$H^*(BG)$ polynomial.
\begin{thm}
Let $G$ be a connected Lie group. 
Then the following three conditions are equivalent:

1) $H^*(BG;\K)$ is a polynomial algebra on even degree generators.

2) $BG$ is $\K$-formal and $H^*(BG;\K)$ is strictly commutative.

3) The singular cochain algebra $S^*(BG;\K)$ is weakly equivalent as algebras
to a strictly commutative differential graded algebra $A$.
\end{thm}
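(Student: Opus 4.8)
The plan is to prove the three conditions equivalent by a cycle of implications $1)\Rightarrow 3)\Rightarrow 2)\Rightarrow 1)$, relying heavily on the formality/coformality discussion that precedes the statement and on standard rational-homotopy-theory machinery adapted to an arbitrary field $\K$.

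First I would prove $1)\Rightarrow 3)$. Assume $H^*(BG;\K)=\K[y_1,\dots,y_N]$ with the $y_i$ of even degree. Then $A=\K[y_1,\dots,y_N]$, regarded as a differential graded algebra with zero differential, is strictly commutative (the generators being even). By the result quoted in Section~\ref{Relation Hochschild cohomology} (Halperin--Stasheff~\cite[Theorem 9]{Halperin-Stasheff:diff}, see also~\cite[Proposition 8.21]{McCleary:usersguide}), $BG$ is $\K$-formal, i.e.\ $S^*(BG;\K)$ is weakly equivalent as algebras to $H^*(BG;\K)=A$; since $A$ is strictly commutative, this gives $3)$. Note $3)$ as stated does not demand the intermediate algebra be $H^*(BG;\K)$, only \emph{some} strictly commutative dga, so this step is essentially immediate once formality is invoked.

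Next, $3)\Rightarrow 2)$. Suppose $S^*(BG;\K)\simeq A$ with $A$ strictly commutative. Taking cohomology, $H^*(BG;\K)\cong H^*(A)$ as graded algebras; since a sub/quotient construction of a strictly commutative algebra is strictly commutative, $H^*(BG;\K)$ is strictly commutative, giving the second half of $2)$. For the formality half one must produce a \emph{zig-zag of quasi-isomorphisms of algebras} between $A$ (hence $S^*(BG;\K)$) and $H^*(BG;\K)$; the point is that over any field a strictly commutative dga admits a bigraded (Sullivan-type) minimal model and the usual obstruction-theoretic argument for intrinsic formality of a polynomial algebra applies once we know the cohomology is polynomial --- but we do not yet know that, so instead I would argue more directly: since $A$ is strictly commutative and $BG$ is simply connected with $H^*(BG;\K)$ of finite type, $H^*(BG;\K)$ is generated in even degrees (an odd generator would square to zero, and the cocommutative dual Hopf-algebra argument --- as used in the proof of Theorem~\ref{thm:iff} --- forces the loop-space cohomology, via the path fibration $\Omega BG\to PBG\to BG$, to be an exterior-type algebra on suspensions); combined with strict commutativity this makes $H^*(BG;\K)$ a polynomial algebra, and then $\K$-formality of $BG$ follows again from Halperin--Stasheff. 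In effect $3)$ already secretly implies $1)$, which is the cleanest route.

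Finally, $2)\Rightarrow 1)$: if $BG$ is $\K$-formal then $S^*(BG;\K)\simeq H^*(BG;\K)$, which by hypothesis is strictly commutative; so $H^*(BG;\K)$ is a strictly commutative graded algebra that is the cohomology of $S^*(BG;\K)$ for a classifying space of a connected Lie group. One then shows such an algebra must be polynomial on even generators: as in the proof of Theorem~\ref{thm:iff}, the Eilenberg--Moore (or Leray--Serre) spectral sequence of the universal fibration $G\to EG\to BG$ shows $H^*(G;\K)$ has a simple system of transgressive generators $x_i=\sigma(y_i)$; strict commutativity of $H^*(BG;\K)$ rules out odd-degree polynomial generators, and Borel's theorem~\cite[Chapter VII]{Mimura-Toda:topliegroups} (or the collapse argument of Theorem~\ref{thm:iff}) then identifies $H^*(BG;\K)$ with $\K[V]$, $V$ concentrated in even degree.

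The main obstacle I expect is making the implication $3)\Rightarrow 2)$ (or directly $3)\Rightarrow 1)$) genuinely rigorous over a field of positive characteristic: the convenient rational-homotopy statement ``a strictly commutative model with polynomial cohomology is intrinsically formal'' needs to be replaced by an argument that does not presuppose the polynomiality one is trying to prove, so one must instead extract from the mere \emph{existence} of a strictly commutative model enough structure (via the path fibration and the cocommutativity/primitive-generation argument already used for Theorem~\ref{thm:iff}) to conclude $H^*(G;\K)=\Delta(x_1,\dots,x_N)$ with $x_i$ transgressive, and only then invoke Borel's theorem. Everything else is a routine repackaging of results already available in the paper and in~\cite{Mimura-Toda:topliegroups,Halperin-Stasheff:diff}.
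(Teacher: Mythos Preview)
Your implications $1)\Rightarrow 3)$ and the easy half of $3)\Rightarrow 2)$ (strict commutativity of $H^*(BG;\K)$) are fine, and your cycle $1)\Rightarrow 3)\Rightarrow 2)\Rightarrow 1)$ is reasonable in outline. The genuine gap is in $2)\Rightarrow 1)$ (and equivalently in your attempted direct argument $3)\Rightarrow 1)$). You write that ``as in the proof of Theorem~\ref{thm:iff}, the Eilenberg--Moore (or Leray--Serre) spectral sequence \dots\ shows $H^*(G;\K)$ has a simple system of transgressive generators $x_i=\sigma(y_i)$.'' But in the proof of Theorem~\ref{thm:iff} that structure on $H^*(G;\K)$ is \emph{derived from} the hypothesis that $j^*:H^*(LBG)\to H^*(G)$ is surjective (which forces $H^*(G)$ to be cocommutative, primitively generated, hence $\Delta(x_1,\dots,x_N)$). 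You have neither $j^*$ surjective nor $H^*(BG)$ polynomial at this point, so you cannot invoke that argument; strict commutativity of $H^*(BG;\K)$ alone does not give you control over $H^*(G;\K)$, and ``Borel's theorem'' does not convert ``strictly commutative'' into ``polynomial'' without further input. You correctly flag this obstacle in your last paragraph, but the proposed fix is not substantiated.

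The paper's argument for the hard direction is quite different and avoids this circularity entirely. It proves $3)\Rightarrow 1)$ directly by a Tor computation: given a strictly commutative model $A$ for $S^*(BG;\K)$, take a minimal Sullivan model $(\Lambda V,d)$ of $A$ and the semifree resolution $(\Lambda V\otimes\Gamma sV,D)$ of $\K$ over it (\cite{Halperin:unieal}); one gets
\[
H^*(G;\K)\cong \mathrm{Tor}^{S^*(BG;\K)}(\K,\K)\cong \mathrm{Tor}^{(\Lambda V,d)}(\K,\K)\cong H_*(\Gamma sV,\overline{D})\cong \Gamma sV,
\]
the induced differential $\overline{D}$ being zero. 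Since $G$ is a compact Lie group, $H^*(G;\K)$ is finite dimensional, so $\Gamma sV$ is finite dimensional; this forces $sV$ to be concentrated in odd degree, hence $V$ in even degree. Then $(\Lambda V,d)$ lives entirely in even degrees while $d$ raises degree by one, so $d=0$; thus $H^*(BG;\K)\cong \Lambda V=\K[V]$ with $V$ even, and formality is automatic. The key idea you are missing is this use of the divided-power algebra $\Gamma sV$ together with the finite-dimensionality of $H^*(G;\K)$ to pin down the parity of $V$.
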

Stricly commutative means that  $a^2 = 0$ if $a\in A^{odd}$
($\K$ can be a field of characteristics two).
\begin{proof}
$1\Rightarrow 2$.
Suppose that $H^*(BG;\K)$ is a polynomial algebra.
Then by the beginning of section~\ref{Relation Hochschild cohomology},
$BG$ is $\K$-formal.

$2\Rightarrow 3$. Formality means that we can take $A=(H^*(BG;\K),0)$ in 3).

$3\Rightarrow 1$.
Let $Y$ be a simply connected space such that $S^*(Y;\K)$ is weakly equivalent as algebras to a strictly commutative differential graded algebra $A$.
Let $(\Lambda V,d)$ be a minimal Sullivan model of $A$.
Consider the semifree-$(\Lambda V,d)$ resolution of $(\K,0)$, $(\Lambda V\otimes\Gamma sV,D)$
 given in~\cite[Proposition 2.4]{Halperin:unieal} or~\cite[Lemma 7.2]{MenichiL:cohaf}.
 Then the tensor product of commutative differential graded algebras
 $(\K,0)\otimes_{(\Lambda V,d)} (\Lambda V\otimes\Gamma sV,D)\cong (\Gamma sV,\overline{D})$ has a trivial differential $\overline{D}=0$~\cite[Corollary 2.6]{Halperin:unieal}. 
 Therefore we have the isomorphisms of graded vector spaces
 $$
 H^*(\Omega Y)\cong\text{Tor}^{S^*(Y;\K)}(\K,\K)\cong \text{Tor}^{(\Lambda V,d)}(\K,\K)\cong H_*(\Gamma sV,\overline{D})\cong \Gamma sV.
 $$
 If $H^*(\Omega Y)$ is of finite dimension then the suspension of $V$, $sV$, must
 be concentrated in odd degree and so $V$ must be in even degree and $d=0$,
 i. e. $Y$ is $\K$-formal and $H^*(Y)$ is polynomial in even degree.
\end{proof}
\bibliography{Bibliographie}

\providecommand{\bysame}{\leavevmode\hbox to3em{\hrulefill}\thinspace}
\providecommand{\MR}{\relax\ifhmode\unskip\space\fi MR }
\providecommand{\MRhref}[2]{%
  \href{http://www.ams.org/mathscinet-getitem?mr=#1}{#2}
}
\providecommand{\href}[2]{#2}
\begin{thebibliography}{10}

\bibitem{BGNX:Stringstackspublie}
Kai Behrend, Gr{\'e}gory Ginot, Behrang Noohi, and Ping Xu, \emph{String
  topology for stacks}, Ast\'erisque (2012), no.~343, xiv+169.

\bibitem{Berglund-Borjeson:freeloopmanifold}
A.~{Berglund} and K.~{B{\"o}rjeson}, \emph{{Free Loop Space Homology of Highly
  Connected Manifolds}}, ArXiv e-prints (2015).

\bibitem{Bredon:Sheaf}
Glen~E. Bredon, \emph{Sheaf theory}, second ed., Graduate Texts in Mathematics,
  vol. 170, Springer-Verlag, New York, 1997.

\bibitem{Chas-Sullivan:stringtop}
M.~Chas and D.~Sullivan, \emph{String topology}, preprint: math.GT/9911159,
  1999.

\bibitem{Chataur-Leborgne:loophomologycomplexprojective}
David Chataur and Jean-Fran{\c{c}}ois Le~Borgne, \emph{On the loop homology of
  complex projective spaces}, Bull. Soc. Math. France \textbf{139} (2011),
  no.~4, 503--518.

\bibitem{Chataur-Menichi:stringclass}
David Chataur and Luc Menichi, \emph{String topology of classifying spaces}, J.
  Reine Angew. Math. \textbf{669} (2012), 1--45.

\bibitem{Earle-Schatz}
C.~J. Earle and A.~Schatz, \emph{Teichm\"uller theory for surfaces with
  boundary}, J. Differential Geometry \textbf{4} (1970), 169--185.

\bibitem{Farb-Margalit:primermcg}
Benson Farb and Dan Margalit, \emph{A primer on mapping class groups},
  Princeton Mathematical Series, vol.~49, Princeton University Press,
  Princeton, NJ, 2012.

\bibitem{Felix-Halperin-Thomas:ratht}
Y.~F{\'e}lix, S.~Halperin, and J.-C. Thomas, \emph{Rational homotopy theory},
  Graduate Texts in Mathematics, vol. 205, Springer-Verlag, 2000.

\bibitem{Felix-Menichi-Thomas:GerstduaiHochcoh}
Y.~F{\'e}lix, L.~Menichi, and J.-C. Thomas, \emph{Gerstenhaber duality in
  {H}ochschild cohomology}, J. Pure Appl. Algebra \textbf{199} (2005), no.~1-3,
  43--59.

\bibitem{Felix-Thomas:ratBVstringtop}
Yves F{\'e}lix and Jean-Claude Thomas, \emph{Rational {BV}-algebra in string
  topology}, Bull. Soc. Math. France \textbf{136} (2008), no.~2, 311--327.

\bibitem{Felix-Thomas:stringtopGorenstein}
\bysame, \emph{String topology on {G}orenstein spaces}, Math. Ann. \textbf{345}
  (2009), no.~2, 417--452.

\bibitem{FreedHopkinsTeleman:twistedKtheory3}
Daniel~S. Freed, Michael~J. Hopkins, and Constantin Teleman, \emph{Loop groups
  and twisted {$K$}-theory {III}}, Ann. of Math. (2) \textbf{174} (2011),
  no.~2, 947--1007.

\bibitem{Godin:higherstring}
V.~Godin, \emph{Higher string topology operations}, preprint:
  math.AT/0711.4859, 2007.

\bibitem{Halperin:unieal}
S.~Halperin, \emph{Universal enveloping algebras and loop space homology}, J.
  Pure Appl. Algebra \textbf{83} (1992), 237--282.

\bibitem{Hamstrom:homeosurface}
Mary-Elizabeth Hamstrom, \emph{Homotopy groups of the space of homeomorphisms
  on a {$2$}-manifold}, Illinois J. Math. \textbf{10} (1966), 563--573.

\bibitem{Hepworth:stringcomplexprojective}
R.~A. {Hepworth}, \emph{{String topology for complex projective spaces}}, ArXiv
  e-prints (2009).

\bibitem{Hepworth-Lahtinen:stringclassifying}
Richard Hepworth and Anssi Lahtinen, \emph{On string topology of classifying
  spaces}, Adv. Math. \textbf{281} (2015), 394--507.

\bibitem{Hepworth:stringLie}
Richard~A. Hepworth, \emph{String topology for {L}ie groups}, J. Topol.
  \textbf{3} (2010), no.~2, 424--442.

\bibitem{Iwase:adjoint}
Norio Iwase, \emph{Adjoint action of a finite loop space}, Proc. Amer. Math.
  Soc. \textbf{125} (1997), no.~9, 2753--2757.

\bibitem{Johnson:homeosurface}
Dennis~L. Johnson, \emph{Homeomorphisms of a surface which act trivially on
  homology}, Proc. Amer. Math. Soc. \textbf{75} (1979), no.~1, 119--125.

\bibitem{Keller:Deformation}
B.~Keller, \emph{Deformation quantization after {K}ontsevich and {T}amarkin},
  D\'eformation, quantification, th\'eorie de {L}ie, Panor. Synth\`eses,
  vol.~20, Soc. Math. France, Paris, 2005, pp.~19--62.

\bibitem{Kishi-Kono:freetwisted}
Daisuke Kishimoto and Akira Kono, \emph{On the cohomology of free and twisted
  loop spaces}, J. Pure Appl. Algebra \textbf{214} (2010), no.~5, 646--653.

\bibitem{Kock:Frob2TQFT}
Joachim Kock, \emph{Frobenius algebras and 2{D} topological quantum field
  theories}, London Mathematical Society Student Texts, vol.~59, Cambridge
  University Press, Cambridge, 2004.

\bibitem{Kono-Kuri:modulesplitting}
Akira Kono and Katsuhiko Kuribayashi, \emph{Module derivations and
  cohomological splitting of adjoint bundles}, Fund. Math. \textbf{180} (2003),
  no.~3, 199--221.

\bibitem{Kupers:stringop}
Alexander Kupers, \emph{String topology operations, master thesis}, Utrecht
  University, The Netherlands, 2011.

\bibitem{Kuri:moduleadjoint}
Katsuhiko Kuribayashi, \emph{Module derivations and the adjoint action of a
  finite loop space}, J. Math. Kyoto Univ. \textbf{39} (1999), no.~1, 67--85.

\bibitem{KuriMeniNaito:DerivedEMSS}
Katsuhiko Kuribayashi, Luc Menichi, and Takahito Naito, \emph{Derived string
  topology and the {E}ilenberg-{M}oore spectral sequence}, Israel J. Math.
  \textbf{209} (2015), no.~2, 745--802.

\bibitem{Lahtinen:higherop}
A.~{Lahtinen}, \emph{{Higher operations in string topology of classifying
  spaces}}, ArXiv e-prints, to appear in Mathematische Annalen. (2015).

\bibitem{McCleary:usersguide}
John McCleary, \emph{A user's guide to spectral sequences}, second ed.,
  Cambridge Studies in Advanced Mathematics, vol.~58, Cambridge University
  Press, Cambridge, 2001.

\bibitem{MenichiL:cohrfl}
L.~Menichi, \emph{The cohomology ring of free loop spaces}, Homology Homotopy
  Appl. \textbf{3} (2001), no.~1, 193--224.

\bibitem{MenichiL:cohaf}
\bysame, \emph{On the cohomology algebra of a fiber}, Algebr. Geom. Topol.
  \textbf{1} (2001), 719--742.

\bibitem{menichi:stringtopspheres}
\bysame, \emph{String topology for spheres}, Comment. Math. Helv. \textbf{84}
  (2009), no.~1, 135--157.

\bibitem{Menichi:BVmorphismdoublefree}
\bysame, \emph{A {B}atalin-{V}ilkovisky algebra morphism from double loop
  spaces to free loops}, Trans. Amer. Math. Soc. \textbf{363} (2011),
  4443--4462.

\bibitem{Milnor-Moore}
John~W. Milnor and John~C. Moore, \emph{On the structure of {H}opf algebras},
  Ann. of Math. (2) \textbf{81} (1965), 211--264.

\bibitem{Milnor-Stasheff}
John~W. Milnor and James~D. Stasheff, \emph{Characteristic classes}, Princeton
  University Press, Princeton, N. J., 1974, Annals of Mathematics Studies, No.
  76.

\bibitem{Mimura-Toda:topliegroups}
Mamoru Mimura and Hirosi Toda, \emph{Topology of {L}ie groups. {I}, {II}},
  vol.~91, American Mathematical Society, Providence, RI, 1991.

\bibitem{Spanier:livre}
Edwin~H. Spanier, \emph{Algebraic topology}, Springer-Verlag, New York, 1981.

\bibitem{Halperin-Stasheff:diff}
James Stasheff and Steve Halperin, \emph{Differential algebra in its own rite},
  Proceedings of the {A}dvanced {S}tudy {I}nstitute on {A}lgebraic {T}opology
  ({A}arhus {U}niv., {A}arhus 1970), {V}ol. {III}, Mat. Inst., Aarhus Univ.,
  Aarhus, 1970, pp.~567--577. Various Publ. Ser., No. 13.

\bibitem{Tamanoi:BVLalg}
H.~Tamanoi, \emph{Batalin-{V}ilkovisky {L}ie algebra structure on the loop
  homology of complex {S}tiefel manifolds}, Int. Math. Res. Not. (2006), 1--23.

\bibitem{tamanoi:capproducts}
Hirotaka Tamanoi, \emph{Cap products in string topology}, Algebr. Geom. Topol.
  \textbf{9} (2009), no.~2, 1201--1224.

\bibitem{Tamanoi:stabletrivial}
\bysame, \emph{Stable string operations are trivial}, Int. Math. Res. Not. IMRN
  (2009), no.~24, 4642--4685.

\bibitem{tamanoi-2007}
\bysame, \emph{Loop coproducts in string topology and triviality of higher
  genus {TQFT} operations}, J. Pure Appl. Algebra \textbf{214} (2010), no.~5,
  605--615.

\bibitem{thesedewahl}
Nathalie Wahl, \emph{Ribbon braids and related operads, ph.d. thesis}, Oxford
  university,http://www.math.ku.dk/~wahl/, 2001.

\bibitem{Westerland:stringhomologyspheres}
C.~Westerland, \emph{String homology of spheres and projective spaces}, Algebr.
  Geom. Topol. \textbf{7} (2007), 309--325.

\bibitem{Yang:BV}
Tian Yang, \emph{A {B}atalin-{V}ilkovisky algebra structure on the {H}ochschild
  cohomology of truncated polynomials}, Topology Appl. \textbf{160} (2013),
  no.~13, 1633--1651.

\end{thebibliography}
\bibliographystyle{amsplain}
\end{document}